\theoremstyle{plain}
\newtheorem{thm}{Theorem}[section]
\newtheorem{lem}[thm]{Lemma}
\newtheorem{prop}[thm]{Proposition}
\theoremstyle{remark}
\newtheorem{rem}[thm]{Remark}
\newcommand{\ind}{{\mathbbm{1}}}
\newcommand{\1}[1]{{\ind\mkern -1.5mu}{\{#1\}}}
\DeclareMathOperator{\Tr}{Tr}
\DeclareMathOperator{\E}{\mathbb E}
\renewcommand{\P}{\mathbb P}
\renewcommand{\tilde}{\widetilde}
\renewcommand{\epsilon}{\varepsilon}
\newcommand{\tra}{{\scalebox{0.6}{$\top$}}}
\newcommand{\eps}{\varepsilon}
\newcommand{\re}{{\mathrm{e}}}
\newcommand{\ud}{{\mathrm d}}
\newcommand{\R}{{\mathbb R}}
\newcommand{\N}{{\mathbb N}}
\newcommand{\RP}{{\mathbb R}_+}
\newcommand{\as}{\ \text{a.s.}}
\newcommand{\cA}{{\mathcal A}}
\newcommand{\cB}{{\mathcal B}}
\newcommand{\cD}{{\mathcal D}}
\newcommand{\cE}{{\mathcal E}}
\newcommand{\cF}{{\mathcal F}}
\newcommand{\cH}{{\mathcal H}}
\newcommand{\cM}{{\mathcal M}}
\newcommand{\cP}{{\mathcal P}}
\newcommand{\cT}{{\mathcal T}}
\newcommand{\cZ}{{\mathcal Z}}
\newcommand{\lebm}{{\mathfrak{m}}}
\newcommand{\ocD}{\mathrm{Int}{(\mathcal D)}}
\newcommand{\aref}[1]{{\textup{\ref{#1}}}}
\newcommand{\taue}{\tau_{\mathcal{E}}}
\def\namedlabel#1#2{\begingroup  
    #2%
    \def\@currentlabel{#2}%
    \phantomsection\label{#1}\endgroup
}
\newlist{myenumi}{enumerate}{10}
\setlist[myenumi]{leftmargin=0pt, labelindent=\parindent, listparindent=\parindent, labelwidth=0pt, itemindent=!, itemsep=1pt, parsep=4pt}
\newlist{thmenumi}{enumerate}{10}
\setlist[thmenumi]{leftmargin=0pt, labelindent=\parindent, listparindent=\parindent, labelwidth=0pt, itemindent=!}
\let\oldtocsection=\tocsection
\let\oldtocsubsection=\tocsubsection
\let\oldtocsubsubsection=\tocsubsubsection
\renewcommand{\tocsection}[2]{\hspace{0em}\oldtocsection{#1}{#2}}
\renewcommand{\tocsubsection}[2]{\hspace{1em}\oldtocsubsection{#1}{#2}}
\renewcommand{\tocsubsubsection}[2]{\hspace{2em}\oldtocsubsubsection{#1}{#2}}
\begin{document}

\title[Brownian motion with asymptotically normal reflection]{Brownian motion with asymptotically normal reflection in unbounded domains:
from  transience to stability}

\author{Miha Bre\v{s}ar} 
\address{Department of Statistics, University of Warwick, UK}
\email{miha.bresar@warwick.ac.uk}
\author{Aleksandar Mijatovi\'c} 
\address{Department of Statistics, University of Warwick, \and The Alan Turing Institute, UK}
\email{a.mijatovic@warwick.ac.uk}
\author{Andrew Wade}
\address{Department of Mathematical Sciences, Durham University, UK}
\email{andrew.wade@durham.ac.uk}

\maketitle

\begin{abstract}
We quantify the asymptotic behaviour of  multidimensional drifltess diffusions in 
 domains unbounded in a single direction, with asymptotically normal reflections from the boundary. 
We identify the critical growth/contraction rates of the domain that separate   stability, null recurrence and transience. In the stable case we prove existence and uniqueness of the invariant distribution and establish the polynomial rate of decay of its tail. We also establish matching polynomial upper and lower bounds on the rate of convergence to stationarity in total variation. 
All exponents are explicit in the model parameters that determine the asymptotics of the growth rate of the domain, the interior covariance, and the reflection vector field.

Proofs are probabilistic, and use upper and lower 
tail bounds for additive functionals up to return times to compact sets, 
for which we develop  
novel sub/supermartingale criteria, applicable to general continuous semimartingales. Narrowing domains fall outside of the standard literature, 
in part because boundary local time can accumulate arbitrarily rapidly. Establishing Feller continuity (essential for characterizing stability) thus requires an extension of the usual approach. 

Our recurrence/transience classification 
extends previous work on strictly normal reflections, and expands the range of phenomena observed across all dimensions. For all recurrent cases, we provide quantitative information through upper and lower bounds on tails of return times to compact sets 
(see~\cite{Presentation_AM} for a short \href{https://www.youtube.com/watch?v=oDDDWdbPx74}{YouTube} video describing the results).
\end{abstract}

\tableofcontents


\vspace{-10mm}

\noindent
{\em Key words:}  Reflected diffusion; normal reflection; generalized parabolic domains; recurrence; transience; tails of return-times; invariant distribution; convergence rate in total variation.

\smallskip

\noindent
{\em AMS Subject Classification 2020:}  60J60 (Primary); 60J25, 60J65 (Secondary).


\section{Introduction and main results}
We study the asymptotic behaviour of a multidimensional diffusion in an unbounded,
generalized parabolic domain, with asymptotically normal reflection from the boundary. Our model includes Brownian motion with normal reflection. 
We show that the phase transition between recurrence and transience occurs for asymptotically expanding domains. If the domain narrows asymptotically, we identify the phase transition between  null and positive recurrence.
In the recurrent case we characterise the asymptotic behaviour of the tails of the return times. Moreover, in the positive-recurrent case we prove the existence of the invariant distribution of the reflected diffusion and establish the polynomial rate of decay of its tail. Finally, we establish 
the polynomial rate of convergence to stationarity by proving matching 
upper and lower bounds on the total variation distance
between the marginal and the invariant distribution (see a short \href{https://www.youtube.com/watch?v=oDDDWdbPx74}{YouTube} video describing these results~\cite{Presentation_AM}).

All the aforementioned  phenomena depend on the asymptotic behaviour of the boundary and are exhibited by a normally reflected Brownian motion. In particular, this implies that a normally reflected Brownian motion in an unbounded domain of any dimension (greater than~1) may 
converge in total variation to its invariant distribution, which has heavy tails.  

Before stating our results, we briefly describe  our setting. For any $d\in\N$, define a closed domain  $\cD:= \{(x,y) \in \R_+\times\R^d:\| y\|_d \leq b(x)\}$ in $\R^{d+1}$, where 
$\|\cdot\|_d$ is the standard Euclidean norm on $\R^d$,
$\RP:=[0,\infty)$ and
$b: \R_+ \to \R_+$ is a smooth function with $b(0)=0$ and $b> 0$ on $(0,\infty)$.
Let $W = (W_t)_{t \in \RP}$ be a standard Brownian motion in $\R^{d+1}$ and $\Sigma$ a matrix-valued function on $\cD$, taking values in positive-definite square matrices  of dimension $(d+1)$.
Denote by $\Sigma^{1/2}$ the symmetric square root of $\Sigma$ and let $\phi: \partial \cD \rightarrow \R^{d+1}$ be a vector field  on $\partial \cD$.
Let 
the processes $Z = (Z_t)_{t \in [0,\taue)}$ 
and $L=(L_t)_{t\in[0,\taue)}$
with state spaces $\cD$ and $\RP$, 
respectively, 
satisfy
the stochastic differential equation~(SDE)
\begin{equation}
\label{eq::SDE}
Z_t = z + \int_0^t \Sigma^{1/2}(Z_s) \ud W_s + \int_0^t \phi(Z_s) \ud L_s 
\quad\text{\&}\quad L_t = \int_0^t \mathbbm{1}\{Z_s \in \partial\cD\} \ud L_s,\quad  t \in [0,\taue),
\end{equation}
where $\taue \in [0,\infty]$ is a possibly finite explosion time
and $L$ denotes the local time process of $Z$ at the boundary $\partial \cD$.
For any starting point $Z_0=z\in\cD$, 
by~\cite[Thm~A.1]{menshikov2022reflecting}, SDE~\eqref{eq::SDE} has a unique strong solution (with law denoted by $\P_z$)
under the assumptions in~\aref{ass:covariance1}, \aref{ass:domain1} and~\aref{ass:vector1} stated in Section~\ref{subsec::Modelling_assumptions} below. 
Informally,~\aref{ass:covariance1} requires $\Sigma$ to be bounded, Lipschitz and uniformly elliptic, \aref{ass:domain1} 
requires regularity of $b$ at zero to make the domain $\cD$ smooth,
and \aref{ass:vector1} stipulates that the smooth bounded vector field $\phi$ points into the interior of $\cD$.
Unlike in the case of oblique reflection~\cite{menshikov2022reflecting}, asymptotically normal reflection
does not exhibit explosions.  Theorem~\ref{thm:non_explosion_moments}, stated and proved below, asserts that $\taue=\infty$, $\P_z$-a.s. for all starting points $z\in\cD$,
which we assume in the remainder of the introduction. 

\subsection{The main results}
The asymptotic growth of the domain $\cD$ 
is described by the parameter 
\begin{equation}
\label{eq::beta}
\beta := \limsup_{x \rightarrow \infty}\frac{x b'(x)}{b(x)},   \end{equation}
typically  equal to the limit $\lim_{x \rightarrow \infty}x b'(x)/b(x)$
(e.g.~if  
$b(x) = ax^\beta$ for $x \geq x_0 > 0$ and some $a > 0$, or, more generally, if
$b$ is regularly varying and $b'$ is eventually monotone \cite[p.~59]{bingham1989regular}).
For local time in SDE~\eqref{eq::SDE} to influence the long-time behaviour of the reflected process $Z$, we have to assume in~\aref{ass:domain2} below 
that $\beta$  in~\eqref{eq::beta}  lies in the interval $(-\infty,1)$ (cf.~Section~\ref{subsec:heuristic} below), making the growth of $\cD$ sublinear (see Remark~\ref{rem:Ass2} below) and, possibly, asymptotically narrowing. 
Assumption~\aref{ass:covariance2} below
permits $\Sigma$ to vary smoothly with $z =(x,y)\in \cD\subset \RP\times\R^d$, but asserts 
that, as $x\to\infty$, the diagonal entry of $\Sigma$ in the $x$-direction and the sum of the remaining diagonal entries converge to positive
values $\sigma_1^2$ and $\sigma_2^2$, respectively.
Assumption~\aref{ass:covariance2} ensures that the process does not stop interacting with the boundary far away from the origin. 
Assumption~\aref{ass:vector2} specifies linear factors $s_0, c_0 \in (0,\infty)$, which 
scale the projections of the vector field $\phi$ in the $x$ and normal directions, so that, as $x\to\infty$, the former projection is
asymptotic to $s_0 b'(x)$ while the latter converges to 
$c_0$.
Since $b'(x) \to 0$ (as $\beta < 1$), the assumption~\aref{ass:vector2} makes the vector field~$\phi$ \emph{asymptotically normal}. Strictly normal reflection has~$s_0 = c_0$.

The reflected process $Z$
is
\textit{transient} (resp.\ \textit{recurrent}) if $\lim_{t \rightarrow \infty} \|Z_t\|_{d+1} = \infty$ 
(resp.\ there exists $r_0 \in \RP$ satisfying $\liminf_{t \rightarrow \infty} \|Z_t\|_{d+1} \leq r_0$) $\P_z$-a.s.
The recurrence/transience 
transition occurs at the critical growth rate of the boundary 
\begin{equation}
\label{eq:beta_c}
    \beta_c := \frac{c_0\sigma_1^2}{s_0\sigma_2^2}.
\end{equation}

\begin{thm}
\label{thm:rec_tran}
Suppose that Assumptions~\aref{ass:domain2}, \aref{ass:covariance2}, \aref{ass:vector2} hold and the process 
$Z$ solves SDE~\eqref{eq::SDE}. Then the following statements hold for all starting points $z\in\cD$:
\begin{itemize}
    \item[(a)] if $\beta < \beta_c$, then $Z$ is recurrent;
    \item[(b)] if $ \beta > \beta_c$, then $Z$ is transient;
    \item[(c)] if $\beta = \beta_c$ and Assumptions~\aref{ass:vector2plus}, \aref{ass:domain2plus} and~\aref{ass:covariance2plus} are satisfied, then  $Z$ is recurrent.
\end{itemize}
\end{thm}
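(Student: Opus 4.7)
The plan is a Foster--Lyapunov scheme with one-dimensional scale-function test functions in the first coordinate of $Z_t=(X_t,Y_t)\in\RP\times\R^d$, combined with a ``local-time-to-time'' identity that converts the boundary reflection into an effective drift on $X_t$. The first step is to apply Itô's formula to $\|Y_t\|_d^2$: with $\Tr(\Sigma_{yy}(Z_s))\to\sigma_2^2$ from~\aref{ass:covariance2}, and with $Y_s\cdot\phi_y(Z_s)\sim -c_0 b(X_s)$ on $\partial\cD$ (which follows from $\phi\cdot n\to c_0$ in~\aref{ass:vector2}, where $n=(b'(x),-y/\|y\|_d)/\sqrt{1+b'(x)^2}$ is the inward unit normal), one obtains
\[
\|Y_t\|_d^2-\|Y_0\|_d^2 \;=\; M_t^Y + \int_0^t(\sigma_2^2+o(1))\,ds \;-\; \int_0^t 2c_0 b(X_s)(1+o(1))\,dL_s
\]
for a local martingale $M^Y$. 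Since $0\le\|Y_t\|_d^2\le b(X_t)^2$, this identity lets me control $\int_0^t g(X_s)\,dL_s$ in terms of $\int_0^t g(X_s)\sigma_2^2/(2c_0 b(X_s))\,ds$, modulo boundary and martingale errors that are negligible at infinity.

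Next, apply Itô to $\psi(X_t)$ for smooth $\psi:\RP\to\R$, and use $\Sigma_{11}(Z_s)\to\sigma_1^2$ together with $\phi_1(Z_s)\sim s_0 b'(X_s)$ on $\partial\cD$. The semimartingale decomposition, combined with the trade from the first step, produces an effective drift
\[
\cA\psi(x) \;=\; \tfrac12\sigma_1^2\,\psi''(x) \;+\; \tfrac{s_0\sigma_2^2}{2c_0}\,\tfrac{b'(x)}{b(x)}\,\psi'(x),
\]
whose kernel is spanned by the constants and by the antiderivative of $b(x)^{-1/\beta_c}$. For part~(a), $\beta<\beta_c$, take $\psi(x)=x^\alpha$ with $\alpha\in(0,\,1-\beta/\beta_c)$; then $\psi\uparrow\infty$ and $\cA\psi(x)<0$ outside a compact set, so $\psi(X_t)$ is a supermartingale outside a compact set and a Foster--Lyapunov argument (combined with non-explosion, Theorem~\ref{thm:non_explosion_moments}) yields recurrence. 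For part~(b), $\beta>\beta_c$, take $\psi(x)=x^\alpha$ with $\alpha\in(1-\beta/\beta_c,\,0)$; then $\psi>0$ with $\psi\to 0$ at infinity and $\cA\psi(x)<0$ outside a compact set, so $\psi(X_t)$ is a positive supermartingale outside a compact set. The standard exit-from-annulus argument via $\psi$, combined with the strong Markov property, then gives a positive probability of escape to infinity from any starting point, i.e.\ transience.

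For the critical case~(c), $\beta=\beta_c$, $\cA\psi$ vanishes at leading order for every power-law $\psi$, and a logarithmic scale function is required. Taking $\psi(x)=(\log(1+x))^\gamma$ with $\gamma\in(0,1)$ and using $b(x)\asymp x^{\beta_c}$, the order $(\log x)^{\gamma-1}/x^2$ contributions from $\tfrac12\sigma_1^2\psi''$ and from the reflection term cancel, leaving a leading term of order $\gamma(\gamma-1)(\log x)^{\gamma-2}/x^2<0$. Assumptions~\aref{ass:vector2plus}, \aref{ass:domain2plus} and \aref{ass:covariance2plus} supply the next-order control on $\phi$, $b$ and $\Sigma$ needed to guarantee that all further error terms are dominated by this negative contribution, again yielding recurrence.

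The principal obstacle throughout is that the local-time-to-time identity above is only an integrated, approximate equality, and its error terms must be controlled uniformly over excursions of $Z$ that may travel arbitrarily far in the $x$-direction; in particular, for narrowing domains the local time $L$ can accumulate arbitrarily rapidly. This is precisely the role of the paper's novel sub/supermartingale criteria for additive functionals up to return times to compact sets, which are designed to absorb such reflection-induced fluctuations and close the Foster--Lyapunov bookkeeping.
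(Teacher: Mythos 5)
Your effective‐drift computation and choice of test functions are exactly what the paper uses, and your ``local‐time‐to‐time'' trade is really the same device as the $\|y\|_d^2$-correction baked into the paper's Lyapunov family $f_{w,\gamma}$: expanding $f_{w,1}(z)=(x+k_w)\bigl(1+w(1-w)\|y\|_d^2/(2(x+k_w)^2)\bigr)^{1/w}\approx (x+k_w)+\tfrac{1-w}{2(x+k_w)}\|y\|_d^2$ shows that the paper simply adds to your $\psi(x)$ the $g(x)\|y\|_d^2$ correction needed to make $\langle\nabla f,\phi\rangle$ carry the desired sign on $\partial\cD$, which kills the local-time term at the level of the Itô decomposition rather than via a post hoc ``trade''. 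The kernel of your effective generator (antiderivative of $b^{-1/\beta_c}$) is precisely the approximate harmonic function $r^w\cos(w\theta)$ that inspires $f_{w,\gamma}$, and your parameter $\alpha$ is the paper's $\gamma$ (with the paper's second parameter $w$ tuning the boundary sign). Your choice of $(\log x)^\gamma$ in the critical case is also a valid alternative to the paper's $\log x - x^{-\delta}+\ldots$, since the quantified assumptions \aref{ass:domain2plus}, \aref{ass:vector2plus}, \aref{ass:covariance2plus} give $o(1/\log x)$ error control, which is all that is needed once the $(\log x)^{\gamma-1}/x^2$ contributions cancel.

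There are two points worth flagging. First, your proposal as written leaves the trade as an integrated heuristic: applying Itô separately to $\|Y_t\|_d^2$ and to $\psi(X_t)$ and then trading the $\int\psi'(X_s)s_0 b'(X_s)\ud L_s$ term against the unweighted identity is not a rigorous substitution, because the local-time integrands differ; the clean way to close this (and what the paper does) is to work directly with $f(z)=\psi(x)+g(x)\|y\|_d^2$ (or its multiplicative analogue $f_{w,\gamma}$), where the local-time term in Itô's formula has the form $\langle\nabla f,\phi\rangle\,\ud L$ with the sign controlled pointwise on $\partial\cD$, so no trading is ever needed. Second, your closing paragraph misattributes the role of the paper's sub/supermartingale machinery: the ``criteria for additive functionals up to return times to compact sets'' (Lemma~\ref{lem:return_times}) appear only in Theorems~\ref{thm:return_times} and~\ref{thm:invariant_distributon}, not in Theorem~\ref{thm:rec_tran}. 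For the recurrence/transience dichotomy, the local-time contribution is absorbed once and for all by the sign condition $\langle\nabla f_{w,\gamma},\phi\rangle<0$ in Lemma~\ref{lem3.1}, and the classification itself then follows from the plain Foster--Lyapunov criteria of Lemmas~\ref{lem2.2} and~\ref{lem2.3} applied to $\kappa=f_{w,1}(Z)$; no control of local-time growth along excursions is required at this stage.
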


Write $Z = (X,Y) \in \cD$ in coordinates, so that $X_t \in \RP$ for $t\in\R_+$.  For any $r\in\RP$, let 
\begin{equation}
\label{eq::varsigma}
\varsigma_r := \inf\{ t \in \RP: X_t \leq r\}
\end{equation}
(with convention $\inf \emptyset :=\infty$) be the return time of $Z$ to $\cD\cap[0,r]\times \R^d$.
Define
\begin{equation}
\label{eq::m0}
m_c  := \left(1-\beta/\beta_c\right)/2.
\end{equation}
Note that, if $\beta<\beta_c$, then $m_c>0$. In this case the following result
implies that the return time
$\varsigma_r$
is a.s.~finite, with $m_c$ being the critical moment exponent.

\begin{thm}
\label{thm:return_times}
Suppose that Assumptions~\aref{ass:domain2}, \aref{ass:covariance2}, \aref{ass:vector2} hold and the process 
$Z$ solves SDE~\eqref{eq::SDE}. Then the following statements hold. 
\begin{enumerate}
    \item[(a)] If $\beta \in(\beta_c,1)$, then for any level $r\in(0,\infty)$ and starting point $z\in \cD\cap(r,\infty)\times \R^d$, there is positive probability that component $X$ does not reach level $r$,
    i.e., 
    $\P_z(\varsigma_r = \infty) > 0$.
    \item[(b)] If $\beta \in (-\infty,\beta_c)$, then 
    for any $\eps>0$, $z=(x,y)\in\cD$ and $r\in(0,x)$, there exist constants  $C_1,C_2\in(0,\infty)$ for which
    $$
     C_1t^{-m_c-\eps}\leq \P_z(\varsigma_{r} \geq t) \leq C_2t^{-m_c+\eps}\quad \text{for all $t\in(1,\infty)$.}
    $$
\end{enumerate}
\end{thm}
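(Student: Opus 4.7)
The plan is to reduce the problem to a one-dimensional Lamperti-type analysis for the $X$-component of $Z$, with the effective drift arising from the local-time pushback at the boundary, and then to invoke the paper's novel sub/supermartingale criteria for tails of return times advertised in the abstract.

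\textbf{Heuristic and the role of $m_c$.} From SDE~\eqref{eq::SDE} one reads off
\[
X_t = x + M_t^X + \int_0^t \phi_1(Z_s)\,\ud L_s,
\]
where $M^X$ is a continuous local martingale with $\ud\langle M^X\rangle_t = \Sigma_{11}(Z_t)\,\ud t \to \sigma_1^2\,\ud t$ in the far field, and $\phi_1(Z_s) \sim s_0 b'(X_s)$ on $\partial\cD$ by~\aref{ass:vector2}. Applying It\^o's formula to $\|Y\|_d^2 - b(X)^2$ (which vanishes on $\partial\cD$) and balancing the interior drift against the inward-normal boundary push (scaled by $c_0$ via~\aref{ass:vector2}) identifies the quasi-stationary rate of local-time accumulation at level $x$ as $\sim \sigma_2^2/(2 c_0 b(x))\,\ud t$. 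Hence the effective infinitesimal drift of $X_t$ is $s_0 b'(X_t)\cdot \sigma_2^2/(2 c_0 b(X_t)) \sim (\beta/\beta_c)\,\sigma_1^2/(2X_t)$, placing $X$ in the Lamperti regime with scaling parameter $\gamma = \beta/\beta_c$ and associated critical moment $(1-\gamma)/2 = m_c$.

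\textbf{Part~(a).} For $\beta \in (\beta_c,1)$ we have $\gamma > 1$, so take a scale-type function with $\psi(x) \sim x^{1-\beta/\beta_c}$, decreasing and tending to $0$ at infinity. I would construct $F:\cD\to(0,\infty)$ of the form $F(x,y) = \psi(x) + G(x,y)$, with a lower-order cross-section correction $G$ designed to cancel the local-time term in the It\^o expansion of $F(Z_t)$, making $F(Z_{t\wedge\varsigma_r})$ a supermartingale. Optional stopping at the exit time from a shell $[r,n]$, letting $n\to\infty$, then yields $\P_z(\varsigma_r < \infty) \leq F(z)/\inf\{F(x',y'): x'=r\} < 1$ for $x$ sufficiently large; starting points with $r < x$ near $r$ are handled by combining this with the strong Markov property at the hitting time of a large level, which is reached with positive probability by the interior martingale component.

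\textbf{Part~(b).} For $\beta < \beta_c$ pick $p_\pm := m_c \pm \eps'$ with $\eps'>0$ small and put $V_\pm(x) := x^{2 p_\pm}$ (smoothed near $0$). By the same recipe as in part~(a), build $F_\pm(x,y) = V_\pm(x) + G_\pm(x,y)$ on $\cD$ so that the local-time term in It\^o's formula for $F_\pm(Z_t)$ is cancelled by the cross-section piece of $G_\pm$, leaving an interior drift of the order $V_\pm(X_t)^{1-1/p_\pm}$ whose sign, by choice of exponent relative to $m_c$, is strictly positive for $F_+$ and strictly negative for $F_-$ outside a compact set. Feeding $F_+$ (a submartingale candidate) and $F_-$ (a supermartingale candidate) into the paper's sub/supermartingale tail criteria, which turn such pointwise drift bounds into polynomial tail estimates for return times to compact sets, yields the lower bound $\P_z(\varsigma_r \geq t) \geq C_1 t^{-m_c-\eps}$ and the upper bound $\P_z(\varsigma_r \geq t) \leq C_2 t^{-m_c+\eps}$ after absorbing $\eps'$ and the lower-order perturbations into $\eps$.

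\textbf{Main obstacle.} The crux is engineering the test functions $F_\pm$ so that the local-time term in It\^o's expansion cancels \emph{pathwise}, not merely in average: naive Lyapunov arguments break down because, for narrowing domains ($\beta<0$), $L$ may accumulate arbitrarily rapidly (as stressed in the abstract), so any uncancelled boundary contribution dominates the interior drift. This forces $G_\pm$ to satisfy a Neumann-type boundary condition finely tuned to the asymmetry between $s_0$ and $c_0$ in the asymptotically normal field $\phi$, while still preserving the prescribed polynomial scaling of $V_\pm$ in the far field and behaving harmlessly near the origin. Verifying matching exponents up to $\eps$ then reduces to quantifying the error terms permitted by~\aref{ass:domain2}, \aref{ass:covariance2} and~\aref{ass:vector2} in the asymptotics of $b, b'$ and $\Sigma$.
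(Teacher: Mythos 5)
Your overall route -- reduce $X$ to a Lamperti-type diffusion with effective drift $\sim(\beta/\beta_c)\sigma_1^2/(2x)$, build Lyapunov functions given by a power of $x$ corrected by a cross-section term tuned so that the boundary local-time contribution has a definite sign pathwise, and feed the resulting sub/supermartingales into tail criteria -- is the paper's route. The parametric family $f_{w,\gamma}$ in~\eqref{eq:Lyapunov_polynomial} is exactly such a function, Lemma~\ref{lem3.1} provides the required sign control through the parameters $w$ and $\gamma$, part~(a) follows from the transient supermartingale and Lemma~\ref{lem2.3}, and the upper bound in~(b) follows from a supermartingale fed to~\cite[Thm~2.1]{menshikov1996passage} as in Prop.~\ref{prop:return_time_upper_bound}. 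Up to this point your plan and the paper's proof coincide in all essentials.

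The genuine gap is the lower bound in~(b). A submartingale condition on $\kappa^p$ (your $F_+$) controls, via optional stopping, only the probability that an excursion of $\kappa$ climbs above a high level $r$ before returning to the compact set; it says nothing about how \emph{long} such an excursion lasts. A priori, $\kappa$ could snap back below $\ell$ in time $O(1)$ after touching $r$, and $\lambda_\ell$ could then have arbitrarily light tails -- a submartingale alone gives at best an infinite-moment statement, not the pointwise bound $\P_z(\varsigma_r\geq t)\geq C_1 t^{-m_c-\eps}$ for all $t>1$. The paper closes this with a second, complementary hypothesis: Lemma~\ref{lem:return_times}(b) requires a supermartingale condition on $\kappa^{-2}$ compensated by $C\kappa^{-4}$, which, via the maximal inequality of Prop.~\ref{prop:maximal}, shows that once $\kappa$ reaches $r/(1-q)$ it stays above $r$ for a time $\gtrsim r^2$ with probability at least $q$. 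The Lyapunov calculation realizing this is~\eqref{eq:sup_mart_return_times_lower_bound} for $f_{w,-2}$; that is the missing ingredient in your plan. A secondary but real gap is the quantifier: your estimates (like the paper's Propositions~\ref{prop:return_time_upper_bound} and~\ref{prop:return_time_lower_bound}) hold only for $r$ and $z$ sufficiently far from the origin, where Assumptions~\aref{ass:domain2}, \aref{ass:covariance2}, \aref{ass:vector2} apply, whereas the theorem asserts the bounds for all $r\in(0,x)$ and all $z\in\cD$ with $x>r$. The paper bridges this via Dirichlet-problem and maximum-principle escape estimates (Prop.~\ref{prop:crossing_time_probabilities}) together with the boundary-escape bound of~\cite[Lem.~4.5]{menshikov2022reflecting}; your remark that ``the interior martingale component reaches a large level with positive probability'' gestures at this but does not supply it, and it is not automatic from a starting point on $\partial\cD$ where the reflection field acts.
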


We say that the process $Z$ is \textit{positive recurrent} if
$\E_z[\varsigma_r] < \infty$ holds for all $z \in \cD$ and sufficiently large $r$.
The process $Z$ is \textit{null recurrent} if it is recurrent but not positive recurrent. 
Theorem~\ref{thm:return_times}(b) implies that $Z$ is positive (resp.\ null) recurrent if $\beta < -\beta_c$
(resp.\ $-\beta_c<\beta<\beta_c$).

 By Theorem~\ref{thm:return_times}(b), positive recurrence of $Z$ occurs if 
 $\beta < -\beta_c$. In this case, we study properties of the invariant distribution of $Z$ on $\cD$.
Recall that, by~\cite[Thm~A.1]{menshikov2022reflecting}, the process $Z$ is strong Markov. An \textit{invariant distribution}
$\pi$ of $Z$ is a probability measure on the Borel $\sigma$-algebra $\mathcal{B}(\cD)$ generated by  the open subsets of $\cD$,
satisfying  $\int_{\cD}\E_z [f(Z_t)]\pi(\ud z)=\int_\cD f(z)\pi(\ud z)$ for all bounded measurable functions $f:\cD\to\RP$ and all $t\in\RP$.
A \textit{total variation} distance between two probability measures $\varphi_1$ and $\varphi_2$, defined on $\mathcal{B}$, is given by 
$\| \varphi_1-\varphi_2\|_{\mathrm{TV}}=\sup_{B\in\mathcal{B}}|\varphi_1(B)-\varphi_2(B)|$.
Define  $$M_c := -(1+\beta/\beta_c)/2,$$ 
and note that, in the case $\beta < -\beta_c$, we have $M_c>0$.

\begin{thm}
\label{thm:invariant_distributon}
Suppose~\aref{ass:domain2}, \aref{ass:covariance2}, \aref{ass:vector2} hold, 
$Z$ solves SDE~\eqref{eq::SDE}, and
$\beta < -\beta_c$.
Then the reflected process $Z$ possesses  a unique invariant distribution $\pi$. Moreover, for any $\epsilon>0$, the following statements hold:
\begin{itemize}
    \item[(a)] there exist  constants $c_\pi, C_\pi\in(0,\infty)$  such that 
\begin{equation*}
c_\pi  r^{-2M_c-\epsilon}\leq  \pi\left(\{z\in\cD:\|z\|_{d+1}\geq r\}\right)\leq C_\pi r^{-2M_c+\epsilon},\quad \text{for all $r\in[1,\infty)$;}
\end{equation*}
    \item[(b)] for any starting point $z\in \cD$ of $Z$,
there exist constants $c_{\mathrm{TV}},C_{\mathrm{TV}}\in(0,\infty)$ 
such that
\begin{equation*}
c_{\mathrm{TV}} t^{-M_c - \epsilon} \leq  \|\P_z(Z_{t} \in \cdot)-\pi\|_{\mathrm{TV}}\leq C_{\mathrm{TV}}t^{-M_c + \epsilon},\quad \text{for all  $t\in[1,\infty)$.}
\end{equation*}
\end{itemize}
\end{thm}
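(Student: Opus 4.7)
The plan is to leverage the polynomial return-time tails of Theorem~\ref{thm:return_times}(b) throughout. Since $\beta<-\beta_c$ yields $m_c>1$, Theorem~\ref{thm:return_times}(b) implies $\E_z[\varsigma_r]<\infty$ for $r$ sufficiently large. Combined with the Feller continuity established in the body of the paper and the Lebesgue-irreducibility of $Z$ implied by the uniform ellipticity in~\aref{ass:covariance1}, the Meyn--Tweedie/Harris framework delivers the existence and uniqueness of the invariant probability $\pi$ on $\cD$, representable via Kac's formula
\[
\pi(A)=\frac{\E_{z_r}\bigl[\int_0^{\varsigma_r}\1{Z_s\in A}\,\ud s\bigr]}{\E_{z_r}[\varsigma_r]},\qquad A\in\mathcal B(\cD),
\]
for any reference point $z_r\in\cD$ with $X$-coordinate $r$.

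For the tail estimates in part~(a), I would decompose the excursion from $z_{r_0}$ at the hitting time $\tau_r:=\inf\{t\in\RP:X_t\geq r\}$, for $r\gg r_0$. By the strong Markov property,
\[
\E_{z_{r_0}}\!\Bigl[\int_0^{\varsigma_{r_0}}\!\1{X_s\geq r}\,\ud s\Bigr]=\E_{z_{r_0}}\!\Bigl[\1{\tau_r<\varsigma_{r_0}}\cdot\E_{Z_{\tau_r}}\!\Bigl[\int_0^{\varsigma_{r_0}}\!\1{X_s\geq r}\,\ud s\Bigr]\Bigr].
\]
The hitting probability $\P_{z_{r_0}}(\tau_r<\varsigma_{r_0})$ is controlled via the sub/supermartingale criteria developed elsewhere in the paper (essentially the escape-probability estimate between the levels $r_0$ and $r$ that underlies Theorem~\ref{thm:rec_tran}); the inner expectation is bounded above by $\E_{z_r}[\varsigma_{r_0}]$ and below by an analogous occupation-time computation, both controlled through Theorem~\ref{thm:return_times}(b). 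Multiplying the two factors and optimising the exponents gives $\pi(\{X\geq r\})\asymp r^{-2M_c\pm\eps}$. Since $\beta<0$ forces $b(x)\to 0$, one has $\|z\|_{d+1}\sim X$ on the bulk of $\cD$, so the $\|\cdot\|_{d+1}$-tails follow immediately.

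Part~(b) splits into a standard subgeometric upper bound and a more delicate lower bound. For the upper bound, the polynomial return-time moment $\E_z[\varsigma_r^{m_c-\eps}]<\infty$ obtained by integrating the tail bound of Theorem~\ref{thm:return_times}(b), together with a minorisation on a compact sublevel set $\{X\leq R\}$ (available from uniform ellipticity by standard strong-Feller-type arguments for reflecting diffusions with smooth coefficients), feeds directly into the polynomial ergodic theorem of Tuominen--Tweedie (or Jarner--Roberts, Douc--Fort--Moulines--Soulier), yielding $\|\P_z(Z_t\in\cdot)-\pi\|_{\mathrm{TV}}\leq C_{\mathrm{TV}}t^{-M_c+\eps}$. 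The matching lower bound is the main obstacle: a direct comparison against sets of the form $\{X>r(t)\}$ using the lower tail in Theorem~\ref{thm:return_times}(b) loses a factor of~$t$ relative to the target rate. The resolution is a renewal-theoretic argument exploiting the excursion structure at the compact reference set $\{X\leq R\}$: under stationarity the current excursion straddling time~$t$ has a size-biased distribution with tail $\asymp t^{-M_c}$, whereas started from $z$ the corresponding fresh excursion has the unbiased tail $\asymp t^{-m_c}=t^{-M_c-1}$. By the classical renewal-coupling lower bounds (as in Jarner--Tweedie's necessary conditions for subgeometric convergence), this distributional mismatch converts into a lower bound on TV distance of order $\int_t^\infty\!\P_{z'}(\varsigma_r>s)\,\ud s/\E_{z'}[\varsigma_r]$, and the lower tail of Theorem~\ref{thm:return_times}(b) then produces $\asymp t^{-M_c-\eps}$, completing the proof.
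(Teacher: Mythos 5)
Your overall architecture --- existence and uniqueness through Harris/petite-set theory, upper TV rate through a subgeometric ergodic theorem --- coincides with the paper's, and your excursion decomposition for part~(a) is a legitimate alternative route. The paper obtains the upper tail from the drift condition in Lemma~\ref{Lem:drift_conditions} and the Douc--Fort--Moulines--Soulier moment estimate, and the lower tail by a contradiction argument (Lemma~\ref{lem:lower_bound_invariant}) that constructs a function $H$ integrable against $\pi$ yet violating a renewal-type criterion; you instead propose to estimate the Kac ratio directly. Your scheme is plausible --- hitting probability $\P_{z_{r_0}}(\tau_r<\varsigma_{r_0})\asymp r^{-2m_c\pm\eps}$ times an inner occupation time $\asymp r^{2\mp\eps}$ --- but you should be explicit that the lower bound on the inner occupation integral is not delivered by Theorem~\ref{thm:return_times}(b), which only controls $\P_z(\varsigma_r>t)$; it requires the sharper occupation-time machinery of Lemma~\ref{lem:return_times} (with $h$ taken to be an indicator) and Proposition~\ref{prop:return_time_lower_bound}. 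Likewise, in a continuous state space the Kac formula must be taken in the petite-set form of~\cite{tweedie1993generalized}, so the boundedness of petite sets (Proposition~\ref{prop:bounded_petite}) cannot be skipped.

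The genuine gap is in the lower bound of part~(b). You correctly identify that a naive comparison between $\P_z(X_t>r(t))$ and $\pi(\{X>r(t)\})$ loses a power of $t$, but the ``renewal-coupling'' patch does not close the argument. The functional you are implicitly comparing, the residual $R_t:=\inf\{s>0:X_{t+s}\le R\}$, is indeed a functional of $Z_t$, so $\|\cL_z(R_t)-\cL_\pi(R_t)\|_{\mathrm{TV}}\le\|\P_z(Z_{t}\in\cdot)-\pi\|_{\mathrm{TV}}$; but $\cL_z(R_t)$ is determined by $\cL_z(Z_t)$, so lower-bounding the left side requires already knowing how far $\cL_z(Z_t)$ is from $\pi$ --- a circularity. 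Moreover, Jarner--Tweedie-type necessary conditions produce statements of the form ``convergence cannot be $o(t^{-p})$,'' i.e.\ the rate fails along a subsequence; they do not give the pointwise bound for \emph{all} $t\ge1$ that Theorem~\ref{thm:invariant_distributon}(b) asserts. The paper circumvents this by a non-renewal Lyapunov comparison (Lemma~\ref{lem:lower_bound_convergence_rate}, in the spirit of Hairer): take $G=F_{w,\gamma}$; part~(a) furnishes $\pi(\{G\ge y\})\ge f(y)$ with $f$ polynomial, and Lemma~\ref{Lem:drift_conditions} furnishes $\E_z[G(Z_t)]\le F_{w,\gamma}(z)+C_3t$; then, by Markov's inequality, $\|\P_z(Z_{t}\in\cdot)-\pi\|_{\mathrm{TV}}\ge f(y)-\bigl(F_{w,\gamma}(z)+C_3t\bigr)/y$ pointwise in $t$, and optimising over $y$ gives the claimed rate. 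You should replace your renewal step with this comparison (equivalently, apply the same Lyapunov comparison with $G(z)=\E_z[\tau_C]$, which makes the renewal heuristic rigorous).
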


Theorem~\ref{thm:invariant_distributon} shows that, surprisingly, a reflected Brownian motion $Z$ in $\cD\subset \R^{d+1}$ with normal reflection 
(i.e., $Z$ in the class of models with $\sigma_1^2=1$, $\sigma_2^2=d$, and 
$c_0=s_0$, so that $\beta_c=1/d$)
on an unbounded domain may be polynomially ergodic if the domain narrows sufficiently fast, i.e.~$\beta<-1/d$.
In this case the tail 
$\pi\left(\{z\in\cD:\|z\|_{d+1}>r\}\right)$ 
of the invariant distribution $\pi$ decays asymptotically as $r^{1+d\beta}$ when $r\to\infty$. Note that if $\beta<-1/d$,   the domain $\cD$ has finite volume in $\R^{d+1}$. However, by modifying either the covariance matrix or the reflection vector field so that $\beta_c<1/d$, for any $\beta\in(-1/d,-\beta_c)$ we obtain polynomial stability of the reflected Brownian motion in a domain with infinite volume.  

 Theorem~\ref{thm:invariant_distributon}(a)
characterises the critical moment of the invariant distribution $\pi$ of the reflected process $Z$ in a domain $\cD$ with $\beta<-\beta_c$:
for any $\alpha$ in $[0,2M_c)$ (resp.\ $(2M_c,\infty)$), the moment $
\int_\cD \|z\|_{d+1}^\alpha \pi(\ud z)$  is finite (resp.\ infinite).
Moreover, by Remark~\ref{rem:no_mass_on_boundary} below, for every $z\in\cD$ we have $\P_z(Z_t\in\partial\cD)=0$ for Lebesgue almost every $t\in\RP$. By Theorem~\ref{thm:invariant_distributon}(b), this implies $\pi(\partial \cD)=0$.

The polynomial rate of convergence in total variation of $Z_t$ to stationarity, given in Theorem~\ref{thm:invariant_distributon}(b), is half of the rate of decay of the tail of its stationary measure. Differently put, by Theorem~\ref{thm:invariant_distributon}, we have
\begin{align*}
     \lim_{t \rightarrow \infty} \frac{\log\| \P_z(Z_{t} \in \cdot)-\pi\|_{\mathrm{TV}}}{\log t} =\frac{1}{2} \lim_{r\to\infty}\frac{\log\pi(\{z\in\cD:\|z\|_{d+1}>r\})}{\log r} = -M_c.
\end{align*}

\subsection{Discussion of the main results}
The fact that $\phi$ is an asymptotically normal reflection implies that, in the case with $\beta > 0$, the process accumulates a positive drift in the horizontal direction when it reaches the boundary. Here, we observe phase transitions between recurrence and transience depending on the model parameters. When $\beta < 0$, the process accumulates a negative drift in horizontal direction. In this case, the process is always recurrent, and in some cases the invariant distribution exists. 

We now comment on the structure of the proofs and discuss features of the theorems in the previous section. A key step in the proofs of 
Theorems~\ref{thm:rec_tran},~\ref{thm:return_times} and~\ref{thm:invariant_distributon} consists of reducing the problem to certain super/submartingale
conditions that can be verified. We stress that the processes involved (that turn out to be super/submartingales) in all non-critical cases, covering phenomena from transience to stability, are transformations of the reflected process in SDE~\eqref{eq::SDE} via a \textit{single} parametric family of Lyapunov functions. The class of functions we use are not, \textit{and should not be}, harmonic because the analysis of the return times and quantitative properties of the invariant distribution and rate of convergence require the presence of a sufficiently strong drift.

\subsubsection{Positive recurrence}
\label{subsubsec:positive_recurrence}
Theorem~\ref{thm:invariant_distributon} 
provides detailed information on the ergodicity of the reflected process $Z$, with lower bounds matching the upper bounds.  
To the best of our knowledge, this is the first  characterisation of the  rate convergence to stationarity in the context of reflecting diffusions, including those with drift. Upper bounds abound: for example~\cite{sarantsev2017reflected, Sarantsev21} provide upper bounds on the rate of convergence for various reflected diffusions with drift via drift conditions in~\cite{douc2009subgeometric} (due to the presence of the drift, the upper bounds in this case are sub-exponential). 

\begin{figure}[ht]
    \centering
    \includegraphics[width=80mm]{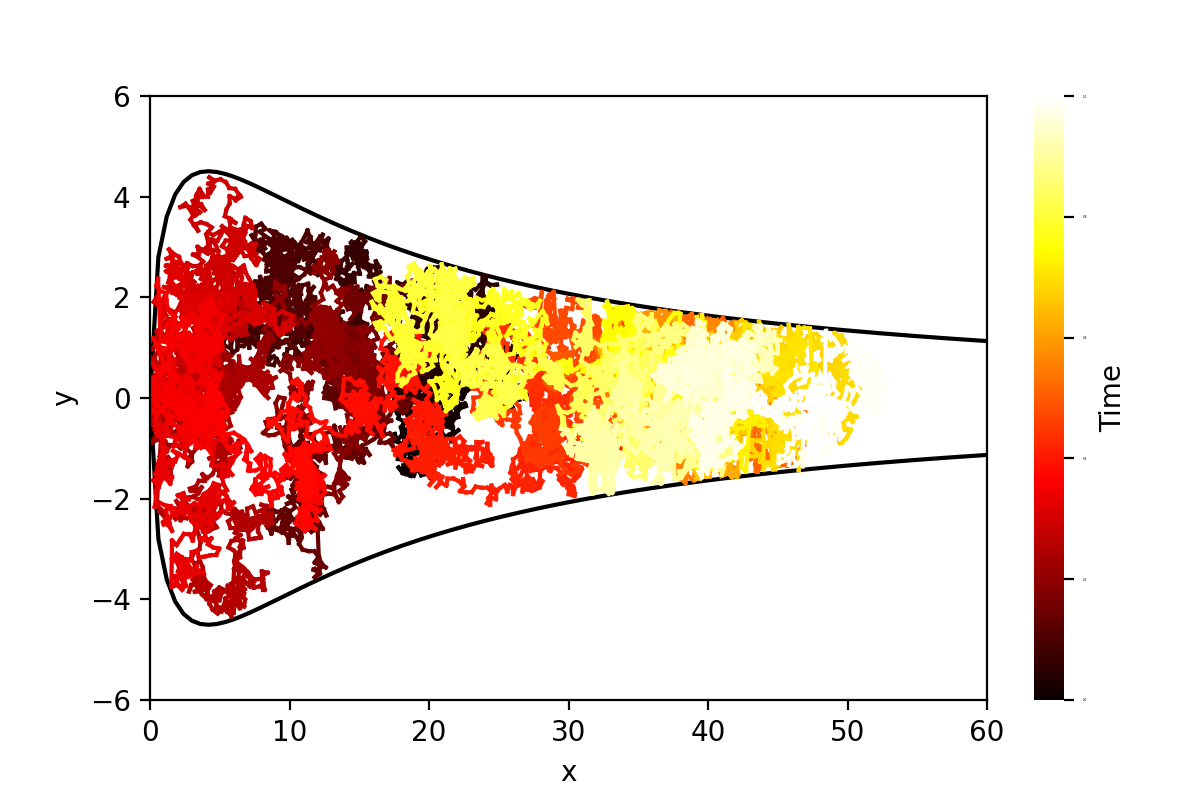}
    \caption{A positive-recurrent case ($\beta=-1.2<-1=-\beta_c$): simulation of the normally reflected Brownian motion in an unbounded domain, narrowing sufficiently fast so that (by Theorem~\ref{thm:invariant_distributon}(b)) the process converges to stationary in total variation with at the rate $t^{-0.1}$ as $t\to\infty$.}
    \label{fig:positive_rec}
\end{figure}

In contrast, the literature for lower bounds is scarce. 
Our approach to the lower bounds on the rate of convergence is purely probabilistic. It rests on 
novel continuous super/submartingale methods, based on Lemma~\ref{lem:return_times} below,
which provide a general setting where the full force of the  idea behind~\cite[Thm~5.1]{hairer2010convergence} (see Lemma~\ref{lem:lower_bound_convergence_rate} in Appendix~\ref{subsec:lower_bounds_convergence_to_stationarity} below)
can be exploited.  
The approach is robust to the underlying stochastic model and appears to be applicable to a general continuous ergodic Markov processes.

The first step in the proof of Theorem~\ref{thm:invariant_distributon}
consists of establishing the Feller continuity (Theorem~\ref{thm:Feller_continuity_of_Z} below) and 
irreducibility (Proposition~\ref{prop:marginal_equivalent_to_Lebesgue} below) for the reflected process $Z$. The key technical challenge in this step consists of controlling the growth of the local time (cf. Section~\ref{subsec:heuristic} below), which requires establishing Feller continuity of the stopped process first (see the proof of Theorem~\ref{thm:Feller_continuity_of_Z} for details).

Once Feller continuity and irreducibility of $Z$ have been established, the upper bounds in Theorem~\ref{thm:invariant_distributon} are proved 
using supermartingale conditions together with the classical convergence results in~\cite{douc2009subgeometric}, applicable in the 
subexponential case. This yields finiteness of moments of the invariant distribution $\pi$, which is then translated into upper bounds on the tails  in Theorem~\ref{thm:invariant_distributon} via Markov's  inequality. 

The lower bounds in Theorem~\ref{thm:invariant_distributon} require a lower bound on the tail of the invariant distribution $\pi$. In contrast to the upper bounds, characterising infinite moments of $\pi$
alone does not yield a lower bound on the tail of $\pi$. In order to circumvent this problem, we give a sufficient condition for $\int_\cD H \ud\pi=\infty$ for any  non-decreasing (not necessarily polynomial) function  $H$. This sufficient condition  relies on 
the lower bounds
on the tails of the return times to compact sets in Lemma~\ref{lem:return_times} below.
Once established, the criterion 
yields lower bounds on the tail of $\pi$ via elementary methods (see proof of Lemma~\ref{lem:lower_bound_invariant} below). The lower bound on the rate of convergence in total variation in Theorem~\ref{thm:invariant_distributon} follow from a supermartingale property of a transformed reflected diffusion in~\eqref{eq::SDE}, the lower bounds on the tail of the stationary distribution $\pi$
and a general result in~\cite{hairer2010convergence} (see Lemma~\ref{lem:lower_bound_convergence_rate} in Appendix~\ref{subsec:lower_bounds_convergence_to_stationarity} below) that converts the tails of the stationary distribution to a lower bound on the convergence rate in total variation. 

Finally we note that, in Theorem~\ref{thm:invariant_distributon}, the critical case $\beta=-\beta_c$ is omitted for brevity. Its analysis would require additional assumptions and a new Lyapunov function, analogous to the ones used in the proof of the critical case of Theorem~\ref{thm:rec_tran}(c). We expect that, under appropriate assumptions, such analysis would yield ergodicity of the normally reflected Brownian motion with logarithmic decay in any dimension.

\subsubsection{Return times}
If the reflected process is transient, return times to compact sets are not finite almost surely (we will thus discuss Theorem~\ref{thm:return_times}(a) in Section~\ref{subsubsection:rec_trans} below). In the recurrent case,
the upper bound on the tail of the return time in Theorem~\ref{thm:return_times}(b) is established via a supermartingale condition of a transformation of the reflected process, which (via~\cite{menshikov1996passage}) implies the finiteness of the moments of return times. The upper bounds in the theorem then follow by Markov's inequality. 

\begin{figure}[ht]
    \centering
    \includegraphics[width=80mm]{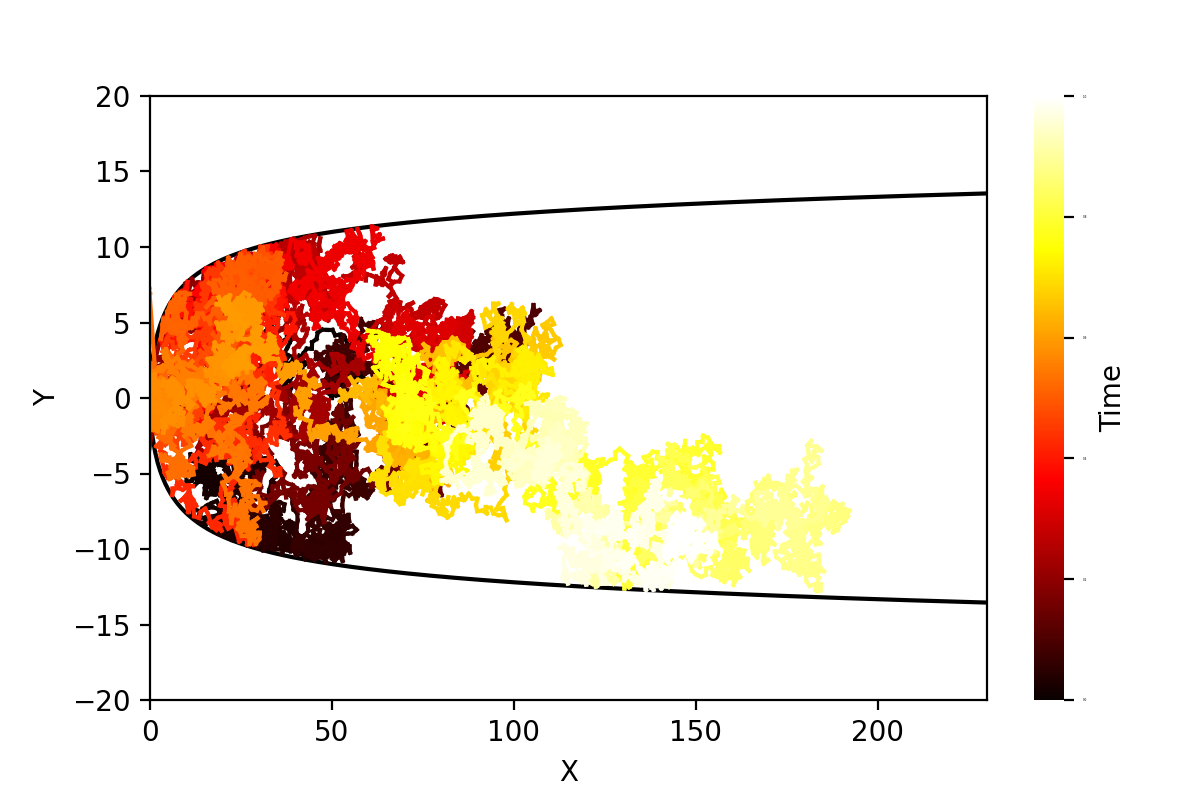}
    \caption{A null-recurrent case ($\beta=0.1<1=\beta_c$): simulation of the normally reflected Brownian motion in an unbounded expanding domain. By Theorem~\ref{thm:return_times}(b), the  tail of the return time decays with rate $t^{-0.45}$ as time $t\to\infty$, making the reflected process ``less'' recurrent than the modulus of the scalar Brownian motion.}
    \label{fig:null_rec}
\end{figure}

The lower bound in  Theorem~\ref{thm:return_times}(b) is established via certain submartingale conditions and 
lower bounds on the tails of return times to bounded sets in Lemma~\ref{lem:return_times}.
As in the proof of Theorem~\ref{thm:invariant_distributon} discussed above,
 Lemma~\ref{lem:return_times} is again critical here (as the infinite moment criterion of~\cite{menshikov1996passage} cannot be applied to obtain the lower bounds on the tail). Note that 
 Lemma~\ref{lem:return_times} is applicable in the entire recurrent regime.
 This is key in the 
 proof of Theorem~\ref{thm:return_times}(b), in contrast to the proof of Theorem~\ref{thm:invariant_distributon}, where Lemma~\ref{lem:return_times} is applied in the positive-recurrent case only. 

 As our modelling assumptions \aref{ass:covariance2}, \aref{ass:domain2}, \aref{ass:vector2} are asymptotic (i.e., they only specify the limiting behaviour both of the coefficients of SDE~\eqref{eq::SDE}, the domain $\cD$ and the reflection vector field as $x \to \infty$, see Section~\ref{subsec::Modelling_assumptions} below for details), it is natural for Theorem~\ref{thm:return_times}(b) to assert only the tail behaviour of return times, without information about the constants. However, we could provide some explicit constants, if we concentrated on the return times of the sufficiently large levels $r$
only (see Propositions~\ref{prop:return_time_upper_bound} and \ref{prop:return_time_lower_bound}).
Note also that, by Assumption~\aref{ass:domain2}, the function $b$ is sublinear as $x \to \infty$, implying that there exist positive constants $c < C$, such that $c x \leq \|z\|_{d+1} \leq Cx$ for all $z=(x,y)\in\cD\cap (1,\infty)\times\R^d$.
Theorem~\ref{thm:return_times} may thus be restated for return times of $Z=(X,Y)$, given in terms of $\|Z\|_{d+1}$, instead of the scalar process $X$.

Recall that for the modulus of scalar Brownian motion, the critical exponent for return times equals $1/2$, with moments of order less (resp.\ greater) than $1/2$ being finite (resp.\ infinite).
By Theorem~\ref{thm:return_times}(b), for domains with asymptotically increasing 
(resp.\ decreasing) boundary function $b$, i.e.~$\beta > 0$ (resp.\ $\beta<0$), the critical exponent $m_c$ for the return times of the reflected process satisfies $0<m_c<1/2$ (resp.\ $1/2<m_c$). Differently put, the reflected process in an asymptotically expanding (resp.\ narrowing) domain is, due to the asymptotically positive (resp.\ negative) projection of the reflection vector field in the $x$-direction,
``less'' (resp.\ ``more'') recurrent than the modulus of the scalar Brownian motion.

In the case $\beta = 0$, the boundary function $b$ may (but need not, see Lemma~\ref{lem:oscilating_domain} below) be asymptotically constant, see the discussion in Remark~\ref{rem:Ass2} below. 
In this case, the projection of the reflection vector field converges to zero sufficiently fast that the critical moment exponent of the return time equals $m_c= 1/2$,
regardless of other model parameters.

\subsubsection{The recurrence/transience dichotomy}
\label{subsubsection:rec_trans}
Theorem~\ref{thm:rec_tran}  characterises the recurrence/transience dichotomy for the reflected process defined by the SDE in~\eqref{eq::SDE} above. 
Its proof relies on a generalisation to continuous time, given in  Section~\ref{subsec:Transience_Recurrence_criteria} below, of the classical Foster--Lyapunov criteria for transience and recurrence. We stress that our approach is purely probabilistic: our criteria are phrased in terms of continuous-time supermartingale conditions. In particular,  neither the Markov property nor any explicit knowledge of the infinitesimal characteristics of the process are required (cf.~discussion about the approach in~\cite{pinsky2009transcience} in Section~\ref{subsec:related_literature} below). Both of these features are crucial in the proof of Theorem~\ref{thm:rec_tran}.

\begin{figure}[ht]
    \centering
    \includegraphics[width=80mm]{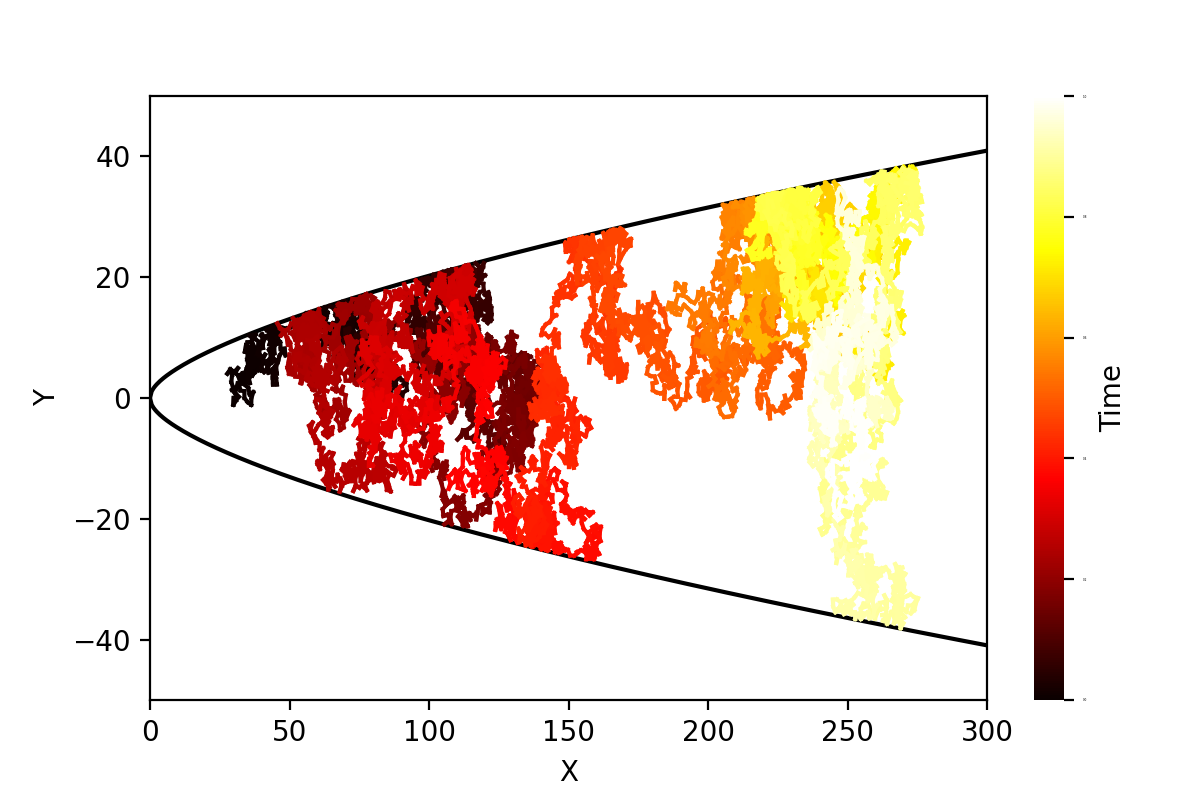}
    \caption{A transient case ($\beta=0.65>0.5=\beta_c$): the trajectory is a martingale in the interior of the domain, clearly being pushed away from the origin at boundary. The reflection is normal and $\sigma_1^2=0.5$, $\sigma_2^2=1$. By
    Theorem~\ref{thm:rec_tran}, normally reflected Brownian motion ($\sigma_1^2=\sigma_2^2=1$) in $\cD\subset \R^2$ is not transient.}
    \label{fig:transient}
\end{figure}

The critical asymptotic growth rate 
$\beta_c$ is always positive (see Assumptions~\aref{ass:covariance2} and~\aref{ass:vector2}). 
By Theorem~\ref{thm:rec_tran}, if $\beta_c \geq 1$, then $Z$ is recurrent 
for all boundary functions $b$ satisfying~\aref{ass:domain2}. 
This is for example the case for two-dimensional Brownian motion 
($\sigma_1^2=\sigma_2^2=1$ and $d=1$)
with normal reflection ($c_0=s_0$) in $\cD\subset\R^2$, as 
in this case we have 
$\beta_c = 1$.
Note that, in any dimension $d\in\N$,
the critical growth rate $\beta_c$ may be greater than one, implying recurrence for all boundary functions satisfying~\aref{ass:domain2}.

Theorem~\ref{thm:return_times}(a) strengthens transience 
of $Z$, stated in Theorem~\ref{thm:rec_tran}(b). Theorem~\ref{thm:return_times}(a) shows
that for any $r>0$ and  $z \in \cD\cap(r,\infty)\times \R^d$,
the process $Z$ does not visit the set 
$\cD\cap[0,r]\times \R^d$
with positive probability (under $\P_z$), even though $x-r>0$ can be arbitrarily small (where $z=(x,y)$). The proof of Theorem~\ref{thm:rec_tran}(b)
establishes only that, with positive probability, the process $Z$ does not return 
to $\cD\cap[0,r]\times \R^d$
 after reaching the set
 $\cD\cap(r_1,\infty)\times \R^d$
for a sufficiently large $r_1\in(r,\infty)$.
As our assumptions are asymptotic, this extension requires proving, using basic analytical techniques, that,  with positive probability, the reflected process reaches an arbitrarily high level before visiting a neighbourhood of the origin.

\subsection{A heuristic}
\label{subsec:heuristic}

Recall that $X$ denotes the $x$-component of the reflected process $Z$ in~\eqref{eq::SDE}.
An informative heuristic argument in \cite[pp.~679--680]{pinsky2009transcience}, based on the renewal theorem, 
estimates that the average local time 
accumulates as 
\begin{equation}
\label{eq:Pinsky_heuristic}
\ud L_t\approx 
\sigma_2^2/(2c_0b(x))\ud t,\qquad\text{when  $X_t$ is at level $x$.} 
\end{equation}
By SDE~\eqref{eq::SDE} and the definition of $\beta$ in~\eqref{eq::beta}, the total instantaneous drift of $X$
(when $X_t=x$)
is thus approximately equal to
$s_0 \sigma_2^2 b'(x)/(2c_0 b(x)) \approx s_0 \sigma_2^2 \beta/(2c_0 x)$ for large $x\in\RP$. 
Consequently, the large-scale behaviour of the horizontal coordinate $X$ of $Z$ is resembles that of the solution of the SDE
$ \ud \tilde X_t = s_0 \sigma_2^2 \beta/(2c_0 \tilde X_t)\ud t+\sigma_1\ud\tilde{W}_t$
for large times $t$ and values $\tilde X_t$,
where  $\beta\in(-\infty,1)$ and $\tilde{W}$ is a one-dimensional Brownian motion. After time-changing the SDE for $\tilde X$ 
by $t\mapsto t\sigma_1^2$, we obtain  a Bessel-type process 
whose drift coefficient is determined by
the ratio $\beta/\beta_c$. For $\beta > 0$, we have a Bessel process of dimension 
$2\beta/\beta_c$ and
the transition between recurrence and transience in the heuristic matches that of the result in Theorem~\ref{thm:rec_tran}. In the case $\beta<0$,  the standard literature on Langevin diffusions~\cite{fort2005subgeometric} implies that the invariant distribution exists if $\beta< -\beta_c$, the tail of invariant distribution decays at the rate $x^{\beta/\beta_c+1}$ (as $x\to\infty$) and the convergence to the invariant distribution is polynomial with the critical exponent $(1+\beta/\beta_c)/2$. Again the heuristic coincides with our results in~Theorem~\ref{thm:invariant_distributon}.

The definition of the asymptotically normal reflection requires the projection of the vector field $\phi$ in the $x$-direction to decay as $s_0 b'(x)\to0$, when $x\to\infty$. 
The heuristic in~\eqref{eq:Pinsky_heuristic} further motivates this definition: allowing a rate of decay 
of the projection of $\phi$ in the $x$-direction
different from that of~$b'(x)$ 
would miss phase transitions and other criticality phenomena.  
Moreover, heuristic~\eqref{eq:Pinsky_heuristic} and the definition of $\beta$ in~\eqref{eq::beta} suggest that, in the super-linear case  
 $\beta>1$, the drift of $X$ produced by the reflection at the boundary is asymptotically decays as $x^{-\beta}$ for large $x \in \RP$.  Such a drift is too weak to influence the long-term behaviour of the reflected process, because the process does not interact with the boundary sufficiently often. 

Finally we note that the heuristic in~\eqref{eq:Pinsky_heuristic}
illustrates the difference between our domain $\cD$, satisfying~\aref{ass:domain2},
and the smooth domains studied in~\cite{stroock1971diffusion}. 
Recall from~\cite[Rem 2.7(f)]{menshikov2022reflecting} that  the domain $\cD$ with asymptotically narrowing boundary does not satisfy the assumptions of~\cite{stroock1971diffusion}. This difference is more than just a technical caveat: one of the crucial features of the domains considered in~\cite{stroock1971diffusion} is the availability of bounds, uniform in the starting point, on the increments of expected local time. In contrast, for a narrowing domain $\cD$
 (i.e., $b(x)\to 0$ when $x\to\infty$), by~\eqref{eq:Pinsky_heuristic},
an increment of local time over a short time period cannot be bounded uniformly in the starting point because its growth is proportional to $1/b(x)$.
We circumvent this issue via localisation, resulting in more involved proofs of fundamental properties such as the Feller continuity (see Theorem~\ref{thm:Feller_continuity_of_Z}).
Moreover, in the case of asymptotically oblique reflection, 
a narrowing domain $\cD$ may result in an explosive reflected 
Brownian motion and explosive local time~\cite[Thm~2.2(i)]{menshikov2022reflecting}, suggesting further that local time in $\cD$
can grow arbitrarily fast. The long-term growth of local time is discussed in Section~\ref{subsec:Concluding_rem} below.

\subsection{Related literature}
\label{subsec:related_literature}

The theory of reflecting diffusions began with~\cite{stroock1971diffusion}.
A large literature is dedicated to reflecting diffusions in bounded domains (see e.g.~\cite{lions1984stochastic,Burdzy17}). In the unbounded case, the classical domains are cones  (see e.g. \cite{franceschi2019integral,menshikov1996passage,williams1985recurrence,varadhan1985brownian,hobson1993recurrence}) and halfplanes~\cite{Burdzy1992,Burdzy1993}. Reflecting diffusions have been extensively studied due to their vast applications, including queueing theory~\cite{Harrison13,Peterson91,Ramanan03} and mathematical finance~\cite{Ichiba11,Banner05}.

Our domain $\cD$ in the case of (strictly)
normal reflection has been  studied in~\cite{pinsky2009transcience}.
Theorem~\ref{thm:rec_tran}  can be viewed as a generalisation
of a result in~\cite{pinsky2009transcience}, which 
considers the case where $\Sigma(z)$
equals the identity matrix and the reflection vector field is given by the unit normal on the entire boundary $\partial \cD$. In the context of our model, this setting is within the subclass $\sigma_1^2=1$, $\sigma_2^2=d$ 
and $c_0=s_0=1$ (recall that these constants specify only the limiting values of $\Sigma(z)$ and the reflection field $\phi$
on $\partial\cD$ as $x\to\infty$).

By~\cite{pinsky2009transcience}, the  $(d+1)$-dimensional Brownian motion with normal reflection has phase transitions between transience and recurrence at $\beta_c = 1/d$, cf.~\eqref{eq:beta_c}.
In particular, when dimension $d$ is large, recurrence occurs only when the boundary function grows at 
rate slower than $1/d$, i.e., very slowly.
In addition, Theorem~\ref{thm:rec_tran}(a) reveals that a $(d+1)$-dimensional reflected Brownian motion may be recurrent for a boundary function $b$ with growth close to linear if 
the projection of the reflection vector field $\phi$ in the $x$-direction decays as 
 $s_0 b'(x)\to0$, when $x\to\infty$, for a sufficiently small constant $s_0>0$ (note that the projection in the $x$-direction of the normal reflection 
decays precisely at the rate $b'(x)\to0$).
Moreover, Theorem~\ref{thm:rec_tran}(a) and the formula in~\eqref{eq:beta_c} imply that $Z$ 
with normal reflection (i.e.~$s_0=c_0$)
may be recurrent for the growth rate $\beta$ (of the boundary function $b$) arbitrarily close to one, if the instantaneous variance of $Z$ in the $x$-direction is greater than the sum of the variances in all other directions. 

 A general result in~\cite{pinsky2009transcience}, for domains satisfying existence and uniqueness conditions of~\cite{stroock1971diffusion},  states
that a $(d+1)$-dimensional Brownian motion with normal reflection is positive recurrent if and only if $\cD$ has finite volume. As in the case of Theorem~\ref{thm:rec_tran} above,
 Theorem~\ref{thm:return_times} and formula~\eqref{eq::m0}  contrast with the result of~\cite{pinsky2009transcience}. 
 It is easy to see that the right choice of parameters results in 
 positive (resp.\ null) recurrence in the domains with infinite (resp.\ finite) volume, e.g.~$\beta<-1/(s_0d)$ (resp.\ 
 $-1/(s_0d)<\beta<1/(s_0d)$) in the example in the previous paragraph.

As discussed in Section~\ref{subsec:heuristic} above, some of the technical difficulties in our paper arise due to the fact that our domain 
$\cD$ with the function $b$ decreasing to zero (e.g.~$\beta < 0$)
violates the smoothness condition of domains in~\cite{stroock1971diffusion}.
A general framework, via submartingale problems, for a large family of domains that fail to satisfy the assumptions~\cite{stroock1971diffusion} was developed in~\cite{kang2017submartingale}. Similar ideas were employed in~\cite{kang2014characterization} to characterise stationary distributions for a large family of reflecting diffusions with piecewise smooth boundaries.

The question of convergence in general domains, beyond assumptions in~\cite{stroock1971diffusion}, was studied in~\cite{Burdzy2006}. The main result of~\cite{Burdzy2006} gives the criteria for the uniform ergodicity of normally reflected planar Brownian motion. In particular,~\cite[Prop.~2.11]{Burdzy2006} yields that the process in the domain $\cD$ (with a boundary function $b$ of polynomial decay defined in~\eqref{eq::domain}) is not uniformly ergodic, a fact also implied by the lower bounds in Theorem~\ref{thm:invariant_distributon} above. Moreover, the proofs in~\cite{Burdzy2006} use analytical tools such as potential theory and conformal mappings (the latter  available in $\R^2$ only). In a domain $\cD\subset \R^2$ in~\eqref{eq::domain}, uniform ergodicity arises if the decay of the boundary function $b$ is superexponential~\cite[Prop.~2.11]{Burdzy2006}. It is feasible that the probabilistic methods developed in our paper could generalise the results of~\cite{Burdzy2006} to domains~$\cD$ of arbitrary dimensions.

Finally we note that the invariant distribution and the rate of convergence of the reflecting Brownian motion with drift have been studied in~\cite{sarantsev2017reflected,Sarantsev21}, motivated by applications in particle systems (see Section~\ref{subsubsec:positive_recurrence} above  for a brief discussion of the relation between our result and those in~\cite{sarantsev2017reflected,Sarantsev21}).

\section{Modelling assumptions and preliminary results}
\label{subsec::Modelling_assumptions}

\subsection{Modelling assumptions}
For any  $d \in \N := \{1,2,\dots\}$, let $\|\cdot\|_d$ denote the Euclidean norm on $\R^d$. 
Define the closed domain $\cD \subset \RP\times\R^d$, where  $\RP \coloneqq[0,\infty)$, by
\begin{equation}
\label{eq::domain}
\cD := \{ z =(x,y)\in \RP \times \R^d: \| y \|_d \leq b(x)\},\quad\text{where $b: \RP \rightarrow \RP$}
\end{equation}
is
strictly positive and differentiable on $(0,\infty)$ with $b(0)=0$.
Let 
$\partial\cD=\{z\in\cD:\|y\|_d =b(x)\}$ be the boundary of $\cD$ in $\R^{d+1}$ and denote the unit sphere in $\R^d$ by $\mathbb{S}^{d-1} := \{ u \in \R^d: \|u \|_d = 1\}$. Let $e_x := (1,0) \in \mathbb{S}^d\subset \RP\times\R^d$ denote the unit vector in the $x$-direction and, for any $u \in \mathbb{S}^{d-1}$, define $e_u := (0,u) \in \mathbb{S}^d$. 
We can express $z=(x,y)\in\cD$ 
as $z=xe_x+\|y\|_d e_{\hat y}$, 
where $\hat y := y / \| y \|_d\in\mathbb{S}^{d-1}$ for $\| y \|_d > 0$
(if 
$\| y \|_d =0$, we may choose $\hat y$ to be any vector in $\mathbb{S}^{d-1}$).

Recall that the functions $\Sigma: \cD \rightarrow \cM_{d+1}^+$
and $\phi:\partial\cD\to\R^{d+1}$
are the instantaneous variance and the reflection vector field at the boundary of the domain $\cD$ in SDE~\eqref{eq::SDE} above. Throughout we denote 
by 
$\cM_{d+1}^+$ 
the group of positive  definite square matrices  of dimension $(d+1)$.



\begin{description}[align=left,leftmargin=2.2em,itemindent=0pt,labelsep=4pt,labelwidth=1.8em]
\item[\namedlabel{ass:domain1}{{(D1)}}]
Let $b$ be a continuous function on $\RP$, with $b(0) = 0$ and $b(x) > 0$ for $x > 0$. Suppose that $b$ is twice continuously differentiable  on $(0,\infty),$ such that (i) $ \liminf_{x \rightarrow 0} (b(x)b'(x)) > 0$, and (ii) $\lim_{x \rightarrow 0}(b''(x)/b'(x)^3)$ exists in $(-\infty,0]$.
\end{description}

 \begin{description}[align=left,leftmargin=2.2em,itemindent=0pt,labelsep=4pt,labelwidth=1.8em]
\item[\namedlabel{ass:covariance1}{{(C1)}}] 
Let $\Sigma: \cD \rightarrow \cM_{d+1}^+$ be bounded, globally Lipschitz, and uniformly elliptic, i.e., there exists $\delta_\Sigma > 0$ such that, for every $u  \in \mathbb{S}^d$ and all $z \in \cD$, we have $\langle\Sigma(z)u,u\rangle \geq \delta_\Sigma$.
 \end{description}
 
 
\begin{description}[align=left,leftmargin=2.2em,itemindent=0pt,labelsep=4pt,labelwidth=1.8em]
\item[\namedlabel{ass:vector1}{{(V1)}}] 
Suppose $\phi: \partial \cD \rightarrow \R^{d+1}$ is a $C^2$-vector field, satisfying $\sup_{z \in \partial \cD} \| \phi(z)\|_{d+1} < \infty$ and 
 \begin{equation*}
\inf_{x > 0}\inf_{\hat y \in \mathbb{S}^{d-1}} \langle \phi(x,b(x) \hat y),n(x,b(x) \hat y) \rangle > 0,
 \end{equation*}
 where $n(z)=n(x,b(x) \hat y)$ 
 is the inwards-pointing unit normal vector at $z=(x,y)\in\partial \cD$.
 \end{description}
  
  \begin{rem}
  Assumption~\aref{ass:vector1} requires the vector field $\phi$ to be smooth, of bounded magnitude and have a uniformly positive component in the normal direction (throughout the paper, $\langle\cdot,\cdot\rangle$ denotes the standard inner product on $\R^{d+1}$).
  Assumption~\aref{ass:domain1} guarantees that the boundary $\partial \cD$ is sufficiently regular everywhere, including the origin (see~\cite[Lem.~4.3]{menshikov2022reflecting} for details). 
Assumption~\aref{ass:covariance1} ensures that the process $Z$ is not locally constrained in any direction. Throughout we use the  
matrix norm  $\|\Sigma(z)\|_{\text{op}} := \sup_{v\in \mathbb{S}^d}\|\Sigma(z) v\|_{d+1}$.
(In particular, since $\| \Sigma^{1/2}(z)\|_{\text{op}}^2 = \sup_{v \in \mathbb{S}^d}\langle\Sigma(z)v,v\rangle=\|\Sigma(z)\|_{\text{op}}$ equals the largest eigenvalue of $\Sigma$, the boundedness of $\Sigma$ implies the boundedness of $\Sigma^{1/2}$.)
  \end{rem}

Under assumptions \aref{ass:vector1}, \aref{ass:domain1} and \aref{ass:covariance1}, the process $Z$ may explode with positive probability. In fact, by~\cite[Thm~2.2]{menshikov2022reflecting}, we may have $\P(\taue<\infty)=1$. 
The following additional assumptions preclude explosion (i.e., as we shall see, imply $\P(\taue=\infty)=1$) and allow us to characterise transience and recurrence of the process $Z$;
see Theorem~\ref{thm:rec_tran} above for a detailed statement.

\begin{description}[align=left,leftmargin=2.2em,itemindent=0pt,labelsep=4pt,labelwidth=1.8em]
\item[\namedlabel{ass:domain2}{{(D2)}}] 
Suppose that~\aref{ass:domain1} holds, $\lim_{x \rightarrow \infty} b'(x) = \lim_{x \rightarrow \infty} b''(x) = 0$, the limit in~\eqref{eq::beta} exists and satisfies $\lim_{x\to\infty} xb'(x)/b(x)=\beta \in (-\infty,1)$.
 \item[\namedlabel{ass:covariance2}{{(C2)}}] 
 Suppose that \aref{ass:covariance1} holds and that there exist $\sigma_1^2,\sigma_2^2 \in (0, \infty)$ such that
\begin{align*}
 \langle \Sigma(z)e_x,e_x\rangle = \sigma_1^2(1+o_{\cD}(1)) \quad \text{and} \quad
  \Tr\Sigma(z) -\sigma_1^2 = \sigma_2^2(1+o_{\cD}(1))\quad\text{as $x\to\infty$.}
\end{align*}
\item[\namedlabel{ass:vector2}{{(V2)}}] 
Suppose that \aref{ass:vector1} holds and that there exist $s_0,c_0 \in (0,\infty)$ such that 
 \begin{align*}
     \langle \phi(z),e_x \rangle = s_0b'(x)(1+o_{\partial\cD}(1))\quad \text{and} \quad
     \langle \phi(z),-e_{\hat y} \rangle =c_0(1+o_{\partial\cD}(1))\quad \text{as $x\to\infty$.}
 \end{align*}
\end{description}

Here and throughout,
for any $g:\RP\to(0,\infty)$
and $\cH\in\{\cD,\partial\cD\}$, 
$o_\cH(g(x))$ 
as $x\to\infty$
denotes a function $f:\cH\to\R$ 
satisfying
$\lim_{x\to\infty}\sup_{y:(x,y) \in \cH}|f(x,y)|/g(x) = 0$.

\begin{rem}
\label{rem:Ass2}
Since Assumption \aref{ass:domain2} requires $\lim_{x\to\infty} xb'(x)/b(x)=\beta < 1$, 
for any $\beta' \in (\beta,1)$ we have
$b(x) < x^{\beta'}$ for all sufficiently large $x \in \RP$, implying that $b$ has sublinear growth as $x \to \infty$. Note however that, as $x \to \infty$, Assumption \aref{ass:domain2} allows $b$ to have any of the following limits: infinity (requiring $\beta \geq 0$), a positive finite limit (requiring $\beta = 0$) or a limit equal to $0$ (requiring $\beta \leq 0$). Interestingly, in the case $\beta=0$ the boundary function $b$  may exhibit a variety of different behaviours at infinity. For instance, $b$ may grow to infinity (e.g.~$b(x) \approx \log x$), converge to $0$ (e.g.~$b(x) \approx 1/\log x$), or be asymptotically constant. Furthermore, it is also possible for the function $b$ to oscillate, 
i.e.~ $\limsup_{x \to \infty}b(x) = \infty$ and $\liminf_{x \to \infty} b(x) = 0$. For example, any function $b$ satisfying $$b(x) \approx (1 + (\log \log x)^{-2} + \sin\log \log x)\log \log x\quad\text{as $x\to\infty$,}$$
exhibits such behaviour (see Appendix~\ref{app:oscillating_b} for proof that such a function satisfies~\aref{ass:domain2}).

Assumption~\aref{ass:covariance2} ensures that the instantaneous covariance of the process $Z$ stabilises at a positive level in the $x$-direction as $x$ tends to infinity, without taking up all of the volatility of the process ($\Tr\Sigma$ denotes the sum of the diagonal elements of $\Sigma$). 
Since for the unit normal $n(x,y)$ at $(x,y)\in\partial \cD$, the inner products $\langle n(x,y),e_x\rangle$
and 
$\langle n(x,y),-e_{\hat y}\rangle$
are asymptotically equivalent to $b'(x)$ and $1$, respectively,
as $x\to\infty$, Assumption~\aref{ass:vector2} requires that the vector field $\phi$ has the same asymptotic behaviour as the unit normal $n(x,y)$ up to positive constants. As we shall see, it is precisely this property that precludes explosions of the process $Z$ and gives rise to the phenomena studied in this paper. 
\end{rem}

Theorem~\ref{thm:rec_tran} above gives rise to the critical exponent $\beta_c$,
defined in~\eqref{eq:beta_c} above,
at which the global behaviour of $Z$ transitions between recurrence and transience.  
Quantification of the limits in  Assumptions~\aref{ass:vector2}, \aref{ass:domain2} and \aref{ass:covariance2} are required to understand the behaviour of $Z$ if the boundary of the domain grows at the critical rate   
$\beta=\beta_c$ (see~\eqref{eq::beta} above for the link between the growth rate of the boundary and exponent $\beta$).

\begin{description}[align=left,leftmargin=2.2em,itemindent=0pt,labelsep=4pt,labelwidth=1.8em]
\item[\namedlabel{ass:domain2plus}{{(D2+)}}]
Assume \aref{ass:domain1} and that there exists  $\beta\in (0, 1)$  such that
\begin{equation*}
x b'(x) = \beta b(x)(1+o({b(x)^2x^{-2}})  ) \quad \text{as $x \rightarrow \infty$.} 
\end{equation*}
\item[\namedlabel{ass:covariance2plus}{{(C2+)}}]
Assume \aref{ass:covariance1} and that there exist $\sigma_1^2,\sigma_2^2 \in (0, \infty)$ and  $\epsilon > 0$ such that, as $x\to\infty$,
\begin{align*}
 \langle\Sigma(z)e_x,e_x\rangle = \sigma_1^2(1+o_{\cD}(x^{-\eps})) \quad\text{and}\quad
  \Tr\Sigma(z) - \sigma_1^2 = \sigma_2^2(1+o_{\cD}(x^{-\epsilon})).
\end{align*}
\item[\namedlabel{ass:vector2plus}{{(V2+)}}]
Assume \aref{ass:vector1} and that there exist $s_0,c_0 \in (0,\infty)$ such that, as $x\to\infty$, 
 \begin{align*}
    \label{eq::refx}
     \langle \phi(z),e_x \rangle = s_0b'(x)(1+o_{\partial\cD}(b(x)^2x^{-2})) \quad\text{and}\quad
     \langle \phi(x,y),-e_{\hat y} \rangle =c_0(1+o_{\partial\cD}(b(x)^2x^{-2})).
 \end{align*}
\end{description}


\subsection{It\^o's formula for the reflected process
and Lyapunov functions}
\label{subsec:Ito_Lyapunov}
We start by noting that under Assumptions~\aref{ass:covariance1}, \aref{ass:domain1} and~\aref{ass:vector1},  by~\cite[Thm.~A.1]{menshikov2022reflecting}, SDE~\eqref{eq::SDE} has a unique strong solution $(Z,L,\taue)$ for any starting point in a generalised parabolic domain $\cD$ defined in~\eqref{eq::domain}.
In Section~\ref{subsec:non-explosion_trans_rec_proofs} we prove Theorem~\ref{thm:non_explosion_moments}
(which shows that $\taue=\infty$, a.s.) and Theorem~\ref{thm:rec_tran}. 
Sections~\ref{subsection:return_times_drift_conditions} and~\ref{subsection:main_proofs}
prove Theorems~\ref{thm:return_times} and~\ref{thm:invariant_distributon}, respectively.

A key step in each of these proofs consists of the application of It\^o's formula to an appropriate Lyapunov function. More precisely,
let $f:\cD \to \R$ denote a $C^2$-function on the open domain $\cD\setminus\partial\cD$, such that its gradient $\nabla f$ (i.e., the vector of the partial derivatives of $f$) has a continuous extension to the closed domain $\cD$ (e.g. if 
 $f$ has a $C^2$-extension to an open set in $\R^{d+1}$ containing $\cD$, which is typically the case in applications below).
By  It\^{o}’s formula~\cite[Thm.~3.3]{revuz2013continuous} we obtain
\begin{equation}
\label{eq::Ito}
    f(Z_t) = f(Z_0) + M_t 
    + \frac{1}{2}
    \int_0^t \Delta_\Sigma f(Z_s)\ud s + \int_0^t \langle \nabla f(Z_s),\phi(Z_s)\rangle \ud L_s,\quad\text{$0\leq t<\taue$,}
\end{equation}
where $\Delta_\Sigma f :=  \Tr\left(\Sigma^{1/2}H(f)\Sigma^{1/2}\right)=\Tr\left(\Sigma H(f)\right)$
is the $\Sigma$-Laplacian of $f$ (recall that the Hessian matrix $H(f):\cD\to\cM_{d+1}^+$ consists of the second partial derivatives of $f$). The (scalar) process $M$ and its quadratic variation $[M]$ on the stochastic interval $[0,\taue)$ are given by 
\begin{equation}
\label{eq::Ito_QV}
M_t := \int_0^t \langle \nabla f(Z_s),\Sigma^{1/2}(Z_s)\ud W_s\rangle\>\>\&\>\>
[M]_t=\int_0^t \|\Sigma^{1/2}(Z_s)\nabla f(Z_s)\|_{d+1}^2 \ud s,\>\>\text{$0\leq t<\taue$.}
\end{equation}

The strategy of the proofs that follow consists of  applying the continuous semimartingale results of Section~\ref{sec:Trans_recurr_dichotomy} below
to the process $\kappa=f(Z)$ 
for suitable $C^2$-Lyapunov functions $f$.
In particular, we will 
use the representation of the quadratic variation $[M]$
in~\eqref{eq::Ito_QV} to conclude that the appropriately stopped process $M$ is a true martingale. 


Pick $w \in \R\setminus\{0\}$.
Define
$k_w:=1+ \sup_{x \in \RP}\left(|w(1-w)|^{1/2}b(x)-x\right)$.
Note that, under~\aref{ass:domain2}, the function $b$ has sublinear growth at infinity (see Remark~\ref{rem:Ass2} above), implying $1 \leq k_w < \infty$. 
For any $z=(x,y)\in\cD$ and parameter $\gamma \in \R$ define
\begin{equation}
\label{eq:Lyapunov_polynomial}
f_{w,1}(z):=(x+k_w)\left(1  + w(1-w)\frac{\|y \|_d^2}{2(x+k_w)^2}\right)^{1/w}\quad\&\quad f_{w,\gamma}(z):=f_{w,1}(z)^\gamma.
\end{equation}
Since, for $(x,y) \in \cD$, we have $\|y\|_d \leq b(x)$, the definition of $k_w$ implies  
\begin{equation}
\label{eq:Lyapunov_fun_ingrediant_inequlaity}
1/2\leq 1  + w(1-w)\|y \|_d^2/(2(x+k_w)^2)\leq 3/2\quad\text{ for all 
$w\in\R\setminus\{0\}$.}
\end{equation}
Thus
$(x+k_w)2^{-1/\lvert w\rvert }\leq f_{w,1}(x,y)\leq (x+k_w)2^{1/\lvert w\rvert}$ for all parameter values $w\in\R\setminus\{0\}$. Moreover, 
for any $(x,y) \in \cD$ we have 
\begin{equation}
\label{eq:bound_Lyapunov_f}
    (x+k_w)^\gamma2^{-|\gamma|/\lvert w\rvert }\leq f_{w,\gamma}(x,y)\leq (x+k_w)^\gamma2^{|\gamma|/\lvert w\rvert}
    \qquad\text{for all  $\gamma\in\R$. }
\end{equation}

Note that, for $\gamma>0$
(resp.~$\gamma <0$), the function 
$f_{w,\gamma}$ tends to infinity (resp.~zero) as $x\to\infty$, 
making it suitable for the application of the results in Section~\ref{sec:Trans_recurr_dichotomy}.
Moreover, it is clear that (for all choices of parameters $\gamma$ and $w$) 
$f_{w,\gamma}$ is a $C^2$-function on the open domain $\cD\setminus\partial\cD$ and its gradient $\nabla f_{w,\gamma}$ has a continuous extension to the closed domain $\cD$.

The Lyapunov function $f_{w,\gamma}$ is inspired by a generalisation of a polynomial approximation of the $2$-dimensional harmonic function $h_w(z) = r^w\cos(w\theta)$ (given in polar coordinates $z=(r,\theta)$ of the plane), previously used in the analysis of the reflected processes in wedges~\cite{menshikov2021reflecting,varadhan1985brownian,menshikov1996passage}.

The following lemma provides asymptotic properties of the relevant derivatives of $f_{w,\gamma}$.

\begin{lem}
\label{lem3.1}
Let assumptions~\aref{ass:vector2}, \aref{ass:domain2} and \aref{ass:covariance2} hold and fix 
$\gamma\in\R$ and $w\in\R\setminus\{0\}$. Then
\begin{equation}
\label{eq:Sigma_Laplacian_f}
\Delta_\Sigma f_{w,\gamma}(z) = \gamma f_{w,1}(z)^{\gamma-2}(\sigma_1^2(\gamma-1)+\sigma_2^2(1-w)  +o_{\cD}(1))\quad\text{as $x\to\infty$.} \quad
\end{equation}
There exists a constant $C>0$, such that
\begin{equation}
\label{eq:grad_f_gamma_w_bound}
\|\nabla f_{w,\gamma}(z)\|_{d+1}^2 \leq C (x+k_w)^{2(\gamma-1)} \text{ for all } z=(x,y)
\in \cD.
\end{equation}
Moreover, 
if $\gamma (s_0 \beta/c_0 - 1 + w)<0$ (resp.~$>0$), then 
there exists a positive $x_0$ such that for all $z=(x,y) \in \partial \cD \cap [x_0,\infty)\times \R^d$  we have
$
\langle \nabla f_{w,\gamma}(z),\phi(z)\rangle < 0$ (resp.~$> 0$).
\end{lem}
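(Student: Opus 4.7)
The overall plan is to reduce everything to estimates for $f_{w,1}$ via the chain rule, and then exploit the sublinearity $b(x) = o(x)$ as $x \to \infty$ (which follows from Assumption~\aref{ass:domain2}, cf.~Remark~\ref{rem:Ass2}) to expand the explicit formula. Writing $g_w(z) := 1 + w(1-w)\|y\|_d^2/(2(x+k_w)^2)$ so that $f_{w,1}(z) = (x+k_w) g_w(z)^{1/w}$, the key observation is that $0 \leq g_w(z) - 1 = O(b(x)^2/x^2)$ uniformly over $y$ on the slice $\{y : (x,y)\in\cD\}$, so $g_w = 1 + o_\cD(1)$ as $x \to \infty$, and likewise $f_{w,1}(z)/(x+k_w) = 1 + o_\cD(1)$. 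Since $f_{w,\gamma} = f_{w,1}^\gamma$, the chain rule gives
\begin{equation*}
\nabla f_{w,\gamma} = \gamma f_{w,1}^{\gamma-1}\nabla f_{w,1},\qquad H(f_{w,\gamma}) = \gamma(\gamma-1) f_{w,1}^{\gamma-2} (\nabla f_{w,1})(\nabla f_{w,1})^{\tra} + \gamma f_{w,1}^{\gamma-1} H(f_{w,1}),
\end{equation*}
so all asymptotics reduce to those of $\nabla f_{w,1}$ and $H(f_{w,1})$.

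A direct differentiation of $f_{w,1} = (x+k_w) g_w^{1/w}$ yields
\begin{equation*}
\partial_x f_{w,1} = g_w^{1/w} - (1-w)\frac{\|y\|_d^2}{(x+k_w)^2} g_w^{1/w - 1} = 1 + o_\cD(1),\qquad \partial_{y_i} f_{w,1} = (1-w)\frac{y_i}{x+k_w} g_w^{1/w-1},
\end{equation*}
so $\|\nabla f_{w,1}\|_{d+1}$ is bounded on $\cD$. Combining with $f_{w,1}^{\gamma-1} \leq C(x+k_w)^{\gamma-1}$ from~\eqref{eq:bound_Lyapunov_f} establishes~\eqref{eq:grad_f_gamma_w_bound}. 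Computing second partials in the same way, one finds $\partial_{xx} f_{w,1} = O(b(x)^2/x^3) = o_\cD((x+k_w)^{-1})$, the mixed partials $\partial_x\partial_{y_i} f_{w,1}$ and the off-diagonal $\partial_{y_i}\partial_{y_j} f_{w,1}$ (for $i\ne j$) are also $o_\cD((x+k_w)^{-1})$, while the diagonal entries satisfy $\partial_{y_i}\partial_{y_i} f_{w,1} = (1-w)/(x+k_w) + o_\cD((x+k_w)^{-1})$ with a coefficient that is \emph{the same} across all $i$.

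The crucial consequence is structural: because $\nabla f_{w,1} = e_x + o_\cD(1)$, and $\Sigma$ is bounded (\aref{ass:covariance1}),
\begin{equation*}
\langle \Sigma(z)\nabla f_{w,1}, \nabla f_{w,1}\rangle = \langle \Sigma(z) e_x, e_x\rangle + o_\cD(1) = \sigma_1^2(1+o_\cD(1))
\end{equation*}
by~\aref{ass:covariance2}. Likewise, since the leading $y$-diagonal Hessian entries coincide, factoring them out and using $\Tr\Sigma - \Sigma_{11} = \sigma_2^2(1+o_\cD(1))$ from~\aref{ass:covariance2} yields
\begin{equation*}
\Tr\bigl(\Sigma H(f_{w,1})\bigr) = \frac{(1-w)}{x+k_w}\sum_{i=2}^{d+1}\Sigma_{ii}(z) + o_\cD\bigl((x+k_w)^{-1}\bigr) = \frac{(1-w)\sigma_2^2}{x+k_w}(1+o_\cD(1)).
\end{equation*}
Substituting back into $\Delta_\Sigma f_{w,\gamma} = \gamma(\gamma-1)f_{w,1}^{\gamma-2}\langle\Sigma\nabla f_{w,1},\nabla f_{w,1}\rangle + \gamma f_{w,1}^{\gamma-1}\Tr(\Sigma H(f_{w,1}))$, and using $f_{w,1}^{\gamma-1}/(x+k_w) = f_{w,1}^{\gamma-2}(1+o_\cD(1))$, gives~\eqref{eq:Sigma_Laplacian_f}.

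For the boundary statement, on $\partial\cD$ we have $\|y\|_d = b(x)$, so Assumption~\aref{ass:vector2} reads $\phi_x(z) = s_0 b'(x)(1+o_{\partial\cD}(1))$ and $\sum_i \phi_{y_i}(z) y_i = \langle \phi(z), -e_{\hat y}\rangle \cdot (-\|y\|_d) = -c_0 b(x)(1+o_{\partial\cD}(1))$. Since $\partial_{y_i} f_{w,1}$ is proportional to $y_i$ with a factor independent of $i$, summing yields
\begin{equation*}
\langle \nabla f_{w,\gamma}(z), \phi(z)\rangle = \gamma f_{w,1}^{\gamma-1}\!\left[s_0 b'(x) - c_0(1-w)\frac{b(x)}{x+k_w} + o_{\partial\cD}\!\left(\frac{b(x)}{x}\right)\right].
\end{equation*}
Using $xb'(x)/b(x) \to \beta$ from~\aref{ass:domain2} and $x/(x+k_w) \to 1$, this rewrites as
\begin{equation*}
\langle \nabla f_{w,\gamma}(z), \phi(z)\rangle = \gamma c_0 f_{w,1}^{\gamma-1}\frac{b(x)}{x}\bigl[s_0\beta/c_0 - 1 + w + o_{\partial\cD}(1)\bigr],
\end{equation*}
so for large $x$ the sign coincides with that of $\gamma(s_0\beta/c_0 - 1 + w)$, proving the claim.

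The main obstacle is purely bookkeeping: correctly propagating $o_\cD$ terms through two layers of chain rule without losing the cancellations that force $\partial_{xx} f_{w,1}$ into a strictly lower order than $(x+k_w)^{-1}$. The structural fact that makes the computation clean is that $f_{w,1}$ has been designed so that, near the $x$-axis, its $y$-Hessian is asymptotically a scalar multiple of the identity (with coefficient $(1-w)/(x+k_w)$), which is exactly what allows the $\sigma_2^2 = \lim(\Tr\Sigma-\sigma_1^2)$ from~\aref{ass:covariance2} to appear cleanly without any dependence on individual diagonal entries of $\Sigma$.
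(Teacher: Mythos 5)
Your proof is correct and follows essentially the same route as the paper: writing $f_{w,1}=(x+k_w)g_w^{1/w}$ (the paper calls $g_w$ by the name $h$), reducing to $\nabla f_{w,1}$ and $H(f_{w,1})$ via the chain rule, observing that the $y$-diagonal of the Hessian carries the dominant $(1-w)/(x+k_w)$ term while all other entries are $o_\cD((x+k_w)^{-1})$, and then using~\aref{ass:covariance2} to extract $\sigma_1^2$ and $\sigma_2^2$ exactly as the paper does. The only slip is the claim ``$0\leq g_w(z)-1$'', which fails when $w(1-w)<0$ (i.e.\ $w<0$ or $w>1$); this is harmless since the argument only uses $|g_w-1|=O(b(x)^2/x^2)=o_\cD(1)$, not the sign.
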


\begin{rem}
Note that the constants in $o_\cD(1)$, as well as $C$ and $x_0$, in Lemma~\ref{lem3.1}
depend on the values of the parameters $\gamma\in\R$ and $w\in\R\setminus\{0\}$.
\end{rem}

\begin{proof}[Proof of Lemma~\ref{lem3.1}]
Denote 
$v(x,y):=y/(x+k_w)\in\R^d$ and define the scalar
$h(z):=1  + w(1-w) \|v(z)\|_d^2/2$.
For any $z=(x,y)\in\cD$ we have $\|y\|_{d} \leq b(x)$ 
and, by~\aref{ass:domain2} (see also Remark~\ref{rem:Ass2} above), $b(x)=o(x^{\beta'})$ as $x\to\infty$ for any $\beta'\in(\beta,1)$.
Hence $\|v(z)\|_d=o_\cD(1)$ and, for any $r\in\R$, $h(z)^r = 1 + o_{\cD}(1)$ as $x\to\infty$.
Since
$h(z)>0$ 
(by~\eqref{eq:Lyapunov_fun_ingrediant_inequlaity})
and 
$f_{w,1}(z)=(x+k_w)h(z)^{1/w}$
for any $z=(x,y)\in\cD$,  we obtain 
\begin{align}
\nonumber
\nabla f_{w,1}(z)&=h(z)^{1/w}
\left(
e_x\left(1-(1-w)\frac{\|v(z)\|_d^2}{h(z)}\right) + e_{\hat y} (1-w)\frac{\|v(z)\|_d}{h(z)}
\right)
\\
&=
(1+ o_\cD(1))\left(e_x+e_{\hat y}(1-w)\|v(z)\|_d\right)
\quad\text{as $x\to\infty$.}
\label{eq:grad_f_w_1}
\end{align}
(See the first paragraph of Section~\ref{subsec::Modelling_assumptions}
for the definition of $\hat y$, $e_x$ and $e_{\hat y}$.)
Moreover, since
$\|v(z)\|_d=o_\cD(1)$ as $x\to\infty$
and
$\nabla f_{w,\gamma}(z)=\gamma f_{w,\gamma-1}(z)\nabla f_{w,1}(z)$, 
by~\eqref{eq:bound_Lyapunov_f}
there exists a positive constant $C$ satisfying
\begin{align*}
    \| \nabla f_{w,\gamma}(z)\|_{d+1}^2 &= \gamma^2f_{w,\gamma-1}(z)^2 \left(1 + o_{\cD}(1)\right) \leq C(x+k_w)^{2(\gamma-1)}\quad\text{for all $z=(x,y)\in\cD$.}
\end{align*}

Recall that $f_{w,\gamma-1}(z)>0$ by~\eqref{eq:bound_Lyapunov_f} for all $z\in\cD$.
Thus, for $z\in\partial \cD$,
the signs of 
$\langle \nabla f_{w,\gamma}(z),\phi(z)\rangle$
and
$\gamma\langle \nabla f_{w,1}(z),\phi(z)\rangle$
are equal.
Assumption~\aref{ass:vector2} implies
$\langle e_x,\phi(z)\rangle=s_0b'(x)(1+o_{\partial\cD}(1))$
and 
$\langle e_{\hat y},\phi(z)\rangle=-c_0(1 +o_{\partial\cD}(1))$ 
as $x\to\infty$. 
Note that for $z=(x,y)\in\partial \cD$
we have $\|v(z)\|_d=b(x)/(x+k_w)$.
By~\eqref{eq:grad_f_w_1} we thus obtain
\begin{align*}
\gamma\langle \nabla f_{w,1}(z),\phi(z)\rangle
&= \gamma (s_0b'(x) -c_0 (1-w)\|v(z)\|_d)\left(1 + o_{\partial\cD}(1)\right)\\
&=\gamma  b(x) (x+k_w)^{-1}( s_0 (x+k_w) b'(x)/b(x) - c_0 (1-w)) (1+ o_{\partial\cD}(1)) \\
 &= \gamma b(x)(x+k_w)^{-1}(  s_0 \beta - c_0 (1-w) + o_{\partial\cD}(1)) (1+ o_{\partial\cD}(1))\\
 &=  c_0 b(x)(x+k_w)^{-1} \left[\gamma \left(s_0 \beta/c_0 - 1 + w \right)+ o_{\partial\cD}(1)\right],
\end{align*}
where the third equality in the display follows from the definition of $\beta$ in~\aref{ass:domain2}.
By~\aref{ass:vector2}, the model parameter $c_0$ is positive. Thus for
$z=(x,y)\in\partial \cD$ with $x$ sufficiently large,
the sign of $\gamma\langle \nabla f_{w,1}(z),\phi(z)\rangle$
equals that of $\gamma \left(s_0 \beta/c_0 - 1 + w \right)$ as claimed in the lemma.

By definition we have $f_{w,\gamma}(z) = f_{w,1}(z)^\gamma$. Hence, for any $z \in \cD$, the Hessian takes the form
$$H(f_{w,\gamma})(z)=\gamma(\gamma-1)f_{w,1}(z)^{\gamma-2}\nabla f_{w,1}(z)(\nabla f_{w,1}(z))^\tra 
+\gamma f_{w,1}(z)^{\gamma-1}H(f_{w,1})(z),
$$
where $(\nabla f_{w,1}(z))^\tra $ denotes the $(d+1)$-dimensional row vector with coordinates given by the first partial derivatives of $f_{w,1}(z)$.
Since $\Delta_\Sigma f_{w,\gamma} = \Tr\left(\Sigma^{1/2}H(f_{w,\gamma})\Sigma^{1/2}\right)=\Tr\left(\Sigma H(f_{w,\gamma})\right)$, for $z = (x,y)\in\cD$ we have
\begin{align}
\label{eq:sigma_Laplacian_f_w_gamma}
\Delta_{\Sigma} f_{w,\gamma}(z) =\gamma(\gamma-1)f_{w,1}(z)^{\gamma-2} \langle\Sigma(z)\nabla f_{w,1}(z),\nabla f_{w,1}(z)\rangle
+\gamma f_{w,1}(z)^{\gamma-1} \Delta_{\Sigma} f_{w,1}(z).
\end{align}
By~\aref{ass:covariance2} and~\eqref{eq:grad_f_w_1}, we have
$\langle\Sigma(z)\nabla f_{w,1}(z),\nabla f_{w,1}(z)\rangle  =\langle \Sigma(z)e_x,e_x\rangle + o_{\cD}(1) = \sigma_1^2 (1+ o_{\cD}(1))$. 

Note that $\partial_x h(z) = -w(1-w)\|y\|_d^2(x+k_w)^{-3}$
and
$\partial_{y_i} h(z) = w(1-w)y_i (x+k_w)^{-2}$
for any $z=(x,y)\in\cD$ ($y_i$ is the $i$-th coordinate of $y\in\R^d$).
An elementary (but tedious) calculation, based on the representation $f_{w,1}(z)=(x+k_w)h(z)^{1/w}$, yields
\begin{align*}
\partial_{y_i}^2f_{w,1}(z) &=(1-w)(x+k_w)^{-1}h(z)^{1/w-1} +o_{\cD}((x+k_w)^{-1})\\
&=(1-w)(x+k_w)^{-1}(1+o_\cD(1))\qquad\text{as $x\to\infty$}
\end{align*}
for every $i\in\{1,\dots,d\}$
(recall that $h(z)^{1/w-1}=1+o_\cD(1)$).
Moreover, all other elements of the Hessian 
$H(f_{w,1})(z)$ are of order $o_{\cD}((x+k_w)^{-1})$
as $x\to\infty$.
Thus, by definition
$\Delta_{\Sigma} f_{w,1}(z)=\Tr(\Sigma(z)H(f_{w,1})(z))$ and the fact that $\Sigma$ is bounded by~\aref{ass:covariance1} (contained in~\aref{ass:covariance2})
we get
$$\Delta_{\Sigma} f_{w,1}(z)=\left(\Tr(\Sigma(z))-\langle \Sigma(z)e_x,e_x\rangle\right)(1-w)(x+k_w)^{-1}(1+o_\cD(1))\quad
\text{as $x\to\infty$.}$$
By Assumption~\aref{ass:covariance2},
it thus follows that 
$\Delta_{\Sigma} f_{w,1}(z)=\sigma_2^2(1-w)(x+k_w)^{-1}(1+o_\cD(1))$ as $x\to\infty$.
The expression in~\eqref{eq:Sigma_Laplacian_f} is now a direct consequence of~\eqref{eq:sigma_Laplacian_f_w_gamma}.
\end{proof}

By Lemma~\ref{lem3.1}, the function $f_{w,\gamma}$ 
controls the sign of the inner product
$\langle \nabla f_{w,\gamma}(z),\phi(z)\rangle$
for $z=(x,y)\in\partial\cD$
with sufficiently large $x\in\RP$.
Controlling the sign of
$\langle \nabla f_{w,\gamma}(z),\phi(z)\rangle$
for \textit{all} $z=(x,y)\in\partial\cD$
is crucial for analysing the moments of $Z_t$ at a fixed time $t$ (see the proof of Theorem~\ref{thm:rec_tran}) as well as establishing drift conditions in the case of positive recurrence (see Lemma~\ref{Lem:drift_conditions} in Section~\ref{subsection:return_times_drift_conditions} below). This requires a slight modification of the function $f_{w,\gamma}$, which we now describe.

Fix arbitrary $x_0,x_1\in(0,\infty)$, satisfying $x_0<x_1$, and define
the function $m:\RP\times\R^d\to[0,1]$ as follows: for  $z=(x,y)\in\RP\times\R^d$ let 
\begin{align*}
m(z) &:= \exp\left((x_1-x_0)^{-2}-((x_1-x_0)^{2}-(x_1-x)^2)^{-1}\right)\mathbbm{1}\{x_0<x<x_1\}+\mathbbm{1}\{x_1\leq x\}.
\end{align*}
The function $m$ is smooth, $\partial_x m(z)\geq 0$ for all $z\in\RP\times\R^d$, 
and the following holds: for any 
 $z_i=(x_i,y)\in\RP\times\R^d$, $i\in\{0,1\}$, we have
$m(z_0) = \partial_x m(z_0) = \partial_x^2 m(z_0)  = 0$ and $m(z_1) = 1$, $\partial_x m(z_1) = \partial_x^2 m(z_1) = 0$.
For any constant  $k\in(0,\infty)$, define
\begin{equation}
\label{eq:F_w,gamma}
    F_{w,\gamma}(z) := f_{w,\gamma}(z)m(z)+k(1-m(z)),\quad z\in\cD.
\end{equation}
The function $F_{w,\gamma}$ 
is clearly a $C^2$-function on the open domain $\cD\setminus\partial\cD$ and its gradient $\nabla F_{w,\gamma}$ has a continuous extension to the closed domain $\cD$
(for all  parameters $\gamma\in\R$ and $w\in\R\setminus\{0\}$). 

\begin{lem}
\label{lem:F_w,gamma}
Let Assumptions~\aref{ass:vector2}, \aref{ass:domain2} and \aref{ass:covariance2} hold and fix 
$\gamma\in\R$ and $w\in\R\setminus\{0\}$. Then, if $\gamma(\beta s_0/c_0 -1 + w) < 0$, there exist $k\in(0,\infty)$  and $0<x_0<x_1$ such that 
$$
\langle \nabla F_{w,\gamma}(z),\phi(z)\rangle \leq 0 \quad \text{ for all } z\in\partial\cD.  
$$
\end{lem}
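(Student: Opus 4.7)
The plan is to compute $\nabla F_{w,\gamma}(z)$ directly and analyze its inner product with $\phi$ region by region. Since the mask $m$ depends only on the $x$-coordinate, $\nabla m(z) = \partial_x m(x)\, e_x$, and hence
\begin{equation*}
\langle \nabla F_{w,\gamma}(z), \phi(z)\rangle = m(z)\, \langle \nabla f_{w,\gamma}(z), \phi(z)\rangle + (f_{w,\gamma}(z) - k)\, \partial_x m(x)\, \langle e_x, \phi(z)\rangle.
\end{equation*}
By Lemma~\ref{lem3.1}, the hypothesis $\gamma(\beta s_0/c_0 - 1 + w) < 0$ yields some $x^* > 0$ with $\langle \nabla f_{w,\gamma}(z), \phi(z)\rangle < 0$ on $\partial\cD \cap [x^*,\infty)\times\R^d$. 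I would take $x_0 \geq x^*$ and $x_1 > x_0$ (sizes to be specified). The desired inequality is automatic on $\{x \leq x_0\}$, where $m = \partial_x m = 0$ force $\nabla F_{w,\gamma} = 0$, and on $\{x \geq x_1\}$, where $m = 1$ and $\partial_x m = 0$ give $\nabla F_{w,\gamma} = \nabla f_{w,\gamma}$; the substance lies in the transition strip $x \in (x_0, x_1)$.

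On that strip, the first summand is already non-positive by the choice of $x_0$, so it suffices to arrange the cross term to also be $\leq 0$. Since $\partial_x m \geq 0$, its sign is $\mathrm{sign}(f_{w,\gamma}(z) - k)\cdot \mathrm{sign}(\langle e_x, \phi(z)\rangle)$. The structural observation I exploit is that, by~\eqref{eq:Lyapunov_polynomial}, on $\partial\cD$ the function $f_{w,\gamma}$ depends on $y$ only through $\|y\|_d = b(x)$, so there is a smooth function $\psi$ with $f_{w,\gamma}(z) = \psi(x)$ for every $z=(x,y)\in\partial\cD$, and $\psi$ is eventually strictly monotone with direction dictated by $\mathrm{sign}(\gamma)$. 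By \aref{ass:vector2}, $\langle e_x, \phi(z)\rangle = s_0 b'(x)(1 + o_{\partial\cD}(1))$, so when $\beta \neq 0$, after enlarging $x_0$ if necessary, the sign of $\langle e_x, \phi(z)\rangle$ is constant on the strip and equal to $\mathrm{sign}(\beta)$.

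For $\beta \neq 0$ I would then set $k$ to be either $\psi(x_0)$ or $\psi(x_1)$, the choice dictated by the signs of $\gamma$ and $\beta$, so that $\psi(x) - k$ has sign opposite to $\langle e_x, \phi(z)\rangle$ throughout the strip, making the cross term $\leq 0$. The main obstacle is the case $\beta = 0$, in which $b'$ may oscillate in sign arbitrarily far out, and no sign-based choice of $k$ works uniformly. For that case I would instead combine the refined bound $|\langle e_x, \phi(z)\rangle| \leq s_0|b'(x)|(1+o(1)) = o(b(x)/x)$, which follows from $xb'(x)/b(x)\to 0$ in \aref{ass:domain2}, with a quantitative sharpening of Lemma~\ref{lem3.1} yielding $|\langle \nabla f_{w,\gamma}(z), \phi(z)\rangle| \geq C_1\, b(x)(x+k_w)^{\gamma-2}$ for some $C_1 > 0$ and all $x \geq x^*$; choosing $k = \psi(x_0)$ makes $|\psi(x)-k|$ vanish at $x_0$ linearly in $x-x_0$, and together with the decay of $m$ and $\partial_x m$ at the endpoints and the smallness of $|b'(x)|$ (by enlarging $x_0$ and keeping $x_1-x_0$ moderate), the cross term becomes dominated in absolute value by the strictly negative first summand on the whole strip, delivering the bound.
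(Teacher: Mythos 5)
Your decomposition~\eqref{eq:grad_F} and the treatment of $\beta\neq0$ are, in substance, the paper's proof: pick $x_0$ far out with $b'(x_0)\neq0$, shrink the transition strip $[x_0,x_1]$ so that $b'$ has constant sign on it by continuity of $b'$, and then choose $k$ so that $f_{w,\gamma}(z)-k$ has sign opposite to $\langle e_x,\phi(z)\rangle$ on $\partial\cD\cap[x_0,x_1]\times\R^d$, making the cross term $\leq0$. Your variant takes $k$ to be an endpoint value $\psi(x_0)$ or $\psi(x_1)$ of $\psi:=f_{w,\gamma}|_{\partial\cD}$ rather than a value outside the range of $f_{w,\gamma}$ on the strip; this works but silently relies on eventual strict monotonicity of $\psi$, which is true (one can check $(\log\psi)'=\gamma(x+k_w)^{-1}(1+o(1))$ under \aref{ass:domain2}) but would have to be recorded. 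The paper's choice of $k$ sidesteps this.

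The $\beta=0$ discussion, however, contains both an unnecessary detour and a genuine gap. Unnecessary: the sign-based argument does not in fact require $\beta\neq0$. It requires only that $b'$ have constant sign on \emph{some} strip $[x_0,x_1]\subset[\max\{x_0',x_0''\},\infty)$, which by continuity of $b'$ exists the moment some $x_0$ there has $b'(x_0)\neq0$; the only complementary case is $b'\equiv0$ on $[\max\{x_0',x_0''\},\infty)$, where $\langle e_x,\phi(z)\rangle\equiv0$ forces the cross term to vanish and $k$ is then arbitrary. This dichotomy covers $\beta=0$ with no estimate. Genuine gap: the domination argument you sketch fails in a right neighbourhood of $x_0$. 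For the mask $m$ defined before~\eqref{eq:F_w,gamma}, $\partial_x m(z)/m(z)\to\infty$ like $(x-x_0)^{-2}$ as $x\downarrow x_0$; with $k=\psi(x_0)$, so that $\lvert f_{w,\gamma}(z)-k\rvert\asymp(x_0+k_w)^{\gamma-1}(x-x_0)$, the ratio of the cross term to the negative summand satisfies
\begin{equation*}
\frac{\lvert f_{w,\gamma}(z)-k\rvert\,\partial_x m(z)\,\lvert\langle e_x,\phi(z)\rangle\rvert}{m(z)\,\lvert\langle\nabla f_{w,\gamma}(z),\phi(z)\rangle\rvert}
\asymp (x-x_0)\,\frac{\partial_x m(z)}{m(z)}\cdot\frac{x\lvert b'(x)\rvert}{b(x)}
\asymp \frac{1}{x-x_0}\cdot\frac{x\lvert b'(x)\rvert}{b(x)},
\end{equation*}
which is unbounded on $(x_0,x_1)$ whenever $b'(x_0)\neq0$ (the second factor is bounded below near $x_0$, the first blows up). So the cross term is \emph{not} dominated by the negative summand near $x_0$, and your proposed bound does not hold. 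The fix is to drop the estimate and apply the sign argument there as well, i.e., use the paper's dichotomy between $b'\not\equiv0$ far out and $b'\equiv0$ far out, which is uniform in $\beta\in(-\infty,1)$.
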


\begin{proof}
Note that for any constants $0<k<\infty$ and $0<x_0<x_1$, for all $z\in\cD$
we have
\begin{align}
\label{eq:grad_F}
    \langle\nabla F_{w,\gamma}(z),\phi(z) \rangle
    &= (f_{w,\gamma}(z)-k)\partial_x m(z)\langle e_x,\phi(z)\rangle + m(z)\langle \nabla f_{w,\gamma}(z),\phi(z)\rangle.
\end{align}
Since $\gamma(\beta s_0/c_0 -1 + w) < 0$, by Lemma~\ref{lem3.1}, there exists $x_0'>0$ such that $\langle\nabla f_{w,\gamma}(z),\phi(z)\rangle < 0$  for all $z\in\partial\cD\cap [x_0',\infty)\times\R^d$.
By Assumption~\aref{ass:vector2}, 
$\langle \phi(z), e_x\rangle = s_0 b'(x)(1+f(z))$ for all $z\in\partial\cD$,
where $f:\partial\cD\to\R$ satisfies $\sup_{y:(x,y)\in\partial\cD} |f(x,y)|\to0$ as $x\to\infty$.
Pick $x_0''\in\RP$ such that $\sup_{y:(x,y)\in\partial\cD} |f(x,y)|<1/2$ for all $x\in[x_0'',\infty)$.
There are two possibilities. 

\smallskip

\noindent \underline{(I) There exists $x_0\in[\max\{x_0',x_0''\},\infty)$ such that  $|b'(x_0)|>0$.}
If 
$b'(x_0)>0$ (resp.\ $b'(x_0)<0$), by the continuity of $b'$, there exists $x_1\in(x_0,\infty)$, such that $b'(x)>0$ (resp.\ $b'(x)<0$) for all $x\in[x_0,x_1]$.
Since $s_0>0$ by Assumption~\aref{ass:vector2}, for any $z=(x,y)\in\partial \cD$ with $x\in[x_0,x_1]$, we have 
$\langle \phi(z), e_x\rangle = s_0 b'(x)(1+f(z))>s_0b'(x)/2>0$
(resp.\ $\langle \phi(z), e_x\rangle = s_0 b'(x)(1+f(z))<s_0b'(x)/2<0$).
By~\eqref{eq:bound_Lyapunov_f},  $f_{w,\gamma}$ is a positive bounded
function on the set $\cD\cap[x_0,x_1]\times\R^d$.
Thus we may pick $k$ in the interval $(\sup_{(x,y)\in\partial\cD:x_0\leq x\leq x_1} f_{w,\gamma}(x,y),\infty)$ (resp.\ $(0,\inf_{(x,y)\in\partial\cD:x_0\leq x\leq x_1} f_{w,\gamma}(x,y))$).
Since the function $m$, defined above, satisfies $\min\{\partial_x m(z),m(z)\}\geq0$  for all $z\in\cD$, $\partial_x m(z)=0$ for all $z=(x,y)$ with $x\in[x_1,\infty)$
and $\partial_x m(z)=m(z)=0$ for all $z=(x,y)$ with $x\in[0,x_0]$,
by~\eqref{eq:grad_F} it follows $\langle\nabla F_{w,\gamma}(z),\phi(z) \rangle\leq 0$ for all $z\in\partial\cD$.





\smallskip

\noindent \underline{(II) $|b'(x)|=0$ for all $x\in[\max\{x_0',x_0''\},\infty)$.}
Thus, for any $x_0, x_1\in[\max\{x_0',x_0''\},\infty)$ with $x_0<x_1$, 
we have $\partial_x m(z)\langle e_x,\phi(z)\rangle= \partial_x m(z)s_0 b'(x)(1+f(z))=0$ for all $z\in\cD$.
Hence, for any $k\in(0,\infty)$, by~\eqref{eq:grad_F}  we have  $\langle \nabla F_{w,\gamma}(z),\phi(z)\rangle = m(z)\langle \nabla f_{w,\gamma}(z),\phi(z)\rangle \leq 0$ for all $z\in\partial \cD$.
\end{proof}

The function  $f_{w,\gamma}$ suffices  to establish Theorem~\ref{thm:rec_tran}(a)--(b) when the asymptotic exponent $\beta$ is away from the critical value $\beta_c$ defined in~\eqref{eq:beta_c}. In the critical case $\beta=\beta_c$,
logarithmic (rather than polynomial) growth 
of the Lyapunov function is required. 
The function we now define for this purpose is inspired by the analysis of the reflecting Brownian motion and random walk in a wedge in~\cite{menshikov2021reflecting,menshikov1996passage}.
Pick an arbitrary constant $\delta\in(0,\infty)$ and
let $g_\delta:\cD\to (1,\infty)$ be a continuous function, twice differentiable in the interior of $\cD$, satisfying
\begin{equation}
\label{eq:g_delta}
g_\delta(z) = \log (x) -x^{-\delta} + \frac{\sigma_1^2}{\sigma_2^2}\frac{\|y\|_{d}^2}{2x^2} (1+\delta x^{-\delta})+1\quad
\text{for $z = (x,y) \in\cD$ with $x \in (\re,\infty)$,}
\end{equation}
with $\sigma_1^2$ and $\sigma_2^2$ given in~\aref{ass:covariance2plus}.
Since for any $(x,y)\in\cD$ we have $\|y\|_{d} \leq b(x)$ and $b$ is sublinear by~\aref{ass:domain2plus} (cf. Remark~\ref{rem:Ass2}), there exists a positive constant $C_\delta \in (0,\infty)$ such that\begin{equation}
\label{eq:bound_Lyapunov_g}
    \log x \leq g_\delta(z) \leq C_\delta + \log x, \quad
    \text{ for $z=(x,y)\in\cD$ with $x > \re$.}
\end{equation}
The relevant asymptotic properties of the derivatives of $g_\delta$ are in the next lemma. 

\begin{lem}
\label{lem:g_delta}
Assume 
$\beta = \beta_c$,
where $\beta$ (resp.\ $\beta_c$) is defined in~\eqref{eq::beta} (resp.~\eqref{eq:beta_c}).
Let Assumptions~\aref{ass:vector2plus}, \aref{ass:domain2plus} and \aref{ass:covariance2plus} hold and choose $\delta\in(0, \min\{\epsilon, 1 - \beta\})$, where $\epsilon > 0$ is the rate of decay in Assumption~\aref{ass:covariance2plus}.
Then there exists an $x_0 > 0$ such that 
\begin{equation}
\label{eq:sigma_lap_scalar_prod_g_delta}
    \Delta_\Sigma g_{\delta} < 0\text{ on $\cD\cap[x_0,\infty)\times\R^d$} \quad
    \text{and} \quad \langle \nabla g_{\delta},\phi\rangle 
    < 0\text{ on $\partial \cD\cap[x_0,\infty)\times\R^d$.} 
\end{equation}
\end{lem}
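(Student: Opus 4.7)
The plan is to substitute the explicit form of $g_\delta$ from~\eqref{eq:g_delta} into $\Delta_\Sigma g_\delta = \Tr(\Sigma H(g_\delta))$ and $\langle \nabla g_\delta,\phi\rangle$, expand asymptotically in $x$ using the strengthened assumptions~\aref{ass:covariance2plus},~\aref{ass:domain2plus} and~\aref{ass:vector2plus}, and verify that the leading surviving term is strictly negative in each case. The function $g_\delta$ has been designed so that \emph{two} successive orders of cancellation occur in each of the two expressions at the critical value $\beta=\beta_c$; this is what makes the proof delicate but also what makes it work.

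For $\Delta_\Sigma g_\delta$, I would compute $\partial_x^2 g_\delta = -1/x^2 - \delta(\delta+1)x^{-\delta-2} + O(\|y\|_d^2/x^4)$, $\partial_{y_i}^2 g_\delta = \frac{\sigma_1^2(1+\delta x^{-\delta})}{\sigma_2^2 x^2}$, and the mixed partial $\partial_x\partial_{y_i}g_\delta = O(|y_i|/x^3)$. Expanding the diagonal of $\Sigma$ via~\aref{ass:covariance2plus}, the diagonal contribution to $\Tr(\Sigma H(g_\delta))$ becomes
\[
\Sigma_{xx}\partial_x^2 g_\delta + \sum_{i=1}^d\Sigma_{y_iy_i}\partial_{y_i}^2 g_\delta = -\sigma_1^2\delta^2 x^{-\delta-2}(1+o(1)),
\]
because the $-\sigma_1^2/x^2$ from $\Sigma_{xx}\partial_x^2 g_\delta$ is cancelled exactly by the $\sigma_1^2/x^2$ from $(\Tr\Sigma-\Sigma_{xx})\cdot\frac{\sigma_1^2}{\sigma_2^2 x^2}$, and the $\delta$-coefficients combine as $-\delta(\delta+1)+\delta=-\delta^2$. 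The off-diagonal contribution $2\sum_i\Sigma_{xy_i}\partial_x\partial_{y_i}g_\delta = O(b(x)/x^3)$ is $o(x^{-\delta-2})$ thanks to $\delta<1-\beta$ and the sublinearity of $b$ from~\aref{ass:domain2plus}, while the covariance errors contribute $o(x^{-\eps-2})=o(x^{-\delta-2})$ since $\delta<\eps$. The first inequality of the lemma follows for $x$ large.

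For $\langle \nabla g_\delta,\phi\rangle$ on $\partial\cD$ (where $\|y\|_d=b(x)$), I would use the decomposition $\nabla g_\delta = \partial_x g_\delta\cdot e_x + \frac{\sigma_1^2 \|y\|_d(1+\delta x^{-\delta})}{\sigma_2^2 x^2}e_{\hat y}$, pair with the expansions from~\aref{ass:vector2plus}, and substitute $xb'(x)=\beta b(x)(1+o(b(x)^2/x^2))$ from~\aref{ass:domain2plus}. The criticality identity $s_0\beta = c_0\sigma_1^2/\sigma_2^2$ cancels the leading $b(x)/x^2$ terms arising from $\frac{1}{x}\langle\phi,e_x\rangle$ and from the $y$-gradient dotted with $-c_0 e_{\hat y}$; then the $\delta x^{-\delta-1}$ piece of $\partial_x g_\delta$ and the $\delta x^{-\delta}$ factor in the $y$-gradient likewise cancel the $\delta b(x)/x^{2+\delta}$ pair. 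What remains as the leading contribution comes from the $-\frac{\sigma_1^2 b(x)^2}{\sigma_2^2 x^3}$ piece of $\partial_x g_\delta$ paired with $\langle\phi,e_x\rangle\sim s_0\beta b(x)/x$, producing $-\frac{c_0\sigma_1^4 b(x)^3}{\sigma_2^4 x^4}(1+o(1))$, strictly negative for $x$ large since $\beta_c>0$ forces $b(x)\to\infty$. The reflection- and domain-side errors combine to $o(b(x)^3/x^4)$, since $(b(x)/x^2)\cdot o(b(x)^2/x^2)=o(b(x)^3/x^4)$.

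The chief obstacle is the delicate bookkeeping: because two orders of leading terms cancel by design in each computation, one must isolate the third-order residual precisely and verify that every $o$-remainder from the assumptions is of even smaller order. The three smallness conditions interlock exactly for this to succeed: $\delta<\eps$ tames the covariance error in $\Delta_\Sigma g_\delta$, $\delta<1-\beta$ together with sublinearity of $b$ absorbs the off-diagonal Hessian contribution, and the $o_{\partial\cD}(b(x)^2/x^2)$ rates in~\aref{ass:vector2plus} and~\aref{ass:domain2plus} are precisely matched to the scale of the boundary cancellation.
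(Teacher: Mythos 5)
Your proposal is correct and follows essentially the same route as the paper: substitute the explicit $g_\delta$, expand $\Delta_\Sigma g_\delta$ and $\langle\nabla g_\delta,\phi\rangle$ asymptotically using \aref{ass:covariance2plus}, \aref{ass:domain2plus}, \aref{ass:vector2plus}, exploit the criticality identity $s_0\beta/c_0=\sigma_1^2/\sigma_2^2$ to cancel the two leading orders, and check that the residual terms $-\sigma_1^2\delta^2 x^{-2-\delta}$ and $-c_0\sigma_1^4 b(x)^3/(\sigma_2^4 x^4)$ dominate the various $o$-errors under the three interlocking smallness conditions $\delta<\eps$, $\delta<1-\beta$, and the $o(b^2/x^2)$ rates. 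Your bookkeeping is correct (including the sign in the $-\sigma_1^2\|y\|_d^2 x^{-3}(1+\delta(1+\delta/2)x^{-\delta})/\sigma_2^2$ piece of $\partial_x g_\delta$, where the paper's display \eqref{eq::g_delta_gradient} has a harmless sign typo in the $\delta$-correction factor); the only inessential point is the remark that $\beta_c>0$ forces $b(x)\to\infty$, which is true but not needed since $b(x)>0$ alone makes the leading term strictly negative.
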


\begin{proof}
Recall that $\beta=\beta_c=c_0\sigma_1^2/(s _0\sigma_2^2)$. By~\eqref{eq:g_delta}, for $z = (x,y)\in\cD$ with $x> \re$, we have
\begin{align}
    \label{eq::g_delta_gradient}
    \nabla g_{\delta}(z)  & =  e_x\left(x^{-1}(1+\delta x^{-\delta}) -\frac{\sigma_1^2}{\sigma_2^2} \frac{\|y\|_d^2}{x^3}(1-\delta x^{-\delta}(1+\delta/2)) \right) \\
    & + e_{\hat y}\left(\frac{\sigma_1^2}{\sigma_2^2}\frac{\|y\|_d}{x^2}(1+\delta x^{-\delta} )\right);
    \nonumber
\end{align}
see the first paragraph of Section~\ref{subsec::Modelling_assumptions}
for the definition of $\hat y$, $e_x$ and $e_{\hat y}$. 


By~\aref{ass:vector2plus} we have
$\langle e_x,\phi(z)\rangle=s_0b'(x)(1+o_{\partial\cD}(b(x)^2x^{-2}))
=s_0\beta b(x)x^{-1}(1+o_{\partial\cD}(b(x)^2x^{-2}))$ as $x\to\infty$,
where the second equality follows from~\aref{ass:domain2plus},
and 
$\langle e_{\hat y},\phi(z)\rangle=-c_0(1+o_{\partial\cD}(b(x)^2x^{-2}))$ 
as $x\to\infty$. 
Since $z = (x,y)\in\partial\cD$ satisfies $\|y\|_d =b(x)$ as $x\to\infty$, by~\eqref{eq::g_delta_gradient} we obtain
\begin{align*}
    \langle \nabla g_{\delta}(z),\phi(z)\rangle &=s_0\beta b(x)x^{-2} (1+\delta x^{-\delta})o_{\partial\cD}(b(x)^2x^{-2})
    -s_0^2\beta^2c_0^{-1} b(x)^3x^{-4}(1+o_{\partial\cD}(b(x)^2x^{-2})) \\
    &= -s_0^2c_0^{-1}\beta^2 b(x)^3x^{-4}+ o_{\partial\cD}(b(x)^3x^{-4})\quad\text{as $x\to\infty$.}
\end{align*}
Thus there exists $x_0>0$, such that $\langle \nabla g_{\delta}(z),\phi(z)\rangle<0$ for all $z=(x,y)\in\partial \cD \cap [x_0,\infty)\times\R^d$.

By Remark~\ref{rem:Ass2} above, for any $\beta'\in(\beta,1)$ it holds 
$b(x)=o(x^{\beta'})$ as $x\to\infty$, implying 
$\|y\|_d/x=o_\cD(1)$ as $x\to\infty$. 
By assumption $\delta\in(0,1-\beta)$. Thus $b(x)=o(x^{\beta+\delta})$, implying  $b(x)^2x^{-4} = o(x^{-2-\delta})$ and $b(x)x^{-3} = o(x^{-2-\delta})$ as $x\to\infty$. 
Recall from~\eqref{eq::g_delta_gradient} that,
for any $i \in \{1,\dots,d\}$, 
the $i$-th partial derivative $\partial_{y_i} g_\delta$ along the coordinate $y_i$ of $y$ equals 
$\partial_{y_i} g_\delta(z)=\beta s_0c_0^{-1}y_i x^{-2}(1+\delta x^{-\delta})$ for $x>\re$,
implying 
$\partial^2_{y_i} g_\delta(z)=\beta s_0c_0^{-1}x^{-2}(1+\delta x^{-\delta})$.
By the representation of the gradient~\eqref{eq::g_delta_gradient} and the fact $\|y\|_d\leq b(x)$ for every $z=(x,y)\in\cD$, we have $\partial_x^2 g_\delta(z) = -x^{-2}(1+\delta(1+\delta)x^{-\delta})+o_\cD(x^{-2-\delta})$, while 
all mixed derivatives in the Hessian $H(g_\delta)(z)$ are of order $b(x)x^{-3} = o_\cD(x^{-2-\delta})$ as $x \to \infty$.
Thus, by the definition
$\Delta_{\Sigma} g_\delta(z)=\Tr(\Sigma(z)H(g_\delta)(z))$ and the fact that
$\partial^2_{y_i} g_\delta$
does not depend on the index $i \in \{1,\dots,d\}$ and is bounded for $x>\re$,
we get
\begin{align*}
\Delta_{\Sigma} g_\delta(z) =& \langle \Sigma(z)e_x,e_x\rangle\partial_x^2 g_\delta(z)  +\left(\Tr(\Sigma(z))-\langle \Sigma(z)e_x,e_x\rangle\right) \partial^2_{y_1} g_\delta(z) 
+o_{\cD}(x^{-2-\delta}) \\
=& -x^{-2} (1+\delta(1+\delta)x^{-\delta}) (\sigma_1^2 + o_{\cD}(x^{-\eps}))
+ (\sigma_2^2+o_{\cD}(x^{-\eps}))\beta s_0c_0^{-1} x^{-2}(1+\delta x^{-\delta})\\
 & + o_{\cD}(x^{-2-\delta}) 
=  -\sigma_1^2\delta^2x^{-2-\delta}  +o_{\cD}(x^{-2-\delta})\quad\text{as $x\to\infty$,}
\end{align*}
where the last equality follows form the identity  $s_0\beta\sigma_2^2/c_0=\sigma_1^2$ and 
the fact $\delta<\epsilon$. Hence, the sign of $\Delta_{\Sigma} g_\delta(z) $ is negative for $z\in(x,y)\in\cD$ with $x$ sufficiently large as claimed in lemma.
\end{proof}

\section{Non-explosion, recurrence/transience criteria, and return times of continuous semimartingales}
\label{sec:Trans_recurr_dichotomy}

This section develops certain semimartingale tools for classifying asymptotic behaviour via Foster--Lyapunov criteria.
The general theory developed in this section is expected to have broad applicability. In the present paper, it will be applied to study the reflected process $Z$, given by SDE~\eqref{eq::SDE}, via the Lyapunov functions constructed and analysed in Section~\ref{subsec:Ito_Lyapunov} above.

Fix a probability space $(\Omega,\cF,\P)$ and a filtration $(\cF_t)_{t \in \RP}$ satisfying the usual conditions. Consider an $(\cF_t)$-adapted  continuous process $\kappa = (\kappa_t)_{t \in \RP}$, taking values in $[0,\infty]$.
Let $\cT$ denote the set of all $[0,\infty]$-valued stopping times with respect to $(\cF_t)_{t\in \RP}$.
For any $\ell,r \in \RP$ and stopping time $T \in \cT$, define the first entry times (after $T$) by
\begin{align}
\label{eq::lambda}
    \lambda_{\ell,T} :=  T + \inf\{s \in \RP: T < \infty,~ \kappa_{T+s} \leq \ell\}, \\
    \rho_{r,T} :=  T + \inf\{s \in \RP: T < \infty, ~\kappa_{T+s} \geq r\},
    \label{eq::rho}
\end{align}
where we adopt the convention $\inf \emptyset := +\infty$. If $T = 0$, we write $\lambda_\ell := \lambda_{\ell,0}$ and $\rho_r := \rho_{r,0}$. Almost sure limits $\rho_{\infty} := \lim_{r \rightarrow \infty}\rho_r$ and $\rho_{\infty,T} = \lim_{r \rightarrow \infty}\rho_{r,T}$ exist by monotonicity. 
\textit{Explosion} of the process $\kappa$ occurs
if the event $\{\rho_\infty < \infty\}$ has positive  probability. 
Since $\rho_{r,T} = \rho_r$ on the event $\{T < \rho_r \}$, we have $\rho_\infty = \rho_{\infty,T}$ on the event $\{ T < \rho_\infty\}$.  
For $r_0\leq r$,
we define the first exit time from the interval $[r_0,r]$ after some stopping time $T \in \cT$ by 
\begin{equation}
\label{eq::exit}
S_{r,T} := \lambda_{r_0,T} \wedge \rho_{r,T}.
\end{equation}
Here and throughout we denote $x\wedge y:=\min\{x,y\}$ and 
$x\vee y:=\max\{x,y\}$
for any $x,y\in[0,\infty]$.

\subsection{Non-explosion}
We first establish criteria for $\kappa$ not to explode.
The main application of this result in the present paper is to prove that the reflected process with asymptotically normal reflection cannot explode. 

This should be contrasted with the case of the asymptotically oblique reflection, where explosion may occur, see the characterisation in~\cite[Thm~2.2]{menshikov2022reflecting}.
The non-explosion criteria in~\cite[Thm~3.4]{menshikov2022reflecting} are more delicate than the ones in Lemma~\ref{lem2.1}, but require transience of the underlying semimartingale, making them inapplicable to the entire class of processes considered here. The following result is more robust (i.e.~with simpler assumptions), has an elementary proof and covers all the models analysed in this paper. 

\begin{lem}
\label{lem2.1}
Let $\kappa  = (\kappa_t)_{t \in \RP}$ be an $[0,\infty]$-valued $(\cF_t)$-adapted continuous process 
and $V: \RP \rightarrow (0,\infty)$ a continuous function with $\lim_{x \rightarrow \infty} V(x) = \infty$. Suppose there exist $r_0,\eta \in \RP$, such that for all $r \in (r_0,\infty)$ and any  $T \in \cT$, such that $\E[V(\kappa_T)\mathbbm{1}\{T<\rho_\infty\}]<\infty$, the process $\zeta^{T,r} = (\zeta^{T,r}_t)_{t \in \RP}$,
defined by
\begin{equation}
\label{eq::zeta}
\zeta^{T,r}_t := (V(\kappa_{(t+T)\wedge S_{r,T}})-\eta(t \wedge (S_{r,T}-T)))\mathbbm{1}\{T < \rho_{\infty}\},
\end{equation}
is an $(\cF_{t+T})$-supermartingale, i.e., for $0 \leq s \leq t < \infty$, 
\begin{equation}
\label{eq::exp}
    \E[\zeta^{T,r}_t\vert \cF_{s + T}] \leq \zeta^{T,r}_s , \as
\end{equation}
Then $\P(\rho_\infty = \infty) = 1$.
\end{lem}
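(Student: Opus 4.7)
The plan is to prove $\P(\rho_\infty = \infty) = 1$ via an iterative application of the supermartingale hypothesis at the consecutive entry times of $\kappa$ into $[r_0+1, \infty)$, followed by the limits $r \to \infty$ and $t \to \infty$, using monotone convergence of the events $\{\rho_r \leq \cdot\}$. First I would reduce to the case of a bounded initial value, $V(\kappa_0) \leq c$ for some deterministic $c < \infty$, by conditioning on $\{V(\kappa_0) \leq N\} \in \cF_0$ (the supermartingale hypothesis is preserved under this conditioning since $\{V(\kappa_0) \leq N\}$ is $\cF_0$-measurable) and letting $N \to \infty$. Throughout I use $\tilde V(r) := \inf_{x \geq r} V(x)$, which tends to $\infty$ by hypothesis on $V$.

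Next introduce stopping times $\sigma_0 := \rho_{r_0+1}$ and inductively $\lambda^{(i)} := \lambda_{r_0, \sigma_i}$, $\sigma_{i+1} := \rho_{r_0+1, \lambda^{(i)}}$, the successive entry and return times of $\kappa$ between the levels $r_0$ and $r_0+1$. By continuity of $\kappa$, on $\{\sigma_i < \rho_\infty\}$ one has $V(\kappa_{\sigma_i}) \leq \bar V := \max\{c, V(r_0+1)\}$, so the integrability hypothesis of the lemma holds at $T = \sigma_i$. The supermartingale inequality $\E[\zeta^{\sigma_i, r}_t] \leq \E[\zeta^{\sigma_i, r}_0]$ then unpacks to
\begin{equation*}
\E\bigl[V(\kappa_{(t+\sigma_i)\wedge S_{r,\sigma_i}})\,\mathbbm{1}\{\sigma_i<\rho_\infty\}\bigr] \leq \bar V + \eta t .
\end{equation*}
On the event $A_r := \{\sigma_i < \rho_\infty,\; \rho_r \leq (t+\sigma_i) \wedge \lambda^{(i)}\}$ we have $S_{r,\sigma_i} = \rho_r$ and $V(\kappa_{\rho_r}) \geq \tilde V(r)$, so Markov's inequality gives $\P(A_r) \leq (\bar V + \eta t)/\tilde V(r)$. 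Since the $A_r$ decrease as $r \to \infty$ to $\{\sigma_i < \rho_\infty,\, \rho_\infty \leq (t+\sigma_i) \wedge \lambda^{(i)}\}$, the latter event has probability zero; letting $t \to \infty$ then yields $\rho_\infty > \lambda^{(i)}$ almost surely on $\{\sigma_i < \rho_\infty\}$ (interpreted as $\rho_\infty = \infty$ when $\lambda^{(i)} = \infty$).

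An induction across excursions finishes the proof: on $[\lambda^{(i)}, \sigma_{i+1}]$ one has $\kappa \leq r_0+1 < \infty$, so no explosion can occur there, and hence $\rho_\infty \geq \sigma_{i+1}$ whenever $\sigma_{i+1} < \infty$; this gives $\{\sigma_i < \infty\} \subseteq \{\sigma_i < \rho_\infty\}$ for every $i$. If some $\sigma_i = \infty$ then $\kappa \leq r_0+1$ on $[\lambda^{(i-1)}, \infty)$ and $\rho_\infty = \infty$; otherwise every $\sigma_i$ is finite and by continuity of $\kappa$ combined with the alternating boundary values $\kappa_{\sigma_i} = r_0+1$ and $\kappa_{\lambda^{(i)}} = r_0$, the sequence $(\sigma_i)$ cannot accumulate at a finite limit (such a limit would force $\kappa$ to equal both $r_0$ and $r_0+1$ there), so $\sigma_i \uparrow \infty$ and again $\rho_\infty = \infty$. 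The main obstacle I expect to manage carefully is the iterative bookkeeping: verifying $\E[V(\kappa_{\sigma_i})\mathbbm{1}\{\sigma_i < \rho_\infty\}] < \infty$ at each step (resolved via continuity of $\kappa$ at $\sigma_i$), carrying the double limit $r \to \infty$, $t \to \infty$ cleanly through the induction, and the continuity-based ruling out of accumulation, which crucially relies on the strict separation of the two levels $r_0$ and $r_0 + 1$.
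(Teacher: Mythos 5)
Your proposal is correct and follows essentially the same route as the paper: applying the supermartingale hypothesis at successive upcrossing times of an interval $[r_0, r_0+1]$ (the paper uses $[r_0, r_1]$ for arbitrary $r_1 > r_0$), sending $r \to \infty$ to kill the probability of escaping to $\infty$ before the next downcrossing, and invoking non-accumulation of continuous crossings to derive a contradiction with explosion (the paper phrases this via the a.s.\ finiteness of the downcrossing count $D(t_0,r_0,r_1)$ on a fixed horizon $t_0$ with $\P(\rho_\infty < t_0) > 0$, while you pass to the $t\to\infty$ limit). Your preliminary conditioning on $\{V(\kappa_0)\le N\}$ is a clean way of handling the initial integrability that the paper leaves implicit, but otherwise the two arguments coincide.
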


\begin{proof}
Choose $r_1\in(r_0,\infty)$ 
and define recursively the upcrossing and downcrossing times of the process $\kappa$ over the interval $[r_0,r_1]$ as follows: $\overline{\theta}_1 := 0$, and if $\overline \theta_k$ has been defined for some $k\in\N$, then 
$\underline{\theta}_k := \lambda_{r_0,\overline\theta_k}$ and  $\overline\theta_{k+1} := \rho_{r_1,\underline\theta_k}$. Thus we have $0=\overline{\theta}_1\leq \underline{\theta}_1\leq \cdots \leq \overline{\theta}_k \leq \underline{\theta}_k \leq \overline\theta_{k+1} \leq \cdots$.
Moreover, for any $t \in \RP$, we denote $$
D(t,r_0,r_1) :=\sup\{ k \in \N;~ \underline\theta_k \leq t\},
$$ 
the number of downcrossings of the interval $[r_0,r_1]$ up to time $t$ for the process $\kappa$. Since any continuous function on the compact interval $[0,t]$ 
crosses an interval of positive length at most finitely many times, we have $D(t,r_0,r_1) < \infty$ a.s.

Assume  now $\P(\rho_\infty < \infty) > 0$. Then there exists $t_0 \in \RP$ such that $\P(\rho_\infty < t_0) > 0$.  We will prove by induction that $\{\overline\theta_k<\rho_\infty\}\cap\{ \rho_\infty < t_0\}=\{ \rho_\infty < t_0\}$ a.s.~holds for all $k \in \N$.
The induction hypothesis holds for $k=1$ since
$\overline\theta_1=0 < \rho_\infty$ a.s.
Assume the almost-sure equality of events holds for some $k\in\N$.
Set $T := \overline\theta_k$ and note 
$V(\kappa_T)\mathbbm{1}\{T<\rho_\infty\}= V(r_1)\mathbbm{1}\{T<\rho_\infty\}$, since the paths of $\kappa$ are continuous and, on the event $\{T<\rho_\infty\}$, we have $T<\infty$. In particular, $V(\kappa_T)\mathbbm{1}\{T<\rho_\infty\}$ is bounded and hence integrable.
Pick any $r \in (r_1,\infty)$ and consider 
 the supermartingale $\zeta^{T,r}$ defined in~\eqref{eq::zeta}.
 Note that $(T\vee t_0)-T$ is a bounded ($\cF_{T+t}$)-stopping time since,
 for any $s\in\RP$, we have $\{T \vee t_0 -T \leq s\} = \{t_0 \vee T\leq  T+s \}\in\cF_{T+s}$ as both $t_0\vee T$ and $s+T$ are $(\cF_t)$-stopping times.
 Applying the optional sampling theorem to $\zeta^{T,r}$
 at $(T\vee t_0)-T$ yields:
\begin{align*}
\P(\rho_{r,T}<\lambda_{r_0,T}\wedge (T\vee t_0), T<\rho_\infty \vert\cF_{T}) V(r)- \eta t_0 & \leq 
\E[\zeta_{(T\vee t_0)-T}^{T,r}\vert \cF_{T}] \\
& \leq \zeta^{T,r}_0 = V(r_1)\mathbbm{1}\{T<\rho_\infty\}.
\end{align*}
Taking expectations on both sides, we obtain the following inequality for every $r\in(r_1,\infty)$:
$$
\P(\rho_{r,T}<\lambda_{r_0,T}\wedge (T\vee t_0), T<\rho_\infty)\leq  (V(r_1)+ \eta t_0)/V(r).
$$
Thus, by the monotone convergence theorem, we obtain
\begin{align}
\nonumber
0\leq \P(\rho_\infty\leq\lambda_{r_0,T}\wedge (T\vee t_0),T<\rho_\infty ) & =
\lim_{r\to\infty}\P(\rho_{r,T}<\lambda_{r_0,T}\wedge (T\vee t_0),T<\rho_\infty ) \\
& \leq \limsup_{r\to\infty}(V(r_1)+t_0\eta)/V(r)=0,
\label{eq::prob_zero}
\end{align}
implying
$\{T<\rho_\infty \}=
\{\rho_\infty>\lambda_{r_0,T}\wedge (T\vee t_0)\}\cap \{T<\rho_\infty\}$
a.s.
We hence 
obtain 
\begin{align*}
\{ \rho_\infty < t_0\} & =\{T<\rho_\infty\}\cap\{\rho_\infty<t_0\}
=
\{\rho_\infty>\lambda_{r_0,T}\wedge (T\vee t_0)\}\cap\{T<\rho_\infty\}\cap \{\rho_\infty<t_0\}\\
& =
\{\lambda_{r_0,T}<\rho_\infty\}\cap\{\rho_\infty<t_0\}\text{ a.s.}
\end{align*}
The first equality in this display holds by the induction hypothesis, the second holds by~\eqref{eq::prob_zero},  and the third equality follows from the fact that $T\leq \lambda_{r_0,T}$ by definition~\eqref{eq::lambda}.
Since the 
equality 
$\{\underline\theta_k = \lambda_{r_0,T} < \rho_\infty\}=\{\rho_{r_1,\underline\theta_k} =\overline\theta_{k+1} < \rho_\infty\}$ holds almost surely,
we proved that $\{\overline\theta_{k+1}<\rho_\infty\}\cap\{ \rho_\infty < t_0\}=\{ \rho_\infty < t_0\}$ almost surely,
thus verifying the induction step.

We conclude that 
$$\{\overline\theta_1<\ldots<\overline\theta_k<\rho_\infty<t_0\}= \bigcap_{i=1}^k\{\overline\theta_i < \rho_\infty < t_0\} = \{\rho_\infty < t_0\} \quad\text{a.s.~for all $k\in\N$.}$$  
Since $\{\overline\theta_i<\overline\theta_{i+1}\}=\{\overline\theta_i<\underline \theta_i<\overline\theta_{i+1}\}$ for every $i\in\N$ and
$D(t_0,r_0,r_1) < \infty$ a.s., it follows that $\P(\rho_\infty < t_0) = 0$, contradicting our assumption $\P(\rho_\infty<\infty)>0$.
\end{proof}

\subsection{Transience and recurrence criteria for continuous semimartingales}
\label{subsec:Transience_Recurrence_criteria}
Lemmas~\ref{lem2.2} and~\ref{lem2.3} of the present subsection  provide sufficient conditions for recurrence and transience, respectively, for a continuous semimartingale  $\kappa$. They are continuous-time analogues to the Foster-Lyapunov criteria for discrete-time processes discussed in  e.g.~\cite{menshikov2016non}.

\begin{lem}
\label{lem2.2}
Let
$V: \RP \rightarrow (0,\infty)$ be a continuous function with $\lim_{x \rightarrow \infty} V(x) = \infty$ and
 $\kappa  = (\kappa_t)_{t \in \RP}$ an $\RP$-valued $(\cF_t)$-adapted continuous process satisfying $\limsup_{t \rightarrow \infty} \kappa_t  = \infty$ a.s. If there exists such $r_0 > 0$, such that for all $t_0 \in \RP$ and $r \in (r_0,\infty)$, the process $(V(\kappa_{(t+t_0)\wedge S_{r,t_0}}))_{t \in \RP}$ is an $(\cF_{t+t_0})$-supermartingale (recall that $r_0$ features in $S_{r,t_0}=\lambda_{r_0,t_0} \wedge \rho_{r,t_0}$ by definition~\eqref{eq::exit}), i.e., 
 $\E[V(\kappa_{t_0})]<\infty$ and
 for $0 \leq s \leq t < \infty,$
\begin{equation*}
\label{eq::rec}
    \E[V(\kappa_{(t + t_0) \wedge S_{r,t_0}})\vert \cF_{s + t_0}] \leq V(\kappa_{(s + t_0) \wedge S_{r,t_0}}),
\end{equation*}
then $\P(\liminf_{t \rightarrow \infty}  \kappa_t  \leq r_0)  =  1$.
\end{lem}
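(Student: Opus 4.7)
The plan is to argue by contradiction. Assume $\P(\liminf_{t \to \infty} \kappa_t > r_0) > 0$. Decomposing $\{\liminf_t \kappa_t > r_0\}$ as the monotone increasing union over $n \in \N$ of $\{\liminf_t \kappa_t > r_0 + 1/n\}$, I can fix some $r_1 \in (r_0, \infty)$ with $\P(\liminf_t \kappa_t > r_1) > 0$. Rewriting this further as $\{\liminf_t \kappa_t > r_1\} = \bigcup_{T \in \N}\{\inf_{t \geq T} \kappa_t > r_1\}$ (another monotone increasing union), I can pick a deterministic $T \in \N$ such that $A_T := \{\inf_{t \geq T} \kappa_t > r_1\}$ has positive probability. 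On $A_T$ the process never revisits $[0, r_0]$ after time $T$, so $\lambda_{r_0, T} = \infty$ there, and hence $S_{r,T} = \rho_{r,T}$ for every $r > r_1$. The standing hypothesis $\limsup_t \kappa_t = \infty$ a.s.\ guarantees $\rho_{r,T} < \infty$ almost surely.

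Next I would apply the supermartingale hypothesis at $t_0 := T$. The non-negative process $\bigl(V(\kappa_{(t+T) \wedge S_{r,T}})\bigr)_{t \geq 0}$ is an $(\cF_{t+T})$-supermartingale starting at $V(\kappa_T)$, with $\E[V(\kappa_T)] < \infty$ by hypothesis. Since $S_{r,T} < \infty$ a.s.\ and $\kappa$ is continuous, as $t \to \infty$ one has $V(\kappa_{(t+T) \wedge S_{r,T}}) \to V(\kappa_{S_{r,T}})$ a.s., so Fatou's lemma combined with the supermartingale inequality yields $\E[V(\kappa_{S_{r,T}})] \leq \E[V(\kappa_T)] < \infty$. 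On the sub-event $A_T \cap \{\kappa_T < r\}$, path continuity of $\kappa$ forces $\kappa_{\rho_{r,T}} = r$, so $V(\kappa_{S_{r,T}}) = V(r)$ there. Restricting the expectation to this sub-event therefore gives
\begin{equation*}
V(r)\, \P\bigl(A_T \cap \{\kappa_T < r\}\bigr) \leq \E[V(\kappa_T)].
\end{equation*}
Letting $r \to \infty$, the almost-sure finiteness of $\kappa_T$ combined with monotone convergence yields $\P(A_T \cap \{\kappa_T < r\}) \to \P(A_T) > 0$, while $V(r) \to \infty$ by hypothesis, so the left-hand side diverges. This contradicts the right-hand side's finiteness and forces $\P(A_T) = 0$, completing the argument after reversing the reductions.

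The main technical step to verify carefully is the Fatou-based passage from the supermartingale inequality for $V(\kappa_{(t+T) \wedge S_{r,T}})$ to the terminal bound $\E[V(\kappa_{S_{r,T}})] \leq \E[V(\kappa_T)]$; non-negativity of $V$ together with almost-sure finiteness of $S_{r,T}$ are exactly what make this work without any uniform-integrability input. A further point worth flagging is that the supermartingale hypothesis is only assumed at \emph{deterministic} start times $t_0$, which is why $T$ is extracted as a deterministic time after which $\kappa$ stays strictly above $r_1$, rather than as a random entry time such as $\rho_{r_1, 0}$.
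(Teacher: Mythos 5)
Your proof is correct and rests on the same core machinery as the paper's argument: using Fatou's lemma together with the supermartingale hypothesis to bound $\E[V(\kappa_{S_{r,T}})]$ by $\E[V(\kappa_T)]$, invoking $\limsup_t \kappa_t = \infty$ to ensure $S_{r,T}<\infty$ a.s., and then letting $r\to\infty$ so that $V(r)\to\infty$ forces the relevant probability to vanish. The paper proves the result directly — for each fixed $t_0$ it bounds $\P(\rho_{r,t_0}<\lambda_{r_0,t_0})\leq \E[V(\kappa_{t_0})]/V(r)$ and lets $r\to\infty$ to get $\P(\inf_{t\geq t_0}\kappa_t\leq r_0)=1$, then intersects over $t_0\in\N$ — whereas you argue by contradiction, extracting $r_1>r_0$ and a deterministic $T$ with $\P(A_T)>0$ and deriving the same Fatou bound on the sub-event $A_T\cap\{\kappa_T<r\}$. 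Your explicit restriction to $\{\kappa_T<r\}$ is a nice touch: it makes $V(\kappa_{S_{r,T}})=V(r)$ exactly (since path continuity forces $\kappa_{\rho_{r,T}}=r$) without any appeal to monotonicity of $V$, which the paper's inequality $\E[V(\kappa_{S_{r,t_0}})]\geq\P(\rho_{r,t_0}<\lambda_{r_0,t_0})V(r)$ handles somewhat less transparently when $\kappa_{t_0}\geq r$. The contradiction framing and the two monotone-union reductions add a little length but no new content; both proofs are the same argument viewed from opposite directions.
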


\begin{rem}
Up to requiring verification over a smaller class of stopping times, the hypotheses of Lemma~\ref{lem2.2}
essentially imply those of Lemma~\ref{lem2.1} with $\eta=0$. Thus, a Lyapunov function that implies recurrence will also often yield non-explosion. In the case of a transient process, however, $\eta>0$ is typically needed for Lemma~\ref{lem2.1} to be applicable.
\end{rem}

\begin{proof}[Proof of Lemma~\ref{lem2.2}.]
Pick $t_0 \in \RP$, $r \in (r_0,\infty)$ and consider the $(\cF_{t+t_0})$-supermartingale $ (V(\kappa_{(t+t_0)\wedge S_{r,t_0}}))_{t\in\RP}$.
Note that the assumption $\limsup_{t \rightarrow \infty} \kappa_t = \infty$ a.s. implies $S_{r,t_0} < \infty$ a.s. and hence 
it holds that $\lim_{t \rightarrow \infty}V(\kappa_{(t+t_0) \wedge S_{r,t_0}}) = V(\kappa_{S_{r,t_0}})$ a.s. 
The supermartingale property, Fatou's lemma, definition~\eqref{eq::exit} and the continuity of $\kappa$ imply
\begin{eqnarray*}
\infty>\E[V(\kappa_{t_0})]  &\geq & \liminf_{t \rightarrow \infty}\E[V(\kappa_{(t + t_0) \wedge S_{r,t_0}})] \geq  \E[\liminf_{t \rightarrow \infty}V(\kappa_{(t + t_0) \wedge S_{r,t_0}})] \\
& = & \E[V(\kappa_{S_{r,t_0}})] \geq \P(\rho_{r,t_0} < \lambda_{r_0,t_0}) V(r).
\end{eqnarray*}
Thus, for all $r\in(r_0,\infty)$, we obtain
$$
\P\Bigl(\inf_{t \geq t_0}  \kappa_t \leq r_0\Bigr) \geq \P(\lambda_{r_0,t_0} < \infty) \geq \P(\lambda_{r_0,t_0} <\rho_{r,t_0}) \geq 1 - \E[V(\kappa_{t_0})]/V(r).
$$
Since  $\lim_{r \rightarrow\infty}V(r) =\infty$ by assumption, it follows that $\P(\inf_{t \geq t_0}  \kappa_t \leq r_0) = 1$ for any fixed $t_0 \in \RP$, implying $\P(\liminf_{t \rightarrow \infty}  \kappa_t  \leq r_0)=\P(\cap_{t_0\in\N}\{\inf_{t \geq t_0}  \kappa_t \leq r_0\})=1$.
\end{proof}


\begin{lem}
\label{lem2.3}
Let
$V: \RP \rightarrow (0,\infty)$ be a continuous function with $\lim_{x \rightarrow \infty} V(x) = 0$ and
 $\kappa  = (\kappa_t)_{t \in \RP}$ an $\RP$-valued $(\cF_t)$-adapted continuous process satisfying $\limsup_{t \rightarrow \infty} \kappa_t  = \infty$ a.s.
If there exists $r_0 > 0$, such that for all $T \in \cT$, satisfying  $T < \infty$ a.s., and $r \in (r_0,\infty)$, the process $(V(\kappa_{(t+T)\wedge S_{r,T}}))_{t \in \RP}$, is an $(\cF_{t+T})$-supermartingale, i.e., for $0 \leq s \leq t < \infty,$
\begin{equation}
\label{eq::tran}
    \E[V(\kappa_{(t + T) \wedge S_{r,T}})\vert \cF_{s + T}] \leq V(\kappa_{(s + T) \wedge S_{r,T}}),
\end{equation}
then 
$\P(\lim_{t \rightarrow \infty} \kappa_t = \infty) = 1$.
\end{lem}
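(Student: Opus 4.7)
The plan is to combine a bounded-supermartingale convergence argument at a single a.s.-finite stopping time $T$ with an iteration over hitting times of levels increasing to infinity. Throughout I use that $V$ is bounded on $\RP$ (it is continuous, strictly positive, and $V(x)\to 0$), that continuity of $\kappa$ combined with $\limsup_{t\to\infty}\kappa_t=\infty$ a.s.\ gives $\rho_{r,T}\to\infty$ a.s.\ as $r\to\infty$ (and also makes $\rho_{R,0}$ a.s.\ finite for every $R\in\RP$), and that no Markov structure is assumed.

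First, for any a.s.-finite $T\in\cT$, I would let $r\to\infty$ in~\eqref{eq::tran}: dominated convergence applied to the conditional expectation, together with $S_{r,T}=\lambda_{r_0,T}\wedge\rho_{r,T}\to\lambda_{r_0,T}$, shows that $V(\kappa_{(t+T)\wedge\lambda_{r_0,T}})$ is a bounded non-negative $(\cF_{t+T})$-supermartingale. It therefore converges a.s.\ to some limit $X_\infty$. On $\{\kappa_T>r_0\}\cap\{\lambda_{r_0,T}<\infty\}$, path continuity gives $\kappa_{\lambda_{r_0,T}}=r_0$, so $X_\infty=V(r_0)$. On $\{\kappa_T>r_0\}\cap\{\lambda_{r_0,T}=\infty\}$, we have $\kappa_{t+T}>r_0$ for all $t\geq 0$ and $\limsup_{t\to\infty}\kappa_{t+T}=\infty$, so some subsequence of $V(\kappa_{t+T})$ tends to $0$; since the whole sequence converges, $X_\infty=0$. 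Continuity and positivity of $V$ give $\inf_{[r_0,M]}V>0$ for every $M>r_0$, and together with $\kappa_{t+T}>r_0$ and $V(\kappa_{t+T})\to 0$ this forces $\kappa_{t+T}\to\infty$ on this event. Sending $t\to\infty$ in $\E[V(\kappa_{(t+T)\wedge\lambda_{r_0,T}})\mid\cF_T]\leq V(\kappa_T)$ by dominated convergence then yields
\begin{equation*}
V(r_0)\,\P(\lambda_{r_0,T}<\infty\mid \cF_T)\ \leq\ V(\kappa_T)\quad\text{a.s.\ on }\{\kappa_T>r_0\}.
\end{equation*}

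Next, I would pick an increasing sequence $R_n\to\infty$ with $R_n>r_0$ and $V(R_n)\to 0$ (available since $V(x)\to 0$), and set $\sigma_n:=\rho_{R_n,0}\in\cT$. Each $\sigma_n$ is a.s.\ finite and, by continuity, $\kappa_{\sigma_n}=R_n$. Applying the displayed bound at $T=\sigma_n$ and taking expectations gives $\P(\lambda_{r_0,\sigma_n}<\infty)\leq V(R_n)/V(r_0)$, while the previous paragraph shows that on the complementary event $\{\lambda_{r_0,\sigma_n}=\infty\}$ one has $\kappa_t\to\infty$. Hence
\begin{equation*}
\P\bigl(\lim_{t\to\infty}\kappa_t=\infty\bigr)\ \geq\ \P(\lambda_{r_0,\sigma_n}=\infty)\ \geq\ 1-V(R_n)/V(r_0),
\end{equation*}
and letting $n\to\infty$ yields the claim.

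The main obstacle is justifying the two successive limits ($r\to\infty$ and then $t\to\infty$) inside the conditional supermartingale inequality and correctly identifying the a.s.\ limit $X_\infty$ on both $\{\lambda_{r_0,T}<\infty\}$ and $\{\lambda_{r_0,T}=\infty\}$. Everything else is soft, and the argument rests essentially on boundedness of $V$, path continuity of $\kappa$, and the standing hypothesis $\limsup_t\kappa_t=\infty$.
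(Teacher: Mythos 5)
Your proposal is correct, and it reaches the conclusion by a genuinely different route than the paper's proof, even though both ultimately apply the supermartingale estimate at $T=\rho_{r_1}$ with $r_1\to\infty$ and exploit $V(r_1)\to 0$.

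The paper's proof fixes an arbitrary $\ell\in(r_0,\infty)$, stops the supermartingale at $\lambda_{\ell,T}\wedge\rho_{r,T}$ (note the $\lambda_\ell$, not $\lambda_{r_0}$), applies conditional Fatou as $t\to\infty$ at fixed $r$ to get $\P(\lambda_{\ell,T}\leq\rho_{r,T}\mid\cF_T)V(\ell)\leq V(r_1)$, and only then sends $r\to\infty$. The conclusion then comes from the soft inclusion $\{\liminf_t\kappa_t\leq\ell\}\subset\{\lambda_{\ell,\rho_{r_1}}<\infty\}$ together with $r_1\to\infty$; letting $\ell$ run over all of $(r_0,\infty)$ gives $\liminf_t\kappa_t=\infty$ a.s.\ without ever having to analyse what $\kappa$ does on the event $\{\lambda_{\ell,T}=\infty\}$. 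You instead first remove $\rho_{r,T}$ by dominated convergence as $r\to\infty$, obtaining the single bounded nonnegative supermartingale $V(\kappa_{(t+T)\wedge\lambda_{r_0,T}})$, then invoke the supermartingale convergence theorem and identify the a.s.\ limit on both $\{\lambda_{r_0,T}<\infty\}$ and $\{\lambda_{r_0,T}=\infty\}$; on the latter you extract $\kappa_{t+T}\to\infty$ directly from $V(\kappa_{t+T})\to 0$, $\kappa_{t+T}>r_0$, and the positive lower bound for $V$ on compacts. This requires slightly more bookkeeping (the interchange of limits and the identification of $X_\infty$, which you correctly flag as the delicate part), but it buys a more explicit conclusion: convergence to infinity is exhibited on an explicit event of probability $\geq 1-V(R_n)/V(r_0)$, rather than deduced from $\P(\liminf_t\kappa_t\leq\ell)=0$ for all $\ell$. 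One cosmetic point common to both arguments: $\kappa_{\sigma_n}=R_n$ (resp.\ $\kappa_T=r_1$ in the paper) by continuity presupposes $\kappa_0<R_n$ (resp.\ $\kappa_0<r_1$); this is harmless since $\kappa_0$ is fixed and $R_n\to\infty$, but worth stating.
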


\begin{rem}
\label{rem:trans_integrability}
Note that the function $V$ in Lemma~\ref{lem2.3} is assumed to be continuous and have limit zero at infinity, making it bounded. Thus, for any $T \in \cT$, satisfying  $T < \infty$ a.s., we have $\E[V(\kappa_T)]<\infty$. Once we have such a candidate function $V$, in order to apply Lemma~\ref{lem2.3}, we only need to check the $(\cF_{t+T})$-supermartingale property in~\eqref{eq::tran}.
\end{rem}

\begin{proof}
Pick an arbitrary $\ell\in(r_0,\infty)$. For any
$r, r_1\in \RP$, satisfying $\ell<r_1<r$,  define $T := \rho_{r_1}$. 
Since $\limsup_{t \rightarrow \infty} \kappa_t = \infty$ a.s., we have $T = \rho_{r_1}<\infty$ a.s. and $\rho_{r,T}<\infty$ a.s., implying further by definition~\eqref{eq::exit} that 
$S_{r,T}=\lambda_{r_0,T} \wedge \rho_{r,T}\leq \rho_{r,T}<\infty$ a.s.

Define the process $\xi = (\xi_t)_{t \in \RP}$ by 
$\xi_t:= V(\kappa_{(t + T)\wedge \lambda_{\ell,T} \wedge \rho_{r,T}})$,
$t \in \RP$.
Since $\lambda_{\ell,T} \wedge \rho_{r,T}\leq S_{r,T}$ a.s., 
the process $\xi$ equals the $(\cF_{t+T})$-supermartingale 
$(V(\kappa_{(t+T)\wedge S_{r,T}}))_{t \in \RP}$ stopped 
at the $(\cF_{t+T})$-stopping time 
$\lambda_{\ell,T} \wedge \rho_{r,T}-T$.
Thus, by Remark~\ref{rem:trans_integrability} and~\cite[Ch. II, Thm 3.3]{revuz2013continuous}, the process $\xi$ is a positive  $(\cF_{t+T})$-supermartingale. 
Hence, for all $t \in \RP$, we have 
$$
V(\kappa_T) = \xi_0 \geq \E[\xi_t \vert \cF_T] \geq \E[\xi_t \mathbbm{1}\{\lambda_{\ell,T} \leq \rho_{r,T}\}\vert \cF_T].
$$
Since $\kappa$ is continuous, 
it holds
$\lim_{t\to\infty}\xi_t \mathbbm{1}\{\lambda_{\ell,T} \leq \rho_{r,T}\}
=V(\ell)\mathbbm{1}\{\lambda_{\ell,T} \leq \rho_{r,T}\}$
and $V(\kappa_T)=V(r_1)$ a.s. 
The (conditional) Fatou's lemma yields
\begin{align*}
\P(\lambda_{\ell,T} \leq \rho_{r,T}\vert \cF_T)
V(\ell) &= \E[\liminf_{t \rightarrow \infty}\xi_{t}\mathbbm{1}\{\lambda_{\ell,T} \leq  \rho_{r,T}\}\vert \cF_T] \\&\leq \liminf_{t \rightarrow \infty}\E[\xi_{t}\mathbbm{1}\{\lambda_{\ell,T} \leq \rho_{r,T}\}\vert \cF_T]\leq \xi_0 = V(\kappa_T)= V(r_1).
\end{align*}
Since, by assumption,
$\kappa_t\in\RP$ a.s. for all $t\in\RP$, $\kappa$ does not explode. Thus 
$\lim_{r \rightarrow \infty} \rho_{r,T} = \infty$ a.s. and, since
$\rho_{r,T}<\infty$ a.s. for all $r\in(r_1,\infty)$,
we get (recall $V(\ell)>0$)
\begin{equation}
    \label{eq:upper_bound_lower_exit}
\P(\lambda_{\ell,T} < \infty)=\P(\cup_{r\in\N\cap(r_1,\infty)}\{\lambda_{\ell,T} \leq \rho_{r,T}\}) =\lim_{r\to\infty}\P(\lambda_{\ell,T} \leq \rho_{r,T}) \leq V(r_1)/V(\ell).
\end{equation}
Recall $T=\rho_{r_1}<\infty$ a.s. and 
note $\{\liminf_{t \rightarrow \infty}  \kappa_t\leq \ell\}\subset \{\lambda_{\ell,\rho_{r_1}} < \infty\}$ for all $r_1\in(\ell,\infty)$.
The inequality
$\P(\liminf_{t \rightarrow \infty}  \kappa_t\leq \ell) \leq \P(\lambda_{\ell,\rho_{r_1}} < \infty)$
for all $r_1\in(\ell,\infty)$,
the upper bound in~\eqref{eq:upper_bound_lower_exit} and the hypotheses $V(r_1)\to0$ as $r_1\to\infty$ imply
$$
\P \Bigl(\liminf_{t \rightarrow \infty}  \kappa_t\leq \ell \Bigr) \leq \limsup_{r_1 \rightarrow \infty} \P(\lambda_{\ell,\rho_{r_1}} < \infty) = 0.
$$
Thus $\liminf_{t \rightarrow \infty}  \kappa_t>\ell$ a.s.
Since $\ell \in(r_0,\infty)$ was arbitrary, transience follows.
\end{proof}

\subsection{Lower bounds on the tails of return times and associated additive functionals}
\label{subsec:lower_bounds_semimartingale_return_Times}

In this subsection we establish \textit{lower} bounds for the tails of the return times of continuous semimartingales and associated additive functionals. In the Markovian setting, there exists a rich theory providing upper bounds for additive functionals considered here, in the context of establishing related upper bounds on the tails of the invariant distribution (see~\cite{douc2009subgeometric} and the references therein). Moreover, in the continuous semimartingale setting,~\cite{menshikov1996passage} establishes upper bounds on the return-time moments. Comparatively, the literature dedicated to lower bounds on the tails of the return times is scarce. Some results in this direction can be found in~\cite{menshikov1996passage}, however the assumptions are too restrictive to be used in our model. 
Our approach, based on the maximal inequality in Proposition~\ref{prop:maximal} below, is inspired by the discrete-time results in~\cite{hryniv2013excursions}.

The link between between additive functionals studied in the present subsection and the invariant distributions in the Markovian setting, described in~\cite{tweedie1993generalized},
enables the application of Lemma~\ref{lem:return_times} below
in the proofs of lower bounds on the tails of the invariant distribution and the sub-exponential convergence rate
in total variation (stated in  Theorem~\ref{thm:invariant_distributon}).
Lemma~\ref{lem:return_times} is also crucial for 
establishing the lower bounds on the tails of return times in Theorem~\ref{thm:return_times}.

\begin{prop}[Maximal inequality]
\label{prop:maximal}
Let $\xi = (\xi_t)_{t\in\RP}$ be an $\RP$-valued $(\cF_t)$-adapted continuous process and $f:\RP^2 \to \RP$ a measurable function. For some $r > 0$, let $\tau_r:=\inf\{t\in\RP:\xi_t\geq r\}$
(with $\inf\emptyset =\infty$) and assume
$(\xi_{t\wedge \tau_r}- \int_0^{t\wedge \tau_r} f(u,\xi_u)\ud u)_{t\in\RP}$
is an $(\cF_t)$-supermartingale. 
Then, for any $s\in(0,\infty)$, we have $\P(\sup_{0\leq t \leq s}\xi_t\geq r\vert\cF_0)\leq r^{-1}(\xi_0 + \E[\int_0^{s\wedge \tau_r} f(u,\xi_u)\ud u\vert\cF_0])$.
\end{prop}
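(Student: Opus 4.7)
The plan is to turn the supermartingale hypothesis into a one-sided control on $\E[\xi_{s\wedge\tau_r}\vert\cF_0]$ by optional stopping, and then extract the maximal probability by bounding $\xi_{s\wedge\tau_r}$ below in terms of the event we care about.

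Concretely, write $M_t := \xi_{t\wedge \tau_r} - \int_0^{t\wedge \tau_r} f(u,\xi_u)\ud u$ for the $(\cF_t)$-supermartingale supplied by the hypothesis, and note $M_0 = \xi_0$. Since $s$ is a deterministic bounded time and $M$ has continuous sample paths (as $\xi$ is continuous and the integral term is continuous in its upper limit), the supermartingale property directly yields
\begin{equation*}
\E[\xi_{s\wedge \tau_r}\vert\cF_0] - \E\left[\int_0^{s\wedge \tau_r} f(u,\xi_u)\ud u\,\bigg\vert\,\cF_0\right] = \E[M_s\vert\cF_0] \leq M_0 = \xi_0.
\end{equation*}
(If the conditional expectation of the integral is infinite on some event, the target inequality is trivially true there, so we may assume finiteness and rearrange without ambiguity.)

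Next I would bound $\xi_{s\wedge\tau_r}$ below by $r\mathbbm{1}\{\sup_{0\leq t\leq s}\xi_t\geq r\}$. Indeed, the event $A:=\{\sup_{0\leq t\leq s}\xi_t\geq r\}$ coincides with $\{\tau_r\leq s\}$ by continuity of $\xi$; on $A$ we have $s\wedge\tau_r=\tau_r$ and, again by continuity together with the definition of $\tau_r$ as the first entry time into $[r,\infty)$, $\xi_{\tau_r}\geq r$; on the complement of $A$, the lower bound is zero, which is respected since $\xi$ takes values in $\RP$. Taking conditional expectations of this pathwise inequality and combining with the previous display gives
\begin{equation*}
r\,\P(A\vert\cF_0) \leq \E[\xi_{s\wedge\tau_r}\vert\cF_0] \leq \xi_0 + \E\left[\int_0^{s\wedge \tau_r} f(u,\xi_u)\ud u\,\bigg\vert\,\cF_0\right],
\end{equation*}
and dividing through by $r>0$ produces the stated bound.

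I do not anticipate any real obstacle: the argument is the standard maximal-inequality template for supermartingales, and the only mild subtlety is that $M$ is not assumed a priori to be in $L^1$; this is bypassed by restricting to the event where the right-hand side is finite (as above) and using that $\xi\geq 0$ makes $\E[\xi_{s\wedge\tau_r}\vert\cF_0]$ unambiguously defined in $[0,\infty]$.
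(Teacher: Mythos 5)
Your proof is correct and follows essentially the same route as the paper's: evaluate the supermartingale at the deterministic time $s$ (which already coincides with the stopped process at $s\wedge\tau_r$), rearrange to bound $\E[\xi_{s\wedge\tau_r}\vert\cF_0]$, and then lower-bound $\xi_{s\wedge\tau_r}$ by $r\mathbbm{1}\{\tau_r\leq s\}$ using the identification $\{\tau_r\leq s\}=\{\sup_{0\leq t\leq s}\xi_t\geq r\}$. Your use of $\xi_{\tau_r}\geq r$ (rather than the paper's asserted equality $\xi_{\tau_r\wedge s}=r$) is actually the slightly more careful formulation, since equality can fail when $\xi_0>r$, though this has no effect on the conclusion.
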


\begin{proof}
Pick $s\in(0,\infty)$ and consider the  stopping time $\tau_r \wedge s$, bounded above by $s$. By assumption we have $\E[\xi_{\tau_r \wedge s} - \int_0^{\tau_r \wedge s} f(u,\xi_u)\ud u\vert \cF_0] \leq \xi_0$, which implies that
\begin{equation}
\label{eq:maximal_supermart_inequality}
\E[\xi_{\tau_r \wedge s}\vert \cF_0] \leq \xi_0 + \E\left[\int_0^{\tau_r \wedge s} f(u,\xi_u)\ud u \big\vert \cF_0\right].
\end{equation}
Moreover, by the definition of $\tau_r$ in the proposition we have $\{\tau_r \leq s\}=\{\sup_{u\in[0,s]}\xi_u\geq r\}$
a.s. Since the equality $\xi_{\tau_r \wedge s} = r $ holds on this event, by~\eqref{eq:maximal_supermart_inequality}
we have
$$
\P\left(\sup_{0\leq t \leq s}\xi_t \geq r\Big\vert\cF_0\right) = r^{-1}\E[\xi_{\tau_r \wedge s}\mathbbm1\{\tau_r \leq s\}\vert \cF_0] \leq r^{-1}\left(\xi_0 + \E\left[\int_0^{\tau_r \wedge s} f(u,\xi_u)\ud u\Big\vert \cF_0\right]\right),
$$
implying the proposition.
\end{proof}

Note that, in the case 
$\E[\int_0^{s\wedge \tau_r} f(u,\xi_u)\ud u]=\infty$, both the statement and the proof of Proposition~\ref{prop:maximal}
are formally correct, but not informative. In particular, 
when applying Proposition~\ref{prop:maximal} we need 
$\E[\int_0^{s\wedge \tau_r} f(u,\xi_u)\ud u]<\infty$, which follows easily 
if, for example, the function $f$ is continuous (and hence bounded on $[0,s]\times[0,r]$).

\begin{lem}
\label{lem:return_times}
Let $\kappa = (\kappa_t)_{t\in\RP}$ be an $\RP$-valued $(\cF_t)$-adapted continuous process satisfying $\limsup_{t\to\infty}\kappa_t =\infty$ a.s. Suppose that there exist  $p\in(0,\infty)$, $\ell\in(0,\infty)$, and $C \in (0,\infty)$, such that the following hold for all $r\in(\ell,\infty)$:
\begin{enumerate}[label=(\alph*)]
\item \label{assumption:Lem:return_times_a} the process $(\kappa_{t\wedge \lambda_\ell\wedge \rho_r}^p)_{t\in\RP}$ is an $(\cF_t)$-submartingale;

\item \label{assumption:Lem:return_times_b} for any  $q\in(0,1)$ and $r_q := (1-q)^{-1}r$, the process 
$$
\left(\kappa_{(\rho_{r_q}+t)\wedge \lambda_{r,\rho_{r_q}}}^{-2} - C\int_{\rho_{r_q}}^{(\rho_{r_q} + t) \wedge \lambda_{r,\rho_{r_q}}} \kappa_{u}^{-4}\ud u\right)_{t\in\RP},
$$
is an $(\cF_{\rho_{r_q}+t})$-supermartingale.
\end{enumerate}
Let $h:\RP\to\RP$ be a non-decreasing measurable function. Then 
for all $r\in(\ell,\infty)$,  $q\in(0,1)$
and $\eps\in(0,C^{-1}q(1-q)]$,
we have 
$$
\P\left(\int_0^{\lambda_\ell} h(\kappa_s)\ud s \geq \eps h(r)r^2\Big\vert \cF_0\right)\geq q\min\{(\kappa_0^p-\ell^p) (1-q)^{p}r^{-p}, 1\}, \quad \text{on the event $\{ \kappa_0 > \ell\}$}.
$$
In particular, for all $t\in(\ell,\infty)$ (with $h\equiv1$  and $\eps=C^{-1}q(1-q)$), we have
$$
\P(\lambda_\ell \geq t\vert\cF_0) \geq q\min\{(\kappa_0^p-\ell^p)((1-q)^3q/C)^{p/2}t^{-p/2}, 1\},\quad \text{on the event $\{ \kappa_0 > \ell\}$}.$$
\end{lem}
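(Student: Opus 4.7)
The plan is to combine assumptions (a) and (b) via a two-step argument. First, I will use (a) to obtain a lower bound on the probability that $\kappa$ reaches the inflated level $r_q = r/(1-q)$ before descending to $\ell$. Second, I will use (b), via the maximal inequality of Proposition~\ref{prop:maximal}, to show that once $\kappa$ hits $r_q$ it stays above $r$ for at least time $\eps r^2$ with conditional probability at least $q$. On the intersection of these two events, monotonicity of $h$ immediately forces $\int_0^{\lambda_\ell} h(\kappa_s)\ud s \geq \eps h(r) r^2$, and the tower property will supply the desired lower bound; the second (unconditional) inequality then follows by taking $h\equiv 1$ and rearranging.

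For the first step I would apply optional sampling to the bounded $(\cF_t)$-submartingale $(\kappa_{t\wedge\lambda_\ell\wedge\rho_{r_q}}^p)_{t\in\RP}$ furnished by (a). Since $\limsup_{t\to\infty}\kappa_t=\infty$ a.s., $\rho_{r_q}<\infty$ and hence $\lambda_\ell\wedge\rho_{r_q}<\infty$ a.s.; continuity of $\kappa$ pins $\kappa$ to $\ell$ at $\lambda_\ell$ and to $r_q$ at $\rho_{r_q}$. Passing to the limit $t\to\infty$ (by bounded convergence, since the stopped process lies in $[0,r_q^p]$) should yield
\begin{equation*}
\kappa_0^p \leq \ell^p\,\P(\lambda_\ell<\rho_{r_q}\mid\cF_0) + r_q^p\, \P(\rho_{r_q}<\lambda_\ell\mid\cF_0),
\end{equation*}
and rearrangement gives $\P(\rho_{r_q}<\lambda_\ell\mid\cF_0) \geq (\kappa_0^p-\ell^p)r_q^{-p} = (\kappa_0^p-\ell^p)(1-q)^p r^{-p}$ on $\{\kappa_0>\ell\}$.

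For the second step, set $T:=\rho_{r_q}$ and $\xi_t:=\kappa_{(T+t)\wedge\lambda_{r,T}}^{-2}$. The first passage $\tilde\tau:=\inf\{t\geq 0:\xi_t\geq r^{-2}\}$ coincides with $\lambda_{r,T}-T$, so assumption (b) delivers exactly the supermartingale structure required by Proposition~\ref{prop:maximal} with $f(u,x)=Cx^2$. Since $\xi_u\leq r^{-2}$ on $[0,\tilde\tau)$, the integrand is bounded by $Cr^{-4}$, and the maximal inequality yields
\begin{equation*}
\P\bigl(\sup\nolimits_{0\leq t\leq s}\xi_t \geq r^{-2}\,\big\vert\,\cF_T\bigr) \leq r^{2}\bigl(\xi_0 + Cr^{-4}s\bigr) = (1-q)^2 + Cr^{-2}s.
\end{equation*}
Choosing $s=\eps r^2$ with $\eps\in(0,C^{-1}q(1-q)]$ makes the right-hand side at most $1-q$, so $\P(\lambda_{r,T}-T>\eps r^2\mid\cF_T)\geq q$ on $\{T<\infty\}$.

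Combining, on $\{\rho_{r_q}<\lambda_\ell\}\cap\{\lambda_{r,T}-T>\eps r^2\}$ continuity and $r>\ell$ force $\lambda_\ell>T+\eps r^2$ and $\kappa_u\geq r$ throughout $[T,T+\eps r^2]$, so monotonicity of $h$ gives $\int_0^{\lambda_\ell} h(\kappa_u)\ud u \geq \eps h(r)r^2$. Since $\{\rho_{r_q}<\lambda_\ell\}\in\cF_T$, the tower property yields the claimed bound, with the $\min\{\cdot,1\}$ inserted as a trivial upper bound on probabilities. The ``in particular'' consequence follows by substituting $h\equiv 1$, $\eps=C^{-1}q(1-q)$ and $r=(Ct/(q(1-q)))^{1/2}$, so that $(1-q)^p r^{-p}=((1-q)^3 q/C)^{p/2}t^{-p/2}$. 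The main delicacy will be in handling the shifted filtration at the random time $T=\rho_{r_q}$ when invoking Proposition~\ref{prop:maximal} and verifying the integrability needed for optional sampling; both are essentially free here because $\xi$ is bounded by $r^{-2}$ on the relevant stopped interval and the submartingale in step one is bounded by $r_q^p$.
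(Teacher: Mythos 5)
Your proposal is correct and follows essentially the same two-step argument as the paper's proof: the paper first establishes the key estimate $\P(\lambda_{r,\rho_{r_q}} > \rho_{r_q} + \eps r^2 \mid \cF_{\rho_{r_q}}) \geq q$ via Proposition~\ref{prop:maximal} and then bounds $\P(\rho_{r_q}<\lambda_\ell\mid\cF_0)$ using the submartingale in (a), whereas you present these two steps in the opposite order, but the content is identical. The filtration shift you flag is handled exactly as you anticipate, since assumption (b) supplies the supermartingale directly in the filtration $(\cF_{\rho_{r_q}+t})$, which is what Proposition~\ref{prop:maximal} needs.
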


\begin{proof}
Pick $q\in (0,1)$, $r \in(\ell,\infty)$ and note that it suffices to prove the lemma for $\eps=C^{-1}q(1-q)$. We start by establishing the following inequality: 
\begin{equation}
    \label{eq:r_squared_time_to_come_back}
\P(\lambda_{r,\rho_{r_q}} > \rho_{r_q } + \eps r^2\vert \cF_{\rho_{r_q}}) \geq q\quad\text{a.s.}
\end{equation}
Define $(\xi_t)_{t\in\RP}$ by $\xi_t := \kappa_{\rho_{r_q} + t}^{-2}$ and note that~\eqref{eq::lambda} yields
$\tau_{r^{-2}}:=\inf\{t>0:\xi_t \geq r^{-2}\}=\lambda_{r,\rho_{r_q}}-\rho_{r_q}$.
By Assumption~\ref{assumption:Lem:return_times_b}, the process $(\xi_{t\wedge\tau_{r^{-2}}} - \int_0^{t\wedge\tau_{r^{-2}}} C\xi_u^2\ud u)_{t\in\RP}$ is an $(\cF_{\rho_{r_q}+t})$-supermartingale.  By Proposition~\ref{prop:maximal} applied to $\xi$ and the stopping time $\tau_{r^{-2}}$,  we obtain
\begin{align*}
    \P(\lambda_{r,\rho_{r_q }} \leq \rho_{r_q }+t\vert \cF_{\rho_{r_q }}) &= \P(\tau_{r^{-2}}\leq t\vert \cF_{\rho_{r_q}} )=\P(\sup_{0\leq u \leq t} \xi_u \geq r^{-2}\vert \cF_{\rho_{r_q}}) \\
    &\leq r^2\left( \xi_0 + \E\left[\int_0^{t\wedge\tau_{r^{-2}}} C\xi_u^2\ud u\Big\vert \cF_{\rho_{r_q}}\right] \right) \\
    &= r^2\left(\kappa_{\rho_{r_q}}^{-2} +\E\left[ C\int_{\rho_{r_q}}^{(\rho_{r_q}+t)\wedge \lambda_{r,\rho_{r_q}}}\kappa_u^{-4}\ud u\Big\vert \cF_{\rho_{r_q }}\right]\right) \\
    &\leq r^2(r_q^{-2} + Ctr^{-4}) \leq (1-q)^2 + Cr^{-2}t
    \quad\text{for any $t\in(0,\infty)$.}
\end{align*}
It follows that $\P(\lambda_{r,\rho_{r_q }} > \rho_{r_q } + t) \geq 1 - ((1-q)^2 + Cr^{-2}t)$. Taking $t = \eps r^{2}$, we obtain~\eqref{eq:r_squared_time_to_come_back}.

Note that on the event $\{\lambda_{r,\rho_{r_q }} > \rho_{r_q} + \eps r^2\}$, we have $h(\kappa_{\rho_{r_q} + t})\geq h(r)$ for all $t\in[0,\eps r^2]$ and measurable non-decreasing functions $h:\RP\to\RP$.
Since $r>\ell$, the inclusion
$$\left\{\int_0^{\lambda_\ell} h(\kappa_t) \ud t \geq \eps h(r)r^2 \right\}\supset \{\rho_{r_q } < \lambda_\ell\}\cap\{\lambda_{r,\rho_{r_q }} > \rho_{r_q } + \eps r^2\}$$ holds and,
by the inequality in~\eqref{eq:r_squared_time_to_come_back}, we obtain 
\begin{align}
\nonumber
    \P\left( \int_0^{\lambda_\ell} h(\kappa_t) \ud t \geq \eps h(r)r^2\Big \vert\cF_0\right) &\geq \E\left[\mathbbm{1}\{\rho_{r_q } < \lambda_\ell\}\P\left(\lambda_{r,\rho_{r_q }} > \rho_{r_q } + \eps r^2\vert \cF_{\rho_r}\right)  \big\vert\cF_0\right] \\
    \label{eq:return_times_excursion}
    &\geq q\P(\rho_{r_q }<\lambda_\ell\vert\cF_0).
\end{align}
By assumption
$\limsup_{t\to\infty}\kappa_t =\infty$ a.s. and thus 
$\rho_{r_q }\wedge \lambda_\ell<\infty$ a.s. 
Since $(\kappa_{t\wedge \rho_{r_q}\wedge\lambda_\ell}^p)_{t\in\RP}$ is a continuous $(\cF_t)$-submartingale by Assumption~\ref{assumption:Lem:return_times_a}, dominated convergence implies
$$
\kappa_0^p \leq \lim_{t\to\infty} \E[\kappa_{t\wedge \lambda_\ell\wedge \rho_{r_q }}^p\vert \cF_0] = \E[\kappa_{\lambda_\ell\wedge \rho_{r_q }}^p\vert\cF_0] \leq \ell^p + \P(\rho_{r_q }<\lambda_\ell\vert \cF_0)r_q^p.
$$
On the event $\{\kappa_0 > \ell\}$ we obtain $\P(\rho_{r_q}<\lambda_\ell\vert \cF_0) \geq ((\kappa_0^p-\ell^p)r_q^{-p})\wedge 1$. Combining this result with~\eqref{eq:return_times_excursion} implies the  general case of the lemma. The special case follows by setting $h\equiv1$.
\end{proof}

\section{Non-explosion and recurrence/transience dichotomy for the reflected process} \label{subsec:non-explosion_trans_rec_proofs}

\subsection{Diffusivity and non-explosion under asymptotically normal reflection}
\label{subsec:Proof_non_explosion_moments}
The non-explosion of $Z$ is essentially due to Assumption~\aref{ass:vector2}, which stipulates that the horizontal projection of the reflection  vanishes sufficiently fast when $Z_t$ is far from the origin. In contrast, in the asymptotically oblique case~\cite{menshikov2022reflecting}, the horizontal projection of the reflection vector field
has
a strictly positive limit as $\|Z_t\|_{d+1}\to\infty$, which  in a domain $\cD$
that narrows sufficiently fast, may lead to explosive behaviour~$\tau_\cE < \infty$ a.s.

\begin{thm}
\label{thm:non_explosion_moments}
Suppose that Assumptions~\aref{ass:domain2}, \aref{ass:covariance2}, \aref{ass:vector2} hold and
let $(Z,L)$ satisfy SDE~\eqref{eq::SDE} in the domain $\cD$ on the stochastic interval $[0,\taue)$.
For any starting point $z\in\cD$, the process $Z$ 
does not explode,  i.e.~$\P_z(\tau_\cE=\infty)=1$, and 
the second moment of $Z_t$  is finite and diffusive,
i.e.,
$\sup_{t\in\RP} \E_z \|Z_t\|_{d+1}^2/(1+t)<\infty$. 
\end{thm}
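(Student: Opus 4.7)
The plan is to apply the non-explosion criterion of Lemma~\ref{lem2.1} with a carefully chosen Lyapunov function, and to extract the diffusive second-moment bound from the same supermartingale estimate. The crucial observation is that taking $\gamma = 2$ in the family~\eqref{eq:F_w,gamma} makes $\Delta_\Sigma f_{w,\gamma}$ bounded (by Lemma~\ref{lem3.1}), so that the drift of $F_{w,2}(Z_t)$ grows at most linearly in time; combined with the fact that the reflection term is non-positive (by Lemma~\ref{lem:F_w,gamma}), this produces both the hypothesis of Lemma~\ref{lem2.1} and, after taking expectations, the growth bound for $\E_z F_{w,2}(Z_t)$, which controls $\E_z \|Z_t\|_{d+1}^2$.

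To set things up, I would fix $\gamma = 2$ and pick any $w \in \R\setminus\{0\}$ with $w < 1 - \beta s_0/c_0$, which is possible since $\beta < 1$ by~\aref{ass:domain2} and $s_0,c_0 > 0$ by~\aref{ass:vector2}. Then $\gamma(\beta s_0/c_0 - 1 + w) < 0$, so Lemma~\ref{lem:F_w,gamma} produces parameters $k > 0$ and $0 < x_0 < x_1$ making $F := F_{w,2}$ satisfy $\langle \nabla F(z),\phi(z)\rangle \leq 0$ on all of $\partial\cD$. Lemma~\ref{lem3.1} gives $\Delta_\Sigma f_{w,2}(z) = 2(\sigma_1^2 + \sigma_2^2(1-w)) + o_\cD(1)$ as $x \to \infty$, and since $F$ coincides with $f_{w,2}$ for $x \geq x_1$ and is smooth on the bounded transition region, there is a constant $\eta > 0$ with $\tfrac{1}{2}\Delta_\Sigma F \leq \eta$ on $\cD$. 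The bounds~\eqref{eq:bound_Lyapunov_f} and~\eqref{eq:grad_f_gamma_w_bound} yield $c_1(x+k_w)^2 - c_2 \leq F(z) \leq c_3(x+k_w)^2 + c_4$ and $\|\nabla F(z)\|_{d+1}^2 \leq c_5(x+k_w)^2$ on $\cD$.

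Applying It\^o's formula~\eqref{eq::Ito} to $F(Z_t)$ on $[0,\tau_\cE)$ gives
\begin{equation*}
F(Z_t) - F(Z_0) - \eta t = M_t + \int_0^t \bigl(\tfrac{1}{2}\Delta_\Sigma F(Z_s) - \eta\bigr)\,ds + \int_0^t \langle \nabla F(Z_s),\phi(Z_s)\rangle\, dL_s,
\end{equation*}
with both Lebesgue--Stieltjes integrals non-increasing. Set $\kappa_t := F(Z_t)$ for $t < \tau_\cE$ and $\kappa_t := \infty$ otherwise; sublinearity of $b$ ensures $F(z) \to \infty$ iff $\|z\|_{d+1} \to \infty$, so $\kappa$ is $[0,\infty]$-valued continuous with $\rho_\infty = \tau_\cE$. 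For any $r_0 > 0$, any $r > r_0$, and any stopping time $T$ with $\E[\kappa_T \mathbbm{1}\{T<\rho_\infty\}]<\infty$, on the interval $[T, S_{r,T}]$ the process $\kappa$ is bounded by $r$, so $X$ is bounded there, and~\eqref{eq:grad_f_gamma_w_bound} together with~\aref{ass:covariance1} forces $[M]_{S_{r,T}} - [M]_T$ to be bounded, hence $M$ stopped at $S_{r,T}$ is a genuine martingale. The displayed identity then shows that $\zeta^{T,r}$ of~\eqref{eq::zeta} is an $(\cF_{t+T})$-supermartingale, and Lemma~\ref{lem2.1} yields $\P_z(\tau_\cE = \infty) = 1$. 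For the diffusive bound, taking $T=0$ and $\tau_r := \inf\{t : F(Z_t)\geq r\}$, the supermartingale property gives $\E_z F(Z_{t\wedge\tau_r}) \leq F(z) + \eta t$; non-explosion delivers $\tau_r \to \infty$ $\P_z$-a.s., so Fatou yields $\E_z F(Z_t) \leq F(z) + \eta t$. Combining the lower bound on $F$ with $\|z\|_{d+1}^2 \leq C(1+x^2)$ on $\cD$ (from sublinearity of $b$ with $b(0)=0$) gives $\E_z \|Z_t\|_{d+1}^2 = O(1+t)$, i.e.\ the desired diffusivity.

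The main technical point is the true-martingale property of $M$ after stopping, which is the reason for working with $F_{w,2}$ rather than $f_{w,2}$: Lemma~\ref{lem:F_w,gamma} yields the \emph{global} reflection inequality $\langle \nabla F,\phi\rangle \leq 0$ on $\partial\cD$, while Lemma~\ref{lem3.1} alone provides the analogous statement for $f_{w,2}$ only asymptotically as $x\to\infty$, and the supermartingale hypothesis of Lemma~\ref{lem2.1} must be verified over stopped intervals that reach into the compact part of $\cD$.
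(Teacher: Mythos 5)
Your proof is correct, and it follows essentially the same strategy as the paper: verify the hypotheses of Lemma~\ref{lem2.1} via a polynomial Lyapunov function whose gradient points away from $\phi$ at the boundary, then extract the diffusive bound by the same supermartingale estimate followed by Fatou. The difference is that you use the single function $F_{w,2}$ for both parts, whereas the paper uses $f_{w,1}$ for non-explosion and $F_{w,2}$ only for the moment estimate. Both work, but your closing justification for preferring $F_{w,2}$ over $f_{w,2}$ in the non-explosion step is actually based on a misconception. In Lemma~\ref{lem2.1} the supermartingale condition need only hold for $r>r_0$ and the process $\kappa$, between $T$ and $S_{r,T}=\lambda_{r_0,T}\wedge\rho_{r,T}$, stays inside $[r_0,r]$ (it is constant on $\{T<\rho_\infty\}$ whenever $\kappa_T\notin(r_0,r)$), so by choosing $r_0$ large enough the stopped process never leaves the asymptotic region where Lemma~\ref{lem3.1} alone gives $\langle\nabla f_{w,\gamma},\phi\rangle<0$. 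That is exactly what the paper exploits with $f_{w,1}$: there is no need to pass to the globally-modified $F$ for non-explosion. Using $F_{w,2}$ is not wrong, only heavier than necessary for that half of the theorem, and it forces you to localize the quadratic variation (for $\gamma=1$ the gradient bound~\eqref{eq:grad_f_gamma_w_bound} is already uniform, whereas at $\gamma=2$ it grows like $(x+k_w)^2$ and you must restrict to the time interval on which $\kappa\le r$). On this last point be a little more careful than you were: it is not the quantity $[M]_{S_{r,T}}-[M]_T$ that is bounded (the random time $S_{r,T}-T$ can be unbounded), but rather $[M]_{(t+T)\wedge S_{r,T}}-[M]_T\le C(r)\,t$ for each fixed $t$, which is what is needed to conclude that the stopped local martingale is a true $(\cF_{t+T})$-martingale. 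Also, for the second-moment bound, your invocation of ``the supermartingale property'' is slightly elliptical, since the $\zeta^{T,r}$ of Lemma~\ref{lem2.1} involves the two-sided exit $S_{r,T}$ whereas you stop only at $\tau_r$; because $\langle\nabla F,\phi\rangle\le0$ on all of $\partial\cD$ and $\Delta_\Sigma F$ is bounded on all of $\cD$, the simpler one-sided supermartingale inequality $\E_z F(Z_{t\wedge\tau_r})\le F(z)+\eta t$ does hold directly, so this is a matter of phrasing, not substance.
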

 
The finiteness of the first moment of $\|Z_t\|_{d+1}$, implied by Theorem~\ref{thm:non_explosion_moments}, is crucial in the proof of recurrence in Theorem~\ref{thm:rec_tran}(a) and Theorem~\ref{thm:rec_tran}(c) (see Section~\ref{subsec:Proof_Thm_trans_rec} below for details).
A minor modification of the final step in the proof of Theorem~\ref{thm:non_explosion_moments}
would imply that the $p$-th moment of $Z_t$ is also diffusive for any $p\in(0,\infty)$,
i.e., $\sup_{t\in\RP} \E_z \|Z_t\|_{d+1}^p/(1+t)^{p/2}<\infty$
(see Remark~\ref{rem:modification_general_p}, after the proof of Theorem~\ref{thm:non_explosion_moments} below).

\begin{proof}[Proof of Theorem~\ref{thm:non_explosion_moments}]
    Assume~\aref{ass:vector2}, \aref{ass:domain2} and \aref{ass:covariance2} are satisfied.
Consider the process  $f_{w,\gamma}(Z)$
for any $\gamma\in(-\infty,1]$ and $w\in\R\setminus\{0\}$,
where the function $f_{w,\gamma}$ is 
given in~\eqref{eq:Lyapunov_polynomial}.
It\^o's formula applied to $f_{w,\gamma}(Z)$
on the stochastic interval $[0,\taue)$ is given in~\eqref{eq::Ito}. The quadratic variation of the local martingale $M$ in~\eqref{eq::Ito}
grows at most linearly in time. Indeed, 
the representation in~\eqref{eq::Ito_QV} and the bound in~\eqref{eq:grad_f_gamma_w_bound} of Lemma~\ref{lem3.1} (recall that $\gamma\leq 1$) imply
\begin{equation}
\label{eq::f_QV_bound}
[M]_t \leq \int_0^t \|\Sigma(Z_s)\|_{\text{op}}\|\nabla f_{w,\gamma}(Z_s)\|_{d+1}^2 \ud s\leq C_\gamma t, \text{ for }0 \leq t < \tau_{\cE},
\end{equation}
for a   constant $C_\gamma \in \RP$. 
The inequality in~\eqref{eq::f_QV_bound} relies on the norm
$\|\Sigma\|_{\text{op}}=\|\Sigma^{1/2}\|_{\text{op}}^2$
being bounded via
Assumption~\aref{ass:covariance1}. 

Our first task is to prove $\P_z(\taue=\infty)=1$ for any $z\in\cD$. Pick $w \in(-\infty, 1-\beta s_0/c_0)\setminus\{0\}$, ensuring by Lemma~\ref{lem3.1} that $\langle \nabla f_{w,1}(z),\phi(z)\rangle < 0$
for all $z=(x,y)\in\cD$ with sufficiently large $x$, and define the process $\kappa := f_{w,1}(Z)$. 
Recall the definition of the stopping times $\rho_r$ (for any $r\in\R_+$ and $T=0$) in~\eqref{eq::rho} and $\rho_\infty=\lim_{r\to\infty}\rho_r$, both
given in Section~\ref{sec:Trans_recurr_dichotomy}
for the process $\kappa$.
By~\eqref{eq:bound_Lyapunov_f}, the function $f_{w,1}$ has linear growth at infinity, implying the equality of the events
$\{\taue<\infty\}=\{\rho_\infty<\infty\}$.

We will apply Lemma~\ref{lem2.1} with the identity function $V(r) = r$ for all $r \in (2^{-1/|w|-1},\infty)$
to conclude $\P(\rho_\infty<\infty)=0$. Since $\gamma=1$, by~\eqref{eq:Sigma_Laplacian_f} in Lemma~\ref{lem3.1},
we have 
$$
\Delta_\Sigma f_{w,1}(z) = f_{w,1}(z)^{-1} \sigma_2^2(1-w) + o_\cD(1)\quad\text{as $x\to\infty$.}
$$
Thus, by~\eqref{eq:bound_Lyapunov_f}, there exists $\eta>0$ satisfying $|\Delta_\Sigma f_{w,1}(z)|<\eta$ for all $z\in\cD$.
Moreover,
by Lemma~\ref{lem3.1}, there exists $r_0>0$ such that 
$\langle \nabla f_{w,1}(z),\phi(z)\rangle < 0$
for all $z=(x,y)\in\cD$ satisfying $x\geq r_02^{-1/|w|}-k_w$.
Note that the upper bound on $f_{w,1}$ in~\eqref{eq:bound_Lyapunov_f} 
implies that 
any $z\in\cD$ with 
$f_{w,1}(z)\geq r_0$
must satisfy 
$x\geq r_02^{-1/|w|}-k_w$ (and hence 
$\langle \nabla f_{w,1}(z),\phi(z)\rangle < 0$).

For any stopping time $T \in \cT$,
recall the definition in~\eqref{eq::exit} of the exit time $S_{r,T}$
of the process $\kappa$ from the interval $(r_0,r)$ after time $T$.
In order to apply Lemma~\ref{lem2.1}, 
assume that the stopping time $T$ is such that $\E[\kappa_T\mathbbm{1}\{T < \rho_\infty\}] < \infty$.
Then, by It\^o's formula in~\eqref{eq::Ito} and the choice of the constants $\eta$ and $r_0$, the process $\zeta^{T,r}=(\zeta_t^{T,r})_{t\in\RP}$, defined for any $t\in\RP$ by  
$\zeta_t^{T,r} := (\kappa_{(t+T) \wedge S_{r,T}} - \eta ( t \wedge ( S_{r,T} - T ) ) ) \mathbbm{1}\{T < \rho_\infty \}$,
 satisfies 
 $$
 \zeta_t^{T,r}-\zeta_s^{T,r}\leq \left(M_{(t+T) \wedge S_{r,T}}-M_{(s+T) \wedge S_{r,T}}\right)\mathbbm{1}\{T < \rho_{\infty}\}
 \quad\text{a.s. for any $0\leq s\leq t$,}
 $$ 
  where $M$ is the local martingale arising in~\eqref{eq::Ito} (for the function $f_{w,1}$). Since $\kappa$ is continuous, on the event $\{T < \rho_{\infty}\}$ it holds that $S_{r,T} < \rho_\infty$. Thus, by~\eqref{eq::f_QV_bound}, we get
 $$
 [M]_{(t+T) \wedge S_{r,T}}-[M]_T \leq C_1((t+T) \wedge S_{r,T}-T)\leq C_1t \quad\text{for all $t\in\RP$,}
 $$
 ensuring that $(M_{(t+T) \wedge S_{r,T}})_{t\in\RP}$ is a true $(\cF_{t+T})$-martingale and implying 
 $\E[\zeta_t^{T,r}-\zeta_s^{T,r}\vert \cF_{s+T}] \leq 0$
 for all $0\leq s\leq t$.
 Since $\zeta^{T,r}$ is an $(\cF_{t+T})$-supermartingale
 for all $r\in(r_0,\infty)$, Lemma~\ref{lem2.1}
 yields $\P(\rho_\infty=\infty)=1$. Thus, 
 $\P_z(\tau_\cE = \infty)=1$
 for all $z\in\cD$. 
 In the remainder of the section we assume that $Z$ satisfies SDE~\eqref{eq::SDE}
on the entire time interval $\RP$. 


We now prove there exists a constant $C_2>0$ such that $\E_z\|Z_t\|_{d+1}^2\leq C_2 (t+\|z\|_{d+1}^2+1)$ holds for any $z\in\cD$ and $t\in\RP$.
Pick $w\in(-\infty,1-\beta s_0/c_0)\setminus\{0\}$, note $\beta s_0/c_0-1+w<0$ and apply Lemma~\ref{lem:F_w,gamma} to find the constants $0<x_0<x_1$ and $k\in(0,\infty)$ such that the corresponding function  $F_{w,2}$, defined in~\eqref{eq:F_w,gamma}, satisfies $\langle\nabla F_{w,2}(z),\phi(z)\rangle \leq 0$ for all $z\in\partial \cD$. Moreover, by~\eqref{eq:F_w,gamma}, we have $\Delta_\Sigma F_{w,2}(z)=\Delta_\Sigma f_{w,2}(z)$ for all $z=(x,y)\in\cD \cap [x_1,\infty)\times \R^d$. Since $F_{w,2}$ is smooth on a neighbourhood of $\cD$
and, by~\eqref{eq:Sigma_Laplacian_f} in  Lemma~\ref{lem3.1},
the function $z\mapsto|\Delta_\Sigma f_{w,2}(z)|$ is bounded on $\cD$,
there exists a constant 
$C_0'\in(0,\infty)$ satisfying $|\Delta_\Sigma F_{w,2}(z)|<C_0'$ for all $z\in\cD$.
By Assumption~\aref{ass:domain2}, the boundary function $b$ is sublinear (cf. Remark~\ref{rem:Ass2} above): there exist constants $C_1',C_2'\in(0,\infty)$ such that $b(x)^2<C_1'x^2+C_2'$ for all $x\in\RP$.
Thus, by the definition of $F_{w,2}$ in~\eqref{eq:F_w,gamma}
and the lower bound on $f_{w,2}$ in~\eqref{eq:bound_Lyapunov_f}, there exist positive constants $C_i'\in(0,\infty)$, $i\in\{3,4,5,6\}$,  satisfying 
\begin{equation}
\label{eq:basic_dereministic_bound}
\|z\|_{d+1}^2\leq x^2+b(x)^2\leq 
(C_1'+1)x^2+C_2'\leq
C_3' F_{w,2}(z)+C_4'\leq C_5' \|z\|_{d+1}^2 +C_6'\quad\text{for all $z\in\cD$.}
\end{equation}

Recall that the coordinates of $Z=(X,Y)$, taking values in $\cD$, satisfy $X_t\in\RP$ and $Y_t\in\R^d$ for all $t\in\RP$. Define the passage time of the level $r\in\RP$ for the process $X$ by
\begin{equation}
    \label{eq::varrho}
    \varrho_r := \inf\{t\in\RP: X_t\geq r\},
\end{equation}
(with $\inf \emptyset =\infty$).
Fix $r$ and assume that the starting point $z\in\cD$ of the process $Z$
lies in $[0,r)\times\R^d$ or, equivalently, $\P_z(\varrho_r>0)=1$.
The definition of $F_{w,2}$ in the previous paragraph and
It\^o's formula in~\eqref{eq::Ito} applied to the process
$(F_{w,2}(Z_{t\wedge \varrho_r}))_{t\in\RP}$
yield the following inequalities for all $t\in\RP$:
\begin{align*}
F_{w,2}(Z_{t\wedge \varrho_r})\leq F_{w,2}(z)+ \frac{1}{2} \int_0^{t\wedge \varrho_r} \Delta_\Sigma F_{w,2}(Z_s)\ud s + M_{ \varrho_r\wedge t}+\leq 
F_{w,2}(z)+ tC_0' + M_{ \varrho_r\wedge t}.
\end{align*}
Moreover, the quadratic variation of $M_{\varrho_r\wedge\cdot}$ in~\eqref{eq::Ito_QV} is almost surely bounded 
and hence integrable,
since the gradient 
$\nabla F_{w,2}$ is bounded on compact sets. Thus 
$\E_zM_{t\wedge \varrho_r}=0$ for all $t,r\in\RP$, implying the inequality 
$\E_z  F_{w,2} (Z_{t\wedge \varrho_r}) \leq tC_0' + F_{w,2}(z)$
for all $t,r\in\RP$ and $z\in\cD\cap [0,r)\times\R^d$.
Since $Z$ does not explode in finite time and has continuous paths,
we have $\varrho_r\to\infty$ a.s. as $r\to\infty$ and hence 
$Z_{t\wedge \varrho_r}\to Z_t$
a.s. as $r\to\infty$.
By Fatou's lemma and the inequalities in~\eqref{eq:basic_dereministic_bound},
for all $t\in\RP$ and $z\in\cD$, we obtain
$$
\E_z\|Z_t\|_{d+1}^2= \E_z \liminf_{r\to\infty}\|Z_{t\wedge \varrho_r}\|_{d+1}^2\leq \liminf_{r\to\infty}\E_z\|Z_{t\wedge \varrho_r}\|_{d+1}^2
\leq tC_0'C_3'+ C_5'  \|z\|_{d+1}^2 +C_6',
$$
concluding the proof of Theorem~\ref{thm:non_explosion_moments}.
\end{proof}

\begin{rem}
\label{rem:modification_general_p} 
The proof of Theorem~\ref{thm:non_explosion_moments} can be modified to get
$\sup_{t\in\RP} \E_z \|Z_t\|_{d+1}^{2p}/(1+t)^{p}<\infty$
for any $p\in(0,\infty)$.
For $p\in\N$, we note that $F_{w,2p}$
can be constructed by Lemma~\ref{lem:F_w,gamma} as above, so that
$\langle\nabla F_{w,2p}(z),\phi(z)\rangle \leq 0$ for all $z\in\partial \cD$.
By~\eqref{eq:Sigma_Laplacian_f} in  Lemma~\ref{lem3.1} there exists
$C_0'\in(0,\infty)$ satisfying $|\Delta_\Sigma F_{w,2p}(z)|<C_0'F_{w,2(p-1)}(z)$ for all $z\in\cD$.
This inequality, combined with the modification of~\eqref{eq:basic_dereministic_bound} for $\|z\|_{d+1}^{2p}$
and $F_{w,2p}(z)$, and induction on $p\in\N$ yields the diffusive property of moments for all even powers. 
The statement for all positive real powers $p$ can be deduced from the even powers via 
Lyapunov's
inequality. As the generalisation 
of the $p=1$ is not essential for the development in the present paper, the details are omitted for brevity.  
\end{rem}

\subsection{Proof of recurrence/transience classification}
\label{subsec:Proof_Thm_trans_rec}
By Theorem~\ref{thm:non_explosion_moments}, in the remainder of the section we may assume without loss of generality that $Z$ satisfies SDE~\eqref{eq::SDE}
on the entire time interval $\RP$. 
The proof of Theorem~\ref{thm:rec_tran}
starts with a lemma about non-confinement, essential  for the applications of Lemmas~\ref{lem2.2} and~\ref{lem2.3} in the proof of recurrence/transience dichotomy.

\begin{lem}
\label{lem:non_cofinment_f_F}
Under~\aref{ass:vector1}, \aref{ass:domain1} and \aref{ass:covariance1}, the process $Z=(X,Y)$ defined by SDE~\eqref{eq::SDE}, started at any $z\in\cD$, satisfies
$$\limsup_{t\to\infty}X_t=\limsup_{t\to\infty}f_{w,1}(Z_t)=\limsup_{t\to\infty}F_{w,1}(Z_t)=\limsup_{t\to\infty}g_\delta(Z_t)=\infty\quad\text{ $\P_z$-a.s.}$$ 
for any $w\in\R\setminus\{0\}$ and $\delta\in(0,\infty)$, where the functions $f_{w,1}$, $F_{w,1}$ and $g_\delta$ are given  in~\eqref{eq:Lyapunov_polynomial},~\eqref{eq:F_w,gamma} and~\eqref{eq:g_delta}, respectively.
\end{lem}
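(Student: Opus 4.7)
The plan is to reduce the four equalities to the single claim $\limsup_{t\to\infty}X_t = \infty$ $\P_z$-a.s. This reduction is immediate from the bounds established earlier: by~\eqref{eq:bound_Lyapunov_f}, $f_{w,1}(z)\geq 2^{-1/|w|}(x+k_w)$ on $\cD$; by~\eqref{eq:F_w,gamma} combined with the fact that $m\equiv 1$ on $\cD\cap[x_1,\infty)\times\R^d$, the function $F_{w,1}$ agrees with $f_{w,1}$ there and therefore also grows at least linearly in $x$; and by~\eqref{eq:bound_Lyapunov_g}, $g_\delta(z)\geq\log x$ for $x>\re$. Each of $f_{w,1}, F_{w,1}, g_\delta$ is thus bounded below by a function of $x$ tending to infinity with $x$, so the unboundedness of $X_t$ forces the unboundedness of the other three processes; the converse is trivial since each function is finite on bounded subsets of $\cD$.

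For the core claim, fix $R>X_0$ and let $\tau_R := \inf\{t\in[0,\taue):X_t\geq R\}$ with $\inf\emptyset := \taue$. I would show $\P_z(\tau_R < \taue) = 1$ for every $z\in\cD$ and every $R>0$; iterating over $R\uparrow\infty$ then gives $\limsup X_t = \infty$ $\P_z$-a.s. On the event $\{\tau_R = \taue\}$ the process stays in the compact region $\cD_R := \cD\cap[0,R]\times\R^d$, where by~\aref{ass:covariance1}, \aref{ass:domain1} and \aref{ass:vector1} the coefficients $\Sigma$ and $\phi$ are bounded and Lipschitz with $\Sigma$ uniformly elliptic, while $\partial\cD\cap\cD_R$ is smooth (including the behaviour at the origin prescribed by \aref{ass:domain1}). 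A standard argument then precludes explosion on this event: $\taue=\infty$, and consequently $\tau_R=\infty$ there.

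To derive a contradiction from $\P_z(\tau_R=\infty)>0$, I would invoke an auxiliary PDE. Let $u:\cD_R\to\RP$ be the bounded non-negative $C^2$-solution of the mixed boundary value problem
\begin{align*}
\tfrac{1}{2}\Delta_\Sigma u = -1 \text{ in } \cD_R\setminus\partial\cD_R, \quad \langle\nabla u,\phi\rangle = 0 \text{ on } \partial\cD\cap\cD_R, \quad u = 0 \text{ on } \{R\}\times\R^d\cap\cD,
\end{align*}
whose existence follows from classical theory of uniformly elliptic operators on bounded smooth domains with mixed Neumann--Dirichlet data. Applying It\^o's formula~\eqref{eq::Ito} to $u(Z_{t\wedge\tau_R})$, the boundary integral vanishes because $\langle\nabla u,\phi\rangle = 0$ on $\partial\cD\cap\cD_R$, yielding $u(Z_{t\wedge\tau_R}) = u(z) + M_{t\wedge\tau_R} - (t\wedge\tau_R)$, where $M$ is a true martingale on $[0,\tau_R]$ since $\nabla u$ and $\Sigma$ are bounded on $\cD_R$ and hence $[M]_{t\wedge\tau_R}$ grows at most linearly in $t$. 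Taking expectations and letting $t\to\infty$ via monotone convergence yields $\E_z[\tau_R]\leq u(z)<\infty$, whence $\tau_R<\infty$ $\P_z$-a.s., contradicting our standing assumption.

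The main obstacle is the existence and regularity of the PDE solution $u$, since $\partial\cD_R$ has a ``corner'' where the Dirichlet face $\{R\}\times\{y:\|y\|_d\leq b(R)\}$ meets the Neumann face $\partial\cD\cap\cD_R$, and one must also handle the cusp of $\partial\cD$ at the origin (tamed by \aref{ass:domain1}). Classical PDE theory handles both features; if one prefers a purely probabilistic route, an alternative is to establish the uniform lower bound $\inf_{z\in\cD_R}\P_z(\tau_R\leq T)\geq p>0$ for some $T,p>0$ via a support-theorem argument for reflected SDEs together with compactness of $\cD_R$, and then iterate via the strong Markov property to reach the same conclusion.
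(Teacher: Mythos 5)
Your reduction to the single claim $\limsup_{t\to\infty}X_t=\infty$ via the bounds~\eqref{eq:bound_Lyapunov_f},~\eqref{eq:bound_Lyapunov_g} and the identity $F_{w,1}=f_{w,1}$ outside a neighbourhood of the origin is exactly the paper's argument for that part, and it is correct. The divergence is in the core claim: the paper simply cites \cite[Thm~4.1]{menshikov2022reflecting} for the non-confinement statement, whereas you attempt a self-contained proof via a mixed Dirichlet/oblique boundary value problem on $\cD_R$.

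That PDE route is in the right spirit — the paper itself uses a closely related Dirichlet-problem construction in Proposition~\ref{prop:crossing_time_probabilities} — but as written there is a genuine gap at the step that makes $M$ a true martingale "since $\nabla u$ and $\Sigma$ are bounded on $\cD_R$." At the codimension-two interface where the Dirichlet face $\{R\}\times\R^d\cap\cD$ meets the oblique-derivative face $\partial\cD\cap\cD_R$, the solution of a mixed boundary value problem generically exhibits a power-law singularity $r^{\alpha}$ with $\alpha<1$; the interior wedge angle here is roughly $\pi/2-\arctan b'(R)$, which is obtuse whenever $b'(R)<0$, and under~\aref{ass:domain1} alone $b'$ may be negative or oscillate, so you cannot choose $R$ to avoid this. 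Without a bounded gradient you lose $[M]_t\leq Ct$, and in fact It\^o's formula~\eqref{eq::Ito} is only licensed when $\nabla u$ has a continuous extension to the closed domain, which fails at such a singularity; "classical PDE theory handles both features" therefore overstates what is available. A clean repair, consistent with what the paper does when it genuinely needs a PDE solution in Proposition~\ref{prop:crossing_time_probabilities}, is to smooth the corner of $\cD_R$ to a $C^2$ domain first, and/or to relax the exact equations to the one-sided inequalities $\Delta_\Sigma u\leq -2$ in the interior and $\langle\nabla u,\phi\rangle\leq 0$ on $\partial\cD$, which suffice for the supermartingale estimate $\E_z[\tau_R]\leq u(z)$ and are far easier to arrange with a regular $u$. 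The support-theorem alternative you gesture at would also work in principle, but a uniform-in-$z$ hitting bound for a reflected diffusion near the parabolic tip at the origin is itself not routine and would need its own argument. In short: the reduction and the high-level strategy are sound, but the central analytical ingredient (a suitably regular $u$, or a uniform hitting estimate) is asserted rather than established, and the asserted regularity is exactly where mixed boundary value problems commonly fail.
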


\begin{proof}
Under~\aref{ass:vector1}, \aref{ass:domain1}, \aref{ass:covariance1}, \cite[Thm~4.1]{menshikov2022reflecting} implies that the process $Z=(X,Y)$ cannot be confined to a compact set:
$\limsup_{t \to \infty} X_t = \infty$ $\P_z$-a.s.~for all $z\in\cD$. 
In particular, by the inequality in~\eqref{eq:bound_Lyapunov_f} (resp.~\eqref{eq:bound_Lyapunov_g}), for any parameter value $w\in\R\setminus\{0\}$
(resp.\ $\delta\in(0,\infty)$)
the process $f_{w,1}(Z)$ (resp.\ $g_\delta(Z)$) is also not confined: $\limsup_{t \to \infty}f_{w,1}(Z_t) = \infty$ (resp.\ $\limsup_{t\to\infty}g_\delta(Z_t)=\infty$) a.s. Since, by~\eqref{eq:F_w,gamma}, the functions $f_{w,1}$ and $F_{w,1}$ coincide on the complement of a neighbourhood of the origin, the lemma follows.    
\end{proof}

\begin{proof}[Proof of Theorem~\ref{thm:rec_tran}]
    Since Assumptions~\aref{ass:vector1}, \aref{ass:domain1}, \aref{ass:covariance1} hold if~\aref{ass:vector2}, \aref{ass:domain2}, \aref{ass:covariance2} are satisfied, we may apply Lemma~\ref{lem:non_cofinment_f_F} in the proofs of this section.

\smallskip

\noindent \underline{(a) Recurrence for $\beta<\beta_c$}.
The definition of $\beta_c$ in~\eqref{eq:beta_c} and the assumption  $\beta<\beta_c$ imply that $\beta s_0/c_0<\sigma_1^2/\sigma_2^2$ (recall the the definition of $\sigma_1^2,\sigma_2^2$ and $s_0,c_0$
in Assumptions~\aref{ass:covariance2} and~\aref{ass:vector2}, respectively).
Pick $w\in(1-\sigma_1^2/\sigma_2^2,1-\beta s_0/c_0)\setminus\{0\}$
and note that
$0<1-(1-w)\sigma_2^2/\sigma_1^2$. Choose $\gamma\in(0,\min\{1,1-(1-w)\sigma_2^2/\sigma_1^2\})$
and observe the inequalities:
\begin{equation}
    \label{eq:Recur_parm_choices}
    \sigma_1^2(\gamma-1)+\sigma_2^2(1-w)<0\quad\text{and}\quad
    \gamma(s_0\beta/c_0-1+w)<0.
\end{equation}
Lemma~\ref{lem3.1} and the inequalities in~\eqref{eq:Recur_parm_choices} and~\eqref{eq:bound_Lyapunov_f} imply that there exists
$r_0\in(0,\infty)$ such that for all $z=(x,y)\in\cD$ with $x\geq 2^{-1/|w|}r_0-k_w$
we have
\begin{equation}
\label{eq:negative_drift_inequalities_f_w_gamma}
\Delta_\Sigma f_{w,\gamma}(z)<0\quad \text{and}\quad
\langle \nabla f_{w,\gamma}(z),\phi(z)\rangle < 0.
\end{equation}

Consider the process $\kappa = f_{w,1}(Z)$ and a continuous function $V:\RP\to(0,\infty)$, satisfying $V(r) = r^{\gamma}$ for all $r\in(2^{-1/|w|-1},\infty)$.
Since, by~\eqref{eq:bound_Lyapunov_f}, $f_{w,1}(z)\geq 2^{-1/|w|}$ for all $z\in\cD$,
we have $V(\kappa)=f_{w,\gamma}(Z)$. Moreover, by~\eqref{eq:bound_Lyapunov_f} and the fact that $0<\gamma<1$, there exist constants $D_1,D_2\in(0,\infty)$ such that the inequality $f_{w,\gamma}(z) \leq D_1 \|z\|_{d+1}+D_2$ 
holds
for all $z\in\cD$. Thus, by Theorem~\ref{thm:non_explosion_moments}, we get $0<\E[V(\kappa_t)]=\E_z[f_{w,\gamma}(Z_t)] \leq D_1 \E_z \|Z_t\|_{d+1}+D_2< \infty$ for any $t\in\RP$ and $z\in\cD$.

For any fixed $t_0\in\RP$ and any $r\in(r_0,\infty)$,
recall the definition in~\eqref{eq::exit} of
the exit time $S_{r,t_0}$
of the process $\kappa$ from the interval $(r_0,r)$ after time $t_0$.
The choice of $r_0$, the inequalities in~\eqref{eq:negative_drift_inequalities_f_w_gamma}, and 
It\^o's formula in~\eqref{eq::Ito}, applied to the process
$V(\kappa_{(\cdot+t_0) \wedge S_{r,t_0}})=f_{w,\gamma}(Z_{(\cdot+t_0) \wedge S_{r,t_0}})$
 imply 
$$
f_{w,\gamma}(Z_{(t+t_0) \wedge S_{r,t_0}})- f_{w,\gamma}(Z_{t_0})-(M_{(t+t_0) \wedge S_{r,t_0}}-M_{t_0})\leq 0\quad\text{a.s.}
$$
The local martingale $(M_t)_{t\in\RP}$ has integrable quadratic variation by~\eqref{eq::f_QV_bound}, making it a true martingale and implying $\E[V(\kappa_{(t+t_0)\wedge S_{r,t_0}}) -V(\kappa_{(s+t_0)\wedge S_{r,t_0}})|\mathcal{F}_{s+t_0}]\leq 0$ for all $0 \leq s\leq t$. Since
$V(\kappa_{t_0\wedge S_{r,t_0}})=V(\kappa_{t_0})$
is integrable, the process
$(V(\kappa_{(t+t_0)\wedge S_{r,t_0}}))_{t\in\RP}$ is an $(\cF_{t+t_0})$-supermartingale for all $r\in(r_0,\infty)$. By~Lemma~\ref{lem:non_cofinment_f_F},
we have $\limsup_{t \to \infty} \kappa_t = \infty$ a.s. Since $\lim_{r \to \infty}V(r) = \infty$, we may apply Lemma~\ref{lem2.2} to conclude that $\kappa= f_{w,1}(Z)$ is recurrent. By~\eqref{eq:bound_Lyapunov_f} and~\eqref{eq:basic_dereministic_bound}, the recurrence of $Z$ follows.

\smallskip

\noindent \underline{(b) Transience for $\beta>\beta_c$}.
The definition of $\beta_c$ in~\eqref{eq:beta_c} and the assumption  $\beta>\beta_c$ imply that $\sigma_1^2/\sigma_2^2<\beta s_0/c_0$. Pick $w \in (1-\beta s_0/c_0,1-\sigma_1^2/\sigma_2^2)\setminus\{0\}$ and note that $1-\sigma_2^2/\sigma_1^2(1-w)<0$. Choose $\gamma \in (1-\sigma_2^2/\sigma_1^2(1-w),0)$ and observe the inequalities:
\begin{equation}
\label{eq::Tran_parm_choice}
\gamma(\sigma_1^2(\gamma-1)+\sigma_2^2(1-w))<0\quad\text{and}\quad
    \gamma(s_0\beta/c_0-1+w)<0.
\end{equation}
Lemma~\ref{lem3.1}, along with the inequalities in~\eqref{eq::Tran_parm_choice} and~\eqref{eq:bound_Lyapunov_f} imply that there exists 
$r_0\in(0,\infty)$ such that for all $z=(x,y)\in\cD$ with $x\geq 2^{-1/|w|}r_0-k_w$
we have
$$
\Delta_\Sigma f_{w,\gamma}(z)<0\quad \text{and}\quad
\langle \nabla f_{w,\gamma}(z),\phi(z)\rangle < 0.
$$
Consider the process $\kappa = f_{w,1}(Z)$ and a continuous function $V:\RP\to(0,\infty)$, satisfying $V(r) = r^{\gamma}$ for all $r\in(2^{-1/|w|-1},\infty)$.
Since, by~\eqref{eq:bound_Lyapunov_f}, $f_{w,1}(z)\geq 2^{-1/|w|}$ for all $z\in\cD$,
we have $V(\kappa)=f_{w,\gamma}(Z)$.
For any 
stopping time $T \in \cT$, satisfying  $T < \infty$ a.s.,
we have $\E_zV(\kappa_T)<\infty$ for all $z\in\cD$ since the function $V$ is bounded.
Pick $r\in(r_0,\infty)$
and recall the definition in~\eqref{eq::exit} of
the exit time $S_{r,T}$
of the process $\kappa$ from the interval $(r_0,r)$ after time $T$.
The choice of $r_0$ and 
It\^o's formula in~\eqref{eq::Ito}, applied to the process
$V(\kappa_{(\cdot+T \wedge S_{r,T}})=f_{w,\gamma}(Z_{(\cdot+T) \wedge S_{r,T}})$,
imply
$$
f_{w,\gamma}(Z_{(t+T) \wedge S_{r,T}})- f_{w,\gamma}(Z_{T})-(M_{(t+T) \wedge S_{r,T}}-M_{T})\leq 0\quad\text{a.s.}
$$
Moreover, local martingale $(M_t)_{t\in\RP}$ has integrable quadratic variation by~\eqref{eq::f_QV_bound}, making it a true martingale, and for any $t\in\RP$, $\E[V(\kappa_t)] < \infty$ by the fact that $V$ is bounded. This implies $\E[V(\kappa_{(t+T)\wedge S_{r,T}})-V(\kappa_{(s+T)\wedge S_{r,T}})|\cF_{s+T}]\leq 0$ for all $0 \leq s \leq t$.  Thus, $(V(\kappa_{(t+T)\wedge S_{r,T}})_{t\in\RP}$ is an $(\cF_{t+T})$-supermartingale for all $r\in(r_0,\infty)$.  Since $\limsup_{t \to \infty} \kappa_t = \infty$ a.s. and $\lim_{r \to \infty}V(r) = 0$, Lemma \ref{lem2.3} yields transience of $\kappa=f_{w,1}(Z)$.
By~\eqref{eq:bound_Lyapunov_f}, the transience of $Z$ follows.

\smallskip

\noindent \underline{(c) The critical case $\beta=\beta_c$}. Assume $\beta=\beta_c$ and \aref{ass:domain2plus}, \aref{ass:vector2plus}, \aref{ass:covariance2plus}. Consider the process $\kappa := g_{\delta}(Z)$ with $g_\delta$ defined in~\eqref{eq:g_delta} and the parameter $\delta$ chosen to satisfy the assumption in Lemma~\ref{lem:g_delta}.
Then, by Lemma~\ref{lem:g_delta}, there exists  $x_0>0$ such that the inequalities in~\eqref{eq:sigma_lap_scalar_prod_g_delta} hold.
Define $r_0:=C_\delta + \log x_0$ and note that, 
by~\eqref{eq:bound_Lyapunov_g}, the inequality
$r_0\geq g_\delta(z)$ (where $z=(x,y)$)
implies 
$x\in[x_0,\infty)$.
Set $V(r) = r$ for all $r \in (1/2,\infty)$
and note $V(\kappa_t) =\kappa_t= g_{\delta}(Z_t)$ (recall $g_\delta > 1$ on $\cD$).
Pick $t_0\in\RP$ and $r\in(r_0,\infty)$
and 
recall the definition in~\eqref{eq::exit}
of the exit time $S_{r,t_0}$ 
of the process $\kappa$ from the interval $(r_0,r)$ after time $t_0$.
The choice of $r_0$, the inequalities in~\eqref{eq:sigma_lap_scalar_prod_g_delta} and 
It\^o's formula in~\eqref{eq::Ito}, applied to the process
$\kappa_{(\cdot+t_0) \wedge S_{r,t_0}}=g_{\delta}(Z_{(\cdot+t_0) \wedge S_{r,t_0}})$,
imply
$$
g_\delta(Z_{(t+t_0) \wedge S_{r,t_0}})- g_\delta(Z_{t_0})-(M_{(t+t_0) \wedge S_{r,t_0}}-M_{t_0})\leq 0\quad\text{a.s.}
$$
By continuity, the gradient $\|\nabla g_\delta(z)\|_{d+1}^2$ is bounded on compact sets and $\|\Sigma\|_{\text{op}}$ is bounded by Assumption~\aref{ass:covariance1}. Thus, by the representation in~\eqref{eq::Ito_QV}, we can bound the quadratic variation
\begin{align*}
[M]_{(t+t_0) \wedge S_{r,t_0}}-[M]_{t_0} \leq \int_{t_0}^{(t+t_0) \wedge S_{r,t_0}} \|\Sigma(Z_s)\|_{\text{op}}\|\nabla g_{\delta}(Z_s)\|_{d+1}^2ds \leq \tilde C_1t\quad\text{a.s.},
\end{align*}
where $\tilde C_1\in(0,\infty)$ is a positive constant. Thus the process $(M_{(t+t_0) \wedge S_{r,t_0}}-M_{t_0})_{t\in\RP}$ is a true martingale.
Moreover, since $g_\delta(z) \leq \tilde C_2\|z\|_{d+1} + \tilde C_3$ holds for all $z\in\cD$ for some positive constants $\tilde C_2,\tilde C_3$, Theorem~\ref{thm:non_explosion_moments} implies $\E_z[\kappa_t]=\E_z[g_{\delta}(Z_t)]\leq  \tilde C_2 \E_
z \|Z_t\|_{d+1} +\tilde C_3<\infty$ for any $t\in\RP$ and $z\in\cD$.
Thus, $\E[\kappa_{(t+t_0)\wedge S_{r,t_0}} -\kappa_{(s+t_0)\wedge S_{r,t_0}}|\mathcal{F}_{s+t_0}]\leq 0$ for all $0 \leq s\leq t$, and hence $(\kappa_{(t+t_0)\wedge S_{r,t_0}})_{t\in\RP}$ is an $(\cF_{t+t_0})$-supermartingale for all $r\in(r_0,\infty)$ and any $t_0\in\RP$. Moreover, by 
Lemma~\ref{lem:non_cofinment_f_F},
we have
 $\limsup_{t \to \infty}\kappa_t=\infty$ a.s.
Since $\lim_{r \to \infty}V(r) = \infty$, we may apply Lemma~\ref{lem2.2} to conclude that $\kappa= g_{\delta}(Z)$ is recurrent. By~\eqref{eq:bound_Lyapunov_g} and~\eqref{eq:basic_dereministic_bound}, the recurrence of $Z$ follows.
\end{proof}

\section{Return times and drift conditions}
\label{subsection:return_times_drift_conditions}

The tails of return times are controlled by Propositions~\ref{prop:return_time_upper_bound} and~\ref{prop:return_time_lower_bound},
established in this section. The two propositions are crucial
in the proof of Theorem~\ref{thm:return_times}(b);
see Remark~\ref{rem:proof_of_return_time_thm} below for more details.
Moreover, Propositions~\ref{prop:return_time_lower_bound} is key in obtaining the lower bounds on the tails of the  invariant distribution of $Z$ in the positive-recurrent regime:
see the proof of Theorem~\ref{thm:invariant_distributon}
in Section~\ref{subsection:main_proofs} below. The drift conditions in Lemma~\ref{Lem:drift_conditions}, proved in the present section, are used for establishing finite moments (and hence upper bounds on the tails) of the invariant distribution and the rate of convergence to stationarity of $Z$
in the positive-recurrent regime (see the proof of Proposition~\ref{prop:upper_bounds} in Section~\ref{subsection:main_proofs} below). The common theme of the proofs of the results in this section is that they are all based on the supermartingale property of certain processes. 

Recall the definitions of the return time $\varsigma_r$ (for $r\in(0,\infty)$) in~\eqref{eq::varsigma} and  of the critical exponent $m_c=(1-\beta/\beta_c)/2$ in~\eqref{eq::m0}.

\begin{prop}
\label{prop:return_time_upper_bound}
Suppose that~\aref{ass:domain2}, \aref{ass:covariance2}, \aref{ass:vector2} hold and $\beta < \beta_c$. Then, for every $p \in(0,m_c)$, there exists $x_0>0$ such that for all $x_1 \in[x_0,\infty)$ and $z=(x,y)\in\cD$  there exists a constant $C \in \RP$ (depending only on $x$ and $p$) for which
$
\E_z[\varsigma_{x_1}^p]\leq C.
$
\end{prop}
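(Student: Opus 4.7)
The plan is to combine the Lyapunov function $F_{w,\gamma}$ from~\eqref{eq:F_w,gamma} with a continuous-time semimartingale passage-time moment bound in the spirit of~\cite{menshikov1996passage}. Fix $p \in (0, m_c)$ and pick $\gamma \in (2p, 2m_c)$. Using $2m_c = 1 - s_0\beta\sigma_2^2/(c_0\sigma_1^2)$, one verifies that the open interval
$I_\gamma := \bigl(1 + \sigma_1^2(\gamma-1)/\sigma_2^2,\; 1 - s_0\beta/c_0\bigr)$
is non-empty, and we select $w \in I_\gamma \setminus \{0\}$. This choice simultaneously delivers $\gamma(s_0\beta/c_0 - 1 + w) < 0$ and $\sigma_1^2(\gamma-1) + \sigma_2^2(1-w) < 0$. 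Consequently Lemma~\ref{lem:F_w,gamma} yields a function $F_{w,\gamma}$ with $\langle \nabla F_{w,\gamma}(z),\phi(z)\rangle \leq 0$ on all of $\partial \cD$, while Lemma~\ref{lem3.1} together with~\eqref{eq:bound_Lyapunov_f} provides $x_0 > 0$ and $c_\Sigma > 0$ such that
$\Delta_\Sigma F_{w,\gamma}(z) \leq -c_\Sigma (x+k_w)^{\gamma-2}$
for all $z = (x,y) \in \cD$ with $x \geq x_0$; after possibly enlarging $x_0$, we may also assume $F_{w,\gamma} \equiv f_{w,\gamma}$ beyond the modification cutoff.

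Since $\varsigma_{x_1}$ is non-increasing in $x_1$, it suffices to prove $\E_z[\varsigma_{x_0}^p] \leq C(x,p)$. Applying It\^o's formula~\eqref{eq::Ito} to $F_{w,\gamma}(Z_{t \wedge \varsigma_{x_0} \wedge \varrho_n})$ with the localiser $\varrho_n := \inf\{t: X_t \geq n\}$ from~\eqref{eq::varrho}, using the gradient bound~\eqref{eq:grad_f_gamma_w_bound} to turn the stochastic integral into a true martingale, and exploiting both the non-positive boundary term and the negative bulk drift, taking expectations yields
\[
\tfrac{c_\Sigma}{2}\,\E_z\!\left[\int_0^{t \wedge \varsigma_{x_0} \wedge \varrho_n} (X_s + k_w)^{\gamma-2}\, ds\right] \leq F_{w,\gamma}(z).
\]
Passing $n \to \infty$ (permissible by non-explosion, Theorem~\ref{thm:non_explosion_moments}) and $t \to \infty$ via monotone convergence converts this into a corresponding bound over the interval $[0,\varsigma_{x_0}]$.

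The final step is to convert this integrated drift estimate into a moment bound on $\varsigma_{x_0}$ itself. Setting $\kappa_t := f_{w,1}(Z_t)$, which is comparable to $X_t + k_w$ up to constants by~\eqref{eq:bound_Lyapunov_f}, the preceding display is equivalent to the statement that the process $\kappa_{t \wedge \varsigma_{x_0}}^{\gamma} + c' \int_0^{t \wedge \varsigma_{x_0}} \kappa_s^{\gamma-2}\,ds$ is dominated in expectation by its initial value. This is precisely the hypothesis required by the continuous-semimartingale passage-time moment theorem (a continuous-time analogue of~\cite[Thm.~1]{menshikov1996passage}), whose conclusion is $\E_z[\varsigma_{x_0}^p] \leq C(x,p)\kappa_0^{2p}$ for every $p < \gamma/2$. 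Letting $\gamma \uparrow 2m_c$ then covers every $p \in (0, m_c)$. The main obstacle is producing the semimartingale input in exactly the form required by the moment theorem: most delicately, ensuring that the boundary local-time integral $\int \langle \nabla F_{w,\gamma},\phi\rangle\,dL$ does not destroy the supermartingale inequality. This is precisely why the modification from $f_{w,\gamma}$ to $F_{w,\gamma}$ via Lemma~\ref{lem:F_w,gamma} is essential: it enforces the non-positive boundary sign on \emph{all} of $\partial \cD$, not merely for large $x$ as in Lemma~\ref{lem3.1}.
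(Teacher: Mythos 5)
Your proposal follows essentially the same route as the paper: pick the exponent $\gamma$ just below $2m_c$, choose $w$ in the non-empty interval $(1+\sigma_1^2(\gamma-1)/\sigma_2^2,\,1-s_0\beta/c_0)\setminus\{0\}$ so that both the bulk drift and the boundary inner product are negative for large $x$, run It\^o's formula on a stopped Lyapunov process, establish the supermartingale structure, and invoke the passage-time moment theorem of Menshikov--Williams. The parameter bookkeeping is consistent with the paper after the relabelling $\gamma_{\text{paper}}=\gamma_{\text{yours}}/2$, and the localisation/limit passage via non-explosion is fine.

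There is, however, one substantive misstatement in your final paragraph: you claim that replacing $f_{w,\gamma}$ by the modified function $F_{w,\gamma}$ via Lemma~\ref{lem:F_w,gamma} is \emph{essential} because it forces $\langle\nabla F_{w,\gamma},\phi\rangle\leq 0$ on all of $\partial\cD$. It is not essential here, and the paper's own proof does not use it. Because every process path contributing to the local-time integral is stopped at $\varsigma_{x_0}$ (the first time $X$ drops below $x_0$), the boundary local time only accumulates on $\partial\cD\cap[x_0,\infty)\times\R^d$, which is precisely the region where Lemma~\ref{lem3.1} already gives $\langle\nabla f_{w,\gamma},\phi\rangle<0$ once $x_0$ is chosen large enough. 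The global sign control from Lemma~\ref{lem:F_w,gamma} is genuinely needed in Lemma~\ref{Lem:drift_conditions}, where one cannot stop the process at a return time, but in the present proposition the stopping at $\varsigma_{x_0}$ makes the $F$-modification a harmless but unnecessary layer; using $f_{w,\gamma}$ directly is simpler and is what the paper does. A second, smaller, point: the phrase ``dominated in expectation by its initial value'' undersells what is needed (and what It\^o's formula actually delivers) --- the Menshikov--Williams input is the full \emph{conditional} supermartingale property of $\kappa^\gamma_{t\wedge\lambda_{\ell_0}}+c'\int_0^{t\wedge\lambda_{\ell_0}}\kappa_s^{\gamma-2}\,\ud s$, which you should state as such; fortunately the pathwise It\^o inequality plus the true-martingale property of the stochastic integral (via the quadratic-variation bound~\eqref{eq::f_QV_bound}) gives exactly this.

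Apart from these two points your argument reproduces the paper's proof.
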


\begin{proof}
Pick $\gamma \in (0,(1-\beta/\beta_c)/2)$.  Then we can choose $w \in  (-\infty,1-\beta s_0/c_0)\setminus\{0\}$, such that $\sigma_1^2(2\gamma -1) + \sigma_2^2(1-w)<0$.
Thus, for any $\eps\in(0,-(\sigma_1^2(2\gamma -1) + \sigma_2^2(1-w)))$,
Lemma~\ref{lem3.1} implies the existence of $\ell_0 \in(0,\infty)$,  such that for all $z=(x,y)\in\cD$ with $x > 2^{-1/|w|}\ell_0 -k_w$ (the constant $k_w$
is given above display~\eqref{eq:Lyapunov_polynomial}), 
the function $f_{w,2\gamma}$ defined in~\eqref{eq:Lyapunov_polynomial} 
satisfies
\begin{equation}
\label{eq:negative_drift_tail_return_time}
\Delta_\Sigma f_{w,2\gamma}(z) +\eps f_{w,2\gamma-2}(z) < 0  \quad \text{and} \quad
\langle \nabla f_{w,2\gamma}(z),\phi(z)\rangle < 0.
\end{equation}

Having chosen the parameters $\gamma$, $w$, and $\eps$, consider the process $\kappa = f_{w,1}(Z)$.
The key step in the proof of the proposition consists of the application of~\cite[Thm~2.1]{menshikov1996passage} to deduce that,  for any $z\in\cD$ with $\kappa_0=f_{w,1}(z)> \ell_0$, 
the return time $\lambda_{\ell_0}$ of $\kappa$
below the level $\ell_0$, defined in~\eqref{eq::lambda}, has finite $\gamma$-moment, i.e.,
$\E_{z}[\lambda_{\ell_0}^\gamma] < \infty$. 
This will hold by~\cite[Thm~2.1]{menshikov1996passage}
if we establish that the process 
$(\xi_t)_{t\in\RP}$, given by 
$\xi_t:=\kappa_{t \wedge \lambda_{\ell_0}}^{2\gamma} + \epsilon \int_0^{t \wedge \lambda_{\ell_0}} \kappa_u^{2\gamma - 2}\ud u$,
is a supermartingale. 

With this in mind, take an arbitrary $r\in({\ell_0},\infty)$ and consider the stopped process 
$(\xi_{t \wedge \rho_r})_{t\in\RP}$,
where the stopping time $\rho_r$, defined in~\eqref{eq::rho}, is the first time the process $\kappa$ reaches level $r$.
Since $0\leq \xi_{t \wedge \rho_r}\leq \max\{r^{2\gamma},f_{w,2\gamma}(z)\}+\eps t \max\{r^{2\gamma-2},{\ell_0}^{2\gamma-2}\} $
for all $t\in\RP$, we have $\E_z[\xi_{t \wedge \rho_r}]<\infty$.
Moreover, by It\^o's formula in~\eqref{eq::Ito}, the inequalities
in~\eqref{eq:negative_drift_tail_return_time}
and the fact that $\kappa_t\geq {\ell_0}$ for all $t\in\RP$,
for any two times $0\leq s \leq t < \infty$ we have
$\xi_{t \wedge \rho_r}-\xi_{s \wedge \rho_r}-M_{{t \wedge \lambda_{\ell_0}\wedge \rho_r}}+M_{{s \wedge \lambda_{\ell_0}\wedge \rho_r}}\leq 0$ a.s.
Since the local martingale $M$ has, by~\eqref{eq::Ito_QV}
 and~\aref{ass:covariance2}, bounded quadratic variation for each 
$t\in\RP$ with probability one,
 the stopped process $(\xi_{t \wedge \rho_r})_{t\in\RP}$
 is a supermartingale for any $r\in(\ell_0,\infty)$.
 By Theorem~\ref{thm:non_explosion_moments}
 we have $\lim_{r\to\infty}\rho_r=\infty$.
 Thus $\xi_t=\liminf_{r\to\infty}\xi_{t\wedge\rho_r}$ for all $t\in\RP$.
 Since the process 
 $(\xi_{t})_{t\in\RP}$ is non-negative, the conditional Fatou lemma implies that it is a supermartingale.
 Thus, we may apply~\cite[Thm~2.1]{menshikov1996passage} to deduce that for any $p\in(0,\gamma)$ and $z\in\cD$ there exist $C_1,C_2\in(0,\infty)$, such that
 $\E_{z}[\lambda_{\ell_0}^p] \leq C_1f_{w,2\gamma}(z)+C_2\leq C_12^{2\gamma/|w|}(x+k_w)^{2\gamma}+C_2$, where the second inequality follows from~\eqref{eq:bound_Lyapunov_f}.

Recall that $f_{w,1}(Z)=\kappa$. Hence the second inequality in~\eqref{eq:bound_Lyapunov_f} implies that, for $x_0:= 2^{1/|w|}\ell_0-k_w$, we have $\varsigma_{x_0}\leq\lambda_{\ell_0}$ $\P_z$-a.s. for every $z\in\cD$. For every $p\in(0,m_c)$ and $x\in(0,\infty)$, define $C:= C_12^{2\gamma/|w|}(x+k_w)^{2\gamma}+C_2$, where $\gamma\in(p,m_c)$. Thus, we have $\E_z[\varsigma_{x_0}^p]\leq \E_z[\lambda_{\ell_0}^p] \leq C$ for any $z=(x,y)\in\cD$. Moreover, for any 
$x_1\in(x_0,\infty)$, we have 
 $\P_z(\varsigma_{x_0}\geq \varsigma_{x_1})=1$ for every $z\in\cD$, 
 implying $\E_z[\varsigma_{x_1}^p]\leq C$.
\end{proof}

The next proposition provides \textit{lower bounds} on the tails of return times and related path functionals in the recurrent case. This result is a key ingredient in the proof of Theorems~\ref{thm:return_times}(b), as well as in the proof of lower bounds in Theorem~\ref{thm:invariant_distributon}.
The proof of Proposition~\ref{prop:return_time_lower_bound}
is based on an application of Lemma~\ref{lem:return_times} of Section~\ref{subsec:lower_bounds_semimartingale_return_Times} above.
Recall $m_c=(1-\beta/\beta_c)/2$ defined in~\eqref{eq::m0}.

\begin{prop}
\label{prop:return_time_lower_bound}
Suppose that~\aref{ass:domain2}, \aref{ass:covariance2}, \aref{ass:vector2} hold with $\beta < \beta_c$. Then, for every $p\in(2m_c,\infty)$, there exist $x_0\in(0,\infty)$ and constants $c_1,c_2\in(1,\infty)$
such that,
for every 
non-decreasing measurable function $h:\RP\to\RP$,
$q\in(0,1)$,
$x_1\in(x_0,\infty)$
and $z=(x,y)\in\cD\cap (c_1x_1+c_2,\infty)\times\R^d$
we have 
$$
\P_z\left(\int_0^{\varsigma_{x_1}} h\left(c_1(X_s + c_2)\right)\ud s \geq \eps r^2h(r)\right) \geq q\min\{(c_1^{-p}(x-c_2)^p-x_1^p)(1-q)^pr^{-p},1\},
$$
for all $r\in(c_1x_1+c_2,\infty)$ and all sufficiently small $\eps>0$. In particular, $$\P_z(\varsigma_{x_1} \geq t) \geq q\min\{(c_1^{-p}(x-c_2)^p-x_1^p)\eps^{p/2}(1-q)^{p} t^{-p/2},1\},$$ for every $t\in(c_1x_1+c_2,\infty)$ and all  sufficiently small $\eps>0$.
\end{prop}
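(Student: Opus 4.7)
The plan is to apply Lemma~\ref{lem:return_times} to the transformed process $\kappa := f_{w,1}(Z)$, where $f_{w,1}$ is defined in~\eqref{eq:Lyapunov_polynomial}. For $p \in (2m_c,\infty)$, one has $\sigma_1^2(p-1) > -s_0\beta\sigma_2^2/c_0$, so the interval
\[
\bigl(1 - \beta s_0/c_0,\; 1 + \sigma_1^2(p-1)/\sigma_2^2\bigr)
\]
is non-empty and any $w$ in it (with $w\neq 0$) simultaneously ensures $p(\sigma_1^2(p-1)+\sigma_2^2(1-w))>0$ and $p(s_0\beta/c_0-1+w)>0$ (both needed for the submartingale hypothesis (a) of Lemma~\ref{lem:return_times}) as well as $-2(s_0\beta/c_0-1+w)<0$ (needed for hypothesis (b)). Fix such a $w$ for the remainder.

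\textbf{Verifying the two hypotheses.} By It\^o's formula~\eqref{eq::Ito} applied to $\kappa^p = f_{w,p}(Z)$ and Lemma~\ref{lem3.1}, one obtains $\Delta_\Sigma f_{w,p}(z)>0$ and $\langle\nabla f_{w,p}(z),\phi(z)\rangle>0$ for $z=(x,y)\in\cD$ with $x$ sufficiently large. Choosing $\ell$ large enough that $\{\kappa_t\geq\ell\}$ forces $X_t$ into this asymptotic regime, the quadratic-variation bound~\eqref{eq:grad_f_gamma_w_bound} together with~\aref{ass:covariance1} upgrades the It\^o local martingale to a true martingale, giving (a) for every $r>\ell$. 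For (b), apply It\^o's formula to $\kappa^{-2}=f_{w,-2}(Z)$: Lemma~\ref{lem3.1} yields $\Delta_\Sigma f_{w,-2}(z)\leq C_1 \kappa^{-4}$ for a finite $C_1$, while $\langle\nabla f_{w,-2}(z),\phi(z)\rangle\leq 0$ for $x$ large by the sign choice above, producing the required supermartingale with constant $C_1$.

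\textbf{From $\kappa$ back to $X$.} The two-sided bound~\eqref{eq:bound_Lyapunov_f} is the bridge: there exist $c_1,c_2\in(1,\infty)$, built from $2^{1/|w|}$ and $k_w$ (and possibly enlarged to absorb lower-order corrections), such that
\[
c_1^{-1}(x-c_2)\leq \kappa_0 \leq c_1(x+c_2),\qquad \kappa_s \leq c_1(X_s+c_2) \ \text{for all $s\geq 0$.}
\]
Choosing $\ell := c_1 x_1+c_2$ (which, once $x_1$ exceeds the threshold $x_0$, sits above the level needed for (a) and (b)), one obtains the inclusion $\{X_s\leq x_1\}\subset\{\kappa_s\leq \ell\}$, hence $\lambda_\ell\leq \varsigma_{x_1}$. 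Since $h$ is non-decreasing, $h(\kappa_s)\leq h(c_1(X_s+c_2))$, and $\int_0^{\lambda_\ell} h(\kappa_s)\,\ud s\leq \int_0^{\varsigma_{x_1}} h(c_1(X_s+c_2))\,\ud s$. Feeding the estimate $\kappa_0^p\geq c_1^{-p}(x-c_2)^p$ (valid when $x>c_2$) and the above displays into the conclusion of Lemma~\ref{lem:return_times}, then specialising to $h\equiv 1$ with $\eps = C_1^{-1}q(1-q)$, delivers both claims of the proposition.

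\textbf{Main obstacle.} The delicate point is calibrating $c_1$ and $c_2$ so that the clean expression $c_1^{-p}(x-c_2)^p-x_1^p$ in the stated bound truly lower-bounds the quantity $\kappa_0^p-\ell^p$ coming out of Lemma~\ref{lem:return_times}: because $\ell = c_1 x_1+c_2 > x_1$, the naive substitution goes the wrong way, yielding only $c_1^{-p}(x+k_w)^p-(c_1x_1+c_2)^p$. The proposition's hypothesis $x>c_1 x_1+c_2$ must be exploited—possibly after replacing $c_1=2^{1/|w|}$ by $2^{2/|w|}$ and inflating $c_2$—so that the excess $(c_1 x_1+c_2)^p-x_1^p$ is dominated by the gain coming from the separation $x-c_2 \gg c_1 x_1$. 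This constant-juggling is routine but is what ties the proof's parameter choices to the precise form of the stated bound.
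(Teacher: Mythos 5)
Your proposal follows the paper's proof essentially step for step: same Lyapunov process $\kappa=f_{w,1}(Z)$, same invocation of Lemma~\ref{lem:return_times}, same use of Lemma~\ref{lem3.1} to verify the sub/supermartingale hypotheses, and the same final translation from $\lambda_\ell$ back to $\varsigma_{x_1}$ via the two-sided bound~\eqref{eq:bound_Lyapunov_f}. Your parameter algebra (the non-empty interval for $w$, and the sign checks for $\Delta_\Sigma f_{w,p}$, $\Delta_\Sigma f_{w,-2}$, and the boundary inner products) is correct and matches the paper's choices $w\in(1-\beta s_0/c_0,\infty)\setminus\{0\}$ with $p>1-(\sigma_2^2/\sigma_1^2)(1-w)$.

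One caveat worth flagging on the ``from $\kappa$ back to $X$'' step. The inclusion $\{X_s\leq x_1\}\subset\{\kappa_s\leq\ell\}$ needs $\ell\geq 2^{1/|w|}(x_1+k_w)$; with the tight constants $c_1=2^{1/|w|}$, $c_2=k_w$, that threshold is $c_1(x_1+c_2)=c_1 x_1+c_1 c_2$, not $c_1 x_1+c_2$, so your $\ell=c_1 x_1+c_2$ would sit \emph{below} the required level unless $c_1,c_2$ are already inflated. You anticipate this in the ``main obstacle'' paragraph, and the remedy you describe---enlarge $c_1$ beyond $2^{1/|w|}$, enlarge $c_2$, and push up $x_0$---is exactly the right one. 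Quantitatively, $c_1=2^{2/|w|}$ together with an inflated $c_2$ is borderline; a slightly higher power such as $c_1=2^{3/|w|}$ or $2^{4/|w|}$ with $c_2\gtrsim k_w$ and $x_0$ taken large enough closes the gap cleanly, because then $(x+k_w)^p>c_1^{p}(x_1+k_w)^p$ on the admissible range and the excess $\ell^p-x_1^p$ is genuinely absorbed. This constant-matching is, as you say, routine but non-trivial; the paper itself elides it (its final line asserts the proposition ``for all $r\in(2^{1/|w|}x_0+k_w,\infty)$'' without re-deriving the exact form of the stated bound). So: same approach, correct key ideas, with the same loose end that the paper leaves implicit and that you correctly identify as the place where the parameter bookkeeping must be done.
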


\begin{rem}
\label{rem:proof_of_return_time_thm}
Propositions~\ref{prop:return_time_upper_bound} and~\ref{prop:return_time_lower_bound} provide crucial estimates in the proof of Theorem~\ref{thm:return_times}(b) in Section~\ref{subsubsec:proof_of_thm_return_times} below.
The only assertion of 
Theorem~\ref{thm:return_times}(b) not contained in Propositions~\ref{prop:return_time_upper_bound} and~\ref{prop:return_time_lower_bound}
is that the bounds in 
Propositions~\ref{prop:return_time_upper_bound} and~\ref{prop:return_time_lower_bound}
actually holds for all $x_0\in(0,\infty)$ and $z\in\cD\cap (x_0,\infty)\times\R^d$
and not only for large enough $x_0$ and the starting points $z$ sufficiently far (in the $x$-direction) from $x_0$. This generalisation requires uniform ellipticity and will be established in Section~\ref{subsubsec:proof_of_thm_return_times}.
\end{rem}

\begin{proof}[Proof of Proposition~\ref{prop:return_time_lower_bound}]
 Pick $p\in (1-\beta/\beta_c,\infty)$ and note $p>0$. Then there exists $w \in(1-\beta s_0/c_0,\infty)\setminus\{0\}$, such that $p  > 1 - \sigma_2^2/\sigma_1^2(1-w)$. Lemma~\ref{lem3.1} implies that there exist $\ell_0> 0$ and a constant $C \in \RP$ such that, for all $z=(x,y)\in\cD$ with $x > 2^{-1/|w|}\ell_0 -k_{w}$ (the constant $k_w$
is defined above display~\eqref{eq:Lyapunov_polynomial}),
we have
\begin{align}
\label{eq:sub_mart_return_times_lower_bound}
\Delta_\Sigma f_{w,p}(z)  > 0 & \quad\text{and}\quad
\langle \nabla f_{w,p}(z),\phi(z)\rangle > 0,\\
\Delta_\Sigma f_{w,-2}(z)  \leq Cf_{w,-4}(z)  &  \quad \text{and}\quad
\langle \nabla f_{w_1,-2}(z),\phi(z)\rangle < 0.
\label{eq:sup_mart_return_times_lower_bound}
\end{align}

Define $\kappa := f_{w,1}(Z)$
and recall the return time $\lambda_{\ell_0}$ of $\kappa$
below the level $\ell_0$, defined in~\eqref{eq::lambda}.
By~\eqref{eq:bound_Lyapunov_f}, on the event $\{\kappa_t=f(Z_t)\geq \ell_0\}$, the first coordinate $X_t$  of $Z_t$ satisfies
$X_t > 2^{-1/|w|}\ell_0 -k_{w}$.
It\^o's formula in~\eqref{eq::Ito} 
applied to  $\kappa = f_{w,1}(Z)$ and the inequalities in~\eqref{eq:sub_mart_return_times_lower_bound}
imply that, for all $r>\ell\geq \ell_0$ and $0\leq s \leq t < \infty$, we have 
$
\kappa_{t\wedge \lambda_\ell\wedge\rho_r }^{p} - M_{t\wedge \lambda_\ell\wedge \rho_r}  \geq \kappa_{s \wedge \lambda_\ell\wedge\rho_r }^{p} - M_{s \wedge \lambda_\ell\wedge \rho_r}$.
 The process $M$ is a true martingale, since its quadratic variation is bounded by~\eqref{eq::Ito_QV} and~\aref{ass:covariance2}. Thus,
 since  $\E_z[\kappa_{t\wedge \lambda_\ell\wedge\rho_r}^p] \leq \max\{r^p, f_{w,p}(z)\}$, the process $(\kappa_{t\wedge \lambda_\ell\wedge\rho_r}^p)_{t\in\RP}$ is an $(\cF_{t})$-submartingale.

 Pick $r\in(\ell_0,\infty)$, $q\in(0,1)$ and set $r_q := r/(1-q)$. Recall that $f_{w,1}(Z_t) = \kappa$ and define the process $\xi=(\xi_t)_{t\in\RP}$ by $\xi_t :=  \kappa_{{(\rho_{r_q} + t})\wedge \lambda_{r,\rho_{r_q}}}^{-2} - C\int_{\rho_{r_q}}^{t \wedge \lambda_{r,\rho_{r_q}}}\kappa_u^{-4} \ud u$,
 where the constant $C>0$ is as in~\eqref{eq:sup_mart_return_times_lower_bound}.
 By It\^o's formula in~\eqref{eq::Ito} and the inequalities in~\eqref{eq:sup_mart_return_times_lower_bound},  for every  $0\leq s \leq t < \infty$, we have 
 \begin{align*}
 \xi_t - \xi_s - (M_{{(\rho_{r_q} + t})\wedge\lambda_{r,\rho_{r_q}}}- M_{{(\rho_{r_q} + s})\wedge \lambda_{r,\rho_{r_q}}}) \leq 0\quad \text{a.s.}
 \end{align*}
 Since the process $\xi$ is bounded, an analogous argument to the one in the previous paragraph implies that $\xi$ is a supermartingale.

 We have now proved that  Assumptions (a) and (b) of Lemma~\ref{lem:return_times} are satisfied. 
 Moreover, by 
 Lemma~\ref{lem:non_cofinment_f_F},
  we have $\limsup_{t\to\infty}\kappa_t=\infty$ a.s.
Hence, by Lemma~\ref{lem:return_times}, for any 
 $z\in\cD$ and $\ell>\ell_0$, satisfying $f_{w,1}(z) > \ell$, any non-decreasing measurable function $h:\RP\to\RP$ and all sufficiently small $\eps>0$, 
 we obtain
 \begin{equation}
 \label{eq:lower_bound_kappa}
\P_z\left(\int_0^{\lambda_\ell} h(f_{w,1}(Z_s)) \ud s\geq \eps r^2h(r)\right) \geq q\min\{(f_{w,1}(z)^p-\ell^p)(1-q)^{p}r^{-p},1\},
\quad\text{$r\in(\ell,\infty)$.}
 \end{equation}

Define $x_0:=2^{-1/|w|}\ell_0-k_w$,
$c_1:=2^{1/|w|}$ 
and
$c_2:=k_w$. Then, for any $x_1\in(x_0,\infty)$
there exists $\ell>\ell_0$,
such that 
$x_1=2^{-1/|w|}\ell-k_w$.
The second inequality  in~\eqref{eq:bound_Lyapunov_f} implies
$ \varsigma_{x_1}\geq \lambda_\ell$
and 
$ h(f_{w,1}(Z_s))\leq h\left(2^{1/|w|}(X_s+k_w)\right)$
for all non-decreasing measurable functions $h:\RP\to\RP$ and $s\in[0,\lambda_\ell]$.
Thus the inequality 
$\int_0^{\lambda_\ell} h(f_{w,1}(Z_s)) \ud s\leq \int_0^{\varsigma_{x_0}} h\left(2^{1/|w|}(X_s+k_w)\right) \ud s$
holds, implying  by~\eqref{eq:lower_bound_kappa} the inequality in the proposition
for all $r\in(2^{1/|w|}x_0+k_w,\infty)$.
The special case follows by choosing $h\equiv1$.
\end{proof}

The next result establishes a drift condition (in the positive-recurrent case), used in the proofs of the upper bounds 
of Theorem~\ref{thm:invariant_distributon}
concerning the finite moments of the invariant distribution $\pi$ of $Z$ and the total variation distance between $\P_z(Z_t\in\cdot)$ and $\pi$.
The proof of Lemma~\ref{Lem:drift_conditions} is analogous to the proof of Proposition~\ref{prop:return_time_upper_bound}.
For any $r\in\RP$, denote
\begin{equation}
\label{eq:D_r}
    \cD^{(r)} := \cD\cap [0,r]\times \R^d.
\end{equation}

\begin{lem}
\label{Lem:drift_conditions}
Suppose that~\aref{ass:covariance2}, \aref{ass:vector2} and \aref{ass:domain2} hold with $\beta < -\beta_c$. Then, for any  $\gamma \in(0,1-\beta/\beta_c)$, there exist parameters $w \in(-\infty,1-\beta s_0/c_0)\setminus\{0\}$ and $x_0,x_1,k\in\RP$, defining the function $F_{w,\gamma}$ in~\eqref{eq:F_w,gamma}, and
$x_2\in\RP$, $C_1,C_2\in \RP$, such that the process $\xi = (\xi_t)_{t\in\RP}$,
\begin{equation}
\label{eq::F_w_gamma_supermart}
\xi_t := F_{w,\gamma}(Z_t) + C_1\int_0^t  F_{w,\gamma-2}(Z_u)\ud u - C_2\int_0^t\mathbbm{1}_{\cD^{(x_2)}}(Z_u)\ud u,
\end{equation}
is an $(\cF_t)$-supermartingale.
\end{lem}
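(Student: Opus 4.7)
The plan is to pick parameters $w$ and $\gamma$ that make the drift of $F_{w,\gamma}(Z)$ dominantly negative of order $F_{w,\gamma-2}$ outside a compact set, then apply It\^o's formula~\eqref{eq::Ito} and absorb the residual contribution from the compact set into the $C_2\mathbbm{1}_{\cD^{(x_2)}}$ term. The proof will mirror the structure of the proofs of Theorem~\ref{thm:rec_tran} and Proposition~\ref{prop:return_time_upper_bound}, with Lemma~\ref{lem:F_w,gamma} providing the crucial global control at the boundary that the bare function $f_{w,\gamma}$ only has at infinity.

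For the parameter choice, by Lemma~\ref{lem3.1} the asymptotic
$\Delta_\Sigma f_{w,\gamma}(z) \sim \gamma f_{w,1}(z)^{\gamma-2}(\sigma_1^2(\gamma-1)+\sigma_2^2(1-w))$ as $x\to\infty$ combined with $\gamma>0$ shows that the dominant term has the right (negative) sign of order $f_{w,\gamma-2}$ as soon as $w > 1+(\sigma_1^2/\sigma_2^2)(\gamma-1)$; meanwhile the hypothesis of Lemma~\ref{lem:F_w,gamma} requires $\gamma(\beta s_0/c_0-1+w)<0$, i.e.~$w<1-\beta s_0/c_0$. These two windows overlap iff $(\sigma_1^2/\sigma_2^2)(\gamma-1)<-\beta s_0/c_0$, which rearranges precisely to $\gamma < 1-\beta/\beta_c$, the standing hypothesis; pick any $w\neq 0$ in the (nonempty, open) common interval. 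Then Lemma~\ref{lem:F_w,gamma} supplies $0<x_0<x_1$ and $k>0$ defining $F_{w,\gamma}$ via~\eqref{eq:F_w,gamma} with $\langle\nabla F_{w,\gamma}(z),\phi(z)\rangle\leq 0$ for every $z\in\partial\cD$.

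Next I will establish the pointwise bound
\begin{equation*}
\tfrac{1}{2}\Delta_\Sigma F_{w,\gamma}(z) + C_1 F_{w,\gamma-2}(z) \leq C_2\mathbbm{1}_{\cD^{(x_2)}}(z), \qquad z\in\cD.
\end{equation*}
On $\{x\geq x_1\}$ the cutoff satisfies $m\equiv 1$, so both $F_{w,\gamma}$ and $F_{w,\gamma-2}$ reduce to $f_{w,\gamma}$ and $f_{w,\gamma-2}=f_{w,1}^{\gamma-2}$ respectively; with $K:=-\gamma(\sigma_1^2(\gamma-1)+\sigma_2^2(1-w))>0$, equation~\eqref{eq:Sigma_Laplacian_f} yields some $x_2\geq x_1$ so that $\Delta_\Sigma F_{w,\gamma}(z) \leq -(K/2) F_{w,\gamma-2}(z)$ on $\cD\setminus\cD^{(x_2)}$. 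Set $C_1:=K/8$. On the complementary bounded region $\cD^{(x_2)}$, $F_{w,\gamma}$ extends to a $C^2$-function, so $\Delta_\Sigma F_{w,\gamma}$ and $F_{w,\gamma-2}$ are both bounded, and a sufficiently large $C_2$ absorbs the excess. Combining the pointwise drift inequality with the nonpositive boundary term in It\^o's formula~\eqref{eq::Ito} yields $\xi_t - \xi_s \leq M_t - M_s$ almost surely for $0 \leq s \leq t$, where $M$ is the local martingale of~\eqref{eq::Ito_QV}.

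The main obstacle is upgrading $M$ to a true $(\cF_t)$-martingale and ensuring the integrability needed for the supermartingale property, since $\gamma$ may exceed $1$ (indeed $1-\beta/\beta_c > 2$ under $\beta<-\beta_c$), so the bound~\eqref{eq:grad_f_gamma_w_bound} produces super-linearly growing quadratic variation. I will handle this by standard localization at $\rho_n := \inf\{t : X_t \geq n\}$: the stopped process $M^{\rho_n}$ has bounded $[M]$ on bounded time intervals and is hence a true martingale, making $\xi^{\rho_n}$ a genuine $(\cF_t)$-supermartingale. To pass to the limit $n\to\infty$ I would invoke the diffusive moment bound $\sup_{t\in\RP}\E_z\|Z_t\|_{d+1}^{2p}/(1+t)^p<\infty$ for any $p\geq \gamma$ (Theorem~\ref{thm:non_explosion_moments} with its extension in Remark~\ref{rem:modification_general_p}) together with~\eqref{eq:bound_Lyapunov_f} to obtain a uniformly integrable bound on $F_{w,\gamma}(Z_{t\wedge\rho_n})$ and on $\int_0^{t\wedge\rho_n} F_{w,\gamma-2}(Z_u)\,du$, and then apply conditional Fatou on both sides of $\E[\xi^{\rho_n}_t\mid \cF_s]\leq \xi^{\rho_n}_s$ to conclude that $\xi$ itself is an $(\cF_t)$-supermartingale.
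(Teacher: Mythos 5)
Your parameter choice (window for $w$, the equivalence with $\gamma<1-\beta/\beta_c$), the invocation of Lemma~\ref{lem:F_w,gamma}, and the global pointwise drift bound $\tfrac{1}{2}\Delta_\Sigma F_{w,\gamma}+C_1F_{w,\gamma-2}\le C_2\mathbbm{1}_{\cD^{(x_2)}}$ all match the paper's argument. The genuine divergence is the final step: the paper does not need uniform integrability or the higher-moment extension of Theorem~\ref{thm:non_explosion_moments}. Instead it observes that $F_{w,\gamma}\ge 0$ and $F_{w,\gamma-2}\ge 0$, so $\xi_s \ge -C_2\int_0^s\mathbbm{1}_{\cD^{(x_2)}}(Z_u)\,\mathrm du \ge -C_2 t$ for all $s\in[0,t]$, a \emph{deterministic} lower bound. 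With this, one-sided conditional Fatou applied to $\E_z[\xi_{t\wedge\rho_r}\mid\cF_s]\le\xi_{s\wedge\rho_r}$ immediately yields $\E_z[\xi_t\mid\cF_s]\le\xi_s$ (and, taking $s=0$, integrability of $\xi_t$). This is strictly simpler and self-contained.

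Your route, by contrast, (i) leans on Remark~\ref{rem:modification_general_p}, which the paper explicitly declines to prove (``details are omitted for brevity''), so you are importing an unproved statement; and (ii) the claimed UI of $\int_0^{t\wedge\rho_n}F_{w,\gamma-2}(Z_u)\,\mathrm du$ is not immediate, since $F_{w,\gamma-2}$ is unbounded (recall $\gamma>2$ here) — one would have to route through monotone convergence and the a priori bound on $\E_z[\xi_{t\wedge\rho_r}]$ anyway. Both issues are avoidable once you notice the elementary deterministic lower bound on $\xi$, which is all the supermartingale inequality requires. You should replace the UI/higher-moments step with that observation.
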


Note that the process  $F_{w,\gamma}(Z)$ in Lemma~\ref{Lem:drift_conditions} gets a non-positive push 
(by Lemma~\ref{lem:F_w,gamma})
when $Z$ hits the boundary $\partial\cD$. The constant $C_1$ (resp.~$C_2$)  needs to be 
sufficiently small (resp.\ large) for the process $\xi$ to have non-positive drift in the entire interior of $\cD$.

\begin{proof}[Proof of Lemma~\ref{Lem:drift_conditions}]
Pick 
$\gamma\in(0,1-\beta/\beta_c)$ 
and note that 
$1+\sigma_1^2/\sigma_2^2(\gamma-1)<1-\beta s_0/c_0$, since
$\beta_c=c_0\sigma_1^2/(s_0\sigma_2^2)$ by definition~\eqref{eq:beta_c}.
Pick 
$w\in(1+\sigma_1^2/\sigma_2^2(\gamma-1),1-\beta s_0/c_0)\setminus\{0\}$
and note that 
$\gamma(\beta s_0/c_0-1+w)< 0$. Lemma~\ref{lem:F_w,gamma} implies that there exist 
$x_0,x_1,k\in(0,\infty)$ such that the function $F_{w,\gamma}$ defined in~\eqref{eq:F_w,gamma} satisfies  $\langle \nabla F_{w,\gamma}(z),\phi(z)\rangle < 0$ for all $z\in\partial\cD$.

By~\eqref{eq:F_w,gamma} we have  $F_{w,\gamma}(z)=f_{w,\gamma}(z)$ on $z\in\cD\cap(x_1,\infty)\times\R^d$
and~\eqref{eq:Sigma_Laplacian_f} yields
$$
\Delta_\Sigma F_{w,\gamma}(z) = \gamma F_{w,1}(z)^{\gamma-2}(\sigma_1^2(\gamma-1)+\sigma_2^2(1-w)  +o_\cD(1))\quad\text{as $x\to\infty$,}
$$
where $o_\cD(1)$ is defined after Assumption~\aref{ass:vector2}. 
Since $\sigma_1^2(\gamma-1)+\sigma_2^2(1-w)<0$, there exists $x_2\in(x_1,\infty)$, 
such that for 
$C_1:=-(\sigma_1^2(\gamma-1)+\sigma_2^2(1-w))\gamma/4$
we have 
$$
\frac{1}{2}\Delta_\Sigma F_{w,\gamma}(z) + C_1F_{w,\gamma-2}(z) \leq 0 \quad \text{on } z \in \cD\cap(x_2,\infty)\times\R^d.
$$
 Thus, since the functions $\Delta_\Sigma F_{w,\gamma}$ and $\Sigma$ are bounded on the compact set $\cD_{x_2}$,
 there exists $C_2\in\RP$
 such that 
\begin{equation}
\label{eq:F_w_gamma_drift_bound}
\frac{1}{2}\Delta_\Sigma F_{w,\gamma}(z) + C_1F_{w,\gamma-2}(z) \leq C_2\mathbbm{1}_{\cD^{(x_2)}}(z) \quad \text{ for all } z\in\cD.
\end{equation}

 Recall the definition of~$\xi$ in~\eqref{eq::F_w_gamma_supermart}
 and set $\kappa:=F_{w,1}(Z)$. 
 Note that by definition~\eqref{eq:F_w,gamma}, there exist $\delta_0>0$ such that  $\inf_{z\in\cD}F_{w,1}(z)>\delta_0$.
 For any $r>\max\{1,\delta_0\}$, the stopped process $(\xi_{t\wedge \rho_r})_{t\in\RP}$, where the stopping time $\rho_r$, defined in~\eqref{eq::rho}
 as the first time the process $\kappa$ crosses level $r$, satisfies 
 $-C_2 t \leq \E_z[\xi_{t \wedge \rho_r}] \leq  \max\{F_{w,\gamma}(z),r^\gamma\} +C_1 t\max\{ r^{\gamma-2},\delta_0^{\gamma-2}\}$ for all $t\in\RP$. Thus $\E_z | \xi_{t\wedge \rho_r} | <\infty$ for all $t\in\RP$ and $z\in\cD$. 
 Moreover, the inequality~\eqref{eq:F_w_gamma_drift_bound} and  It\^o's formula~\eqref{eq::Ito} applied to $F_{w,\gamma}(Z)$ imply that, for any $0 \leq s \leq t<\infty$, we have $\xi_{t\wedge \rho_r}-\xi_{s\wedge \rho_r} - (M_{t\wedge \rho_r} - M_{s\wedge \rho_r}) \leq 0$ a.s. Since, by~\eqref{eq::Ito_QV} and~\aref{ass:covariance2},
 $[M]_{t\wedge \rho_r}\leq C_0 t$ a.s.~for all $t\in\RP$ and some constant $C_0>0$,  $(\xi_{t\wedge \rho_r})_{t\in\RP}$ is a supermartingale for any $r\in\RP$. By Theorem~\ref{thm:non_explosion_moments} we have $\lim_{r\to\infty}\rho_r = \infty$. Thus, $\xi_s = \liminf_{r\to\infty}\xi_{s\wedge \rho_r}$ for all $s\in\RP$. Since $\xi_s\geq -C_2t$ for all $s\in[0,t]$, the conditional Fatou lemma implies that for any $0\leq s \leq t <\infty$, we have $$\E_z[\xi_t \vert\cF_s] = \E_z[\liminf_{r\to\infty}\xi_{t\wedge \rho_r}\vert \cF_s] \leq \liminf_{r\to\infty}\E_z[\xi_{t\wedge \rho_r}\vert \cF_s] \leq \xi_s,$$
 in addition we deduce the integrability of $\xi_t$ by choosing $s=0$ and noting that $\E_z[\xi_0]= F_{w,\gamma}(z)$, hence $\xi$ is an $(\cF_{t})$-supermartingale.
\end{proof}

\section{Feller continuity and irreducibility of the reflected process and applications}
\label{subsection:Markovian_stability}
The existence of the invariant distribution of $Z$ requires positive recurrence (see definition preceding Theorem~\ref{thm:invariant_distributon} above). The study of its moments requires certain technical results established in the present section. Section~\ref{subsubsec:Feller_continuity} is dedicated to the proofs of Feller continuity and irreducibility of the process $Z$. In Section~\ref{subsubsec:continuous_markov_theory} we apply these two properties to prove that the reflected process $Z$
is Harris recurrent with an irreducible skeleton chain and that the set $\cD\cap [0,r]\times \R^d$ is petite for any $r \in(0,\infty)$.
Moreover, in Section~\ref{subsubsec:continuous_markov_theory} we will also show that every petite set for $Z$ is bounded. 

\subsection{Feller continuity and irreducibility of the reflected process}
\label{subsubsec:Feller_continuity}
In this section we prove
that the reflected process $Z$ is Feller-continuous (see Theorem~\ref{thm:Feller_continuity_of_Z} below) and that 
the $(d+1)$-dimensional Lebesgue measure 
$\lebm_{d+1}$
on 
the Borel $\sigma$-algebra $\cB(\cD)$ on $\cD$
is absolutely continuous with respect to 
its marginals of the reflected process at positive times (Proposition~\ref{prop:marginal_equivalent_to_Lebesgue} below).
We start with the latter.

\begin{prop}
\label{prop:marginal_equivalent_to_Lebesgue}
Let~\aref{ass:domain2}, \aref{ass:covariance2}, \aref{ass:vector2} hold. Then, for any $z\in\cD$ and $t\in(0,\infty)$ and any $A\in \cB(\cD)$, such that 
$\lebm_{d+1}(A)>0$, we have $\P_z(Z_t \in A)>0$. 
\end{prop}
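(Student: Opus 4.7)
The plan is to combine a reachability argument, guaranteeing $Z$ visits any interior open ball with positive probability in some time $s\in(0,t)$, with a classical interior-density bound for uniformly elliptic It\^o diffusions. First I reduce: since $\partial\cD$ is a smooth hypersurface, $\lebm_{d+1}(\partial\cD)=0$, so we may assume $A\subset\ocD$ with $\lebm_{d+1}(A)>0$; by covering $A$ with countably many closed balls whose closures lie in $\ocD$ and applying a union bound, we may further assume $A\subset B(w_0,r_0/2)$ for some open ball $B(w_0,r_0)$ with $\overline{B(w_0,r_0)}\subset \ocD$.

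Next, for any $z\in\cD$ and sufficiently small $s\in(0,t)$, I would establish $\P_z(Z_s\in B(w_0,r_0/4))>0$. This is a controllability/support statement: by a Stroock--Varadhan type support theorem for reflecting SDEs on smooth domains (e.g.~Saisho's, applied after localising to a compact subdomain where $b$ is bounded below), any continuous path $\gamma\colon[0,s]\to\cD$ with $\gamma(0)=z$ lies in the topological support of the law of $(Z_u)_{u\in[0,s]}$ in $C([0,s],\cD)$. Path-connectedness of $\cD$ then lets one join $z$ to $w_0$ by a smooth curve, giving the claim. Alternatively, the Feller continuity established in Theorem~\ref{thm:Feller_continuity_of_Z}, combined with an iterative pushing argument using the inward-pointing reflection to escape the boundary, would suffice.

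The strong Markov property at time $s$ then gives
\[
\P_z(Z_t\in A)\geq \E_z\!\left[\mathbbm{1}\{Z_s\in B(w_0,r_0/4)\}\,\P_{Z_s}(Z_{t-s}\in A)\right].
\]
For $z'\in B(w_0,r_0/4)$, let $\tau$ be the first exit time of the process from $B(w_0,r_0)$ when started at $z'$. Up to $\tau$, $Z$ coincides with a non-reflecting It\^o diffusion with uniformly elliptic, bounded, Lipschitz coefficient $\Sigma^{1/2}$ (by~\aref{ass:covariance1}), for which Aronson-type Gaussian heat-kernel bounds (or a Girsanov transform reducing to Brownian motion with bounded drift) yield that the sub-law of $Z_{t-s}$ restricted to $\{\tau>t-s\}$ has a strictly positive density with respect to $\lebm_{d+1}$ on $B(w_0,r_0/2)$. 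Hence $\P_{z'}(Z_{t-s}\in A)>0$ for every $z'\in B(w_0,r_0/4)$, and the proposition follows.

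The principal obstacle is the reachability step: in a possibly narrowing domain the classical support theorems must be applied with care, since local time can accumulate arbitrarily rapidly near narrow sections of $\partial\cD$ (cf.~Section~\ref{subsec:heuristic}), which breaks the uniform local-time-increment estimates implicit in some standard support-theorem proofs. I would handle this by restricting to bounded subdomains where $b$ is bounded away from $0$, invoking the smooth-boundary support theorem there, and then extending to arbitrary starting points via continuity of paths and the strong Markov property.
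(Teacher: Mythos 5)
Your overall architecture — reduce $A$ to a small interior ball, reach that ball with positive probability, then use uniform ellipticity of the killed diffusion in the ball to get a positive density — matches the paper's strategy, and your final interior-density step (via [Stroock--Varadhan Thm II.1.3], Aronson bounds, or Girsanov) is essentially what the paper does in its Claim~2. The gap is in the reachability step, which you correctly flag as the principal obstacle but do not close.

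You offer two routes. The first invokes a Stroock--Varadhan/Saisho-type support theorem for reflecting SDEs. The paper deliberately avoids any such theorem: Remark~\textup{2.7(f)} of~\cite{menshikov2022reflecting} and Section~\ref{subsec:heuristic} of this paper emphasise that a narrowing domain fails the hypotheses of~\cite{stroock1971diffusion}, precisely because local-time increments cannot be bounded uniformly in the starting point. Your proposed fix — localise to bounded subdomains ``where $b$ is bounded away from $0$'' — does not resolve this: near the origin $b\to 0$ by~\aref{ass:domain1}, so there is no compact subdomain containing a generic starting point $z\in\partial\cD$ (for instance $z$ near the tip) on which $b$ is bounded below; moreover a localised subdomain has artificial lateral boundary through which the process escapes, so a support theorem on the subdomain does not immediately yield reachability of an interior ball for the global reflecting process. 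The second route, ``Feller continuity combined with an iterative pushing argument,'' is not an argument but a placeholder; Theorem~\ref{thm:Feller_continuity_of_Z} gives continuity in the starting point but by itself gives no positivity of hitting probabilities.

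The paper circumvents all of this with Claim~0 and a chain-of-interior-balls argument (Claims~2--3). Claim~0, proved via the occupation-times formula for the semimartingale $G(Z)$ of Lemma~\ref{lem:G}, shows $\int_0^t\ind\{Z_s\in\partial\cD\}\ud s=0$ a.s., hence $\int_0^t\P_z(Z_s\in\ocD)\ud s=t$; this produces, with no support theorem at all, a time $s<t$ and an interior ball $B(z',h)$ with $\P_z(Z_s\in B(z',h))>0$. From there Claims~2--3 connect $B(z',h)$ to the target ball by a finite chain of interior balls, applying the positivity result for the \emph{unreflected} elliptic diffusion inside each ball. This is exactly the ingredient your proposal is missing: a boundary-free mechanism for escaping $\partial\cD$ into the interior, and a boundary-free connectivity argument thereafter. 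Without Claim~0 (or an equivalent), your Markov-property display at time $s$ has no guaranteed $s$ for which the indicator has positive expectation.
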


The proofs of Proposition~\ref{prop:marginal_equivalent_to_Lebesgue} and Theorem~\ref{thm:Feller_continuity_of_Z} below require the following lemma.
For small $h>0$, define a ``thin'' neighborhood of $\partial\cD$ in $\cD$ by $\cD_{h} := \{z\in\cD: \exists z'\in \partial\cD \text{ such that } \|z-z'\|_{d+1}<h\}$.

\begin{lem}
\label{lem:G}
Let~\aref{ass:domain1}, \aref{ass:covariance1}, \aref{ass:vector1} hold.
Then there exists a functions $G:\cD\to\RP$, twice-differentiable on a neighbourhood of $\cD\subset\R^{d+1}$ 
and strictly positive on the open set $\cD\setminus \partial\cD$. Moreover, the function $g(z) := G(z)^2$ satisfies $\nabla g(z) = 0$  for all $z\in\partial \cD$ and for any $r>0$ there exist $h_r>0$, $\delta > 0$,  such that the following hold
\begin{itemize}
\item[(a)] 
$\|\Sigma^{1/2}(z)\nabla G(z)\|_{d+1}^2> \delta$  for all $z\in\cD_{h_r}\cap[0,r]\times \R^d$;
\item[(b)]
$\langle \phi(z),\nabla G(z)\rangle >  \delta$ for all $z\in\partial \cD \cap[0,r]\times \R^d$.
\end{itemize}
\end{lem}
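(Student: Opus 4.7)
The plan is to take $G(x,y) := b(x)^2 - \|y\|_d^2$ on $\cD$ (with a modification near the origin; see the final paragraph) and set $g := G^2$. Since $\cD$ is defined by $\|y\|_d^2 \leq b(x)^2$, this $G$ is non-negative on $\cD$, vanishes precisely on $\partial\cD$, and is strictly positive on $\cD\setminus\partial\cD$. Differentiating, $\nabla g = 2G\,\nabla G$, so $\nabla g$ vanishes identically on $\partial\cD$ because $G$ does. First I would compute $\nabla G(x,y) = (2b(x)b'(x),\,-2y)$, and note that at a boundary point $z=(x,y)\in\partial\cD$, using $\|y\|_d=b(x)$,
\[
\|\nabla G(z)\|_{d+1}^2 = 4b(x)^2 b'(x)^2 + 4b(x)^2.
\]

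Next I would bound $\|\nabla G\|_{d+1}^2$ below on $\partial\cD \cap [0,r]\times\R^d$ by splitting at some $\epsilon\in(0,r)$: for $x\in[\epsilon,r]$ continuity and positivity of $b$ on $(0,\infty)$ give $b(x)\geq\min_{[\epsilon,r]}b>0$; for $x\in(0,\epsilon)$, assumption~\aref{ass:domain1}(i) gives $b(x)b'(x)\geq c>0$, so $\|\nabla G\|_{d+1}^2\geq 4c^2$. Combining yields $\|\nabla G\|_{d+1}^2 \geq 8\delta'$ for some $\delta'>0$, uniformly on $\partial\cD\cap[0,r]\times\R^d$. Item (b) will then follow by identifying $\nabla G$ with a positive multiple of the inward unit normal: the defining function $\psi := \|y\|_d^2 - b(x)^2 = -G$ gives $n = -\nabla\psi/\|\nabla\psi\|_{d+1} = \nabla G/\|\nabla G\|_{d+1}$ on $\partial\cD$, so $\langle\phi,\nabla G\rangle = \|\nabla G\|_{d+1}\langle\phi,n\rangle$, which together with the uniform positivity $\inf_{\partial\cD}\langle\phi,n\rangle > 0$ from~\aref{ass:vector1} delivers the required $\delta$. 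For (a), I would propagate the lower bound by continuity of $\nabla G$ to a thin neighborhood $\cD_{h_r}\cap[0,r]\times\R^d$ for some $h_r>0$, and then invoke uniform ellipticity~\aref{ass:covariance1} to get $\|\Sigma^{1/2}\nabla G\|_{d+1}^2 \geq \delta_\Sigma\|\nabla G\|_{d+1}^2$, so (a) holds with $\delta := 4\delta_\Sigma\delta'$.

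The hard part will be ensuring that $G$ is actually $C^2$ on a neighborhood of $\cD$ in $\R^{d+1}$ across the origin, which lies in $\partial\cD$. Assumption~\aref{ass:domain1}(i) makes $b^2$ only $C^1$ at $0$ (with positive one-sided derivative $(b^2)'(0^+) = 2\lim_{x\to 0}b(x)b'(x)$), while $(b^2)''(x) = 2b'(x)^2 + 2b(x)b''(x)$ will typically diverge as $x\to 0$. I would deal with this using~\aref{ass:domain1}(ii): the horizontal inverse $\beta := b^{-1}$ extends $C^2$-smoothly across $0$ (with $\beta'(0)=0$ and $\beta''(0)=-\lim b''/(b')^3$ finite), whence by an argument as in~\cite[Lem.~4.3]{menshikov2022reflecting} the boundary $\partial\cD$ is a $C^2$ hypersurface through the origin and admits tubular coordinates in a neighborhood of $0$. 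Replacing $G$ in that neighborhood by a suitably scaled squared signed distance to $\partial\cD$ in those coordinates, and matching smoothly via a cutoff to $b(x)^2-\|y\|_d^2$ outside, will yield a $C^2$ function for which the same computations deliver (a) and (b) on the bulk while preserving the vanishing of $\nabla g$ on the whole of $\partial\cD$.
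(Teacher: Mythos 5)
Your construction coincides with the paper's: take $G(x,y) := b(x)^2 - \|y\|_d^2$, compute $\nabla G = (2bb', -2y)$, obtain~(a) from \aref{ass:domain1}(i) combined with the uniform ellipticity in \aref{ass:covariance1}, and obtain~(b) from $\nabla G = \|\nabla G\|_{d+1}\,n$ on $\partial\cD$ together with \aref{ass:vector1}. You are also right that the regularity of $G$ at the origin is a genuine issue. The paper disposes of it with the single word ``extend'' and then immediately uses the explicit formula for $\nabla G$ on all of $\cD$, which requires $b^2 \in C^2$ at $0$. But under \aref{ass:domain1} alone the quantity $(b^2)'' = 2(b')^2 + 2bb''$ can blow up as $x \to 0$: for $b(x) = \sqrt{x}\log(1/x)$ near zero, both parts of \aref{ass:domain1} hold (indeed $bb' \to \infty$ and $b''/(b')^3 \to 0$), yet $(b^2)''(x) = 2(1 - \log(1/x))/x \to -\infty$. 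So a local modification near the origin really is needed, and noticing this is an improvement on the paper's terse argument.

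However, the fix you sketch has a slip. If near the origin you replace $G$ by a multiple of the \emph{squared} signed distance $d(\cdot,\partial\cD)^2$, then $\nabla G = 2d\,\nabla d$ vanishes on $\partial\cD$ there, which destroys both conclusions near the origin: (a) fails on any boundary neighbourhood by continuity of $\nabla G$, and (b) degenerates to $\langle\phi,\nabla G\rangle = 0$ on $\partial\cD$. What you want near the origin is a $C^2$ \emph{defining function} for $\cD$ with \emph{non-vanishing} gradient on $\partial\cD$ --- for instance a scaled copy of the un-squared signed distance, or the normal coordinate in a $C^2$ tubular chart (available from \aref{ass:domain1}(ii) and~\cite[Lem.~4.3]{menshikov2022reflecting}). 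It is $g = G^2$, not $G$, that should vanish to second order on $\partial\cD$. With this substitution your construction is correct and supplies the justification the paper leaves implicit.
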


\begin{rem}
Recall that Assumptions~\aref{ass:domain2}, \aref{ass:covariance2}, \aref{ass:vector2} imply Assumptions~\aref{ass:domain1}, \aref{ass:covariance1}, \aref{ass:vector1}.
\end{rem}

\begin{proof}[Proof of Lemma~\ref{lem:G}]
Extend the function 
$G:\cD\to\RP$, defined by $G(z):=b(x)^2-\|y\|_d^2$
for any $z=(x,y)\in\cD$, to a 
$C^2$-function on a neighbourhood of $\cD$ in $\R^{d+1}$.  Hence 
$\|\nabla G(z)\|_{d+1}^2=4(b(x)^2b'(x)^2+\|y\|_d^2)$
for all $z=(x,y)\in \cD$ and $\nabla g(z) = 2G(z)\nabla G(z) = 0$ for all $z\in \partial \cD$, since $G(z) = 0$ on~$\partial\cD$. 
Recall that~\aref{ass:domain1} yields $\liminf_{x\to0}b(x)b'(x)>0$. Thus, for any $r>0$, there exists a sufficiently small $h_r>0$, such that $0<\inf_{z\in\cD_{h_r}\cap[0,r]\times \R^d}\|\nabla G(z)\|_{d+1}^2=:\delta_{h_r}$
(we restrict to $x\in[0,r]$ because of functions $b$ with $\beta \leq 0$, e.g. example in Lemma~\ref{lem:oscilating_domain}).
Moreover, by~\aref{ass:covariance1}, there exists $\delta_\Sigma>0$
such that 
$\|\Sigma^{1/2}(z)\nabla G(z)\|_{d+1}^2=\langle \Sigma(z)\nabla G(z),\nabla G(z)\rangle\geq \delta_\Sigma\|\nabla G(z)\|_{d+1}^2\geq \delta_\Sigma\cdot \delta_{h_r}>0$
for all $z\in\cD_{h_r}\cap[0,r]\times \R^d$, implying~(a).

Note that gradient $\nabla G(z)$ for any $z\in\partial\cD$ equals 
$n(z)\|\nabla G(z)\|_{d+1}$, where $n(z)$ is the inwards-pointing unit normal vector to $\partial \cD$ at $z$. Hence, by~\aref{ass:vector1}, there exists a $\delta_\phi>0$ such that $\langle \phi(z),\nabla G(z)\rangle \geq \delta_{h_r}^{1/2} \langle \phi(z), n(z)\rangle > \delta_\phi \cdot\delta_{h_r}^{1/2}>0$ for all $z\in \partial\cD \cap [0,r]\times \R^d$.
\end{proof}

\begin{proof}[Proof of Proposition~\ref{prop:marginal_equivalent_to_Lebesgue}]
We start by proving that $Z$ spends no time at the boundary. 

\noindent \textbf{Claim 0.} The equality $\int_0^\infty \1{Z_t\in\partial \cD}\ud t=0$ 
holds $\P_z$-a.s. 
for any starting point $z\in\cD$.

\noindent \underline{\textit{Proof of Claim 0.}} Let $G$ be a function whose existence is guaranteed by Lemma~\ref{lem:G}. Define the non-negative continuous semimartingale $\xi=(\xi_t)_{t\in\RP}$, 
$\xi_t:=G(Z_t)$, denote its local time field by $(L_t^u(\xi))_{t,u\in\RP}$
(see~\cite[Ch.~VI]{revuz2013continuous} for definition and properties)
and note that $\xi_t=0$
if and only if $Z_t\in\partial\cD$, since $G>0$ on $\cD\setminus\partial\cD$ by  Lemma~\ref{lem:G}.
Thus the local-time process $L$ in SDE~\eqref{eq::SDE} satisfies
$L^0(\xi)=L$.  
It\^o's formula in~\eqref{eq::Ito}, applied to $G(Z)$,
yields that the quadratic variation $[\xi]$ equals  $[M]$ given in~\eqref{eq::Ito_QV}. 
Pick any $t,r\in(0,\infty)$.
The occupation times formula in~\cite[Cor~VI.1.6]{revuz2013continuous} applied to the indicator  $u\mapsto \ind_{\{0\}}(u)$ of zero,
the representation of the quadratic variation $[M]$ in~\eqref{eq::Ito_QV},
the property in Lemma~\ref{lem:G}(a)
and the fact that $\xi_s=0$
is equivalent to 
 $Z_s\in\partial\cD$ for all $s\in\RP$
yield
$$
0=\int_{\RP}\ind_{\{0\}}(u)L_{t\wedge \varrho_r}^u(\xi)\ud u = \int_0^{t\wedge \varrho_r}\ind_{\{0\}}(\xi_s)\ud [M]_s\geq \delta \int_0^{t\wedge \varrho_r} \1{Z_s\in\partial\cD}\ud s\geq0,
$$
implying $0=\int_0^{t\wedge \varrho_r} \1{Z_s\in\partial\cD}\ud s$. 
Since $\varrho_r$, given in~\eqref{eq::varrho}, satisfies $\lim_{r\to\infty}\varrho_r=\infty$ a.s. (by Theorem~\ref{thm:non_explosion_moments})
and $t>0$ is arbitrary, our claim follows.

Fubini's theorem and Claim 0 yield 
$\int_0^\infty \P_z(Z_t\in\partial\cD)\ud t=\E_z\int_0^\infty \1{Z_t\in\partial \cD}\ud t=0$
for any $z\in\cD$
(since $Z$ is continuous, it is progressively measurable, implying the  various integrals are well defined and measurable).
In particular, denoting $\ocD := \cD \setminus \partial \cD$, for any $z\in\cD$ and $t>0$, it holds that $\int_0^t \P_z(Z_s\in \ocD)\ud s = t$.

\noindent \textbf{Claim 1.} For every $z\in\cD$, $t>0$ and $A\in\cB(\ocD)$ the following holds: if $\lebm_{d+1}(A) > 0$ then $\int_0^t \P_z(Z_s\in A)\ud s > 0$.

In order to prove Claim 1, we need Claims 2 and 3 below. For $z\in\cD$ and $h>0$, define the open ball in $\cD$ by $B(z,h) := \{ z'\in\cD: \|z-z'\|_{d+1} < h\}$.

\noindent \textbf{Claim 2.} Pick any $z\in\cD$, $s\in\RP$ and any ball $B(z',h)\subset \ocD$ 
and
$A\in\cB(\ocD)$.
The inequalities 
$\P_{z}(Z_s\in B(z',h))>0$ and 
$\lebm_{d+1}(B(z',h)\cap A)>0$
imply
$\P_{z}(Z_v\in B(z',h)\cap A) >0$
for all $v\in(s,s+h^2)$.

\noindent \underline{\textit{Proof of Claim 2.}} Since $B(z',h)\subset \ocD$,
the stopping time $\tau_{\partial B(z',h)}:=\inf\{t\in\RP: Z_t\notin B(z',h)\}$
is strictly positive $\P_{z''}$-a.s. for all 
$z''\in B(z',h)$.
Moreover, the process $Z$ on the stochastic interval $[0,\tau_{\partial B(z',h)})$, started at any $z''\in B(z',h)$, coincides with a uniformly elliptic diffusion on $\R^{d+1}$, stopped upon exiting the ball $B(z',h)$.
Thus,~\cite[Thm~II.1.3]{stroock1988diffusion} is applicable and, together with the strong Markov property of $Z$~\cite[Thm A.1]{menshikov2022reflecting}, yields the claim.

\noindent \textbf{Claim 3.}
For any $z\in\cD$, $t>0$ and $A_0\in\cB(\ocD)$, such that $\lebm_{d+1}(A_0)>0$,
there exist  $z_0\in \ocD$, $h_0>0$
and $s\in(0,t)$ satisfying
$B(z_0,h_0)\subset \ocD$,  $\lebm_{d+1}(A_0 \cap B(z_0,h_0)) > 0$ and $\P_z(Z_s \in B(z_0,h_0)) > 0$.

\noindent \underline{\textit{Proof of Claim 3.}} Since $\int_0^t \P_z(Z_s \in \ocD) \ud s = t$, there exist $s < t$, $z'\in\ocD$ and $h>0$, such that $B(z',h)\subset \ocD$ and  $\P_z(Z_s\in B(z',h)) > 0$.
Moreover, the assumption $\lebm_{d+1}(A_0) > 0$ implies that there exists a ball $B(z_0,h_0) \subset \ocD$ such that $\lebm_{d+1}(B(z_0,h_0) \cap A_0) > 0$.

It remains to prove that for some $s\in(0,t)$ we have $\P_z(Z_s \in B(z_0,h_0)) > 0$.
If $B(z_0,h_0)\cap B(z',h)\neq \emptyset$, then, since
$\lebm_{d+1}(B(z_0,h_0) \cap B(z',h))>0$,
by Claim~2 applied with 
$A:=B(z_0,h_0)$
and 
$\P_z(Z_s\in B(z',h))>0$,
there exist $v\in(s,t)$
with
$\P_z(Z_v\in B(z_0,h_0))>0$.
If $B(z_0,h_0)\cap B(z',h)=\emptyset$,
then there exists a  sequence of $n\in\N$ balls
$B(z_i,h_i)\subset \ocD$, where  $i\in\{1,\ldots,n\}$,
 such that $z_n=z_0$, $h_n=h_0$ and 
$z_1=z'$, $h_1=h$  and $B(z_i,h_i)\cap B(z_{i+1},h_{i+1})\neq \emptyset $ for all $i\in\{1,\ldots,n-1\}$.
Since 
$\P_z(Z_{s}\in B(z_1,h_1))>0$,
by~Claim 2 (applied with $A:=B(z_2,h_2)$),
there exists time $v_1\in(s,t)$, such that 
$\P_z(Z_{v_1}\in B(z_2,h_2))>0$.
The Markov property at $v_1$ and~Claim 2
imply the existence of $v_2\in(v_1,t)$
such that 
$\P_z(Z_{v_2}\in B(z_3,h_3))>0$.
Construct inductively the increasing sequence $v_1,v_2,\ldots,v_{n-1}\in(0,t)$, set $s:=v_{n-1}$
and note $\P_z(Z_{v_{n-1}}\in B(z_n,h_n))>0$, implying  Claim 3.

\noindent \underline{\textit{Proof of Claim 1.}} Assume that Claim~1 does not hold. More precisely, there exist $t>0$, $z\in\cD$ and $A_0 \in \cB(\ocD)$, such that $\lebm_{d+1}(A_0)>0$ and $\int_0^t \P_z(Z_v\in A_0)\ud v = 0$.
By Claim~3 there exist a ball $B(z_0,h_0)$ in $\ocD$ and $s\in(0,t)$ such that 
$\lebm_{d+1}(A_0 \cap B(z_0,h_0)) > 0$ and $\P_z(Z_s \in B(z_0,h_0)) > 0$.
Claim 2 (applied with $z':=z_0$, $h:=h_0$ and $A:=A_0$) yields the contradiction:
$0=\int_0^t\P_z(Z_v\in A_0)\ud v \geq \int_s^{t \wedge (s+h_0^2)}\P_z(Z_v\in B(z_0,h_0)\cap A_0) \ud v>0$.


To conclude the proof of the proposition, we strengthen Claim 1. 
Suppose there exist $z\in\cD$, $t>0$ and $A\in\cB(\ocD)$ with $\lebm_{d+1}(A)>0$, such that $\P_z(Z_t\in A)=0$. Since there exists a ball $B(z',h)\in\cB(\ocD)$, such that $\lebm_{d+1}(A \cap B(z',h)) > 0$, 
by Claim~1 applied to $A \cap B(z',h)$ we have
 $\int_0^{h^2} \P_z(Z_v \in A \cap B(z',h))\ud v > 0$
for all $z\in\cD$.
We may assume $h^2\in(0,t)$.
Since  $Z$ is Markov, we have
$\int_{t-h^2}^t \P_z(Z_s \in A \cap B(z',h))\ud s > 0$
and hence $\P_z(Z_s \in A \cap B(z',h)) > 0$
for some $s\in(t-h^2,t)$. Thus $t\in(s, s+h^2)$.
By~Claim 2
we get 
$\P_z(Z_t\in A) \geq \P_z(Z_t \in A \cap B(z',h))>0$, completing the proof of the proposition.
\end{proof}

\begin{rem}
    \label{rem:no_mass_on_boundary}
    By Claim~0 in the proof of Proposition~\ref{prop:marginal_equivalent_to_Lebesgue}  above, for any $z\in\cD$, the equality $\P_z(Z_t\in\partial \cD)=0$
    holds for Lebesgue almost every $t\in\RP$. 
    Note also that the proof of Claim~0
    uses only the occupation times formula for continuous semimartingales and basic properties of the solution of SDE~\eqref{eq::SDE}. 
\end{rem}

Domain $\cD$, defined in~\eqref{eq::domain}, with increasing boundary (e.g. $\beta>0$, see~\eqref{eq::beta} for definition) satisfies the conditions of~\cite{stroock1971diffusion}, which establishes Feller continuity for reflecting processes $Z$. However, as explained in~\cite[Rem.~2.3(f)]{menshikov2022reflecting}, 
the assumptions of~\cite{stroock1971diffusion} are not satisfied if the boundary function $b$ decreases to zero (e.g. $\beta<0$). In the case $\beta=0$, the domain $\cD$, may but need not, satisfy the assumptions of~\cite{stroock1971diffusion}, see example in Lemma~\ref{lem:oscilating_domain} below.
Since the case $\beta<0$ is when positive recurrence occurs, we develop a new approach to Feller continuity of $Z$, relying on the localisation of the process. This is  more involved than the standard approach in the literature (see e.g.~\cite{dai1990steady})
due to the difficulty of obtaining a global bound on the growth of the local time in the case 
$\beta<0$, which requires localisation.

\begin{thm}
\label{thm:Feller_continuity_of_Z}
Let~\aref{ass:domain2}, \aref{ass:covariance2}, \aref{ass:vector2} hold. For a
continuous bounded function $f:\cD\to\RP$,
$t\in\RP$,
and a convergent sequence $(z_n)_{n\in\N}$ in $\cD$ with limit $\lim_{n\to\infty}z_n= z_\infty\in\cD$, we have 
$$
\E_{z_n}[f(Z_t)] \to \E_{z_\infty}[f(Z_t)]\quad\text{as $n\to\infty$.}
$$
\end{thm}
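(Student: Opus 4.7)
The plan is to establish Feller continuity via \emph{localisation}, circumventing the difficulty highlighted in Section~\ref{subsec:heuristic} that on narrowing domains the local time $L$ accumulates at rate proportional to $1/b(X_t)$, which precludes the standard coupling arguments at a global level. Specifically, for each $r>0$ I would first establish Feller continuity for the process stopped at $\varrho_r$ (defined in~\eqref{eq::varrho}), and then pass to the limit $r\to\infty$ using the diffusive moment bound of Theorem~\ref{thm:non_explosion_moments}.

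For the stopped process, on the bounded region $\cD^{(r)}=\cD\cap[0,r]\times\R^d$ the boundary function $b$ is bounded and $C^2$ (with the origin being a regular point thanks to~\aref{ass:domain1}; see~\cite[Lem.~4.3]{menshikov2022reflecting}), while $\Sigma$ and $\phi$ are bounded and Lipschitz with $\langle \nabla G,\phi\rangle\ge \delta_r>0$ on $\partial\cD\cap[0,r]\times\R^d$ by Lemma~\ref{lem:G}(b). Applying It\^o's formula to $G(Z_{t\wedge\varrho_r})$ and rearranging yields
\begin{equation*}
 \delta_r L_{t\wedge\varrho_r}\le G(z)-G(Z_{t\wedge\varrho_r})+\tfrac{1}{2}\int_0^{t\wedge\varrho_r}|\Delta_\Sigma G(Z_s)|\,\ud s+|M^G_{t\wedge\varrho_r}|,
\end{equation*}
giving $L^p$-bounds on $L_{t\wedge\varrho_r}$ uniform in the starting point $z\in\cD^{(r)}$, since $\Sigma$, $\nabla G$ and $\Delta_\Sigma G$ are all bounded on $\cD^{(r)}$. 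Feller continuity of $Z_{\cdot\wedge\varrho_r}$ then follows from a standard coupling argument: drive $Z^z$ and $Z^{z'}$ by the same Brownian motion, apply It\^o's formula to $\|Z^z_{t\wedge\varrho_r}-Z^{z'}_{t\wedge\varrho_r}\|_{d+1}^2$, and close the resulting inequality via the Lipschitz continuity of $\Sigma^{1/2}$ and $\phi$, the bound on local time just derived, and Gronwall's lemma, producing convergence in $L^2$ as $z'\to z$.

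To pass from stopped to unstopped, I would decompose
\begin{equation*}
\E_{z_n}[f(Z_t)]-\E_{z_\infty}[f(Z_t)]=\bigl(\E_{z_n}-\E_{z_\infty}\bigr)[f(Z_{t\wedge\varrho_r})]+R_n(r),
\end{equation*}
with $|R_n(r)|\le 2\|f\|_\infty\sup_{m\in\N\cup\{\infty\}}\P_{z_m}(\varrho_r\le t)$. Since $\{\varrho_r\le t\}=\{\sup_{s\le t}X_s\ge r\}$ and the sequence $(\|z_n\|_{d+1})_{n\in\N}$ is bounded, the supermartingale property established in the proof of Theorem~\ref{thm:non_explosion_moments} for $F_{w,2}(Z_\cdot)$ minus its drift, combined with Doob's maximal inequality, yields $\sup_m\P_{z_m}(\varrho_r\le t)\to 0$ as $r\to\infty$. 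The theorem then follows by first choosing $r$ large and then $n$ large.

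The main obstacle is the coupling step for the stopped process: even after localisation, the local-time contributions in the expansion of $\|Z^z-Z^{z'}\|_{d+1}^2$ require careful treatment and must be bounded using the uniform $L^p$-bound on $L_{t\wedge\varrho_r}$ above together with an oscillation-type estimate exploiting the $C^2$-regularity of $\phi$. The degeneracy of $\partial\cD$ at the origin moreover demands the regularity built into~\aref{ass:domain1} throughout, so that a smooth extension of the signed distance to $\partial\cD$ is available on a neighbourhood of $\cD^{(r)}$.
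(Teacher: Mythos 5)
Your overall architecture — localise at $\varrho_r$, establish stopped Feller continuity, pass to the limit using the supermartingale/maximal inequality to kill $\P_{z_m}(\varrho_r\le t)$ — matches the paper's structure (cf.\ Lemma~\ref{lem:Feller_continuity_from_stopped_to_general}), and the final passage from stopped to unstopped is essentially correct. The genuine gap is in the stopped-process step, where you replace the paper's weak-convergence argument with a synchronous coupling; as described, this step does not close.

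The problem is the local-time cross-terms. Applying It\^o's formula to $\|Z^z_t-Z^{z'}_t\|_{d+1}^2$ produces, besides the martingale and $\ud t$ terms, the contributions
$\int_0^t\langle Z^z_s-Z^{z'}_s,\phi(Z^z_s)\rangle\,\ud L^z_s - \int_0^t\langle Z^z_s-Z^{z'}_s,\phi(Z^{z'}_s)\rangle\,\ud L^{z'}_s$.
Since $\cD$ is not assumed convex and $\phi$ is only asymptotically normal, these terms have no sign; the best one can do with the $C^{1,1}$-geometry of $\partial\cD\cap[0,r]\times\R^d$ (interior-sphere estimate) and boundedness of $\phi$ is a bound $\lesssim\|Z^z_s-Z^{z'}_s\|_{d+1}^2(\ud L^z_s+\ud L^{z'}_s)$. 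Gronwall then delivers a factor $\exp\bigl(C(t+L^z_t+L^{z'}_t)\bigr)$ multiplied by a stochastic exponential, so the $L^p$-moments of $L_{t\wedge\varrho_r}$ that you derive via $G$ and Lemma~\ref{lem:G} are not enough; you need \emph{exponential} moments of local time to control the Gronwall factor, and even then the exponential Gronwall factor and the stochastic exponential are entangled, so a naive expectation does not close — one needs either a stochastic Gronwall inequality, or a Lions--Sznitman type change of variables that eliminates the boundary terms. Your phrase ``Lipschitz continuity of $\Sigma^{1/2}$ and $\phi$, the bound on local time just derived, and Gronwall's lemma'' understates this: these ingredients alone do not prove continuous dependence of reflected SDE solutions on the initial point, which is the hard part (and in full generality relies on Lipschitz continuity of the Skorokhod map \`a la Lions--Sznitman/Dupuis--Ishii for the smoothly-capped domain $\hat\cD$). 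You flag the obstacle but do not resolve it, and it is precisely the obstacle that led the paper to a completely different route: tightness via Aldous's criterion (Lemma~\ref{lem:expected_growth_loc_time_norm}), stability of stochastic integrals under weak convergence~\cite{Kurtz1991}, identification of any subsequential limit as a solution of SDE~\eqref{eq::SDE}, and pathwise uniqueness from~\cite[Thm~A.1]{menshikov2022reflecting}.

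A second, smaller issue: $\varrho_r$ depends on the initial point, so the quantities $Z^z_{t\wedge\varrho_r^z}$ and $Z^{z'}_{t\wedge\varrho_r^{z'}}$ involve different stopping times, and applying It\^o's formula to their squared difference is not well posed without extra care (e.g.\ stopping both at $\varrho_r^z\wedge\varrho_r^{z'}$ and then controlling the overshoot). In the paper this is reflected in the fact that convergence of the hitting times $\tau_r$ on path space only holds for all but countably many $r$ (via~\cite{Georgiou2019Invariance}), hence stopped Feller continuity is only obtained along a sequence $r_k\to\infty$, which is precisely what Lemma~\ref{lem:Feller_continuity_from_stopped_to_general} is designed to accept. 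Your coupling step would need an analogous device.
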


The proof of Theorem~\ref{thm:Feller_continuity_of_Z}
requires Lemmas~\ref{lem:expected_growth_loc_time_norm}
and~\ref{lem:Feller_continuity_from_stopped_to_general}. Lemma~\ref{lem:expected_growth_loc_time_norm} provides growth estimates required for the proof of tightness via the Aldus's criterion~\cite[VI.~Thm 4.5]{Jacod2003}.

\begin{lem}
\label{lem:expected_growth_loc_time_norm}
Let~\aref{ass:domain2}, \aref{ass:covariance2}, \aref{ass:vector2} hold.
Fix any $r>0$, $T>0$ and $\theta\in(0,T]$. Then there exist positive constants $C_{i}$, for $i\in\{1,2,3,4\}$,
such that for any $z\in\cD$ and $(\cF_{t})$-stopping times $S_1,S_2$  ($(\cF_{t})_{t\in\RP}$ is the Brownian filtration  in~\eqref{eq::SDE}), satisfying $S_1\leq S_2\leq S_1+\theta \leq T$,
for any $
\eps>0$ the following hold:
\begin{itemize}
\item[(a)]$\P_z(L_{S_2\wedge \varrho_r}-L_{S_1\wedge \varrho_r}\geq \eps) \leq  (C_1 \theta + C_2\theta^{1/2})/\eps$; 
\item[(b)]$\P_z(\|Z_{S_2\wedge \varrho_r} - Z_{S_1\wedge \varrho_r}\|_{d+1}^2\geq \eps) \leq (C_3\theta+ C_4\theta^{1/2})/\eps$.
\end{itemize}   
Moreover, $\E_{z}[L_{T \wedge \varrho_r}]\leq C_1T+C_2T^{1/2}<\infty$ holds.
\end{lem}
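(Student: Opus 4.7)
The overall strategy is (i) to establish the uniform ``moreover'' estimate $\sup_{z\in\cD}\E_z[L_{T\wedge\varrho_r}] \leq C_1 T + C_2\sqrt T$ as the core result, and then deduce (a) from it via the strong Markov property and Markov's inequality, and (b) by decomposing the $Z$-increment through SDE~\eqref{eq::SDE} and supplementing the core estimate with an analogous second-moment bound on the local-time increment. Throughout, the key tool is the function $G$ from Lemma~\ref{lem:G}, extended to a $C^2$ function on a neighbourhood of $\cD$: it satisfies $G\geq 0$ with $G=0$ on $\partial\cD$, the inner product $\langle \nabla G, \phi\rangle$ is at least some $\delta>0$ on $\partial\cD \cap [0,r]\times\R^d$ by Lemma~\ref{lem:G}(b), and $G$, $\|\nabla G\|_{d+1}$, $|\Delta_\Sigma G|$ are all bounded on the compact set $\cD^{(r)}$ (recall~\eqref{eq:D_r}).

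For the core estimate, I would apply It\^o's formula~\eqref{eq::Ito} to $G(Z)$ on $[0,t\wedge\varrho_r]$. The local-martingale part has quadratic variation $[\tilde M]_{t\wedge\varrho_r} \leq K t$ by boundedness of $\nabla G$ and $\Sigma$ (see~\eqref{eq::Ito_QV} and~\aref{ass:covariance1}), so it is a true martingale; $|\Delta_\Sigma G|$ is bounded on $\cD^{(r)}$. Using $\langle\nabla G, \phi\rangle \geq \delta$ on the support of $\ud L$ and taking expectations yields
\[
\delta\,\E_z[L_{t\wedge\varrho_r}] \leq \E_z[G(Z_{t\wedge\varrho_r})] - G(z) + \tfrac12 K t.
\]
The crucial cancellation comes from applying It\^o also to $G^2$: since $\nabla(G^2) = 2G\nabla G$ vanishes identically on $\partial\cD$, the boundary local-time integral drops out entirely and
\[
\E_z[G^2(Z_{t\wedge\varrho_r})] \leq G(z)^2 + K' t.
\]
Cauchy--Schwarz gives $\E_z[G(Z_{t\wedge\varrho_r})] \leq \sqrt{G(z)^2 + K' t}$, and the elementary inequality $\sqrt{a^2+b} - a \leq \sqrt b$ (for $a,b\geq 0$) cancels the $z$-dependent $G(z)$ contribution uniformly, producing the desired $C_1 T + C_2\sqrt T$ bound.

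For part~(a), the increment $L_{S_2\wedge\varrho_r} - L_{S_1\wedge\varrho_r}$ vanishes on $\{S_1 \geq \varrho_r\}$; on $\{S_1 < \varrho_r\}$ the strong Markov property at $S_1$ (valid since $Z$ is strong Markov by~\cite[Thm~A.1]{menshikov2022reflecting}) identifies this increment with the local time of a fresh copy of the process started at $Z_{S_1}$, up to a time bounded by $\theta$. The core estimate bounds its conditional expectation uniformly by $C_1\theta + C_2\sqrt\theta$, and Markov's inequality gives~(a). For part~(b), setting $\tau_i := S_i\wedge\varrho_r$, the SDE implies $\|Z_{\tau_2} - Z_{\tau_1}\|_{d+1}^2 \leq 2\|I\|_{d+1}^2 + 2\|\phi\|_\infty^2(L_{\tau_2} - L_{\tau_1})^2$, where $I$ denotes the stochastic integral of $\Sigma^{1/2}(Z)$ over $[\tau_1,\tau_2]$; It\^o's isometry with~\aref{ass:covariance1} gives $\E\|I\|_{d+1}^2 \leq K\theta$. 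The required bound $\E[(L_{\tau_2} - L_{\tau_1})^2] \leq C\theta + C'\sqrt\theta$ is obtained by squaring the a.s.~inequality $\delta(L_{\tau_2} - L_{\tau_1}) \leq G(Z_{\tau_2}) - G(Z_{\tau_1}) - (\tilde M_{\tau_2} - \tilde M_{\tau_1}) + K\theta$ from the core estimate, taking expectations, and controlling $\E[(G(Z_{\tau_2}) - G(Z_{\tau_1}))^2]$ via the identity $\E[(G(Z_{\tau_2}) - G(Z_{\tau_1}))^2] = \E[G^2(Z_{\tau_2})-G^2(Z_{\tau_1})] - 2\E[G(Z_{\tau_1})(G(Z_{\tau_2})-G(Z_{\tau_1}))]$, applying the $G^2$ bound to the first term and expanding the second via It\^o (the $\tilde M$-contribution vanishes by $\cF_{\tau_1}$-measurability of $G(Z_{\tau_1})$, while the drift and boundary contributions are controlled by $K\theta + K'\E[L_{\tau_2}-L_{\tau_1}]$ and hence by $K\theta + K'(C_1\theta + C_2\sqrt\theta)$ using part~(a)).

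The principal difficulty is producing the $\sqrt T$ scaling in the core estimate, which reflects the fact that starting on $\partial\cD$ the reflected process accumulates local time of order $\sqrt T$ on short timescales (as for the one-dimensional reflecting Brownian motion). A naive bound $\E_z[G(Z_{T\wedge\varrho_r})] \leq \|G\|_\infty$ yields only an $O(1)$ term, not the required $O(\sqrt T)$. The $G^2$ trick is essential precisely because the vanishing of $\nabla(G^2)$ on $\partial\cD$ produces a variance-type estimate \emph{free of any boundary contribution}, allowing the uniform cancellation $\sqrt{G(z)^2 + K' T} - G(z) \leq \sqrt{K' T}$, which is exactly what transmits the $\sqrt T$ scale down to the local-time expectation.
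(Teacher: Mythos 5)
Your core argument for part~(a) and the ``moreover'' estimate is essentially the paper's: both rely on the same observation that $\nabla(G^2)$ vanishes on $\partial\cD$, so the It\^o expansion of $G^2(Z)$ is free of boundary terms, and the combination of this second-moment bound with Cauchy--Schwarz and the inequality $\sqrt{a^2+b}-a\leq\sqrt{b}$ produces the $\sqrt\theta$ scaling uniformly in the starting point. The only difference there is organisational: you establish the uniform estimate $\E_z[L_{T\wedge\varrho_r}]\leq C_1T + C_2\sqrt{T}$ first and then feed it through the strong Markov property at $S_1$, whereas the paper runs the conditional It\^o calculus directly on the interval $[S_1\wedge\varrho_r,S_2\wedge\varrho_r]$ via optional sampling and reads off the ``moreover'' as a special case; these are interchangeable.

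Part~(b) is where you take a genuinely different route. The paper applies It\^o to $V_{z_0}(z)=\|z-z_0\|_{d+1}^2$ with $z_0 := Z_{S_1\wedge\varrho_r}$: the boundary integral $\int\langle\nabla V_{z_0},\phi\rangle\,\ud L$ is then \emph{linear} in $L$, so the \emph{first}-moment bound from part~(a) suffices, and the argument closes in a few lines. You instead decompose $Z_{\tau_2}-Z_{\tau_1}$ through the SDE as stochastic integral plus local-time integral, which forces you to control $\E[(L_{\tau_2}-L_{\tau_1})^2]$; you then obtain this second-moment bound by squaring the a.s.\ It\^o inequality for $G(Z)$ and controlling $\E[(G(Z_{\tau_2})-G(Z_{\tau_1}))^2]$ through the identity $(a-b)^2 = a^2-b^2-2b(a-b)$, another pass through the $G^2$ expansion, and the first-moment bound on $L$. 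This is correct (note that squaring the a.s.\ inequality $0\leq\delta(L_{\tau_2}-L_{\tau_1})\leq b$ is legitimate because the left side is nonnegative, so $b\geq 0$ a.s.\ on the event where the inequality holds, and the elementary bound $(u+v+w)^2\leq 3(u^2+v^2+w^2)$ then applies), but it is markedly longer: the paper's choice of Lyapunov function for~(b) is tailored precisely to avoid ever needing $\E[L^2]$, which is the cleaner way to go.
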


\begin{proof}
Let $G$ be the function whose existence is guaranteed by Lemma~\ref{lem:G} and let $g(z) := G(z)^2$,  $z\in\cD$. Since the first and second derivatives of functions $G$ and $g$ are continuous on $\cD$, and $\Sigma$ is bounded by~\aref{ass:covariance2}, it follows that $\|\Sigma^{1/2}\nabla G\|_{d+1}^2$, $\|\Sigma^{1/2}\nabla g\|_{d+1}^2$, $\lvert  \Delta_\Sigma G\rvert$ and  $\lvert \Delta_\Sigma g\rvert$  are bounded on $\cD^{(r)}= \cD\cap [0,r]\times\R^d$ for any $r>0$.

Pick $r>0$,  $T>0$ and $\theta\in (0,T]$. Recall from Lemma~\ref{lem:G} that $\nabla g(z) = 0$ and $\langle \phi(z),\nabla G(z)\rangle >  \delta_r$ for all $z\in\partial \cD \cap[0,r]\times \R^d$ and some positive constant $\delta_r$.
Let the bounded stopping times $S_1,S_2$ be as in the statement of the lemma.
It\^o's formula~\eqref{eq::Ito} applied to the processes $g(Z)$ and $G(Z)$ on the stochastic interval $[S_1\wedge \varrho_r,S_2\wedge \varrho_r]$
(recall the definition of $\varrho_r$ in~\eqref{eq::varrho})
yields 
$$
g(Z_{S_2 \wedge \varrho_r}) = g(Z_{S_1 \wedge \varrho_r}) + M_{S_2\wedge \varrho_r}^g-M_{S_1\wedge \varrho_r}^g + \frac{1}{2}\int_{S_1\wedge \varrho_r}^{S_2\wedge \varrho_r} \Delta_\Sigma g(Z_u)\ud u,  \quad \P_z\text{-a.s.,}
$$
and 
$$
\delta_r (L_{S_2 \wedge \varrho_r}-L_{S_1\wedge \varrho_r})\leq  G(Z_{S_2\wedge \varrho_r}) - G(Z_{S_1\wedge \varrho_r}) - M_{S_2 \wedge \varrho_r}^G+ M_{S_1 \wedge \varrho_r}^G - \frac{1}{2}\int_{S_1\wedge \varrho_r}^{S_2\wedge \varrho_r} \Delta_\Sigma G(Z_u)\ud u,~\P_z\text{-a.s.}
$$
 for any $z\in\cD$.
 Note that $(M^g_{t\wedge \varrho_r})_{t\in\RP}$ and $(M^G_{t\wedge \varrho_r})_{t\in\RP}$ are true martingales by~\eqref{eq::Ito_QV}. The
optional sampling theorem 
at the bounded stopping time $S_2\wedge \varrho_r$
yields $\E_z[M_{S_2\wedge \varrho_r}^g-M_{S_1\wedge \varrho_r}^g|\cF_{S_1\wedge \varrho_r}]=0$, $\P_z$-a.s. and $\E_z[M_{S_2\wedge \varrho_r}^G-M_{S_1\wedge\varrho_r}^G|\cF_{S_1\wedge \varrho_r}]=0$, $\P_z$-a.s.
There exists a constant $C_1>0$ such that $|\Delta_\Sigma g(z)|\leq 2C_1$ for all $z\in\cD^{(r)}$, which implies that
\begin{equation}
\label{eq:bound_on_g_at_S2}
    \E_z[g(Z_{S_2\wedge \varrho_r}) \vert \cF_{S_1\wedge \varrho_r}] \leq  g(Z_{S_1\wedge \varrho_r}) + C_1\theta,\quad\text{$\P_z$-a.s.}
\end{equation}
holds for any $z\in\cD$. Moreover, there exists a constant $C_2$ such that $|\Delta_\Sigma G(z)|\leq 2C_2$ for $z\in\cD^{(r)}$.
Using the fact that $g(z) = G(z)^2$, and applying the optional sampling theorem at the bounded stopping time $S_2\wedge \varrho_r$ implies 
\begin{align}
\label{eq:local_time_growth}
\delta_r \E_z[L_{S_2 \wedge \varrho_r} - L_{S_1 \wedge \varrho_r}] &\leq \E_z\left[\E_z[G(Z_{S_2 \wedge \varrho_r})\vert \cF_{S_1 \wedge \varrho_r}] - G(Z_{S_1 \wedge \varrho_r})+ C_2 (S_2-S_1)\right] \\
\nonumber
&\leq \E_z\left[\E_z[g(Z_{S_2 \wedge \varrho_r})\vert \cF_{S_1 \wedge \varrho_r}]^{1/2} - G(Z_{S_1 \wedge \varrho_r})+ C_2 \theta \right] \\
\nonumber
&\leq \E_z\left[(C_1 \theta + g(Z_{S_1 \wedge \varrho_r}))^{1/2} - G(Z_{S_1 \wedge \varrho_r}) + C_2 \theta \right] \\
\nonumber
&\leq (C_1 \theta)^{1/2} + C_2 \theta,
\end{align}
 where we used the Cauchy-Schwarz inequality, the inequality in~\eqref{eq:bound_on_g_at_S2} and the triangle inequality in the 
 second, third and fourth inequalities in the display above, respectively. Application of Markov inequality to~\eqref{eq:local_time_growth} implies  $(a)$. Moreover, since  $L_0 = 0$, setting $S_1 = 0, S_2 = T, \theta = T$ in~\eqref{eq:local_time_growth}, we get $\E_{z}[L_{T \wedge \varrho_r}]\leq (C_1T)^{1/2}+C_2T<\infty$, as claimed.

To prove $(b)$, define the function $V_{z_0}:\cD \to \RP$ by $V_{z_0}(z) := \|z-z_0\|_{d+1}^2$ for a parameter $z_0\in\cD$. Pick $r>0$, $T>0$ and $\theta\in(0,T]$. 
Since first and second derivatives of $V_{z_0}(z)$ are continuous on $\cD$ in both variable $z$ and parameter $z_0$, it follows by \aref{ass:vector2} and \aref{ass:covariance2} that there exist constants $\tilde C_1,\tilde C_2,\tilde C_3$ such that  $\sup_{z_0\in\cD^{(r)}}\langle\nabla V_{z_0}(z),\phi(z)\rangle \leq \tilde C_1$, $\sup_{z_0\in\cD^{(r)}}|\Delta_\Sigma V_{z_0}(z)|\leq 2\tilde C_2$, and $\sup_{z_0\in\cD^{(r)}}\|\Sigma(z)^{1/2}\nabla V_{z_0}(z)\|_{d+1}^2\leq \tilde C_3$ hold for every $z\in\cD^{(r)}$.
It\^o's formula~\eqref{eq::Ito} applied to $V_{z_0}(Z)$, with $Z$ started at $z_0$, gives
\begin{align*}
V_{z_0}(Z_{S\wedge \varrho_r}) =  M_{S\wedge \varrho_r}^{V_{z_0}} + \int_{0}^{S\wedge \varrho_r} \langle \nabla V_{z_0}(Z_u),\phi(Z_u)\rangle\ud L_u + \frac{1}{2}\int_{0}^{S\wedge \varrho_r} \Delta_\Sigma V_{z_0}(Z_u)\ud u,\quad\text{$\P_{z_0}$-a.s.,}
\end{align*}
for any stopping time $S$ and any $z_0\in\cD$.
Moreover, by~\eqref{eq::Ito_QV} the process $(M^{V_{z_0}}_{t\wedge \varrho_r})_{t\in\RP}$ is a true martingale.  By~\eqref{eq:local_time_growth}, for $S$  satisfying $S\leq \theta$,  we have $\sup_{z_0\in\cD^{(r)}}\E_{z_0}[L_{S\wedge \varrho_r}]\leq(C_1 \theta)^{1/2} + C_2 \theta$. Hence, the optional sampling theorem at the bounded stopping time $S\wedge \varrho_r$ yields
$$
 \E_{z_0}[\|Z_{S\wedge \varrho_r}-Z_0\|_{d+1}^2] = \E_{z_0}[V_{z_0}(Z_{S\wedge \varrho_r})] \leq \tilde C_1 \E_{z_0}[L_{S\wedge \varrho_r}] + \tilde C_2 \theta\leq \tilde C_1((C_1\theta)^{1/2}+C_2\theta)+\tilde C_2 \theta,$$
for any $S\leq \theta$ and $z_0\in\cD^{(r)}$, where $C_1,C_2$ come from part (a) of this lemma.  
It follows from the strong Markov property that for any stopping times $S_1,S_2$ satisfying assumptions of the lemma, and any $z_0\in\cD$ we have
$$
\E_{z_0}[\|Z_{S_2\wedge \varrho_{r}}-Z_{S_1\wedge \varrho_{r}}\|_{d+1}^2] =\E_{z_0}[\E_{z_0}[\|Z_{S_2\wedge \varrho_r}-Z_{S_1\wedge \varrho_r}\|_{d+1}^2\vert \cF_{S_1\wedge \varrho_{r}}]] \leq \tilde C_1((C_1\theta)^{1/2}+C_2\theta)+\tilde C_2 \theta.
$$
Application of Markov inequality gives part $(b)$ of the lemma.
\end{proof}

\begin{lem}   \label{lem:Feller_continuity_from_stopped_to_general}
Let~\aref{ass:domain2}, \aref{ass:covariance2}, \aref{ass:vector2} hold. 
Suppose that for any $t>0$, any continuous bounded function $f:\cD\to\R$, and a convergent sequence $z_n\to z_\infty\in \cD$ (as $n\to\infty$), there exists a sequence $(r_k)_{k\in\N}$ satisfying $\lim_{k\to\infty}r_k = \infty$, such that for every $k\in\N$ the following holds:
\begin{equation}
\label{eq:stopped_feller}
\E_{z_n}[f(Z_{t\wedge \varrho_{r_k}})]\to\E_{z_\infty}[f(Z_{t\wedge \varrho_{r_k}})]\quad  
\quad \text{as $n\to\infty$}.
\end{equation}
Then the process $Z$ is Feller continuous, that is, $\E_{z_n}[f(Z_t)] \to \E_{z_\infty}[f(Z_t)]$ as $n\to\infty$.
\end{lem}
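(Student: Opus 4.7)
The plan is to reduce Feller continuity to the hypothesized stopped Feller continuity via a triangle-inequality split, with the remainder controlled by a uniform tail bound on $\varrho_{r_k}$ derived from a non-negative supermartingale built from the Lyapunov function $F_{w,2}$.

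To set up the reduction, note that for every $z\in\cD$ and $k\in\N$, since $t\wedge\varrho_{r_k}=t$ off $\{\varrho_{r_k}\leq t\}$,
\[
|\E_z[f(Z_t)]-\E_z[f(Z_{t\wedge\varrho_{r_k}})]|\leq 2\|f\|_\infty\,\P_z(\varrho_{r_k}\leq t).
\]
The triangle inequality therefore yields
\[
|\E_{z_n}[f(Z_t)]-\E_{z_\infty}[f(Z_t)]|\leq 2\|f\|_\infty\bigl(\P_{z_n}(\varrho_{r_k}\leq t)+\P_{z_\infty}(\varrho_{r_k}\leq t)\bigr)+\Delta_{n,k},
\]
where $\Delta_{n,k}:=|\E_{z_n}[f(Z_{t\wedge\varrho_{r_k}})]-\E_{z_\infty}[f(Z_{t\wedge\varrho_{r_k}})]|$ tends to $0$ as $n\to\infty$ for each fixed $k$ by hypothesis~\eqref{eq:stopped_feller}. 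It thus suffices to show that $\P_{z_n}(\varrho_{r_k}\leq t)\to 0$ uniformly in $n\in\N\cup\{\infty\}$ as $k\to\infty$; since the convergent sequence $(z_n)$ lies in a bounded subset of $\cD$, this reduces to a bound on $\P_z(\varrho_r\leq t)$ that vanishes as $r\to\infty$ uniformly on compact subsets of $\cD$.

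For the uniform tail bound, the plan is to apply Lemma~\ref{lem:F_w,gamma} with $\gamma=2$ and some $w\in(-\infty,1-\beta s_0/c_0)\setminus\{0\}$ to obtain a function $F_{w,2}$ with $\langle\nabla F_{w,2}(z),\phi(z)\rangle\leq 0$ on $\partial\cD$ and, by Lemma~\ref{lem3.1} together with~\eqref{eq:F_w,gamma}, with a constant $C_0'\in(0,\infty)$ such that $|\Delta_\Sigma F_{w,2}|\leq C_0'$ on $\cD$. It\^o's formula~\eqref{eq::Ito} then decomposes $F_{w,2}(Z_s)=F_{w,2}(z)+M_s+A_s$, where $M$ is a continuous local martingale and the finite-variation part satisfies $A_s\leq \tfrac{C_0'}{2}s$ (the boundary integral being non-positive). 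Consequently, on $[0,t]$ the process
\[
Y_s:=F_{w,2}(Z_s)-\tfrac{C_0'}{2}s+\tfrac{C_0'}{2}t
\]
is a continuous non-negative local supermartingale, hence a true supermartingale by Fatou. Doob's maximal inequality for non-negative supermartingales together with the bound $F_{w,2}(z)\geq(\|z\|_{d+1}^2-C_4')/C_3'$ from~\eqref{eq:basic_dereministic_bound} and the inclusion $\{\varrho_r\leq t\}\subseteq\{\sup_{0\leq s\leq t}\|Z_s\|_{d+1}^2\geq r^2\}$ (since $X_s\leq\|Z_s\|_{d+1}$) yield
\[
\P_z(\varrho_r\leq t)\leq \P_z\Bigl(\sup_{0\leq s\leq t}F_{w,2}(Z_s)\geq(r^2-C_4')/C_3'\Bigr)\leq \frac{C_3'(F_{w,2}(z)+C_0't/2)}{r^2-C_4'}
\]
for $r^2>C_4'$, and the right-hand side tends to zero uniformly for $z$ in compact subsets of $\cD$ by the continuity of $F_{w,2}$.

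Combining the ingredients: given $\varepsilon>0$, the plan is to first choose $k$ so large that the two probability terms each contribute less than $\varepsilon/3$ uniformly in $n\in\N\cup\{\infty\}$, and then invoke~\eqref{eq:stopped_feller} for this fixed $k$ to find $n_0$ with $\Delta_{n,k}<\varepsilon/3$ for all $n\geq n_0$. The main technical step will be the uniform tail bound $\P_z(\varrho_r\leq t)=O(r^{-2})$: the quadratic-growth property of $F_{w,2}$ at infinity combined with the boundary tilt furnished by the cut-off $m$ in~\eqref{eq:F_w,gamma} is what makes $Y$ a genuine non-negative supermartingale after the linear drift correction, which in turn powers Doob's inequality.
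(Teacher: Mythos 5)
Your proof is correct and takes essentially the same approach as the paper: reduce to the stopped-Feller hypothesis via a triangle inequality and supply a uniform-in-$n$ tail bound on $\P_z(\varrho_r\leq t)$ from an It\^o/supermartingale argument with the cut-off Lyapunov function $F_{w,\gamma}$. The only cosmetic difference is that you use $F_{w,2}$ together with Doob's maximal inequality for non-negative supermartingales (yielding an $O(r^{-2})$ bound), whereas the paper uses $F_{w,1}$ and its own Proposition~\ref{prop:maximal} (yielding an $O(r^{-1})$ bound); both suffice.
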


\begin{proof}
Fix $t>0$. For any $r>0$, define the events $\cA_r := \{\varrho_r\leq t\}$ (and their complements  $\cA_r^c$). 

\noindent \textbf{Claim.} For any convergent sequence  $z_n \to z_\infty\in\cD$ and  $\eps>0$, there exists $r_0>0$ such that for all $r\in[r_0,\infty)$ we have $\sup_{n\in\N} \P_{z_n}(\cA_r)\leq \eps$ and $\P_{z_\infty}(\cA_r)\leq \eps$.

\noindent \textit{\underline{Proof of claim.}} Pick $w\in (-\infty,1-\beta s_0/c_0)$. Lemma~\ref{lem:F_w,gamma} guarantees the existence of constants $k,x_0,x_1\in\RP$ in the definition of the function $F_{w,1}$ such that $\langle \nabla F_{w,1}(z),\phi(z)\rangle \le0$ for all $z\in\partial\cD$. By~\eqref{eq:F_w,gamma}, we have $ F_{w,1}(z)=f_{w,1}(z)$ for all $z=(x,y)\in\cD \cap [x_1,\infty)\times \R^d$. Since $F_{w,1}$ is smooth on a neighbourhood of $\cD$, by~\eqref{eq:Sigma_Laplacian_f} and~\eqref{eq:grad_f_gamma_w_bound} in  Lemma~\ref{lem3.1},
the functions $z\mapsto|\Delta_\Sigma F_{w,1}(z)|$ and $z\mapsto\|\Sigma(z)^{1/2}\nabla F_{w,1}(z)\|_{d+1}^{2}$ are bounded on $\cD$. Hence,
there exist constants 
$\tilde C_1,\tilde C_2\in(0,\infty)$ satisfying $|\Delta_\Sigma F_{w,1}(z)|\leq 2\tilde C_1$ and $\|\Sigma(z)^{1/2}\nabla F_{w,1}(z)\|_{d+1}^{2}\leq \tilde C_2$ for all $z\in\cD$. 
Define the process $\kappa := 2^{1/\lvert w\rvert}F_{w,1}(Z)$,  and recall the definition of $\rho_r$ in~\eqref{eq::rho}. Since $F_{w,1}(z) = f_{w,1}(z)$ for $z\in\cD\cap [x_1,\infty)\times\R^d$,~\eqref{eq:bound_Lyapunov_f} implies that for $r\in[x_1,\infty)$ we have $\rho_r\leq \varrho_r$.
Moreover, application of It\^o's formula~\eqref{eq::Ito} implies that the process
$
\kappa_{t\wedge \rho_r} - C(t\wedge \rho_r)
$
is a supermartingale for $C=2^{1/|w|}\tilde C_1$, and any $r>0$, where the local martingale appearing in It\^o's formula is a true martingale by~\eqref{eq::Ito_QV}. Thus, applying Proposition~\ref{prop:maximal} (with $\xi=\kappa$ and a constant function $f\equiv C$) we infer that for any $r\geq x_1$ and any $z\in\cD$,
$$
\P_z(\cA_r)\leq \P_{z}(\rho_r \leq t) \leq r^{-1}(2^{1/|w|}F_{w,1}(z) 
+ C t).
$$
Since $(2^{1/|w|}\sup_{n\in\N}F_{w,1}(z_n)+Ct)$ is finite by the continuity of $F_{w,1}$, there exists $r_0$ as claimed.

To prove the lemma, pick an arbitrary continuous bounded function $f:\cD\to\R$, $\eps>0$, and a convergent sequence $z_n\to z_{\infty}$.  Let  the sequence $(r_k)_{k\in\N}$ tend to infinity and satisfy~\eqref{eq:stopped_feller} for every $k\in\N$.
 Since $f$ is bounded, the Claim above implies that there exists $k\in\N$ such that $\sup_{z\in\cD}\lvert f(z)\rvert(\P_{z_n}(\cA_{r_k})+\P_{z_\infty}(\cA_{r_k}))<\eps/3$ holds for every $n\in\N$. 
Thus, as $f(Z_t)=f(Z_{t\wedge \varrho_{r_k}})\mathbbm{1}(\cA_{r_k}^c)+f(Z_t)\mathbbm{1}(\cA_{r_k})$, for every $n\in\N$ we get
\begin{align}
\label{eq:Feller_local_to_global_bound}
\rvert \E_{z_n}[f(Z_t)]-\E_{z_\infty}[f(Z_t)]\rvert&\leq\rvert \E_{z_n}[f(Z_{t\wedge \varrho_{r_k}})\mathbbm{1}(\cA_{r_k}^c)]-\E_{z_\infty}[f(Z_{t\wedge \varrho_{r_k}})\mathbbm{1}(\cA_{r_k}^c)]\rvert +
\eps/3.
\end{align}
Next, by~\eqref{eq:stopped_feller}, for all large $n\in\N$ we have $\lvert \E_{z_n}[f(Z_{t\wedge \varrho_{r_k}})] - \E_{z_\infty}[f(Z_{t\wedge \varrho_{r_k}})]\rvert <\eps/3$. 
 Since
 $f(Z_{t\wedge \varrho_{r_k}})\mathbbm{1}(\cA_{r_k}^c)=f(Z_{t\wedge \varrho_{r_k}})-f(Z_{t\wedge \varrho_{r_k}})\mathbbm{1}(\cA_{r_k})$,
 the triangle inequality yields 
\begin{align*}
\lvert\E_{z_n}[f(Z_{t\wedge \varrho_{r_k}})\mathbbm{1}(\cA_{r_k}^c)&]-\E_{z_\infty}[f(Z_{t\wedge \varrho_{r_k}})\mathbbm{1}(\cA_{r_k}^c)]\rvert \\
\leq \lvert \E_{z_n}[f(Z_{t\wedge \varrho_{r_k}})] &- \E_{z_\infty}[f(Z_{t\wedge \varrho_{r_k}})] \rvert + \sup_{z\in\cD}\lvert f(z)\rvert(\P_{z_n}(\cA_{r_k})+\P_{z_\infty}(\cA_{r_k})) \leq 2\eps/3,
\end{align*}
which, together with~\eqref{eq:Feller_local_to_global_bound},
implies $\rvert \E_{z_n}[f(Z_t)]-\E_{z_\infty}[f(Z_t)]\rvert<\eps$
for all large $n\in\N$.
\end{proof}

\begin{proof}[Proof of Theorem~\ref{thm:Feller_continuity_of_Z}] 
By Lemma~\ref{lem:Feller_continuity_from_stopped_to_general}, it suffices to prove that for any $f:\cD\to\RP$,
$T\in\RP$ and a convergent sequence $z_n\to z_\infty\in\cD$ (as $n\to\infty$), there exists a sequence $(r_k)_{k\in\N}$ tending to infinity, such that for any $k\in\N$ we have
$$
\E_{z_n}[f(Z_{T\wedge \varrho_{r_k}})] \to \E_{z_\infty}[f(Z_{T\wedge \varrho_{r_k}})]
\quad\text{as $n\to\infty$}.
$$
The first step is to prove that, for a fixed $r>0$, the laws of
$(Z_{\cdot \wedge \varrho_r},L_{\cdot \wedge \varrho_r},W_{\cdot})$ under $\P_{z_n}$ are tight. 
The second step consists of proving that 
every subsequence converges to the law of 
$(Z_{\cdot \wedge \varrho_r},L_{\cdot \wedge \varrho_r},W_{\cdot})$ 
under $\P_{z_\infty}$.

Let $E^{(1)} :=E_1\times E_2\times E_3$, where $E_1:=\cD$, $E_2:=  \RP$ and $E_3:=\R^{d+1}$, and denote by $D_{E^{(1)}}([0,\infty))$  the space of c\`adl\`ag  functions (i.e.~right-continuous functions with left limits), mapping the interval $[0,\infty)$ into the metric space $E^{(1)}$. Denote by $C_{E^{(1)}}([0,\infty)) \subset D_{E^{(1)}}([0,\infty))$ the subspace of continuous functions. Endow $D_{E^{(1)}}([0,\infty))$ with the Skorohod topology and its Borel $\sigma$-algebra $\cM^1$, see e.g.~\cite[Ch.~16]{Billingsley1999} for details. Any function $\omega \in D_{E^{(1)}}([0,\infty))$
can be expressed as $\omega = (a^{(1)},a^{(2)},a^{(3)})$, for some ``coordinate'' c\`adl\`ag functions $a^{(i)}: [0,\infty)\to E_i$ for $i=1,2,3$.
For any $t\in[0,\infty)$, define maps $A^{(i)}_t:D_{E^{(1)}}([0,\infty))\to E_i$
by
$A^{(i)}_t(\omega):=a^{(i)}(t)$, $i\in\{1,2,3\}$. 
Define a $\sigma$-algebra 
$\cM_t^1 := \sigma\{A^{(i)}_s:0\leq s\leq t, i\in\{1,2,3\}\}\subset\cM^1$ generated by  continuous maps $A^{(i)}_s$.
By~\cite[Thm~16.6]{Billingsley1999} we have $\cM_\infty^1=\cM^1$.

For any $z\in\cD$, 
by~\cite[Thm A.1]{menshikov2022reflecting}, there exists a filtered probability space $(\Omega,(\cF_t)_{t\in\RP},\cF,\P_z)$,
supporting the  processes $(Z,L,W)$, taking values in $E^{(1)}$
such that, under $\P_z$,
the SDE in~\eqref{eq::SDE} holds and 
$W$ is a standard $(\cF_t)$-Brownian motion.
We may assume that the filtration $(\cF_t)_{t\in\RP}$ is complete (i.e.~$\cF_0$
contains all $\P_z$-null sets of $\cF$) and right continuous (i.e.~$\cF_t = \cap_{t<s}\cF_s$).
Pick arbitrary $r_0\in\RP$ and $T>0$, and recall the stopping time 
$\varrho_{r_0}$, defined in~\eqref{eq::varrho}, is the first time the coordinate $X$
of $Z=(X,Y)$
reaches level $r_0$.
The stopped process   
$(Z_{t \wedge \varrho_{r_0}\wedge T},L_{t \wedge \varrho_{r_0}\wedge T},W_{t\wedge T})_{t\in\RP}$
produces a measurable map 
$(Z_{\cdot \wedge \varrho_{r_0}\wedge T},L_{\cdot \wedge \varrho_{r_0}\wedge T},W_{\cdot\wedge T}):(\Omega,\cF)\to(D_{E^{(1)}}([0,\infty)),\cM^1)$ 
and induces the probability measure $Q_z^1(\cdot):=\P_z((Z,L,W)\in \cdot)$ on $\cM^1$.

Denote by $\cP(D_{E^{(1)}}([0,\infty)),\cM^1)$ the space of probability measures on the measurable space $(D_{E^{(1)}}([0,\infty)),\cM^1)$ and endow it with the topology of weak convergence~(see \cite[VI.~3.]{Jacod2003} for details). We say that the sequence of measures $(Q_{n})_{n\in\N}\in \cP(D_{E^{(1)}}[0,\infty),\cM^1)$ is \textit{C-tight}, if every subsequence of $(Q_{n})_{n\in\N}$ has a convergent subsequence and the limiting probability measure $Q_{*}$ charges only the set $C_{E^{(1)}}([0,\infty))$, i.e.~$Q_{*}(C_{E^{(1)}}[0,\infty)) = 1$.

Recall that the measures $Q_z^1$, for $z\in\cD$, are defined by the laws of the stopped processes $(Z_{\cdot\wedge \varrho_r\wedge T},L_{\cdot\wedge \varrho_r\wedge T},W_{\cdot\wedge T})$ under the measure $\P_z$.
Lemma~\ref{lem:expected_growth_loc_time_norm}, and the fact that $W$ is a Brownian motion, imply that for any $\theta>0$ there exist positive constants $C_1,C_2$ such that for any stopping times $S_1,S_2\in\cF_T$ satisfying $S_1\leq S_2\leq S_1+\theta\leq T$,
for any $n\in\N$ we have 
$$\P_{z_n}(\| (Z_{S_2\wedge \varrho_{r_0}},L_{S_2 \wedge \varrho_{r_0}},W_{S_2})-(Z_{S_1\wedge \varrho_{r_0}},L_{S_1 \wedge \varrho_{r_0}},W_{S_1})\|_{2d+3}^2 \geq \eps) \leq (C_1 \theta + C_2\theta^{1/2})/\eps.$$ 
There exists a positive constant $C>0$, such that  $\sup_{t\in\R_+}
\|Z_{t\wedge \varrho_{r_0}}\|_{d+1}\leq  C$ $\P_{z_n}$-a.s. for all $n\in\N$.  Since Lemma~\ref{lem:expected_growth_loc_time_norm} bounds the expected growth of the local time $L$, for any $\eps>0$, there exists $K>0$, such that for all $n\in\N$ we have $\P_{z_n}(\sup_{0\leq s\leq T} \|(Z_{s \wedge \varrho_{r_0}},L_{s\wedge \varrho_{r_0}},W_{s})\|_{2d+3} \geq K)\leq \eps$. Thus we may apply Aldous's tightness criterion~\cite[VI.~Thm 4.5]{Jacod2003} to deduce that the sequence of measures  $(Q_{z_n}^1)_{n\in\N}$ is tight. Moreover, since $Q_{z_n}^1(C[0,\infty)) = 1$ for every $n\in\N$,~\cite[VI.~Prop. 3.26]{Jacod2003} implies that the sequence $(Q_{z_n}^1)_{n\in\N}$ is $C$-tight.
It follows that there exists a subsequence 
$(Q_{z_{n_k}}^1)_{k\in\N}$ that converges weakly to a probability measure  $Q_{*}^1$, satisfying $Q_{*}^1(C[0,\infty)) = 1$. For notational convenience we assume that the sequence $(Q_{z_n}^1)_{n\in\N}$ itself converges to $Q_{*}^1$. 

Our aim is to prove that $(A^{(1)},A^{(2)},A^{(3)})$, under the measure $Q_{*}^1$, solves SDE~\eqref{eq::SDE}. 
Consider the process $\cZ:=\bar\cZ_{\cdot \wedge T}$, where 
$$
\bar\cZ :=\left(Z_{\cdot\wedge\varrho_{r_0}},L_{\cdot\wedge\varrho_{r_0}},W_\cdot,\int_0^{\cdot} \Sigma^{1/2}(Z_u)\ud W_u,\int_0^{\cdot\wedge\varrho_{r_0}} \phi(Z_u)\ud L_u,\int_0^{\cdot\wedge\varrho_{r_0}}(b(X_u)^2 - \|Y_u\|_d^2)\ud L_u\right).
$$ 
The state space of $\cZ$ is $E^{(2)} := E^{(1)}\times E_4\times E_5\times E_6$, where $E_4 = \R^{d+1},E_5=\R^{d+1}$ and $E_6=\R$. Let  $(D_{E^{(2)}}([0,\infty)),(\cM_t^2)_{t\in[0,\infty)}, \cM^2)$ be the filtered measurable space with coordinate projections $A^{(i)}:D_{E^{(2)}}([0,\infty))\to D_{E_i}([0,\infty))$ for $i\in\{1,\dots,6\}$ (note that $A^{(i)}$, for $i\in\{1,2,3\}$, agrees with the definition in the beginning of the second paragraph of this proof).  Moreover, a measurable map $\cZ:(\Omega,\cF)\to(D_{E^{(2)}}([0,\infty)),\cM^2)$
 induces the probability measure $ Q_z^2(\cdot):=\P_z(\cZ\in \cdot)$ on $\cM^2$. Note that $Q_z^2((A^{(1)},A^{(2)},A^{(3)})^{-1}(\cdot)) = Q_z^{1}(\cdot)$ for all $z\in\cD$. Hence 
 the first three coordinates converge weakly under  $(Q_{z_n}^2)_{n\in\N}$.

Denote $A^{(1)} = (A^{(1,X)},A^{(1,Y)})$, where $A^{(1,X)}:D_{E^{(2)}}([0,\infty)) \to D_{\RP}([0,\infty))$ and $A^{(1,Y)}:D_{E^{(2)}}([0,\infty)) \to D_{\R^d}([0,\infty))$.
 Note that for each $z\in\cD$, $L$ and $W$ under the measures $\P_z$ are adapted to the complete, right-continuous filtration $(\cF_{t})_{t\in\RP}$ and have continuous sample paths, which are thus in $D_{E_2}([0,\infty))$ and $D_{E_3}([0,\infty))$, respectively.\footnote{We work with c\`adl\`ag paths because we apply~\cite[Thm~2.2]{Kurtz1991} to conclude the stability of the stochastic integrals.} Moreover, the processes $W,L$ are semimartingales, which satisfy $\E_{z_n}[L_{t\wedge \varrho_{r_0}\wedge T}]<\infty$ (by Lemma~\ref{lem:expected_growth_loc_time_norm})  and  $\E_{z_n}[[W]_{t\wedge T}] \leq T$ for any $t\in\RP$ and $n\in\N$. Thus, Assumption~\cite[C2.2(i)]{Kurtz1991} is satisfied with the deterministic time $\tau^\alpha_n := \alpha+1$, where $\alpha\in(0,\infty)$, $n\in\N$ are arbitrary parameters and $\tau^\alpha_n$ is a sequence of stopping times in~\cite[C2.2(i)]{Kurtz1991}. Since   the functions $z\mapsto\Sigma(z)$, $z\mapsto \phi(z)$, $z\mapsto b(x)^2-\|y\|_{d+1}^2$ are continuous for $z=(x,y)\in\cD$ and $\P_z(\cZ\in\cdot) = Q_z^2(\cdot)$,~\cite[Thm~2.2]{Kurtz1991} implies that there exists a probability measure $Q_{*}^2$ on $(D_{E^{(2)}}([0,\infty)),\cM^2)$, such that
$$
\widetilde A :=\left(A^{(1)}_\cdot,A_\cdot^{(2)},A_\cdot^{(3)},\int_0^{\cdot} \Sigma^{1/2}(A^{(1)}_u)\ud A_u^{(3)},\int_0^{\cdot} \phi(A^{(1)}_u)\ud A^{(2)}_u,\int_0^{\cdot} (b(A_u^{(1,X)})^2 - \|A^{(1,Y)}_u\|_d^2)\ud A^{(2)}_u\right),
$$
under the measure $Q_{z_n}^2$, converges weakly
(as $n\to\infty$) to
$\widetilde A$
under $Q_{*}^2$. In particular,\cite[Thm~2.2]{Kurtz1991} ensures that $A^{(2)},A^{(3)}$ are semimartingales under $Q_{*}^2$.
Note that all limiting processes have continuous paths $Q_{*}^2$-a.s., which follows from the fact that $A^{(1)},A^{(2)}$ and $A^{(3)}$ have continuous paths $Q_{*}^2$-a.s.
We now extend (using~\cite[Lem~4.3]{Georgiou2019Invariance}) the weak convergence to stopping times $\varrho_r$ and the corresponding stopped processes.

For $a^{(1,X)}\in D_{\RP}([0,\infty))$ and $r>0$, denote $\tau_{r}(a^{(1,X)}) := \inf \{ t>0: a^{(1,X)}_t \geq r \text{ or } a^{(1,X)}_{t-} \geq r\}$, where $a^{(1,X)}_{t-}$ denotes the left limit of $a^{(1,X)}$ at time $t$. Since, for any $z\in\cD$, the process $X$ has $\P_z$-a.s.~continuous paths, we have $\tau_r(X) = \varrho_r$, $\P_z$-a.s.
We now  make the final extension of the state space, which will capture the convergence of the stopping times. Denote $E^{(3)} := E^{(2)}\times \RP$, and let $(D_{E^{(3)}}([0,\infty)),(\cM_t^3)_{t\in[0,\infty)}, \cM^3)$ be the filtered measurable space with coordinate projections $A^{(i)}$, for $i\in\{1,\dots,7\}$.
For any $r\in (0,\infty)$, a measurable map $(\cZ,\varrho_r):(\Omega,\cF)\to(D_{E^{(3)}}([0,\infty)),\cM^3)$
 induces the probability measure $ Q_z^{3,r}(\cdot):=\P_z((\cZ,\varrho_r)\in \cdot)$ on $\cM^3$. 
 Furthermore, for any $r\in(0,\infty)$, it holds that $Q_z^{3,r}(\widetilde A^{-1}(\cdot)) = Q_z^{2}(\cdot)$.

As before, $\widetilde A$ under $Q^{3,r}_{z_n}$ converge to $\tilde A$ under some probability measure $Q^{3,r}_{*}$.
Moreover, $A^{(1,X)}$ has $Q_{*}^{3,r}$-a.s. continuous paths.
Thus,~\cite[Rem 4.8 and Lem 4.3]{Georgiou2019Invariance} implies that for all but at most countably many $r\in(0,r_0)$, the map $D_{\RP}([0,\infty))\to\RP$, given by $a^{(1,X)}\mapsto \tau_r(a^{(1,X)})$, is continuous at $A^{(1,X)}$, $Q_{*}^{3,r}$-a.s. (recall that $r_0>0$ is an arbitrary number, fixed at the beginning of the proof). Pick $r_1\in(r_0/2,r_0)$ such that $a^{(1,X)}\mapsto \tau_{r_1}(a^{(1,X)})$ is continuous at $a^{(1,X)}$, $Q_{*}^{r_1}$-a.s. The  Continuous mapping theorem~\cite[Thm 2.7]{Billingsley1999} implies that $(\tilde A,\tau_{r_1}(A^{(1,X)}))$, under $Q_{z_n}^{3,r_1}$, converges weakly  to $(\tilde A,\tau_{r_1}(A^{(1,X)}))$ under $Q_{*}^{3,r_1}$. 
The convergence of the stopped processes follows by applying~\cite[Theorem 2.2]{Kurtz1991} to the stochastic integral $\int_0^t \mathbbm{1}\{s\leq\varrho_{r_1}\} \ud\cZ_s$  (condition~\cite[C2.2(i)]{Kurtz1991} is satisfied with $\tau^\alpha_n := \alpha+1$ as above). Since $\P_z((\cZ,\varrho_{r_1})\in\cdot) =Q_z^{3,r_1}(\cdot)$, it follows that
$(\tilde A_{\cdot \wedge \tau_{r_1}(A^{(1,X)})})$, under $Q_{z_n}^{3,r_1}$, converges weakly as $n\to\infty$ to $(\tilde A_{\cdot \wedge \tau_{r_1}(A^{(1,X)})})$ under the probability measure $Q_{*}^{3,r_1}$.

To conclude the proof, we have to show that the process $(A^{(1)},A^{(2)},A^{(3)})$ under $Q_{*}^{3,r_1}$ solves SDE~\eqref{eq::SDE}.
Denote by $\mathbf{0}\in D_{E_1}([0,\infty))$ 
the function mapping every $t\in[0,\infty)$ into the origin of $\R^{d+1}$.
For every $n\in\N$, we have
$$A^{(1)}_{\cdot \wedge \tau_{r_1}(A^{(1,X)})} - z_n -\int_0^{\cdot \wedge \tau_{r_1}(A^{(1,X)})} \phi(A^{(1)}_u)\ud A^{(2)}_u - \int_0^{\cdot \wedge \tau_{r_1}(A^{(1,X)})} \Sigma^{1/2}(A^{(1)}_u)\ud A^{(3)}_u\equiv \mathbf{0}, \quad \text{$Q_{z_n}^{3,r_1}$-a.s.}$$
Since the set $\{\mathbf{0}\}$ is closed in 
$ D_{E_1}([0,\infty))$,~\cite[Thm~2.1(iii)]{Billingsley1999} implies that $$A_{\cdot \wedge \tau_{r_1}(A^{(1,X)})}^{(1)} -z_{\infty} - \int_0^{\cdot \wedge \tau_{r_1}(A^{(1,X)})} \Sigma^{1/2}(A^{(1)}_u)\ud A_u^{(3)} - \int_0^{\cdot \wedge\tau_{r_1}(A^{(1,X)})} \phi(A^{(1)}_u)\ud A^{(2)}_u\equiv\mathbf{0}, \quad \text{$Q_{*}^{3,r_1}$-a.s.}$$

Note that since $A^{(3)}$ is a Brownian motion under 
$Q_{z_n}^{3,r_1}$ for every $n\in\N$, it is also a Brownian motion under the weak limit $Q_{*}^{3,r_1}$.

It remains to prove that, under $Q^{3,r_1}_{*}$, $A^{(2)}$ is a local time
of $A^{(1)}$ at the boundary $\partial\cD$. For any $z\in\cD$, 
the local time has to satisfy $L_{t\wedge \varrho_r} = \int_0^{t\wedge \varrho_r} \1{Z_s\in\partial\cD}\ud L_s$, $\P_{z}$-a.s.
This requirement is equivalent to 
\begin{equation} 
\label{eq:loc_time_definition}
\int_0^{t\wedge \varrho_r} (b(X_s)^2 - \|Y_s\|_d^2) \ud L_s = 0, \quad\text{$\P_z$-a.s.}
\end{equation}
Indeed, since $b(x)^2-\|y\|_d^2=0$ for all $(x,y)\in\partial \cD$, we have
$\int_0^{t\wedge \varrho_r} (b(X_s)^2 - \|Y_s\|_d^2) \ud L_s =
\int_0^{t\wedge \varrho_r} (b(X_s)^2 - \|Y_s\|_d^2) \1{Z_s\in\partial\cD}\ud L_s=
0$, $\P_z$-a.s., by the definition of local time.
Conversely, since $b(x)^2-\|y\|_d^2>0$ for all $(x,y)\in\cD\setminus \partial \cD$, the sets $\cD_k:=\{(x,y)\in\cD:b(x)^2-\|y\|_d^2\in[1/(k+1),1/k)\}$
are pairwise disjoint and satisfy $\cup_{k\in\N}\cD_k=\cD\setminus\partial \cD$.
Moreover, we have
$0\leq\int_0^{t\wedge \varrho_r} \1{Z_s\in\cD_k}\ud L_s\leq (k+1)\int_0^{t\wedge \varrho_r} (b(X_s)^2 - \|Y_s\|_d^2) \1{Z_s\in\cD_k}\ud L_s=0$, implying~\eqref{eq:loc_time_definition} via
$L_{t\wedge \varrho_r} = \int_0^{t\wedge \varrho_r} \1{Z_s\in\partial\cD}\ud L_s +\sum_{k\in\N}\int_0^{t\wedge \varrho_r} \1{Z_s\in\cD_k}\ud L_s
=\int_0^{t\wedge \varrho_r} \1{Z_s\in\partial\cD}\ud L_s$.

Note that $\int_0^{\cdot\wedge \tau_{r_1}(A^{(1,X)})} (b(A^{(1,X)})^2 - \|A^{(1,Y)}\|_d^2)\ud A^{(2)}_u \equiv \mathbf{0}$, $Q_{z_n}^{3,r_1}$-a.s., for every $n\in\N$, where $\mathbf{0}$  now denotes the zero function in $D_{E_6}([0,\infty))$ (recall that $E_6=\R$).
Thus, by~\cite[Thm. 2.1(iii)]{Billingsley1999},
we get $\int_0^{\cdot\wedge \tau_{r_1}(A^{(1,X)})} (b(A^{(1,X)})^2 - \|A^{(1,Y)}\|_d^2)\ud A^{(2)}_u \equiv 0$, $Q_*^{3,r_1}$-a.s. Hence, the condition in~\eqref{eq:loc_time_definition} implies that 
$A^{(2)}$ is indeed the local time of $A^{(1)}$
at the boundary $\partial\cD$.

 We have thus proved that $(A^{(1)}_{\cdot \wedge \tau_{r_1}(A^{(1,X)})},A^{(2)}_{\cdot \wedge \tau_{r_1}(A^{(1,X)})},A^{(3)}_{\cdot \wedge \tau_{r_1}(A^{(1,X)})})$, under $Q_{*}^{3,r_1}$ (and hence under $Q_{*}^1$), solves SDE~\eqref{eq::SDE} on the stochastic interval $[0,\varrho_{r_1}]$. The pathwise uniqueness~\cite[Thm A.1.]{menshikov2022reflecting} of solutions of SDE~\eqref{eq::SDE} implies 
 that every sub-sequential limit $Q_{*}^1$ of the sequence of $(Q_{z_n}^1)_{n\in\N}$ 
 equals $Q_{z_\infty}^1$, implying the
 Feller continuity for the process stopped at $\varrho_{r_1}$, i.e.~the limit in~\eqref{eq:stopped_feller} holds for $r_1$.
 Since  $r_0$ was chosen arbitrarily and $r_1\in(r_0/2,r_0)$, we can inductively construct a sequence $(r_k)_{k\in\N}$ with $\lim_{k\to\infty}r_k = \infty$, such that~\eqref{eq:stopped_feller} holds for every $k\in\N$. An application of Lemma~\ref{lem:Feller_continuity_from_stopped_to_general} concludes the proof of Theorem~\ref{thm:Feller_continuity_of_Z}.
\end{proof}

\subsection{Application to return times}
\label{subsubsec:proof_of_thm_return_times}
An easy consequence of Feller continuity and irreducibility is the non-confinement of the reflected process $Z$ in any compact set. We will use this property to extend the asymptotic results about return times in Section~\ref{subsection:return_times_drift_conditions} to the entire domain $\cD$.

Recall the definition of the first passage time $\varrho_r$ (over $r$) in~\eqref{eq::varrho}, the return time~$\varsigma_r$ (below $r$) in~\eqref{eq::varsigma} and the neighbourhood $\cD_{h} = \{z\in\cD: \exists z'\in \partial\cD \text{ such that } \|z-z'\|_{d+1}<h\}$ of the boundary $\partial\cD$ for $h\in(0,\infty)$.
 We start with the following proposition.
 
\begin{prop}
\label{prop:crossing_time_probabilities}
For any $0<r_0<r_1<\infty$, the following statements hold.
\begin{itemize}
    \item[(a)] For all sufficiently small $h>0$ and $z\in (\cD\setminus \cD_h )\cap (r_0,r_1)\times\R^d$, we have $\P_z(\varrho_{r_1}<\varsigma_{r_0})>0$.

\item[(b)] For any $\delta\in(0,r_1-r_0)$ and sufficiently small $h>0$, there exists $\eps>0$, such that $\P_z(\varsigma_{r_0}<\varrho_{r_1})>\eps$  for any $z\in (\cD\setminus\cD_h) \cap [r_0,r_1-\delta]\times\R^d$.

\item[(c)] All moments of the first exit time of the interval $(r_0,r_1)$ are finite: $\E_z[(\varsigma_{r_0}\wedge \varrho_{r_1})^k]<\infty$ for all $k\in\N$ and $z\in\cD$.
\end{itemize}
\end{prop}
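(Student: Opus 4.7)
The plan is to deduce (a) and (b) from Stroock--Varadhan support arguments for the uniformly elliptic diffusion to which $Z$ reduces on the stochastic interval $[0,\tau_{\partial\cD})$ before any local-time contribution in SDE~\eqref{eq::SDE} activates (where $\tau_{\partial\cD}:=\inf\{t\geq 0:Z_t\in\partial\cD\}$), and to obtain (c) by iterating a uniform lower bound on the exit probability in a fixed time, derived from Feller continuity (Theorem~\ref{thm:Feller_continuity_of_Z}) and irreducibility (Proposition~\ref{prop:marginal_equivalent_to_Lebesgue}).

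For (a), I would take $h$ small enough that $z\in(\cD\setminus\cD_h)\cap(r_0,r_1)\times\R^d$ sits at distance at least $h$ from $\partial\cD$, and then construct a smooth curve $\gamma:[0,1]\to\cD\setminus\partial\cD$ with $\gamma(0)=z$, whose $x$-coordinate lies in $(r_0+\alpha,r_1+1)$ for some $\alpha>0$ throughout and exceeds $r_1$ at the endpoint. Path-connectedness of the interior of $\cD$ permits this, with the path routed through regions where $b$ is bounded below in the narrowing case $b(x)\to 0$. The Stroock--Varadhan support theorem applied to $Z$ stopped at $\tau_{\partial\cD}$ then yields positive probability that $\|Z_{\cdot\wedge\tau_{\partial\cD}}-\gamma\|_{\infty,[0,1]}<\eps$ for small $\eps>0$; on that event $\varrho_{r_1}<\varsigma_{r_0}$, proving (a). For (b), the analogous construction with a descending curve $\gamma_z$ from $z\in K':=(\cD\setminus\cD_h)\cap[r_0,r_1-\delta]\times\R^d$ to $\{X<r_0\}$, staying below $r_1-\delta/2$, furnishes pointwise positivity. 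Choosing $\gamma_z$ continuously in $z$ renders the quantitative support-theorem lower bound continuous as well, and compactness of $K'$ yields the uniform $\eps>0$.

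For (c), I would avoid path constructions altogether. Set $K:=\cD\cap[r_0,r_1]\times\R^d$ (compact) and
\[
A:=\cD\cap\bigl([0,r_0)\cup(r_1,\infty)\bigr)\times\R^d,
\]
an open subset of $\cD$ with $\lebm_{d+1}(A)>0$. Feller continuity provides weak convergence of $Z_T$ under $\P_{z_n}$ to $Z_T$ under $\P_z$ whenever $z_n\to z$; by the Portmanteau theorem, $z\mapsto\P_z(Z_T\in A)$ is lower semicontinuous on $\cD$, and by Proposition~\ref{prop:marginal_equivalent_to_Lebesgue} it is strictly positive. Compactness then yields $\eta:=\inf_{z\in K}\P_z(Z_T\in A)>0$. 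Continuity of $X$ and $X_0\in[r_0,r_1]$ give $\{Z_T\in A\}\subset\{\varsigma_{r_0}\wedge\varrho_{r_1}\leq T\}$, while $Z_T\in K$ on the complementary event; the strong Markov property and induction then supply $\P_z(\varsigma_{r_0}\wedge\varrho_{r_1}>nT)\leq(1-\eta)^n$ for every $n\in\N$ and $z\in\cD$, so all exponential (hence all polynomial) moments are finite.

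The most delicate step will be the interior path construction in (a) and (b), particularly in the narrowing case where $b(x)\to 0$, together with ensuring continuous dependence of $\gamma_z$ on $z\in K'$ for the uniformity claim in (b). Part (c), by contrast, is a short Feller--Portmanteau--compactness argument that cleanly leverages the structural results already established in Section~\ref{subsection:Markovian_stability}.
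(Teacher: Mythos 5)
Your part (c) is essentially the paper's argument: the paper uses $p(z):=\P_z(X_1>r_1)$ while you use $A=\cD\cap([0,r_0)\cup(r_1,\infty))\times\R^d$, but the Feller-plus-Portmanteau lower-semicontinuity, compactness of the strip, and Markov-iteration skeleton are identical.

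Your parts (a) and (b) take a genuinely different route. The paper does not invoke the support theorem at all; instead it smooths the corners of the slab $(\cD\setminus\cD_{h'})\cap(r_0,r_1)\times\R^d$ to obtain a $C^2$ subdomain $\hat\cD_{r_0,r_1,h'}$, poses the Dirichlet problem $\tfrac12\Delta_\Sigma u_i=0$ with boundary data $f_i$ concentrated on the faces $\{x=r_i\}$, identifies $u_i(z)=\E_z[f_i(Z_\tau)]$ as a lower bound for the exit probability, and applies the strong maximum principle for uniformly elliptic operators to obtain strict positivity of $u_i$ in the interior. This yields (a), and because a Dirichlet solution is automatically continuous, it also yields the compactness argument for the uniform $\eps$ in (b) with no extra work. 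Your support-theorem construction handles the pointwise positivity in (a) just as cleanly — both approaches confine the diffusion to the interior so that the local time never activates — but the uniformity in (b) is where your sketch is thinnest: the qualitative support theorem gives $\P_z(\varsigma_{r_0}<\varrho_{r_1})>0$ for each fixed $z$, yet does not by itself supply continuity (or even lower semicontinuity) of this probability in $z$, since the event depends on the whole path and Feller continuity of $Z$ does not automatically transfer to such path functionals. The ``quantitative support-theorem lower bound depending continuously on the curve'' you invoke would have to be made precise; a rigorous version — a Gaussian lower bound on the interior transition density, or continuity of the exit distribution of the stopped diffusion — is itself essentially an elliptic-regularity fact of the same flavour as the paper's Dirichlet argument. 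One further technicality you should flag: $\Sigma^{1/2}$ must first be extended to a Lipschitz, uniformly elliptic field on a neighbourhood of $\cD$ in $\R^{d+1}$ before the support theorem for an unconstrained diffusion applies. So the plan is sound, but filling the gap in (b) would naturally lead you back to the PDE machinery the paper already employs, which is the more economical route here.
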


\begin{proof}
Since proofs of both (a) and (b) follow from the same PDE argument, we will prove them together. Let $i\in\{0,1\}$.
Fix  $0<h'<h<\infty$.
Let $\cD_{h}$ be as above and set  $\cD_{r_0,r_1,h} := (\cD \setminus \cD_h) \cap (r_0,r_1)\times\R^d$. Note that $\cD_{r_0,r_1,h}\subset \cD_{r_0,r_1,h'}$. For any $h''\in(h',h)$ there exists a closed domain $\hat\cD_{r_0,r_1,h'}$ with $C^2$ boundary, satisfying
$$
\cD_{r_0,r_1,h''} \subset \hat\cD_{r_0,r_1,h'} \subset \cD_{r_0,r_1,h'}, \quad \cD_{r_0,r_1,h''} \cap \{r_i\}\times\R^d  = \hat\cD_{r_0,r_1,h'} \cap \{r_i\}\times\R^d \quad \text{for $i\in\{0,1\}$}.
$$
Closed domain $\hat\cD_{r_0,r_1,h'}$ can be obtained from $\cD_{r_0,r_1,h'}$ by smoothing its corners appropriately. 

Choose continuous functions $f_{i}: \partial \hat\cD_{r_0,r_1,h'} \rightarrow \R$, such that $f_{i} \equiv 1$ on $\cD_{r_0,r_1,h} \cap \{r_i\}\times\R^d = \partial \cD_{r_0,r_1,h} \cap \{r_i\}\times\R^d $ and $f_i \equiv 0$ on $\partial \hat\cD_{r_0,r_1,h'} \setminus (\hat\cD_{r_0,r_1,h'} \cap \{r_i\}\times\R^d )$ for $i\in\{0,1\}$.
Dirichlet problems on $\hat\cD_{r_0,r_1,h'}$ with boundary conditions $f_{i}: \partial \hat\cD_{r_0,r_1,h'} \rightarrow \R$ are given by 
\begin{align}
    \label{eq::PDE1}
    \frac{1}{2}\Delta_{\Sigma}u_{i} &= 0 \text{ on } \hat\cD_{r_0,r_1,h'}\setminus \partial \hat\cD_{r_0,r_1,h'}; \\
    \label{eq::PDE2}
    u_{i} &= f_{i}, \text{ on } \partial \hat\cD_{r_0,r_1,h'}.
\end{align}
  Then, by~\cite[pp.\ 364--366]{karatzas2012brownian}, the functions
$$
u_i(z) := \E_z[f_i(Z_{\tau})], \quad \text{where } \tau:= \inf\{t\in\RP:Z_t\in\partial\hat\cD_{r_0,r_1,h'}\},
$$
solve the respective Dirichlet problem in \eqref{eq::PDE1}--\eqref{eq::PDE2} on $\hat\cD_{r_0,r_1,h'}$ for $i\in\{0,1\}$. Moreover, $f_0(Z_\tau) \leq \mathbbm{1}\{\varsigma_{r_0} < \varrho_{r_1}\}$, $\P_z$-a.s., and $f_1(Z_\tau) \leq \mathbbm{1}\{\varrho_{r_1} < \varsigma_{r_0}\}$, $\P_z$-a.s., on $\hat\cD_{r_0,r_1,h'}$,  implying $u_0(z) \leq \P_z(\varsigma_{r_0} < \varrho_{r_1})$ and $u_1(z) \leq \P_z(\varrho_{r_1} < \varsigma_{r_0})$. Since $f_i$ are continuous, $\hat\cD_{r_0,r_1,h'}$ has a $C^2$ boundary and thus satisfies the inside the sphere property (see~\cite[p.~59]{friedman2008partial} for definition), and the coefficients in \eqref{eq::PDE1}--\eqref{eq::PDE2} are continuous and uniformly elliptic by assumption \aref{ass:covariance2}, the maximum principle \cite[Thm.~21, p.~55]{friedman2008partial}, applied to $-u_i$, yields $u_i(z) > 0$ for all $z\in\hat\cD_{r_0,r_1,h'}$. This directly implies part (a). Moreover, since $\cD_{r_0,r_1,h}\cap [r_0,r_1-\delta]\times\R^d$ is compact and $u_0$ is continuous with $u_0(z)>0$ for all $z\in\cD_{r_0,r_1,h}\cap [r_0,r_1-\delta]\times\R^d$, part (b) follows.

Recall  the notation $Z_t=(X_t,Y_t)\in\cD$, where $X_t\in\RP$, for all $t\in\RP$. To prove part (c), consider a function $p(z) := \P_z(X_1>r_1)$. Proposition~\ref{prop:marginal_equivalent_to_Lebesgue} implies that $p(z)>0$ for all $z\in\cD$. Moreover, by Theorem~\ref{thm:Feller_continuity_of_Z} and~\cite[Thm~2.1]{Billingsley1999}, the function $p$ is lower semi-continuous. Since the set $\cD\cap [0,r_1]\times \R^d$ is compact, there exists $\eps>0$ such that $p(z)>\eps$ for all $z\in \cD\cap [0,r_1]\times \R^d$. Thus, $\P_z(X_1\leq r_1)\leq 1-\eps$ for all $z\in\cD\cap [0,r_1]\times \R^d$.
This, together with the Markov property,  implies that for every $n\in\N$ and $z\in\cD$ we have $\P_z(\varsigma_{r_0}\wedge \varrho_{r_1}>n)\leq \P_z(\cap_{k=1}^n \{X_k\leq r_1\} )\leq (1-\eps)^n$. Thus, $\E_z[(\varsigma_{r_0}\wedge \varrho_{r_1})^k]<\infty$ holds for all $k\in\N$ and every $z\in \cD$.
\end{proof}

\begin{proof}[Proof of Theorem~\ref{thm:return_times}]
Transience and the lower bounds in the recurrent case both require the following claim.

\noindent\textbf{Claim 1.} For any $0<r_0<r_1<\infty$ and $z\in\cD \cap (r_0,r_1)\times \R^d$ we have $\P_z(\varrho_{r_1}<\varsigma_{r_0})>0$.

\noindent \underline{Proof of Claim 1}. By Proposition~\ref{prop:crossing_time_probabilities}(a), for $h>0$ sufficiently small and $z\in(\cD\setminus \cD_h)\cap(r_0,r_1)$, we have $\P_z(\varrho_{r_1}<\varsigma_{r_0})>0$. Pick $z \in \partial \cD\cap (r_0,r_1)\times\R^d$ and define the stopping time $v_h := \inf\{t>0:Z_t\in\cD\setminus\cD_h\}$. The continuity of paths implies $\E_z[\varsigma_{r_0}]>0$. Moreover, Assumptions~\aref{ass:domain2}, \aref{ass:vector2} and~\aref{ass:covariance2} and~\cite[Lem.~4.5]{menshikov2022reflecting} imply that
for some $\delta_\Sigma>0$ and all
sufficiently small $h > 0$, we have $\E_z[v_h \wedge \varrho_{r_1}] \leq 2h^2/\delta_\Sigma$. Hence, if $h\in(0, (\delta_\Sigma \E_z[\varsigma_{r_0}]/2)^{1/2})$ then $\E_z[v_h \wedge \varrho_{r_1}] < \E_z[\varsigma_{r_0}]$. In particular, this implies  $\P_z(v_h \wedge \varrho_{r_1} < \varsigma_{r_0}) > 0$. If $\P_z(\{v_h \wedge \varrho_{r_1} < \varsigma_{r_0}\} \cap \{\varrho_{r_1} <v_h\} ) > 0$ the proof is complete. Otherwise,  $\{v_h \wedge \varrho_{r_1} < \varsigma_{r_0}\}  = 
\{v_h \wedge \varrho_{r_1} < \varsigma_{r_0}\}\cap \{\varrho_{r_1} >v_h\}=
\{v_h < \varsigma_{r_0}\}$, $\P_z$-a.s. Thus $\P_z(v_h < \varsigma_{r_0})=\P_z(v_h \wedge \varrho_{r_1} < \varsigma_{r_0}) > 0$.
By the strong Markov property at $v_h$ and Proposition~\ref{prop:crossing_time_probabilities}(a), Claim 1 follows. 

\noindent\underline{Proof of~(a)}. In this case we have $\beta > \beta_c$. By Theorem~\ref{thm:rec_tran} the process $Z$ is transient.  
The Lyapunov function $f_{w,\gamma}(x,y)$ (with $\gamma<0$) tends to zero  by~\eqref{eq:bound_Lyapunov_f} as $x\to\infty$. Analogous to the proof of Theorem~\ref{thm:rec_tran}(b), an application of Lemma~\ref{lem2.3} (and in particular
the bound in~\eqref{eq:upper_bound_lower_exit} with $\kappa=f_{w,1}(Z)$, $V(u)=u^\gamma$),  implies that for every $x_0\in\RP$, there exist $c(x_0)$, such that for all $z\in\cD \cap [x_0+c(x_0),\infty)\times\R^d$, we have $\P_z(\varsigma_{x_0} = \infty) > 0$. Moreover, Claim~1 implies that 
for all $z\in\cD \cap (x_0,\infty)\times\R^d$ we have 
$\P_z(\varrho_{x_0+c(x_0)}<\varsigma_{x_0})>0$. 
The strong Markov property at the stopping time 
$\varrho_{x_0+c(x_0)}$ concludes the proof of Theorem~\ref{thm:return_times}(a).

\noindent\underline{Proof of the lower bound in (b)}.  In this case we have $\beta < \beta_c$.  Recall the definition of $m_c$ in~\eqref{eq::m0}. Moreover, Proposition~\ref{prop:return_time_lower_bound} implies that, for any $p\in(m_c,\infty)$, there exist constants $x_0\in(0,\infty)$ and $c_1,c_2\in(1,\infty)$, such that for all $x_1\in[x_0,\infty)$ and $z\in\cD \cap (c_1(x_1+1)+c_2,\infty)\times\R^d$ we have $\P_z(\varsigma_{x_1} > t) \geq Ct^{-p}$, for some constant $C$ and all $t \geq 1$, say. Pick $x_1\in[x_0,\infty)$. Note that by Claim~1, for any $x_2\in(0,x_1]$ and any $z\in\cD \cap (x_2,c_1(x_1+1)+c_2)\times\R^d$ we have $\P_z(\varrho_{c_1(x_1+1)+c_2}<\varsigma_{x_2})>0$. Moreover, $\varsigma_{x_2}\geq\varsigma_{x_1}$, $\P_z$-a.s. Thus, the strong Markov property applied at the stopping time $\varrho_{c_1(x_1+1)+c_2}$ implies the lower bound in Theorem~\ref{thm:return_times}(b).

\noindent\textbf{Claim 2.} For any $0<r_0<r_1<\infty$ and any $\delta\in(0,\min\{(r_1-r_0)/2,1\})$, there exists an $\eps>0$ such that $\P_z(\varsigma_{r_0}<\varrho_{r_1})>\eps$ for any $z\in\cD\cap[r_0,r_1-2\delta]\times\R^d$.

\noindent \underline{Proof of Claim 2.}
For any $z=(x,y)\in\cD\cap[r_0,r_1-2\delta]\times\R^d$, It\^o's formula~\eqref{eq::Ito} implies that $X_{t\wedge \varrho_{r_1-\delta}} = x+M_{t\wedge \varrho_{r_1-\delta}} + \int_0^{t\wedge \varrho_{r_1-\delta}}\langle e_x,\phi(Z_s)\rangle \ud L_s$.  Moreover, by~\eqref{eq::Ito_QV} and Assumption~\aref{ass:covariance1}, $M$ is a local martingale. By~\aref{ass:vector1} and Lemma~\ref{lem:expected_growth_loc_time_norm} there exist constants $C_1,C_2\in(1,\infty)$, such that
\begin{equation}
\label{eq:bound_expectation:x}
\E_z[X_{t\wedge \varrho_{r_1-\delta}}]\leq x + C_1t+C_2t^{1/2}\leq x+ (C_1+C_2)t^{1/2}\leq r_1-3\delta/2 \quad \text{ for all $t\in[0,t_0]$},
\end{equation}
where $t_0 := \delta^2/(4 (C_1+C_2)^2)$,
since $x\in[r_0,r_1-2\delta]$. Thus, $\eps_1:=\inf\{\P_z(\varrho_{r_1-\delta}>t_0):z\in\cD\cap[r_0,r_1-2\delta]\times\R^d\}>0$ and hence $\E_z[\varrho_{r_1-\delta}]>\eps_1t_0>0$. Otherwise, there would exist $\eps>0$ and $z\in\cD\cap[r_0,r_1-2\delta]\times\R^d$ such that $\E_z[X_{t\wedge \varrho_{r_1-\delta}}] \geq (r_1-\delta)(1-\eps)>r_1-3\delta/2$, contradicting the inequality in~\eqref{eq:bound_expectation:x}.

By~\aref{ass:domain2}, \aref{ass:vector2},~\aref{ass:covariance2} and~\cite[Lem.~4.5]{menshikov2022reflecting}, 
for some $\delta_\Sigma>0$ and all
sufficiently small $h > 0$, we have $\E_z[v_h \wedge \varrho_{r_1-\delta}] \leq 2h^2/\delta_\Sigma$ for all $z\in\cD$. Thus, for all $h\in(0,(\delta_\Sigma\eps_1 t_0)^{1/2}/2)$ and $z\in\cD\cap[0,r_1-2\delta]\times\R^d$, the strict inequality $\E_z[v_h \wedge \varrho_{r_1-\delta}]\leq t_0\eps_1/2<t_0\eps_1\leq \E_z[\varrho_{r_1-\delta}]$ implies $\P_z(v_h<\varrho_{r_1-\delta})>\eps_2$, for some $\eps_2>0$. Moreover, by Proposition~\ref{prop:crossing_time_probabilities}(b) there exists $h_0\in(0,\infty)$ such that the following holds: for any  $h\in(0,h_0)$ there exists $\eps_3>0$, such that for all $z\in\cD\setminus \cD_h \cap [r_1,r_0-\delta)$, we have $\P_z(\varsigma_{r_0}<\varrho_{r_1})>\eps_3$.  Pick $h =\min\{h_0/2,(\delta_\Sigma \eps_1)^{1/2}/2\}$. The strong Markov property at time $v_h$ yields the claim: 
\begin{align*}
\P_z(\varsigma_{r_0}<\varrho_{r_1}) &\geq \P_z(\varsigma_{r_0}<\varrho_{r_1},v_h<\varrho_{r_1-\delta})\geq \E_z[\1{v_h<\varrho_{r_1-\delta}}\cdot \P_{Z_{v_h}}(\varsigma_{r_0}<\varrho_{r_1})] \\
&\geq \E_z[\eps_3\1{v_h<\varrho_{r_1-\delta}}]\geq \eps_2\eps_3 \quad \text{for all $z\in\cD\cap [0, r_1-2\delta]\times\R^d$}.
\end{align*}

\noindent\underline{Proof of the upper bounds in (b).} 
Pick $p\in(0,m_c)$. Proposition~\ref{prop:return_time_upper_bound} implies the following: there exists $x_0\in(0,\infty)$ such that for every $x_1\in[x_0,\infty)$ and $z\in\cD$  we have $\E_z[\varsigma_{x_1}^p]<\infty$. We now extend this result to all $x_1\in(0,\infty)$. Thus for any $q\in(p,m_c)$, there exist $x_0,\tilde C\in(0,\infty)$, such that $\E_z[\varsigma_{x_0}^{q}]\leq \tilde C$ for all $z\in \cD \cap \{x_0+1\} \times \R^d$.  Pick $x_1\in(0,x_0)$ and $z\in\cD$. In order to prove  $\E_z[\varsigma_{x_1}^p]<\infty$ for all $x_1\in(0,x_0)$, we introduce the sequence of stopping times $T_0 :=0$, 
$$
S_k :=\inf\{t>T_{k-1}:X_t\leq x_0\} \quad \text{and} \quad T_k:=\{t>S_k: X_t\notin (x_1,x_0+1)\}, \quad \text{for $k\in\N$.}
$$
There exist constants $\eps, C_1,C_2\in(0,\infty)$ such that, for every $k\in\N$, we have:  $\P_z(Z_{T_k} \leq x_1)>\eps$ 
(resp. $\E_z[(S_k-T_{k-1})^q]\leq C_1$, $\E_z[(T_k-S_k)^q]\leq C_2$) by
Prop.~\ref{prop:crossing_time_probabilities}(b)
(resp. Prop.~\ref{prop:return_time_upper_bound}, Prop.~\ref{prop:crossing_time_probabilities}(c)) and  the strong Markov property at $S_k$ (resp.  $T_{k-1}$, $S_k$).

Since $T_k = \sum_{j=1}^k (S_j-T_{j-1} + T_{j}-S_j)$, we have $\P_z(T_k<\infty)=1$, for all $k\in\N$. Moreover, for $k\in\N$, we have $\{Z_{T_k}\leq x_1\} =\{T_k = \varsigma_{x_1}\} $, $\P_z$-a.s.
(recall that on the event $\{Z_{T_k}\leq x_1\}$, we have $T_n=T_k=\varsigma_{x_1}$ for all $n\geq k$).
Hence, by the strong Markov property at $S_j$, for $j\in\{1,\ldots,k\}$, it follows that $\P_z(\varsigma_{x_1}>T_k)=
\P_z(\cap_{j=1}^k Z_{T_j}>x_1)\leq 
(1-\eps)^k$. We thus conclude $\P_z(\varsigma_{x'}<\infty)=1$ for all $x'\in(0,x_0)$. 

Define the indicator $\mathbbm{1}(A_k) :=\1{Z_{T_k}\leq x_1}\prod_{j=1}^{k-1}\1{Z_{T_j}>x_1}$. Note that the events $A_k$, $k\in\N$, are pairwise disjoint, $\P_z(A_k)\leq (1-\eps)^{k-1}$ and, since $\P_z(\varsigma_{x_1}<\infty)=1$, the following equality holds
$$
\varsigma_{x_1} = \sum_{k=1}^{\infty}\mathbbm{1}(A_k)T_k \quad \text{$\P_z$-a.s.}
$$ 
Thus, by H\"older's inequality with exponents $p/q$ and $1-p/q$, we obtain
\begin{align*}
\E_z[\varsigma_{x_1}^p] &= \E_z\left[\sum_{k=1}^{\infty}\mathbbm{1}(A_k)T_k^p\right] = \sum_{k=1}^\infty \E_z\left[\mathbbm{1}(A_k)\left(\sum_{j=1}^k(T_j-S_j) + \sum_{j=1}^k(S_j-T_{j-1})\right)^p\right] \\
&\leq \sum_{k=1}^\infty (2k)^p\E_z\left[\mathbbm{1}(A_k) \max_{j\in\{1,\dots,k\}}\{(T_j-S_j)^p,(S_j-T_{j-1})^p\}\right] 
\\
&\leq \sum_{k=1}^\infty (2k)^p\P_z(A_k)^{1-p/q}\E\left[ \max_{j\in\{1,\dots,k\}}\{(T_j-S_j)^q,(S_j-T_{j-1})^q\}\right]^{p/q} 
\\
&\leq  \sum_{k=1}^\infty (2k)^p (1-\eps)^{(k-1)(1-p/q)}\E_z\left[\sum_{j=1}^k\left((T_j-S_j)^q + (S_j-T_{j-1})^q\right)\right]^{p/q}  \\
&\leq  \sum_{k=1}^\infty  (2k)^{p+1}(1-\eps)^{(k-1)(1-p/q)}(C_1+C_2)^{p/q} <\infty,
\end{align*}
where the first equality in the display above holds since $A_k$ are pairwise disjoint. 
We conclude that for every $x_1\in(0,\infty)$, all $p\in(0,m_c)$ and every $z\in\cD$ we have $\E_z[\varsigma_{x_1}^p]<\infty$. Since Markov's inequality implies $\P_z(\varsigma_{x_1}>t)\leq \E_z[\varsigma_{x_1}^p]/t^p<\infty$ for all $t\in(0,\infty)$, we have 
$\P_z(\varsigma_{x_1}>t)\leq Ct^{-p}$ for all $t\in[1,\infty)$
and $z\in\cD$ with $C:= \E_z[\varsigma_{x_1}^p]$.
\end{proof}

\subsection{Application to petite sets and Harris recurrence}
\label{subsubsec:continuous_markov_theory}
The family of probability measures 
$\P_z(Z_t \in \cdot)$ on $\cB(\cD)$, indexed by $t\in\RP$
and $z\in\cD$ 
constitutes a Markov transition kernel by~\cite[Thm~A.1]{menshikov2022reflecting} and Theorem~\ref{thm:Feller_continuity_of_Z}.
A non-empty set $B \in \cB(\cD)$ is called \textit{petite} if there exists  
a probability measure $a$ on $(\RP,\cB(\RP))$, which does
not charge zero (i.e.~ $a(\{0\})=0$), and 
a non-trivial measure $\varphi_a$  on $(\cD,\cB(\cD))$  satisfying
\begin{equation}
\label{eq::petite}
K_a(z, \cdot) \geq \varphi_a(\cdot)\qquad\text{for all $z \in B$,}
\end{equation}
where the Markov transition function
$K_a: \cD \times \cB(\cD) \rightarrow \RP$ is given by
\begin{equation}
\label{eq:Markov_transition_kernel}
K_a(z,\cdot) := \int_{\RP}\P_z(Z_t\in\cdot)a(\ud t).
\end{equation}
The measurability of  $z\mapsto K_a(z,A)$ for any
$A\in\cB(\cD)$ follows from~\cite[Ch.~III.~Prop.~1.6]{revuz2013continuous}.

Let $\varphi$ be a $\sigma$-finite measure on the Borel $\sigma$-algebra $\cB(\cD)$. 
The  process $Z$ is \textit{Harris recurrent} if  $\varphi(A) > 0$ implies  $\int_0^\infty \mathbbm{1}_A(Z_s) \ud s = \infty$, $\P_z$-a.s., for all $A\in\cB(\cD)$ and $z \in \cD$. 

\begin{prop}
\label{prop:harris_irreducible_petite}
Suppose that $\aref{ass:covariance2},$ $\aref{ass:vector2}$ and $\aref{ass:domain2}$ hold. Then all compact subsets of $\cD$ are petite. Moreover, if $\beta<\beta_c$, the process $Z$ is Harris recurrent.
\end{prop}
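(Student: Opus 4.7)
The plan is to establish the petite property via a two-step domination argument combining Feller continuity, Proposition~\ref{prop:marginal_equivalent_to_Lebesgue}, and parabolic lower bounds on the transition density of the uniformly elliptic diffusion that $Z$ coincides with before touching $\partial \cD$; Harris recurrence will then follow from the petite property together with $\lebm_{d+1}$-irreducibility and Theorem~\ref{thm:rec_tran}(a), via the standard continuous-time Harris criterion.

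For the petite claim, I would first fix a reference interior point $z^\ast \in \cD \setminus \partial \cD$, an open ball $U \ni z^\ast$ with $\bar U \subset \cD \setminus \partial \cD$, and a small $s>0$; the goal is to show that $\bar U$ is \emph{small} in the Meyn--Tweedie sense, i.e.~$\P_{z'}(Z_s \in \cdot) \geq \nu_0(\cdot)$ uniformly over $z' \in \bar U$ for some non-trivial measure $\nu_0$. Pick an open set $V$ with $\bar U \subset V \subset \bar V \subset \cD \setminus \partial \cD$; until the first exit from $V$, the process $Z$ solves a uniformly elliptic SDE on $\R^{d+1}$ with bounded Lipschitz coefficients (by \aref{ass:covariance1}), so classical parabolic estimates of Aronson type (see e.g. the parametrix construction in Friedman's text) yield, for all sufficiently small $s>0$, a strictly positive continuous transition sub-density $p^V_s(z',z'')$ on $\bar U \times \bar U$ associated with paths remaining in $V$. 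Setting $\nu_0(A) := \bigl(\inf_{z',z'' \in \bar U} p^V_s(z',z'')\bigr)\lebm_{d+1}(A \cap U)$ produces the required common minorant.

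Next, for an arbitrary compact $B \subset \cD$ and any $t_0 > 0$, Theorem~\ref{thm:Feller_continuity_of_Z} combined with the Portmanteau theorem implies that $z \mapsto \P_z(Z_{t_0} \in U)$ is lower semicontinuous on $\cD$; Proposition~\ref{prop:marginal_equivalent_to_Lebesgue} makes it strictly positive; and since a lower semicontinuous function attains its infimum on a compact set, $\eps := \inf_{z \in B} \P_z(Z_{t_0} \in U)$ is strictly positive. The Markov property at time $t_0$ then yields, for every $z \in B$ and $A \in \cB(\cD)$,
\begin{equation*}
\P_z(Z_{t_0+s} \in A) \geq \int_U \P_z(Z_{t_0} \in \ud z')\,\P_{z'}(Z_s \in A) \geq \eps\, \nu_0(A),
\end{equation*}
establishing~\eqref{eq::petite} for $B$ with sampling distribution $a = \delta_{t_0+s}$ (which indeed satisfies $a(\{0\}) = 0$) and minorising measure $\varphi_a = \eps \nu_0$.

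For Harris recurrence in the regime $\beta < \beta_c$, I would invoke the classical continuous-time Meyn--Tweedie criterion: a weakly Feller, $\varphi$-irreducible strong Markov process (here $\varphi = \lebm_{d+1}$, supplied by Proposition~\ref{prop:marginal_equivalent_to_Lebesgue}) for which every compact set is petite and which is topologically recurrent in the sense $\liminf_{t\to\infty}\|Z_t\|_{d+1} \leq r_0$ a.s.~for some $r_0$ (Theorem~\ref{thm:rec_tran}(a)) is Harris recurrent with respect to $\varphi$. The main technical obstacle in the plan is producing the uniform Gaussian lower bound for the interior transition sub-density on $\bar U$; once this parabolic input is in hand, the rest is a routine consolidation of the Feller continuity, irreducibility and recurrence results already proved in the paper.
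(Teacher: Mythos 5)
Your proposal matches the paper's proof in every essential respect: the paper likewise shows a small interior ball is small by comparing $Z$, before its exit from a slightly larger interior ball, with a uniformly elliptic diffusion on $\R^{d+1}$ (citing a lower bound on hitting probabilities from Stroock's diffusion text rather than Aronson-type parametrix estimates, but to the same effect), then upgrades to arbitrary compact sets via the lower-semicontinuity of $z\mapsto\P_z(Z_1\in B(z_0,h_0))$ from Theorem~\ref{thm:Feller_continuity_of_Z}, positivity from Proposition~\ref{prop:marginal_equivalent_to_Lebesgue}, compactness, and the Markov property. For Harris recurrence the paper feeds Proposition~\ref{prop:return_time_upper_bound} into \cite[Thm~1.1]{tweedie1993generalized} rather than Theorem~\ref{thm:rec_tran}(a); both routes supply the same input, namely a.s.\ finiteness of the return time to a compact (hence petite) set, so your argument is a valid minor variant of the same scheme.
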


\begin{proof}

Fix $z_0\in\cD\setminus\partial\cD=\ocD$
and $h_0>0$, such that 
$B(z_0,2h_0)=\{z\in\cD:\|z-z_0\|_{d+1}<2h_0\}\subset \ocD$.
We now prove that the ball $B(z_0,h_0)$ is petite.
Recall that 
the stopping time $\tau_{\partial B(z_0,2h_0)}:=\inf\{t\in\RP: Z_t\notin B(z_0,2h_0)\}$
is strictly positive $\P_z$-a.s. for all 
$z\in B(z_0,h_0)$.
Moreover, the process $Z$ on the stochastic interval $[0,\tau_{\partial B(z_0,2h_0)})$, started at any $z\in B(z_0,h_0)$, coincides with a uniformly elliptic diffusion on $\R^{d+1}$, stopped upon exiting the ball $B(z_0,2h_0)$.
Thus, by~\cite[Thm~II.1.3]{stroock1988diffusion}, we have $\inf_{z\in B(z_0,h_0)} \P_z(Z_{h_0^2}\in \cdot)\geq \varphi(\cdot)$, where 
$\varphi$ is the Lebesgue measure multiplied by a positive scalar and supported in $B(z_0,h_0)$. 
Hence, for any $z\in B(z_0,h_0)$,  condition~\eqref{eq::petite} holds with $a_1(\ud t) = \delta_{h_0^2}(\ud t)$, where $\delta_{h_0^2}$ is the Dirac delta concentrated at $h_0^2>0$, and taking the non-trivial measure $\varphi_{a_1}:=\varphi$ on $(\cD,\cB(\cD))$. 

We now prove that  an arbitrary compact set $D$ in $\cD$ is also petite. Since $B(z_0,h_0)$ is an open set, Theorem~\ref{thm:Feller_continuity_of_Z} and~\cite[Thm~2.1]{Billingsley1999} imply that the function $z\mapsto\P_z(Z_1\in B(z_0,h_0))$ is lower semi-continuous on $\cD$. Moreover, by Proposition~\ref{prop:marginal_equivalent_to_Lebesgue}, we have $\P_z(Z_1\in B(z_0,h_0))>0$ for all $z\in D$. Hence, by compactness of $D$ and the lower semi-continuity of the function $z\mapsto\P_z(Z_1\in B(z_0,h_0))$, we get $\inf_{z\in D}\P_z(Z_1\in B(z_0,h_0))>0$. 
For any $z\in D$, the Markov property of $Z$ implies $$\P_z(Z_{1+h_0^2}\in \cdot)\geq \P_z(Z_1\in B(z_0,h_0))  \inf_{z'\in B(z_0,h_0)} \P_{z'}(Z_{h_0^2}\in \cdot)\geq 
\inf_{z''\in D}\P_{z''}(Z_1\in B(z_0,h_0))\varphi_{a_1}(\cdot).$$ 
Thus the set $D$ satisfies condition~\eqref{eq::petite}   with the probability measure $a_2(\ud t) = \delta_{1+h_0^2}(\ud t)$ and non-trivial measure $\varphi_{a_2} := \inf_{z\in D}\P_z(Z_1\in B(z_0,h_0))\varphi_{a_1}$ on $(\cD,\cB(\cD))$, making $D$ petite.

 In particular, the set $\cD \cap [r_0,\infty)\times \R^d$ is a petite set for every $r_0\in(0,\infty)$. Moreover, if $\beta<\beta_c$, Proposition~\ref{prop:return_time_upper_bound} implies that, for all sufficiently large  $r_0\in(0,\infty)$, we have $\P_z(\varsigma_{r_0} < \infty) = 1$ for all $z \in \cD$. We conclude that the process $Z$ is Harris recurrent by \cite[Thm.~1.1]{tweedie1993generalized}. 
\end{proof}


The following proposition is crucial for establishing the lower bounds on the tail of the invariant distribution  of the reflected process $Z$ in Theorem~\ref{thm:invariant_distributon} (the proof of Theorem~\ref{thm:invariant_distributon} requires an estimate of the return times to an arbitrary petite set).

\begin{prop}
\label{prop:bounded_petite}
Suppose that $\aref{ass:covariance2},$ $\aref{ass:vector2}$ and $\aref{ass:domain2}$ hold with $\beta < \beta_c$. Then every petite set is bounded.
\end{prop}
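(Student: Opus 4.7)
The plan is to proceed by contradiction: assume $B\in\cB(\cD)$ is an unbounded petite set and derive an inconsistency with the definition in~\eqref{eq::petite}. Let $a$ be the associated probability measure on $\RP$ (not charging $\{0\}$) and $\varphi_a$ the associated non-trivial measure on $\cB(\cD)$ satisfying $K_a(z,\cdot)\geq\varphi_a(\cdot)$ for every $z\in B$. The central idea is to exhibit a bounded test set $A\subset\cD$ with $\varphi_a(A)>0$ for which $K_a(z,A)$ can be forced strictly below $\varphi_a(A)$ by taking the $x$-coordinate of $z\in B$ sufficiently large.

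First I would locate such a set. Taking any $z\in B$ in $\varphi_a(\cD)\leq K_a(z,\cD)\leq 1$ shows that $\varphi_a$ is finite. Since $\cD=\bigcup_{r>0}\cD^{(r)}$ with $\cD^{(r)}$ as in~\eqref{eq:D_r}, continuity from below gives $\varphi_a(\cD^{(r)})\to\varphi_a(\cD)>0$ as $r\to\infty$. Fix any $p\in(2m_c,\infty)$ (a non-empty interval since $\beta<\beta_c$ forces $m_c>0$) and let $x_0,c_1,c_2$ be the constants produced by Proposition~\ref{prop:return_time_lower_bound} for this $p$. Choose $r$ large enough that both $r>x_0$ and $\varphi_a(\cD^{(r)})=:2\delta>0$.

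Since $B$ is unbounded and $b$ is sublinear by Remark~\ref{rem:Ass2}, there exists a sequence $z_n=(x_n,y_n)\in B$ with $x_n\to\infty$. For any $z=(x,y)\in B$ with $x>r$ the inclusion $\{Z_t\in\cD^{(r)}\}\subseteq\{\varsigma_r\leq t\}$ yields, for any $T>0$, the split
\begin{equation*}
K_a(z,\cD^{(r)})\leq \P_z(\varsigma_r\leq T)+a((T,\infty)).
\end{equation*}
Now choose $q\in(0,1)$ close enough to $1$ that $1-q<\delta/2$, and then choose $T>c_1 r+c_2$ large enough that $a((T,\infty))\leq\delta/2$ (possible since $a$ is a probability measure on $\RP$). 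Applying Proposition~\ref{prop:return_time_lower_bound} with $x_1=r$ and the chosen $p,q,T$ and some sufficiently small $\eps>0$ gives
\begin{equation*}
\P_z(\varsigma_r\geq T)\geq q\min\bigl\{(c_1^{-p}(x-c_2)^p-r^p)\eps^{p/2}(1-q)^p T^{-p/2},\,1\bigr\},
\end{equation*}
so $\P_z(\varsigma_r\leq T)\leq 1-q$ whenever $x$ is large enough to make the minimum equal to $1$. Picking any $z_n\in B$ with $x_n$ beyond this threshold yields
\begin{equation*}
K_a(z_n,\cD^{(r)})\leq (1-q)+\delta/2<\delta<2\delta=\varphi_a(\cD^{(r)}),
\end{equation*}
contradicting $K_a(z_n,\cdot)\geq\varphi_a(\cdot)$.

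The only real obstacle, once the target set $\cD^{(r)}$ is pinned down, is uniform control of $\P_z(\varsigma_r\leq T)$ for large $x$; this is precisely what Proposition~\ref{prop:return_time_lower_bound} delivers, giving a pointwise tail estimate for $\varsigma_r$ that can be pushed above any prescribed $q<1$ by enlarging $x$, irrespective of the fixed horizon $T$. The remaining ingredients (finiteness of $\varphi_a$, producing a bounded set of positive $\varphi_a$-measure, splitting the $a$-integral at $T$) are straightforward consequences of $a$ being a probability measure and $\varphi_a$ a non-trivial measure dominated by $K_a(z,\cdot)$.
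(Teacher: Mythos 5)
Your proof is correct and follows essentially the same route as the paper's: pick a bounded truncation $\cD^{(r)}$ of positive $\varphi_a$-measure, split $K_a(z,\cD^{(r)})$ into the contribution from times $\leq T$ (controlled by the return-time lower bound of Proposition~\ref{prop:return_time_lower_bound}) and from times $>T$ (controlled since $a$ is a probability measure), and push the total below $\varphi_a(\cD^{(r)})$ by sending the $x$-coordinate of $z\in B$ to infinity. The only differences are cosmetic (contradiction framing vs.\ the paper's direct conclusion $B\subset\cD^{(x_0)}$, and using a single cutoff $r$ where the paper uses two levels $r_0\leq r$).
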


\begin{proof}
 Let $B$ be an arbitrary petite set and let the probability measure $a$ on $\R_+$ and a non-zero measure $\varphi_a$ on $(\cD,\cB(\cD))$ be such that~\eqref{eq::petite} holds,
 with $K_a$ as in~\eqref{eq:Markov_transition_kernel}. 
 Denote $\cD^{(r)}=\cD \cap [0,r]\times\R^d$ for any $r>0$. 
  Since $\varphi_a$ is a non-trivial measure on $(\cD,\cB(\cD))$, there exists $r_0\in(0,\infty)$, such that $c:=\varphi_a(\cD^{(r_0)})> 0$.
 

 Proposition \ref{prop:return_time_lower_bound} implies that for every $q\in(0,1)$ and $p\in(1-\beta/\beta_c,\infty)$, there exist $r_1\in(0,\infty)$, $c_1,c_2\in(1,\infty)$ and $\eps\in(0,\infty)$, such that for every $r\in(r_1,\infty)$ and $z=(x,y)\in\cD$ we have
\begin{equation}
\label{eq:lower_bound_return_time}
\P_z(\varsigma_r \geq t_0) \geq q\min\{(c_1^{-p}(x-c_2)^p-r^p)(1-q)^p\eps^{-1/2} t_0^{-p/2},1\}
\end{equation}
for all $t_0\in(c_1 r_1+c_2,\infty)$
(recall that the return time $\varsigma_{r_0}$ is defined in~\eqref{eq::varsigma}).
Since $a$ is a probability measure, there exists $t_0\in(c_1 r_1+c_2,\infty)$ satisfying
$a([t_0,\infty)) < c/2$.

We now show that there exists $x_0\in(0,\infty)$, such that 
$\P_z(\varsigma_{r_0} <t_0) < c/2$
for all $z\in\cD\setminus\cD^{(x_0)}$. 
Indeed, fix
$r \geq \max\{r_0,r_1\}$ (note  $\varsigma_{r}\leq\varsigma_{r_0}$), $q \in(1-c/2,1)$ and $p\in(1-\beta/\beta_c,\infty)$. 
Pick $x_0>0$, such that $(c_1^{-p}(x-c_2)^p-r^p)(1-q)^p\eps^{-1/2} t_0^{-p/2} \geq 1$ for all $x\in(x_0,\infty)$. 
Note that this choice of $x_0$ implies 
$x_0>r$ and, in particular, $x_0\in(r_0,\infty)$.
For any $z=(x,y)\in\cD\setminus\cD^{(x_0)}$, the inequality in~\eqref{eq:lower_bound_return_time}
implies $\P_{z}(\varsigma_{r_0} < t_0) \leq\P_{z}(\varsigma_{r} < t_0) < 1-q <c/2$. 

By~\eqref{eq:Markov_transition_kernel}, we have 
$K_a(z,\cD^{(r_0)})\leq\int_0^{t_0}\P_z(Z_t\in\cD^{(r_0)})a(\ud t)+a([t_0,\infty))$.  Since, for all $z\in\cD\setminus\cD^{(x_0)}$, we have $\P_z(Z_t\in\cD^{(r_0)})\leq \P_{z}(\varsigma_{r_0} < t_0)<c/2$ for all $t\in[0,t_0]$, the inequality 
$\varphi_a(\cD^{(r_0)}) =c> K_a(z,\cD^{(r_0)})$ holds for all
$z\in\cD\setminus\cD^{(x_0)}$. 
Since the petite set $B$ satisfies~\eqref{eq::petite},
we must have $B\subset \cD^{(x_0)}$, making $B$ bounded.
\end{proof}

\section{Stability: the proof of Theorem~\ref{thm:invariant_distributon}} 
\label{subsection:main_proofs}

Existence and uniqueness of the invariant distribution of $Z$, the upper bounds on the tails of
the invariant distribution and upper bounds on the rate of convergence of $Z$ to stationarity will be established using the drift condition (i.e.~supermartingale property) given in Lemma~\ref{Lem:drift_conditions} (see Section~\ref{subsection:return_times_drift_conditions}) and the fact that every compact set in $\cD$ is petite for the reflected process $Z$ (see Proposition~\ref{prop:harris_irreducible_petite} in Section~\ref{subsubsec:continuous_markov_theory} above). The lower bounds on the tails of the invariant distribution and the rate of convergence to stationarity will follow from the fact that every petite set of $Z$ is bounded (see Proposition~\ref{prop:bounded_petite} above) and the control we have established on the return time and length of excursions away from bounded sets (see Proposition~\ref{prop:return_time_lower_bound}).
Theorem~\ref{thm:invariant_distributon} follows easily from Propositions~\ref{prop:upper_bounds} and~\ref{prop:lower_bound_invariant} proved in this section. 

 \subsection{Existence, uniqueness, and upper bounds}
The upper bounds on the tails of the invariant distribution are obtained by establishing finiteness of certain moments and applying the Markov inequality. In Section~\ref{sec:Lower_bounds} below we show that these bounds cannot be improved. 
 
\begin{prop}
\label{prop:upper_bounds}
Suppose that $\aref{ass:covariance2},$ $\aref{ass:vector2}$ and $\aref{ass:domain2}$ hold with $\beta < -\beta_c$
and recall 
$M_c = -(1+\beta/\beta_c)/2>0$.
Then there exists the unique invariant distribution $\pi$ on $(\cD,\cB(\cD))$  for the process $Z$. Pick $\epsilon>0$. There exists a constant $C_\pi\in(0,\infty)$, such that
$$
\pi(\{z\in\cD:\|z\|_{d+1}\geq r\})\leq C_\pi r^{-2M_c+\epsilon}\qquad\text{for all $r\in[1,\infty)$.}
$$
Furthermore, for every  $z\in\cD$ there exists a constant $C_{\mathrm{TV}}\in(0,\infty)$, such that 
 \begin{align*}
 \label{eq:upper_bound_rate_of_convergnece}
\| \P_z(Z_t\in\cdot)-\pi\|_{\mathrm{TV}} & \leq  C_{\mathrm{TV}}t^{-M_c + \eps}
\quad\text{for all $t\in[1,\infty)$.}
\end{align*}
\end{prop}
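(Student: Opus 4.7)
The plan is to recast the supermartingale property of Lemma~\ref{Lem:drift_conditions} as a continuous-time subgeometric Foster--Lyapunov drift condition, combine it with the Harris recurrence and petite-set structure of Proposition~\ref{prop:harris_irreducible_petite} and with the Feller continuity/irreducibility from Theorem~\ref{thm:Feller_continuity_of_Z} and Proposition~\ref{prop:marginal_equivalent_to_Lebesgue}, and then invoke the subgeometric ergodicity framework of~\cite{douc2009subgeometric}.

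First I would fix $\eps>0$ small and choose $\gamma\in(0,1-\beta/\beta_c)$ close to the upper endpoint, noting that $1-\beta/\beta_c=2M_c+2$. Lemma~\ref{Lem:drift_conditions} then supplies parameters $w,C_1,C_2,x_2$ and the Lyapunov function $V:=F_{w,\gamma}$ for which the process $\xi$ in~\eqref{eq::F_w_gamma_supermart} is a supermartingale. By~\eqref{eq:bound_Lyapunov_f} and~\eqref{eq:F_w,gamma}, $V(z)$ and $F_{w,\gamma-2}(z)$ are comparable to $(x+k_w)^\gamma$ and $(x+k_w)^{\gamma-2}$ respectively for $z=(x,y)\in\cD$ with $x$ large; hence~\eqref{eq::F_w_gamma_supermart} encodes a subgeometric drift with rate $\phi(v)\asymp v^{(\gamma-2)/\gamma}$, concave with exponent $(\gamma-2)/\gamma\in(0,1)$ once $\gamma$ is close to $2M_c+2$, and the small set is the compact $\cD^{(x_2)}$, which is petite by Proposition~\ref{prop:harris_irreducible_petite}.

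From this, positive Harris recurrence follows by combining the drift with Proposition~\ref{prop:harris_irreducible_petite}, yielding a unique invariant probability $\pi$. Integrating~\eqref{eq::F_w_gamma_supermart} against $\pi$, justified by a localisation/Fatou argument (stop at $\varrho_r$ and use Theorem~\ref{thm:non_explosion_moments} together with monotone convergence in $r$), gives $\int_\cD F_{w,\gamma-2}\,\ud\pi<\infty$. Using $F_{w,\gamma-2}(z)\geq c(x+k_w)^{\gamma-2}$ and the equivalence $\|z\|_{d+1}\asymp x$ for $x$ large (cf.~\eqref{eq:basic_dereministic_bound} and Remark~\ref{rem:Ass2}), Markov's inequality yields $\pi(\{z\in\cD:\|z\|_{d+1}\geq r\})\leq Cr^{2-\gamma}$, and the choice $\gamma=2M_c+2-\eps$ produces the first tail bound.

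For the total-variation bound I would apply the continuous-time version of the subgeometric ergodic theorem of~\cite{douc2009subgeometric}, accessed through a skeleton chain whose compact sets are petite by Proposition~\ref{prop:harris_irreducible_petite} and which is aperiodic and $\lebm_{d+1}$-irreducible by Feller continuity (Theorem~\ref{thm:Feller_continuity_of_Z}) together with the Lebesgue-domination of the $t$-step kernel (Proposition~\ref{prop:marginal_equivalent_to_Lebesgue}). With $\phi(v)\asymp v^{(\gamma-2)/\gamma}$, inverting $H_\phi(v):=\int_1^v\ud u/\phi(u)\asymp v^{2/\gamma}$ gives the rate function $r_\phi(t)\asymp t^{(\gamma-2)/2}$, so $\|\P_z(Z_t\in\cdot)-\pi\|_{\mathrm{TV}}\leq C_z\,r_\phi(t)^{-1}\leq C_z\,t^{-(\gamma-2)/2}$, and the same choice $\gamma=2M_c+2-\eps$ yields the stated rate $t^{-M_c+\eps}$. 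The main obstacle I anticipate is the clean transfer of the discrete-time results of~\cite{douc2009subgeometric} to the present continuous-time setting, together with the Fatou/localisation step that justifies $\int F_{w,\gamma-2}\,\ud\pi<\infty$ from the supermartingale property when $V$ is unbounded and may fail to be integrable against $\P_z(Z_t\in\cdot)$ uniformly in $t$.
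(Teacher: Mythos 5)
Your proposal is correct and follows essentially the same route as the paper: Lemma~\ref{Lem:drift_conditions} supplies the subgeometric drift with $\phi(u)=C_1 u^{(\gamma-2)/\gamma}$ and petite small set $\cD^{(x_2)}$, Propositions~\ref{prop:marginal_equivalent_to_Lebesgue} and~\ref{prop:harris_irreducible_petite} furnish the irreducible skeleton and petiteness, and~\cite{douc2009subgeometric} delivers both the finiteness of $\int_\cD F_{w,\gamma-2}\,\ud\pi$ and the TV rate, after which Markov's inequality and the choice $\gamma=2M_c+2-\eps$ close the argument exactly as you outline. The obstacle you flag at the end is not actually one: \cite{douc2009subgeometric} is already a continuous-time subgeometric ergodicity framework (Propositions~3.1 and~3.2 there are stated for continuous-time strong Markov processes), so no discrete-to-continuous transfer is needed, and Proposition~3.1 of that reference also absorbs the Fatou/localisation step you anticipate for proving $\int F_{w,\gamma-2}\,\ud\pi<\infty$, so that this does not need a separate argument.
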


\begin{proof}
By~Proposition~\ref{prop:marginal_equivalent_to_Lebesgue}, the process $Z$ admits an irreducible skeleton chain. Moreover, by Proposition~\ref{prop:harris_irreducible_petite} the sets $\cD^{(r)}=\cD\cap[0,r]\times\R^d$, defined in~\eqref{eq:D_r}, are petite for every $r>0$. Pick arbitrary $\eps\in(0,1-\beta/\beta_c-2)$ and note that $\gamma:=1-\beta/\beta_c - \eps>2$. By Lemma~\ref{Lem:drift_conditions}, the process $\xi$, defined in~\eqref{eq::F_w_gamma_supermart}, is a supermartingale. Note that by definition of $F_{w,\gamma}$ in~\eqref{eq:F_w,gamma}, we have $F_{w,\gamma}(z)^{(\gamma-2)/\gamma} = F_{w,\gamma-2}(z)$ for $z\in\cD\cap[x_1,\infty)\times\R^d$, where $x_1$ is the constant appearing in the definition of the function $m$ in~\eqref{eq:F_w,gamma}. We may thus apply~\cite[Prop.~3.1]{douc2009subgeometric} (with $V=F_{w,\gamma}$, $\phi(u) = C_1 u^{(\gamma-2)/\gamma}$, $b=C_2$ and the petite set $\cD^{(x_2)}$ from Lemma~\ref{Lem:drift_conditions}), to deduce the existence and uniqueness of the invariant distribution $\pi$ of $Z$ and $\int_{\cD}F_{w,\gamma-2}(z)\pi(\ud z)<\infty$. From the definition of $F_{w,\gamma}(z)$ in~\eqref{eq:F_w,gamma} and the lower bound in~\eqref{eq:bound_Lyapunov_f} it follows that $\tilde C_\pi:=\int_{z=(x,y)\in\cD}x^{2M_c-\eps}\pi(\ud z)<\infty$. Moreover, Markov's inequality implies $$\pi(\cD\cap [r,\infty)\times \R^d)\leq \int_\cD (x/r)^{2M_c-\epsilon} \pi(\ud z)\leq \tilde C_\pi r^{-2M_c+\epsilon}\qquad\text{for all $r\in[1,\infty)$.}
$$

Recall that 
for any $z = (x,y)\in\cD$ we have $x \leq \|z\|_{d+1}\leq (x^2+b(x)^2)^{1/2}$ and the boundary function $b$ has sublinear growth (cf. Remark~\ref{rem:Ass2}), implying  $(x^2+b(x)^2)^{1/2}/x \to 1$ as $x \to \infty$. Thus, the upper bound $\pi(\cD\cap [r,\infty)\times\R^d)\leq \tilde C_\pi r^{-2M_c+\epsilon}$ implies the existence of the constant $C_\pi\in(0,\infty)$ such that the bound on the tail  $\pi(\{z \in \cD:\|z\|_{d+1} \geq r\})\leq C_\pi r^{-2M_c+\epsilon}$ holds for all $r\in[1,\infty)$ as claimed in the proposition.

Recall that $\lebm_{d+1}$ is a Lebesgue measure on $\R^{d+1}$. The process $Z$ admits an \textit{$\lebm_{d+1}$-irreducible skeleton} chain, since for every $A\in\cB(\cD)$, such that  $\lebm_{d+1}(A) > 0$, and $z \in \cD$, by Proposition~\ref{prop:marginal_equivalent_to_Lebesgue}, we have $\P_{z}(Z_{k}\in A) > 0$ for every $k \in \N$. In particular, Assumption~(i) in~\cite[Thm~3.2]{douc2009subgeometric} is satisfied for $Z$.
For $\gamma = 1-\beta/\beta_c-\eps$,  Lemma~\ref{Lem:drift_conditions} ensures that Assumption~(ii) of~\cite[Thm~3.2]{douc2009subgeometric} is satisfied. By~\cite[Thm~3.2, Eq.~(3.5)]{douc2009subgeometric}, with the pair of functions $\Psi = (\text{Id},\textbf{1})$, where $\text{Id},\textbf{1}:\RP\to\RP$ denote the identity and the constant functions, respectively, we obtain
$$
r_*(t)\| \P_z(Z_t\in\cdot)-\pi\|_{\mathrm{TV}} \leq  F_{w,\gamma}(z) \quad \text{ for all $t\geq 0$,}
$$
where $r_*(t) = \varphi \circ H_\varphi^{-1}(t)$. Here, the function $\varphi$ is positive, satisfying $\varphi(u) = \tilde C_1u^{(\gamma-2)/\gamma}$ for $u\geq 1$, and $H_\varphi^{-1}$ is the inverse of the increasing function $H_\varphi$, satisfying $H_\varphi(u) = \int_1^u \varphi(s)^{-1}\ud s$ for $u\geq 1$. This implies $r_*(t) = \tilde C_2 (t+1)^{\gamma/2-1}$ for a positive constant $\tilde C_2\in(0,\infty)$ and $t\in(0,\infty)$. 
\end{proof}

\subsection{Lower bounds} 
\label{sec:Lower_bounds}
The lower bounds on the tails of the invariant distribution $\pi$ of the reflected process $Z$ 
are closely related to the tail behaviour of 
certain additive functionals of the paths of $Z$ until the return time to a petite set. For a measurable subset $D \subset \cD$ and $\delta > 0$, define the return time of the process $Z$  to the set $D$ after the time $\delta>0$ by $\tau_\delta(D) := \inf \{ t > \delta: Z_t \in D\}$ (with convention $\inf\emptyset=\infty$).

\begin{prop}
\label{prop:bounded_set_excursions}
Suppose that~\aref{ass:domain2}, \aref{ass:covariance2}, \aref{ass:vector2} hold with $\beta < \beta_c$ and pick $p \in(1-\beta/\beta_c,\infty)$, a bounded measurable set $D\subset\cD$ and $z\in\cD$. Then there exist constants 
$\delta \in (0,\infty)$ and 
$C,r_0,c_1,c_2,\epsilon\in(0,\infty)$, such that for every 
non-decreasing continuous function $H:\RP\to\RP$
with $r_H:=\inf\{r'\geq:H(r')>0\}<\infty$,
we have
$$
\P_z\left(\int_0^{\tau_\delta(D)} H\left(c_1(X_s +c_2)\right)\ud s \geq r\right) \geq C/ G(r/\epsilon)^p\quad\text{for all $r\in(r_0,\infty)$,}
$$
where $G:\RP\to\RP$ is the inverse of the strictly increasing function defined on $[r_H,\infty)$ by the formula $v\mapsto v^2 H(v)$.
\end{prop}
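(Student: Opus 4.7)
The plan is to reduce the claim to the return-time bound in Proposition~\ref{prop:return_time_lower_bound} via an initial excursion that drives $Z$ into a region where $X$ is large, using the marginal irreducibility from Proposition~\ref{prop:marginal_equivalent_to_Lebesgue} and the Markov property at a fixed positive time. Let $x_0,c_1,c_2$ be the constants from Proposition~\ref{prop:return_time_lower_bound} for the given $p\in(1-\beta/\beta_c,\infty)$, and fix any sufficiently small $\eps>0$ for which that proposition applies. Since $D$ is bounded, choose $x_1>x_0$ with $D\subset\cD^{(x_1)}$ and put $r_*:=c_1 x_1+c_2+1$. Fix a bounded open set $A\subset\cD\setminus\partial\cD$ contained in $\cD\cap(r_*,\infty)\times\R^d$; then $A$ has positive Lebesgue measure, is at positive distance from $D\subset\{X\leq x_1\}$, and by Proposition~\ref{prop:marginal_equivalent_to_Lebesgue} we have $\alpha:=\P_z(Z_\delta\in A)>0$ for any fixed $\delta>0$ (e.g.\ $\delta=1$).

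The Markov property at $\delta$, combined with the fact that on $\{Z_\delta\in A\}$ we have $Z_\delta\notin D$ and hence $\tau_\delta(D)-\delta$ is the first hitting time of $D$ by the time-shifted process, yields
\begin{equation*}
\P_z\!\left(\int_0^{\tau_\delta(D)}\!H(c_1(X_s+c_2))\ud s \geq r\right)\geq \E_z\!\left[\mathbbm{1}\{Z_\delta\in A\}\,\P_{Z_\delta}\!\left(\int_0^{\tau_0(D)}\!H(c_1(X_s+c_2))\ud s \geq r\right)\right].
\end{equation*}
For $z'=(x',y'):=Z_\delta\in A$ we have $x'>r_*>x_1$, so $z'\notin D$; path continuity of $Z$ and the inclusion $D\subset\{X\leq x_1\}$ force any visit of $Z$ to $D$ starting from $z'$ to be preceded by a crossing of the level $X=x_1$, giving $\tau_0(D)\geq\varsigma_{x_1}$ under $\P_{z'}$.

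Applying Proposition~\ref{prop:return_time_lower_bound} with $h:=H$, a fixed $q\in(0,1)$, target level $v:=G(r/\eps)$ (so that $\eps v^2 H(v)=r$ by the definition of $G$), and starting point $z'\in A$, we obtain, whenever $v>c_1 x_1+c_2$,
\begin{equation*}
\P_{z'}\!\left(\int_0^{\varsigma_{x_1}}\!H(c_1(X_s+c_2))\ud s \geq r\right) \geq q \min\{(c_1^{-p}(x'-c_2)^p-x_1^p)(1-q)^p G(r/\eps)^{-p},\,1\}.
\end{equation*}
The constant $C_*:=\inf_{(x',y')\in A}(c_1^{-p}(x'-c_2)^p-x_1^p)$ is strictly positive because $A\subset\{X>r_*>c_1 x_1+c_2\}$. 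Choosing $r_0$ large enough that both $G(r_0/\eps)>c_1 x_1+c_2$ and $C_*(1-q)^p G(r_0/\eps)^{-p}<1$ hold, the minimum is attained by the first argument for every $r>r_0$, so combining with the displayed Markov-property bound yields the claim with $C:=\alpha q C_*(1-q)^p$.

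The principal subtlety is the trapping step after the Markov shift: one must bound the first hitting time of $D$ below by the return time $\varsigma_{x_1}$ of $X$. This is made trivial by placing $A$ strictly above $\cD^{(x_1)}\supset D$, which, together with the continuity of paths, removes any need to analyse the behaviour of $Z$ near $D$ before time $\delta$. The remaining work is bookkeeping: tracking the polynomial tail through the change of variables $v\mapsto\eps v^2 H(v)$ encoded in $G$, checking the uniform positivity of $C_*$ over $A$, and selecting $r_0$ so that the $\min\{\cdot,1\}$ in Proposition~\ref{prop:return_time_lower_bound} is attained by the first argument.
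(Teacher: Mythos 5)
Your proof is correct and follows essentially the same architecture as the paper's: apply Proposition~\ref{prop:return_time_lower_bound} with $h:=H$, introduce $v:=G(r/\eps)$ so that $\eps v^2H(v)=r$, and shift the lower bound through the Markov property at a fixed positive time so that the first hitting time of $D$ is dominated below by $\varsigma_{x_1}$. The one genuine difference is how you manufacture a positive-probability event that lands $Z_\delta$ sufficiently far out: you invoke irreducibility (Proposition~\ref{prop:marginal_equivalent_to_Lebesgue}), pick a bounded open set $A$ above level $r_*$, and then use boundedness of $A$ to bound $C_*=\inf_{(x',y')\in A}(c_1^{-p}(x'-c_2)^p-x_1^p)$ from below; the paper instead invokes only the non-confinement property $\limsup_{t\to\infty}X_t=\infty$ (Lemma~\ref{lem:non_cofinment_f_F}) to produce a $\delta>0$ with $\P_z(X_\delta>c_1(d_0+1)+c_2)>0$, and replaces the $\inf_A$ argument by the elementary monotonicity bound $c_1^{-p}(x-c_2)^p-x_1^p\geq(x_1+1)^p-x_1^p$, which is uniform over the unbounded region $\{x>c_1(x_1+1)+c_2\}$. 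The paper itself records your alternative in the remark following the statement: irreducibility would in fact give the conclusion for \emph{every} $\delta>0$, but it is the stronger of the two properties, and the existence of \emph{some} $\delta>0$ suffices for Lemma~\ref{lem:lower_bound_invariant}, so the paper keeps the lighter hypothesis. Both routes are sound; yours simply buys the (unneeded) stronger statement "for every $\delta>0$" at the cost of invoking a harder preliminary result.

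Two small presentational points. First, the phrase "the minimum is attained by the first argument for every $r>r_0$" is not quite accurate as stated, since $(c_1^{-p}(x'-c_2)^p-x_1^p)(1-q)^pG(r/\eps)^{-p}$ varies with $x'\in A$ and could exceed $1$ for $x'$ near the top of $A$; what actually drives the bound is $\min\{a(x')G^{-p},1\}\geq\min\{C_*(1-q)^pG^{-p},1\}$, and the latter minimum equals its first argument once $r>r_0$. The inequality you write down is nonetheless correct. Second, as in the paper, your $r_0$ depends on $H$ through $G$; this does not affect the downstream use in Lemma~\ref{lem:lower_bound_invariant}, where only the divergence of the tail integral matters, so it is harmless.
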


\begin{rem}
    (a) The assumption $\beta < \beta_c$ in Proposition~\ref{prop:bounded_set_excursions}
    covers both the null-recurrent and positive-recurrent cases.
    However, the main application of Proposition~\ref{prop:bounded_set_excursions} in the proof of 
    Lemma~\ref{lem:lower_bound_invariant}
     below requires only the positive-recurrent case.
     Since Lemma~\ref{lem:lower_bound_invariant} is crucial in the proof of 
     Proposition~\ref{prop:lower_bound_invariant}, 
the bound in Proposition~\ref{prop:bounded_set_excursions} is key for 
     the lower bounds in Theorem~\ref{thm:invariant_distributon}.
     \\
    (b) The statement of Proposition~\ref{prop:bounded_set_excursions}
    in fact holds for every $\delta>0$. 
    In the proof of Proposition~\ref{prop:bounded_set_excursions} below, we apply the non-confinement property of $Z$, given in Lemma~\ref{lem:non_cofinment_f_F}, to conclude that $\delta>0$  exists. 
    However, since  by Proposition~\ref{prop:marginal_equivalent_to_Lebesgue} $Z$ is irreducible, the event  $\{X_\delta > c_1(d_0+1)+c_2\}$ has positive probability for every $\delta>0$. Since the existence of $\delta>0$ is sufficient for our analysis of the lower bound on the tail of the invariant distribution and non-confinement is weaker (and easier to prove) than irreducibility, we use the formulation of Proposition~\ref{prop:bounded_set_excursions} above.\\
    (c) The assumed continuity of the function $H$ in Proposition~\ref{prop:bounded_set_excursions} is not necessary: measurability would be sufficient but it would complicate the formulation of the proposition.
\end{rem}

\begin{proof}[Proof of Proposition~\ref{prop:bounded_set_excursions}]
Pick  a non-decreasing continuous function $H:\RP\to\RP$ 
with $r_H=\inf\{r':H(r')>0\}<\infty$.
Proposition~\ref{prop:return_time_lower_bound} implies that for every $p \in (1-\beta/\beta_c,\infty)$ and $q\in(0,1)$, there exist constants $x_0\in(0,\infty)$, $c_1,c_2\in(1,\infty)$ and $\eps(q)\in(0,\infty)$, such that for every $x_1\in(x_0,\infty)$ and $z=(x,y)\in\cD\cap (c_1x_1+c_2,\infty)\times\R^d$ and function $\bar H(r):=H(c_1(r+c_2))$, $r\in\RP$, we have
\begin{equation}
\label{eq:lower_bound_ecurtions_final}
\P_z\left(\int_0^{\varsigma_{x_1}} \bar H\left(X_s\right)\ud s \geq \eps v^2H(v)\right) \geq q\min\{(c_1^{-p}(x-c_2)^p-x_1^p)(1-q)^{p}v^{-p},1\},
\end{equation}
for all $v\in(c_1x_1+c_2,\infty)$.
Recall here that $\varsigma_{x_1}$, defined in~\eqref{eq::varsigma} above,  is the return time of the first coordinate $X$ (of $Z$) below the level $x_1$ and $X_0=x$ $
\P_z$-a.s.
Note that the following elementary inequality holds since the function is monotonically increasing as $p>0$:
$$c_1^{-p}(x-c_2)^{p}-x_1^{p}\geq (x_1+1)^p-x_1^p\quad\text{for all $x\in(c_1(x_1+1)+c_2,\infty)$.}
$$
Since $v\mapsto v^2H(v)$ is strictly increasing on $[r_H,\infty)$ with range equal to $\RP$, for any $r\in(c_1x_1+c_2,\infty)$ we 
can define $v:=G(r/\epsilon)$, implying
$r=\epsilon v^2 H(v)$. 
 Thus, for every $z=(x,y)\in\cD \cap (c_1(x_1+1)+c_2,\infty)\times\R^d$
 and any  $C_{x_1}\in(0,q((x_1+1)^p-x_1^p)(1-q)^p)$
 we have
\begin{align}
\label{eq:lower_bound_additive_functional_Z}
\P_z\left(\int_0^{\varsigma_{x_1}} \bar H\left(X_s\right)\ud s \geq r\right)
&\geq q\min\{((x_1+1)^p-x_1^p)(1-q)^p/G(r/\epsilon)^p,1\}\\
\nonumber &\geq C_{x_1} /G(r/\epsilon)^p \quad\text{for all $r\in(c_1x_1+c_2,\infty)$,}
\end{align}
where the first inequality in~\eqref{eq:lower_bound_additive_functional_Z} follows from~\eqref{eq:lower_bound_ecurtions_final} above.

Fix $z\in\cD$ and a bounded set $D\subset \cD$.
Let $m_D:=\sup\{x:(x,y)\in D\}\in\RP$ satisfy $D\subset [0,m_D]\times\R^d\cap\cD$ (note that $m_D<\infty$ by assumption on $D$) 
and denote $d_0 := \max\{m_D,x_1\}$. Since, by Lemma~\ref{lem:non_cofinment_f_F}, $X$ is not confined
to any compact set, there exists $\delta > 0$, such that $\P_z(X_\delta > c_1(d_0+1)+c_2)  > 0$. 
Note that, on the event 
$\{X_\delta > c_1(d_0+1)+c_2\}$, $\P_z$-a.s.
we have 
$Z_\delta\in\cD \cap (c_1(x_1+1)+c_2,\infty)\times\R^d$. The
Markov property of $Z$ and~\eqref{eq:lower_bound_additive_functional_Z} thus imply that for every $p\in(1-\beta/\beta_c,\infty)$ and 
$r_0:=c_1 d_0+c_2$,
there exists a constant $C_{d_0}\in(0,q((d_0+1)^p-d_0^p))$, such that
\begin{align*}
\P_z\left(\int_0^{\tau_\delta(D)} \bar H\left(X_s\right) \ud s \geq r\right) &
\geq \P_z\left( \int_0^{\tau_\delta(D)}   \bar H\left(X_s\right)\ud s \geq r, X_\delta > r_0+c_1\right)\\
&
\geq \E_z\left[\1{X_\delta > r_0+c_1}\cdot\P_{Z_\delta}\left( \int_0^{\tau_\delta(D)}  \bar H\left(X_s\right)\ud s \geq r\right)\right]\\
&
\geq \E_z\left[\1{X_\delta > r_0+c_1}\cdot\P_{Z_\delta}\left( \int_0^{\varsigma_{d_0}}  \bar  H\left(X_s\right)\ud s \geq r\right)\right]\\
&
\geq 
\P_z(X_\delta > r_0+c_1) C/G(r/\epsilon)^p \quad\text{for all $r\in(r_0,\infty)$.} \qedhere
\end{align*}
\end{proof}

We can now establish the lower bounds on the tail of the the invariant distribution of $Z$.

\begin{prop}
\label{prop:lower_bound_invariant}
Suppose that~\aref{ass:domain2}, \aref{ass:covariance2}, \aref{ass:vector2} hold with $\beta < -\beta_c$ and recall $M_c = -(1+\beta/\beta_c)/2$. Let $\pi$ be the invariant distribution of the process $Z$ and pick $\eps>0$. There exists a  constant
$c_\pi\in(0,\infty)$
 such that
$$
c_\pi r^{-2M_c-\epsilon} \leq \pi(\{z\in\cD:\|z\|_{d+1}\geq r\})\qquad\text{for all $r\in[1,\infty)$.}
$$
Furthermore, for any $z\in\cD$, there exists a constant $c_{\mathrm{TV}}\in(0,\infty)$, such that 
$$
c_{\mathrm{TV}}t^{-M_c-\epsilon} \leq \| \P_z(Z_t\in\cdot)-\pi\|_{\mathrm{TV}} \qquad\text{for all $t\in[1,\infty)$.}
$$
\end{prop}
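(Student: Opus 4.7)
The plan is to deduce the tail lower bound on $\pi$ from Proposition~\ref{prop:bounded_set_excursions} via a Kac-type excursion identity, and then to convert it into the lower bound on total variation via the general result in Lemma~\ref{lem:lower_bound_convergence_rate} of Appendix~\ref{subsec:lower_bounds_convergence_to_stationarity} (based on~\cite{hairer2010convergence}). By Proposition~\ref{prop:bounded_petite} every petite set is bounded, and by Proposition~\ref{prop:harris_irreducible_petite} every compact subset of $\cD$ is petite; since $\pi$ is a probability measure on $\cD$ by Proposition~\ref{prop:upper_bounds}, I can fix a bounded petite set $D\subset\cD$ with $\pi(D)>0$. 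Combining positive recurrence with the Harris property, the Kac excursion identity (the core content of Lemma~\ref{lem:lower_bound_invariant}) provides, for some probability measure $\nu$ supported on $D$ and every nonnegative Borel $f$,
\begin{equation*}
\int_\cD f\,d\pi \;=\; \frac{\E_\nu\!\left[\int_0^{\tau_\delta(D)} f(Z_s)\,ds\right]}{\E_\nu[\tau_\delta(D)]},
\end{equation*}
with $\E_\nu[\tau_\delta(D)]<\infty$ by Proposition~\ref{prop:return_time_upper_bound}.

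To prove the tail lower bound, fix $\eta>0$ and set $p=2+2M_c+\eta/2$, so that $p>1-\beta/\beta_c$. For each $r\geq 1$, let $H_r:\RP\to[0,1]$ be the continuous nondecreasing ramp that vanishes on $[0,c_1(r+c_2)]$, equals $1$ on $[2c_1(r+c_2),\infty)$, and is linear in between. Two elementary observations hold: (i) $\ind\{X_s>r\}\geq H_r(c_1(X_s+c_2))$ pointwise, and (ii) the map $v\mapsto v^2H_r(v)$ is strictly increasing on $[c_1(r+c_2),\infty)$ and coincides with $v^2$ on $[2c_1(r+c_2),\infty)$, so its inverse $G_r$ satisfies $G_r(u)=\sqrt{u}$ for all sufficiently large $u$. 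Proposition~\ref{prop:bounded_set_excursions} applied with $H=H_r$, integrated against the starting measure $\nu$, yields
\begin{equation*}
\E_\nu\!\left[\int_0^{\tau_\delta(D)} H_r(c_1(X_s+c_2))\,ds\right] \;\geq\; \int_{u_0(r)}^\infty \frac{c_\sharp}{u^{p/2}}\,du \;\geq\; c_{\ast}\, r^{2-p} \;=\; c_{\ast}\, r^{-2M_c-\eta/2},
\end{equation*}
with $u_0(r)\asymp r^2$. Combining with the Kac identity and (i) gives $\pi(X>r)\geq c_1'\,r^{-2M_c-\eta/2}$ for all sufficiently large $r$. Monotonicity of $r\mapsto\pi(X>r)$ together with the positivity $\pi(X>1)>0$ (inherited from Proposition~\ref{prop:marginal_equivalent_to_Lebesgue} via the absolute continuity of the marginals of $Z$) extends the bound to all $r\in[1,\infty)$ at the cost of a smaller constant, and since $\|z\|_{d+1}\geq X$ one concludes $\pi(\|z\|_{d+1}\geq r)\geq c_\pi\, r^{-2M_c-\eta}$ after absorbing $\eta/2$ into $\eta$.

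The lower bound on the total variation distance then follows from Lemma~\ref{lem:lower_bound_convergence_rate}, a continuous-time version of~\cite[Thm~5.1]{hairer2010convergence}, which turns a polynomial tail lower bound of rate $r^{-a}$ on the invariant distribution into a polynomial lower bound of rate $t^{-a/2}$ on the total variation distance, provided a supermartingale-type bound on a Lyapunov function growing polynomially in $\|z\|_{d+1}$ is available. With $a=2M_c+\eta/2$ this gives rate $t^{-M_c-\eta/4}$, which after adjusting $\eta$ matches the claimed rate $t^{-M_c-\epsilon}$. The required Lyapunov/supermartingale input is supplied by $F_{w,\gamma}$ from Lemma~\ref{lem:F_w,gamma} combined with the drift inequality of Lemma~\ref{Lem:drift_conditions}. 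The main technical obstacle is the careful application of the Kac identity: Proposition~\ref{prop:bounded_set_excursions} provides constants that depend on the starting point $z$, so one must ensure the choice $\nu=\pi_D$ (the normalized restriction of $\pi$ to the bounded set $D$) is compatible with the pointwise tail estimate, for instance by exploiting the boundedness of $D$ so that the constants are uniform over $z\in D$. A secondary point is the verification of the abstract hypotheses of Lemma~\ref{lem:lower_bound_convergence_rate} in our setting; this requires no new ideas but careful bookkeeping of exponents.
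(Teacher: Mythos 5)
Your approach takes a genuinely different route from the paper's, and it has two real gaps that you have not closed.

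The paper does \emph{not} use (nor prove) a Kac-type excursion \emph{identity}. Its Lemma~\ref{lem:lower_bound_invariant} relies on the qualitative finiteness criterion of~\cite[Theorem~1.2(b)]{tweedie1993generalized}: for a closed petite $D$, $\int_\cD f\,\ud\pi<\infty$ if and only if $\sup_{z\in D}\E_z[\int_0^{\tau_\delta(D)}f(Z_s)\ud s]<\infty$ for all $\delta>0$. This is an if-and-only-if \emph{criterion} with a supremum over $z\in D$, not an equality involving a return measure. The paper then runs a proof by contradiction: assuming the tail lower bound fails along a rapidly growing sequence $(r_n)$, it constructs an increasing step-like function $H$ with $\int H\,\ud\pi<\infty$, yet shows via Proposition~\ref{prop:bounded_set_excursions} that $\int_{r_0}^\infty G(r/\epsilon)^{-p}\,\ud r=\infty$, forcing $\int H\,\ud\pi=\infty$ by the Tweedie criterion. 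This avoids any need for a quantitative excursion identity. Your claim that the Kac identity is ``the core content of Lemma~\ref{lem:lower_bound_invariant}'' is therefore a mischaracterization of what is actually in the paper.

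The first gap in your argument is the Kac identity itself, which you assert but never establish. The equality $\pi(f)=\E_\nu[\int_0^{\tau_\delta(D)}f(Z_s)\ud s]/\E_\nu[\tau_\delta(D)]$, for a \emph{general} petite set $D$, a fixed $\delta>0$, and ``some'' probability measure $\nu$ on $D$, is not a trivial consequence of positive Harris recurrence; in continuous time the correct normalizing/return measure is tied to the regeneration or splitting structure and is not arbitrary. Writing a correct identity here requires real work (atomization/splitting, or an argument via the skeleton chain, or a result for resolvents), which you have not done and cannot cite from the paper. The second gap is the uniformity of the constants in Proposition~\ref{prop:bounded_set_excursions}: the constants $\delta$, $C$, $r_0$ there are produced \emph{after} fixing $z$ (in particular $\delta$ comes from non-confinement at the specific starting point, and $C$ contains the factor $\P_z(X_\delta>r_0+c_1)$), so they are $z$-dependent. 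Integrating the bound against a measure $\nu$ on $D$ requires these to be uniform over $D$. You flag this issue but resolve it only by appeal to ``boundedness of $D$''; that alone does not give uniformity, since lower semi-continuity of $z\mapsto\P_z(X_\delta>r_0+c_1)$ plus compactness of $D$ would first require a single $\delta$ to work for all $z\in D$, which in turn needs the irreducibility of Proposition~\ref{prop:marginal_equivalent_to_Lebesgue} (as the paper's Remark after Proposition~\ref{prop:bounded_set_excursions} notes, but deliberately does not use). Both gaps are plausibly fixable with additional work, but as written the argument does not close. The exponent bookkeeping in your computation (the choice $p=2+2M_c+\eta/2$, the inequality $\1\{X_s>r\}\geq H_r(c_1(X_s+c_2))$, the asymptotic $G_r(u)=\sqrt u$ for large $u$, and the resulting rate $r^{-2M_c-\eta/2}$) is correct, and the conversion to the total-variation lower bound via Lemma~\ref{lem:lower_bound_convergence_rate} is essentially the same as in the paper.
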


The key step in the proof of Proposition~\ref{prop:lower_bound_invariant}
is the following lemma.

\begin{lem}
\label{lem:lower_bound_invariant}
Suppose that~\aref{ass:domain2}, \aref{ass:covariance2}, \aref{ass:vector2} hold with $\beta < -\beta_c$.
For any $\epsilon>0$ there exists a constant $c_\pi\in(0,\infty)$,
such that 
$c_\pi r^{1+\beta/\beta_c-\eps}\leq\pi(\cD\cap [r,\infty)\times\R^d)$ for all $r\in[1,\infty)$.
\end{lem}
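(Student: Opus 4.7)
Since $\beta < -\beta_c$, the reflected process $Z$ is positive Harris recurrent (Proposition~\ref{prop:harris_irreducible_petite} combined with Proposition~\ref{prop:upper_bounds}), and every petite set is bounded (Proposition~\ref{prop:bounded_petite}). The plan is to invoke a regenerative representation of $\pi$: there exist a bounded petite set $D$, a probability measure $\mu$ (supported on a neighbourhood of $D$) and a stopping time $T$ with $\E_\mu[T] < \infty$, yielding
\begin{equation*}
\pi(A) \;=\; \frac{1}{\E_\mu[T]}\,\E_\mu\!\left[\int_0^T \1{Z_s \in A}\,\ud s\right]
\quad\text{for every Borel } A \subseteq \cD.
\end{equation*}
Such a representation is obtained by Nummelin splitting of a skeleton chain of $Z$, the required ingredients being Feller continuity (Theorem~\ref{thm:Feller_continuity_of_Z}) and irreducibility of the 1-skeleton (Proposition~\ref{prop:marginal_equivalent_to_Lebesgue}); finiteness $\E_\mu[T] < \infty$ follows from positive recurrence via Proposition~\ref{prop:return_time_upper_bound}. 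Writing $A_r := \cD \cap [r,\infty) \times \R^d$, the task reduces to proving $\E_\mu[\int_0^T \1{X_s \geq r}\,\ud s] \gtrsim r^{1 + \beta/\beta_c - \epsilon}$ for all $r \geq 1$.

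\textbf{Applying Proposition~\ref{prop:bounded_set_excursions}.} Fix $\epsilon > 0$ and set $p := 1 - \beta/\beta_c + \epsilon/2$; since $\beta/\beta_c < -1$, automatically $p > 2$. For each $r \geq 1$, introduce the continuous test function
\begin{equation*}
H_r(v) \;:=\; \min\bigl\{1,\,(v - R)_+\bigr\}, \qquad R \;:=\; c_1(r + c_2),
\end{equation*}
with $c_1, c_2$ the constants supplied by Proposition~\ref{prop:bounded_set_excursions} for this $p$. A short computation shows that $v \mapsto v^2 H_r(v)$ is strictly increasing on $[R, \infty)$, with inverse $G_r$ satisfying $G_r(u) \in [R, R+1]$ for $u \in [0, (R+1)^2]$ and $G_r(u) = \sqrt{u}$ for $u \geq (R+1)^2$. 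Since $H_r(c_1(X_s + c_2)) \leq \1{X_s \geq r}$, Proposition~\ref{prop:bounded_set_excursions} applied to $H_r$ for any fixed $z \in \supp(\mu)$ and integrated in $u$ produces
\begin{equation*}
\E_z\!\left[\int_0^{\tau_\delta(D)} \1{X_s \geq r}\,\ud s\right]
\;\gtrsim\; \underbrace{\int_{r_0}^{c R^2} R^{-p}\,\ud u}_{\asymp\, R^{2-p}}
\;+\; \underbrace{\int_{c R^2}^{\infty} u^{-p/2}\,\ud u}_{\asymp\, R^{2-p}}
\;\asymp\; r^{1 + \beta/\beta_c - \epsilon/2},
\end{equation*}
where the second integral converges thanks to $p > 2$, and the dominant $r^2$-long plateau of the integrand near $u \lesssim R^2$ produces the leading term.

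\textbf{Conclusion and main obstacle.} Transferring this pointwise lower bound from $\delta_z$ to the reference measure $\mu$---feasible because, by Feller continuity and irreducibility, the constant $C$ from Proposition~\ref{prop:bounded_set_excursions} can be chosen uniformly over any compact subset of $\cD \setminus \partial\cD$ on which $\mu$ is supported---and substituting into the regenerative formula yields
\begin{equation*}
\pi(A_r) \;\geq\; \frac{c}{\E_\mu[T]}\, r^{1 + \beta/\beta_c - \epsilon/2} \;\geq\; c_\pi\, r^{1 + \beta/\beta_c - \epsilon}, \qquad r \geq 1,
\end{equation*}
which is the claim. The main technical obstacle is in making the regenerative representation precise in our continuous-time, Polish-state-space setting: one must execute Nummelin splitting on a skeleton (or resolvent) chain of $Z$ and verify that the resulting regeneration time $T$ dominates a return time of the form $\tau_\delta(D)$ to which Proposition~\ref{prop:bounded_set_excursions} applies. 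All the required ingredients (petite sets, Feller continuity, irreducibility, finite return-time moments) have been assembled in the preceding sections, so once the regenerative formula is in place the argument reduces to the bookkeeping of the integrated tail bound above.
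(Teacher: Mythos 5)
Your route is genuinely different from the paper's. The paper argues by contradiction: assuming the tail lower bound fails along a sequence $r_n\to\infty$, it constructs from that sequence a single piecewise-linear non-decreasing test function $H$ satisfying $\int H\,\ud\pi < \infty$ (by a direct summation estimate against the assumed small tails), while simultaneously forcing $\int_{r_0}^\infty G(r/\epsilon)^{-p}\ud r = \infty$; Proposition~\ref{prop:bounded_set_excursions} together with Tweedie's finiteness criterion then forces $\int H\,\ud\pi = \infty$, a contradiction. The paper thus never writes down an explicit formula for $\pi$. You instead propose a \emph{direct} lower bound using the indicator-approximating test function $H_r(v)=\min\{1,(v-R)_+\}$, and your integration of the excursion tail estimate --- $\int_{r_0}^\infty G_r(u/\epsilon)^{-p}\,\ud u \asymp R^{2-p}$, with the plateau near $u\lesssim R^2$ and the $u^{-p/2}$ tail each contributing $R^{2-p}$ and using $p>2$ --- is correct and produces the right exponent $1+\beta/\beta_c-\epsilon/2$. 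If it went through, this would be more transparent than the contradiction argument.

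However, the ``main technical obstacle'' you flag is a real gap, not bookkeeping. The criterion the paper invokes (Theorem~1.2(b) of the Tweedie reference) is only a \emph{finiteness characterization}: $\int f\,\ud\pi<\infty$ if and only if $\sup_{z\in D}\E_z[\int_0^{\tau_\delta(D)}f(Z_s)\,\ud s]<\infty$ for some closed petite $D$ and all $\delta>0$. It does not yield the explicit Kac/regenerative identity $\pi(A)=\E_\mu[\int_0^{T}\1{Z_s\in A}\,\ud s]/\E_\mu[T]$ that your argument relies on. Producing that identity in continuous time requires executing a Nummelin-type splitting of a skeleton or resolvent chain, identifying the regeneration measure $\mu$ and a randomized regeneration time $T$, verifying that $T\geq\tau_\delta(D)$ under $\P_\mu$ so that your excursion lower bound can be transported to the regeneration cycle, checking that $\mu$ is concentrated on a compact subset of $\ocD$ where the constants in Proposition~\ref{prop:bounded_set_excursions} are uniform, and confirming $\E_\mu[T]<\infty$. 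None of these steps is carried out in the preceding sections --- the contradiction argument is engineered precisely so that they need not be. As written, your proposal rests on an unproven representation and therefore has a genuine hole; it is a viable alternative strategy only after you add a separate lemma establishing the regenerative formula with the properties listed above.
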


\begin{proof}[Proof of Lemma~\ref{lem:lower_bound_invariant}]
The reflected process $Z$ is \textit{positive Harris recurrent}, i.e., $Z$ is Harris recurrent (by Proposition~\ref{prop:harris_irreducible_petite}) and admits an invariant distribution (by Proposition~\ref{prop:upper_bounds}). Thus, by~\cite[Theorem 1.2(b)]{tweedie1993generalized}, a measurable function $f: \cD \rightarrow [1,\infty)$ satisfies $\int_{z \in \cD}f(z)\pi(\ud z) < \infty$ if and only if $\sup_{z \in D}\E_z[\int_0^{\tau_\delta(D)}f(Z_s)\ud s]< \infty$ for some closed petite set $D\subset\cD$ and all $\delta > 0$, where $\tau_\delta(D)= \inf \{ t > \delta: Z_t \in D\}$ is the return time to the set $D$ after time $\delta$ (defined before the statement of Proposition~\ref{prop:bounded_set_excursions} above). Since by Proposition~\ref{prop:bounded_petite} all petite sets are bounded, Proposition~\ref{prop:bounded_set_excursions}
implies that for any closed petite set $D$
of $Z$ there exists $\delta>0$ such that for any
non-decreasing continuous function $\tilde H:\RP\to[1,\infty)$
we have 
$\E_z[\int_0^{\tau_\delta(D)}\tilde H(c_1(X_s+c_2))\ud s]\geq C\int_{r_0}^\infty G(r/\epsilon)^{-p}\ud r$
for all $z\in\cD$ and $p\in(1-\beta/\beta_c,\infty)$,
where $G:\RP\to\RP$
is the inverse of the function $v\mapsto v^2\tilde H(v)$
and the positive constants $\epsilon, r_0, C, c_1,c_2$ are as in Proposition~\ref{prop:bounded_set_excursions}.
By the criterion in~\cite[Theorem 1.2(b)]{tweedie1993generalized} stated above we thus obtain that 
\begin{equation}
\label{eq:integtal_of_inverse_G_infinite}
\int_{r_0}^\infty G(r/\epsilon)^{-p}\ud r = \infty\quad \text{ implies }\quad
\int_{z=(x,y)\in\cD} \tilde H(c_1(x+c_2))\pi(\ud z)=\infty.
\end{equation}

The proof of Lemma~\ref{lem:lower_bound_invariant} proceeds by contradiction. Note that the statement in the lemma is equivalent to the following: for every $\epsilon>0$
there exists $r_0\in(0,\infty)$ such that $$r^{1+\beta/\beta_c-\eps}\leq\pi(\cD\cap [r,\infty)\times\R^d)\qquad\text{for all $r\in[r_0,\infty)$.}$$
Assume to the contrary that there exists $\epsilon>0$, such that \textit{for every} $r_0>0$ there exists $r_1\in[r_0,\infty)$ satisfying
$r_1^{1+\beta/\beta_c-\eps}>\pi(\cD\cap [r_1,\infty)\times\R^d)$.
We may pick $r_0>1$ and $r_1>2r_0$. Using a recursive construction, we obtain a sequence $(r_n)_{n\in\N}$, such that 
$r_{n+1}>2r_n$ and $r_n^{1+\beta/\beta_c-\eps}>\pi(\cD\cap [r_n,\infty)\times\R^d)$ for all $n\in\N$.
Using this sequence, we now construct a function $H$ 
satisfying $\int_\cD H(x)\pi(\ud z)<\infty$ 
but which violates the implication in~\eqref{eq:integtal_of_inverse_G_infinite}.

Set $\alpha := -(1+\beta/\beta_c)+\eps$ and define the function 
$\mu:\RP\to\RP$ by 
$\mu(x):=1$ for $x\in[0,r_1)$ and
$\mu(x) := r_n^{-\alpha}$ for $x\in [r_n,r_{n+1})$, $n\in\N$.  
Since the function $x\mapsto \pi(\cD\cap [x,\infty)\times\R^d)$ is non-increasing, we have  
$\pi(\cD\cap [x,\infty)\times\R^d)\leq \mu(x)$ for all $x\in\RP$. 
Let $H:\RP\to[1,\infty)$ be a differentiable function such that $H(x) = 1$ 
for $x\in[0,r_1)$ and, for $x\in[r_n,r_{n+1})$, we have
$$
H'(x) = \begin{cases}
r_n^{\alpha-\eps/2},\quad x\in [r_n,r_{n}+1);\\
1/(r_{n+1}-r_n),\quad x\in [r_n+1,r_{n+1}).
\end{cases}
$$
Since $\mu$ is non-increasing by definition, for $x\in[r_n,r_{n+1})$, we have $\mu(x) \leq r_n^{-\alpha}$ and 
$$
H'(x)\mu(x) = \begin{cases}
r_n^{-\eps/2},\quad x\in[r_n,r_n+1); \\
r_n^{-\alpha}/(r_{n+1}-r_n),\quad x\in[r_n+1,r_{n+1}).
\end{cases}
$$
The inequality $\pi(\cD\cap[x,\infty)\times\R^d)\leq \mu(x)$ and Tonelli's theorem (with $H(0)=1$) imply
\begin{align}
\label{eq:finite_H_pi}
\nonumber\int_{z=(x,y)\in\cD} H(x)\pi(\ud z) &\leq 1+\int_{r_1}^\infty H'(x)\pi(\cD \cap [x,\infty)\times\R^d)\ud x 
 \leq 1+ \int_{r_1}^\infty H'(x)\mu(x)\ud x\\
&\nonumber=1+\sum_{n=1}^\infty \left(\int_{r_n}^{1+r_n}H'(x)\mu(x)\ud x + \int_{1+r_n}^{r_{n+1}}H'(x)\mu(x)\ud x \right)\\
&\leq1+\sum_{n=1}^{\infty}r_n^{-\eps/2} + \sum_{n=1}^{\infty} r_n^{-\alpha}<\infty,
\end{align}
where the final inequality follows from $2^{n-1}r_1\leq r_n$ for every $n\in\N$ and $\alpha>\eps>0$.

Let $\eps>0$ and $c_1, c_2\in(1,\infty)$ whose existence is guaranteed by Proposition~\ref{prop:bounded_set_excursions}. Let
$\tilde H:\RP\to[1,\infty)$ be a non-decreasing differentiable function  satisfying $\tilde H(x)=H(x/c_1- c_2)$ for all $x\in(c_1 c_2,\infty)$.
Pick $p\in(1-\beta/\beta_c,1-\beta/\beta_c+\eps/2)$ and note
that $p>2$ and
$2-p+\alpha-\eps/2>0$.
Let $G:\RP\to\RP$ be the inverse of the function $u\mapsto u^2 \tilde H(u)$. 
Introduce the substitution 
 $r=\epsilon u^2 \tilde H(u)$
 into the first integral in~\eqref{eq:integtal_of_inverse_G_infinite} to obtain
 \begin{align}
 \nonumber
 \int_{1}^{\infty}G(r/\eps)^{-p}\ud r &= \eps\int_{G(1/\epsilon)}^{\infty} u^{-p}(2u\tilde H(u) + u^2\tilde H'(u))\ud u \\
 \nonumber
&\geq \eps\sum_{n=n_0}^{\infty}\int_{c_1(r_n+c_2)}^{c_1(2r_n+c_2)} u^{1-p}\tilde H(u)\ud u=
\eps\sum_{n=n_0}^{\infty}H(r_n)\int_{c_1(r_n+c_2)}^{c_1(2r_n+c_2)} u^{1-p}\ud u
 \\
 \nonumber
 &=\eps/(p-2)\sum_{n=n_0}^{\infty} r_n^{\alpha-\eps/2} c_1^{2-p}((r_n-c_2)^{2-p}-(2r_n-c_2)^{2-p}) \\
 \nonumber
 &\geq \eps/(p-2)\sum_{n=n_0}^{\infty} r_n^{\alpha-\eps/2}(r_n-c_2)^{2-p} (1-((2r_n-c_2)/(r_n-c_2))^{2-p})\\
 \label{eq:lower_bound_inverse_G}
 &\geq \eps(1-2^{2-p})/(p-2)\sum_{n=n_0}^{\infty} (r_n-c_2)^{2-p+\alpha-\eps/2} =\infty,
 \end{align}
where $n_0\in\N$ is sufficiently large so that  
$c_1(r_n+c_2)>G(1/\epsilon)$ and $r_n>c_2$ hold for all $n\geq n_0$. The first inequality in the previous display uses the fact that $\tilde H$ is non-decreasing and positive, while the second follows from $c_1>1$ and $2-p<0$.

Note that $\tilde H(c_1(x+c_2))=H(x)$ for every $x\in\RP$.
Thus, the implication  in~\eqref{eq:integtal_of_inverse_G_infinite} 
and the estimate in~\eqref{eq:lower_bound_inverse_G}
yield   
 $$\infty  = \int_{z=(x,y)\in\cD}\tilde H(c_1(x+c_2))\pi(\ud z) = \int_{z=(x,y)\in\cD}H(x)\pi(\ud z).$$ 
 This contradicts~\eqref{eq:finite_H_pi} and concludes the proof of the lemma.
\end{proof}

\begin{proof}[Proof of Proposition~\ref{prop:lower_bound_invariant}] 
Note that 
$\pi(\{z\in\cD:\|z\|_{d+1}\geq r\}) \geq \pi(\cD \cap [r,\infty)\times\R^d)$
for every $r\in\RP$.
Thus, the lower bound on the invariant distribution $\pi$ follows form Lemma~\ref{lem:lower_bound_invariant}.

Pick arbitrary $\eps'\in(0,1)$. Lemma~\ref{Lem:drift_conditions} implies that for  and $\gamma := 1-\beta/\beta_c - \eps'$, there exists $w\in(-\infty,1-\beta s_0/c_0)$, some constant $x_0,x_1,d\in\RP$, defining the function $F_{w,\gamma}$, and a constant 
$C_3\in(0,\infty)$
such that $\E_{z}[F_{w,\gamma}(Z_t)] \leq F_{w,\gamma}(z)+C_3t$ for all $t\in\RP$ and $z\in\cD$. Since $F_{w,\gamma}(z) = f_{w,\gamma}(z)$ on $z=(x,y)\in\cD \cap[x_1,\infty)\times\R^d$, the upper bound in~\eqref{eq:bound_Lyapunov_f} 
implies that 
\begin{align*}
\pi(\{z\in\cD:F_{w,\gamma}(z) \geq r\}) &\geq \pi(\{z=(x,y)\in\cD:x \geq 2^{\gamma/|w|}r^{1/\gamma}-k_w\})\quad \text{ for all $r\in(x_1,\infty)$.}
\end{align*}
This inequality and the lower bound on the invariant distribution 
in Lemma~\ref{lem:lower_bound_invariant}
imply that there exists a constant $C_4\in(0,1)$ such that 
\begin{align*}
\pi(\{z\in\cD:F_{w,\gamma}(z) \geq r\}) & \geq  C_4r^{(1 + \beta/\beta_c - \epsilon')/\gamma}= C_4r^{(1 + \beta/\beta_c - \epsilon')/(1-\beta/\beta_c-\epsilon')}
\end{align*}
for all $r\in(x_1,\infty)$. By further reducing $C_4>0$
if necessary, we may assume that the inequality in the last display holds for all $r\in[1,\infty)$.

 Define the functions $f: [1,\infty) \to (0,1]$, $f(a)  := C_4 a^{(1 + \beta/\beta_c - \epsilon')/(1-\beta/\beta_c-\epsilon')}$
 and $F(a):=af(a)= C_4 a^{(2 -2\epsilon')/(1-\beta/\beta_c-\epsilon')}$ and note that $F$ is strictly increasing with  $\lim_{a \to \infty} F(a) = \infty$. By Lemma~\ref{lem:lower_bound_convergence_rate}, applied  with  functions $G(z) = F_{w,\gamma}(z)$ (recall from the previous display that $\pi(\left\{z\in\cD:G(z)\geq r\right\})\geq f(r)$ for all $r\in[1,\infty)$) and  $g(z,t) := F_{w,\gamma}(z)+C_3 t\geq \E_z[G(Z_t)]$, yields  constants $C_2,C_5\in(0,\infty)$, such that for all $t\in[1,\infty)$ we get 
\begin{equation}
\label{eq:final_estimate}
C_2t^{(1+\beta/\beta_c-\eps')/(2-2\eps')}\leq \frac{C_5}{2}(2g(z,t))^{(1+\beta/\beta_c-\epsilon')/(2-2\epsilon')} \leq \| \P_z(Z_{t_n}\in\cdot)-\pi\|_{\mathrm{TV}}.
\end{equation}

Pick arbitrary $\eps\in(0,1)$ and let $\epsilon'\in(0,1)$ 
be such that 
$0>(1+\beta/\beta_c-\eps')/(2-2\eps')>(1+\beta/\beta_c)/2-\eps$. 
Then the bound in the proposition follows from~\eqref{eq:final_estimate}.
\end{proof}

\subsection{Concluding remarks}
\label{subsec:Concluding_rem}

In the case of 
the asymptotically oblique reflection in the domain $\cD$, the local time $L_t$ either explodes in finite time or is proportional (as $t\to\infty$) to the integral of the boundary function $b$ (which in this case tends to infinity)~\cite{menshikov2022reflecting}.
In the case of the asymptotically normal reflection considered in this paper, the long-term behaviour of the local time $L$ is much harder to determine.  
As our assumptions in any compact region of $\cD$, given by~\aref{ass:domain1}, \aref{ass:covariance1}, \aref{ass:vector1},
are non-quantitative
(and, in fact, equal to the general existence and uniqueness assumptions in~\cite{lions1984stochastic}), the limiting behaviour of $L_t$
as $t\to \infty$ appears to be most tractable in the transient case, where the process spends all of its time (from some point on) in the region where the asymptotic assumptions in~\aref{ass:domain2}, \aref{ass:covariance2}, \aref{ass:vector2} can be applied. The recurrent case appears to be much harder in this generality. 

Heuristic~\eqref{eq:Pinsky_heuristic} in Section~\ref{subsec:heuristic} above suggest that  the expected local time $\E_z[L_t]$ grows as $\int_0^t 1/(1+b(X_s))\ud s$
when $t\to\infty$, implying that $\E_z[L_t]\to\infty$ as $t\to\infty$ in all the cases. 
Theorem~\ref{thm:non_explosion_moments} suggests that $X$
is diffusive. Thus,
in the transient case (i.e. $0<\beta_c<\beta<1$ by Theorem~\ref{thm:rec_tran}), where $X_t$ is expected approximately to equal to $t^{1/2}$ for large $t$, the expectation $\E_z[L_t]$  is approximately of the order $t^{1-\beta/2} \approx \int_0^t 1/(1+s^{\beta/2})\ud s$ as $t\to\infty$ (recall that $\beta$ in~\eqref{eq::beta} is the growth rate of $b$).

It is feasible that the methods developed in this paper could be applied to find deterministic (law-of-iterated-logarithm type) bounds  for $X_t$ 
of order $t^{1/2}$, which would reveal that the asymptotic behaviour 
 of $\E_z[L_t]$ as $t\to\infty$ is of order 
$t^{1-\beta/2}$. The lack of quantitative assumptions on any compact set (discussed in the first paragraph of this section), where the process spends most of its time in the recurrent case, makes it difficult to quantify the growth of $L_t$.
This is left as an open problem. 

In our proofs, we circumvent the problem of having to quantify the long-term behaviour of local time $L$ by localising the process and/or controlling the sign of the local time term via a suitable choice of the state space transformation. However, unlike in the asymptotically oblique case~\cite{menshikov2022reflecting} (where 
the long-term behaviour of local time can be deduced from the results and the SDE itself), in the asymptotically normal case the results obtained in this paper do not provide directly any information about the growth of local time.

\appendix

\section{A lower bound on the convergence to stationarity of a Markov process}
\label{subsec:lower_bounds_convergence_to_stationarity}
Fix $m\in\N$ and 
let $\kappa=(\kappa_t)_{t\in\RP}$ be a Markov process on an unbounded domain $\cD_\kappa$ in $\R^m$ 
with an invariant distribution $\pi_\kappa$ satisfying $\pi_\kappa(\cdot) = \int_{\cD_\kappa}\P_u(\kappa_t\in\cdot) \pi_\kappa(\ud u)$ for every $t>0$.
Via a suitable Lyapunov function, the following lemma converts a lower bound estimate on the tail of invariant distribution $\pi_\kappa$ into a lower bound on the rate of convergence in total variation of the law of $\kappa_t$  to the invariant distribution $\pi_\kappa$. The elementary proof of Lemma~\ref{lem:lower_bound_convergence_rate} below is adapted from~\cite[Thm~5.1 and Cor~5.2]{hairer2010convergence}. This  lemma is key in the proof of the lower bound on the rate of convergence in total variation stated in Theorem~\ref{thm:invariant_distributon}(b).

\begin{lem}
\label{lem:lower_bound_convergence_rate}
Let 
 $\cD_\kappa$ be an an unbounded domain in $\R^m$ and
$\kappa = (\kappa_t)_{t\in\RP}$ a $\cD_\kappa$-valued Markov process with invariant distribution $\pi_\kappa$. Assume the function $G:\cD_\kappa\to [1,\infty)$ satisfies: 
\begin{itemize}
\item[(a)] there exists  $f:[1,\infty)\to(0,1]$, such that the function $F:y\mapsto yf(y)$ is increasing, $\lim_{y\uparrow\infty}F(y)=\infty$ and  $\pi_\kappa(\{v\in\cD_\kappa:G(v)\geq y\})\geq f(y)$ for all $y\in[1,\infty)$;
\item[(b)] there exists $g:\cD_\kappa\times\RP\to[1,\infty)$, such that for every $u\in\cD_\kappa$ 
the function $t\mapsto g(u,t)$ is continuous and increasing to infinity and 
$\E_u[G(\kappa_t)] \leq g(u,t)$ for all $t\in\RP$.
\end{itemize}
Then, for any starting point $u\in\cD_\kappa$
we have 
$$
\|\pi_\kappa(\cdot)-\P_u(\kappa_{t}\in\cdot)\|_{\mathrm{TV}} \geq\frac{1}{2}f\left(F^{-1}(2g(u,t))\right)
\qquad\text{for all $t\in\RP$.}
$$
\end{lem}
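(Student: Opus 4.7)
The plan is a direct two-line argument based on Markov's inequality and the variational definition of total variation distance. For any measurable set $A\subset\cD_\kappa$, we have
\[
\|\pi_\kappa(\cdot)-\P_u(\kappa_t\in\cdot)\|_{\mathrm{TV}}\ \ge\ \pi_\kappa(A)-\P_u(\kappa_t\in A),
\]
so we need to exhibit a set $A$ where $\pi_\kappa$ puts substantial mass and $\P_u(\kappa_t\in\cdot)$ puts little mass. The natural choice is a super-level set of $G$.

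First I would fix $t\in\RP$ and take $A_y:=\{v\in\cD_\kappa:G(v)\ge y\}$ for a parameter $y\in[1,\infty)$ to be optimised. Assumption~(a) gives the lower bound $\pi_\kappa(A_y)\ge f(y)$, while Markov's inequality combined with assumption~(b) yields
\[
\P_u(\kappa_t\in A_y)\ =\ \P_u(G(\kappa_t)\ge y)\ \le\ \frac{\E_u[G(\kappa_t)]}{y}\ \le\ \frac{g(u,t)}{y}.
\]
Combining these,
\[
\|\pi_\kappa(\cdot)-\P_u(\kappa_t\in\cdot)\|_{\mathrm{TV}}\ \ge\ f(y)-\frac{g(u,t)}{y}\qquad\text{for every }y\in[1,\infty).
\]

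Then I would choose $y$ to make the two competing terms balance. Since $F(y)=yf(y)$ is continuous, strictly increasing, and diverges to infinity by hypothesis, its inverse $F^{-1}:[F(1),\infty)\to[1,\infty)$ is well-defined; monotonicity of $t\mapsto g(u,t)$ together with $\lim_{t\to\infty}g(u,t)=\infty$ ensures that, for all sufficiently large $t$, the value $2g(u,t)$ lies in the range of $F$, so we may set $y_*:=F^{-1}(2g(u,t))\ge 1$. By construction $y_*f(y_*)=2g(u,t)$, hence $g(u,t)/y_*=f(y_*)/2$, and the displayed bound reduces to
\[
\|\pi_\kappa(\cdot)-\P_u(\kappa_t\in\cdot)\|_{\mathrm{TV}}\ \ge\ f(y_*)-\frac{f(y_*)}{2}\ =\ \frac{1}{2}f\bigl(F^{-1}(2g(u,t))\bigr).
\]
For the finitely many small $t$ where $2g(u,t)<F(1)$, the right-hand side of the claim is at most $\tfrac12 f(1)\le\tfrac12$, which is a trivial bound on total variation, so this regime requires no work.

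There is no serious obstacle here; the only subtlety is verifying that $F^{-1}(2g(u,t))$ is defined (handled by continuity and surjectivity of $F$ onto $[F(1),\infty)$ together with $g(u,t)\to\infty$) and ensuring measurability of $A_y$, which follows from measurability of $G$.
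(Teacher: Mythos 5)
Your argument is correct and follows exactly the same route as the paper's: test the total-variation bound on the super-level set $\{G\ge y\}$, lower-bound its $\pi_\kappa$-mass via assumption~(a), upper-bound $\P_u(G(\kappa_t)\ge y)$ via Markov's inequality and assumption~(b), and then balance the two terms by choosing $y=F^{-1}(2g(u,t))$. One small but worth noting issue in your final paragraph: the ``finitely many small $t$'' you worry about do not exist, because $g$ takes values in $[1,\infty)$, so $2g(u,t)\ge 2>1\ge f(1)=F(1)$ always, and hence $F^{-1}(2g(u,t))$ is always defined. Moreover, the justification you offer for that (vacuous) case --- that the right-hand side is at most $\tfrac12$, hence ``a trivial bound'' --- would not actually have worked: bounding the right-hand side above does not establish the inequality, since the total variation distance itself only satisfies $\|\cdot\|_{\mathrm{TV}}\le 1$, not $\ge\tfrac12$. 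Fortunately the case never arises, so the proof stands.
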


\begin{rem}
A good choice for the function $G$ in Lemma~\ref{lem:lower_bound_convergence_rate}
has the following properties:  the expectation   
$\E_u [G(\kappa_t)]$
is bounded as a function of the starting point $u$ and time $t$
\textit{and} 
the function 
$y\mapsto \pi_\kappa(G^{-1}([y,\infty)))$
satisfies
$\lim_{y\to\infty}\pi_\kappa(G^{-1}([y,\infty)))=0$
and
$\lim_{y\to\infty}y\pi_\kappa(G^{-1}([y,\infty)))=\infty$.
The proof of Lemma~\ref{lem:lower_bound_convergence_rate}
shows that if the assumption in~(a) holds for $y$
sufficiently large, then the conclusion of the lemma is valid for all $t$ sufficiently large. 
\end{rem}

\begin{proof}[Proof of Lemma~\ref{lem:lower_bound_convergence_rate}]
The definition of the total variation distance (together with assumption~(a)) and the Markov inequality (together with assumption~(b)) imply  that for every
$u\in\cD_\kappa$ and 
$t\in\RP$ the following inequalities hold for all $y\in[1,\infty)$:
$$
\|\pi_\kappa(\cdot)-\P_u(\kappa_t\in\cdot)\|_{\mathrm{TV}}\geq \pi_\kappa(\{v\in\R^d:G(v)\geq y\})-\P_u(G(\kappa_t)\geq y)\geq f(y) -\frac{g(u,t)}{y}.
$$
Since $F(y)\to\infty$ (as $y\to\infty$) and $t\mapsto g(u,t)$ is increasing and continuous, for all $y\in[F^{-1}(2g(u,0)),\infty)$ (where $F^{-1}$ is the inverse of the increasing function $F$
defined in~(a)), there exists 
a unique $t\in\RP$
satisfying
$F(y)=2g(u,t)\in[1,\infty)$. 
Differently put, for every $t\in\RP$, there exists
$y_t\in[F^{-1}(1),\infty)\subset [1,\infty)$
satisfying $y_t=F^{-1}(2g(u,t))$.
Thus, for every $t\in\RP$, we have 
$f(y_t)-g(u,t)/y_t=f(F^{-1}(2g(u,t)))/2$.
\end{proof}

\section{Asymptotically oscillating domain}
\label{app:oscillating_b}
\begin{lem}
\label{lem:oscilating_domain}
Let $b:\RP \to (0,\infty)$ be a $C^2$ function with $b(0)=0$, satisfying
$$b(x) = \log \log x(1 + (\log \log x)^{-2} + \sin\log \log x) \quad \text{ for } x > \exp(\re+1).$$
Then $\limsup_{x\to\infty}b(x) = \infty$ and $\liminf_{x \to \infty}b(x) = 0$. Moreover the function $b$ satisfies assumption \aref{ass:domain2} with $\beta=0$, i.e.~$\lim_{x\to\infty}xb'(x)/b(x)=\beta=0$, and $\lim_{x\to\infty}b'(x)=\lim_{x\to\infty}b''(x)=0$
\end{lem}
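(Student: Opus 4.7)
The plan is to work throughout with the substitution $u = \log\log x$, so that for $x > \exp(\re+1)$ we have $u>1$ and
\[
b(x) = u + u^{-1} + u\sin u.
\]
Since $1 + \sin u \geq 0$ and $u^{-1} > 0$, this immediately gives $b(x) \geq u^{-1} > 0$ on $(\exp(\re+1),\infty)$; the remaining part of $(0,\infty)$ can be covered by any smooth positive $C^2$-extension matching the values and derivatives at $x = \exp(\re+1)$ and with $b(0)=0$, which is clearly possible.

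For the oscillation statements, I will pick two sequences along which $\sin u$ is extremal. With $u_n^+ := \pi/2 + 2\pi n$ and $u_n^- := -\pi/2 + 2\pi n$, set $x_n^\pm := \exp(\exp(u_n^\pm))$. Then $b(x_n^+) = 2u_n^+ + (u_n^+)^{-1}\to\infty$, while $b(x_n^-) = (u_n^-)^{-1}\to 0$, giving $\limsup_{x\to\infty} b(x)=\infty$ and $\liminf_{x\to\infty} b(x)=0$.

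Next I will use the chain rule with $u'(x) = 1/(x\log x)$. A short calculation yields
\[
b'(x) \;=\; \frac{f(u)}{x\log x},\qquad f(u) := 1 - u^{-2} + \sin u + u\cos u,
\]
where $f(u) = O(u)$ as $u\to\infty$. Since $\log x = \re^u$, we have $xb'(x) = f(u)/\re^u \to 0$, and $b'(x) = f(u)/(x\log x) \to 0$. For the ratio $xb'(x)/b(x)$, using $b(x) \geq u^{-1}$ gives
\[
\Bigl|\,\frac{xb'(x)}{b(x)}\,\Bigr| \;\leq\; \frac{|f(u)|\,u}{\re^u} \;=\; O(u^2 \re^{-u}) \;\to\; 0,
\]
so $\beta = 0$ in the sense of~\aref{ass:domain2}. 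The relevant checks about $b(x)$ being sublinear, etc., follow from the crude bound $b(x) = O(u) = O(\log\log x)$.

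Finally, for $b''(x)$ I differentiate $b'(x) = f(u)/(x\log x)$ once more. Writing $x\log x = x\re^u$ and using $u'(x) = \re^{-u}/x$, a direct computation gives
\[
b''(x) \;=\; \frac{f'(u)}{x^2 \re^{2u}} \;-\; \frac{(\log x + 1)\,f(u)}{(x\log x)^2},
\]
with $f'(u) = 2u^{-3} + 2\cos u - u\sin u = O(u)$. Both terms are bounded in absolute value by $O(u/x^2)$, which tends to $0$ as $x\to\infty$. The main obstacle here is really just careful bookkeeping, since $f$ and $f'$ are unbounded; but the exponential decay of $1/\log x = \re^{-u}$ and the factor $1/x^2$ dominate any polynomial growth in $u = \log\log x$, so every term is manageable. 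Collecting these four pieces completes the verification of~\aref{ass:domain2}.
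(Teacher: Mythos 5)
Your proof is correct and takes essentially the same approach as the paper: both arguments pick extremal sequences for $\sin\log\log x$ to exhibit the oscillation, compute $b'$ and $b''$ by the chain rule through $u=\log\log x$, and use the crude lower bound $b(x)\geq (\log\log x)^{-1}$ to control $xb'(x)/b(x)$. Your bookkeeping via the helper function $f(u)$ is tidier than the paper's inline computation, which, incidentally, misrecords the $u\cos u$ term as $\cos u$ in the expressions for $b'$ and $b''$; your version is the accurate one, and the conclusions are unaffected.
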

\begin{proof}
To show $\liminf_{x\to\infty}b(x) = 0$, consider $\ell_k := \exp(\exp(-\pi/2 + 2k\pi))$, for any $k\in\N$. It follows that 
$$
\lim_{k\to \infty}b(\ell_k) = \lim_{k\to\infty}(\log\log \ell_k)^{-1}= \lim_{k\to\infty}(-\pi/2 + 2k\pi)^{-1} = 0.
$$
Similarly, to show $\limsup_{x\to\infty}b(x)=\infty$, consider $\tilde \ell_k := \exp(\exp(2k\pi))$, for any $k\in\N$. We obtain
$$
\lim_{k\to \infty}b(\tilde\ell_k) = \lim_{k\to\infty}(\tilde \ell_k + (\log\log \tilde\ell_k)^{-1})= \lim_{k\to\infty}2k\pi + (2k\pi)^{-1} =\infty.
$$
The first two derivatives of $b$ on $x > \exp(\re+1)$ take the form
\begin{align*}
    b'(x) &= (x\log x)^{-1}(1-(\log\log x)^{-1} + \sin \log\log x + \cos\log\log x),\\
    b''(x) &= (x\log x)^{-2}((1+\log x)(1 -(\log\log x)^{-2} + \sin\log\log x+\cos\log\log x) + 2(\log\log x)^{-3}),
\end{align*}
implying that $\lim_{x\to\infty} b'(x) =\lim_{x\to\infty} b''(x) = 0$. For the result about $\beta$ it is enough to show $\limsup_{x\to\infty} |xb'(x)|/b(x) \leq 0$. We estimate
\begin{align*}
    \limsup_{x\to\infty}\frac{xb'(x)}{b(x)} &=  \limsup_{x\to\infty}\frac{(\log x)^{-1}\lvert1-(\log\log x)^{-1} + \sin \log\log x + \cos\log\log x)}{\log \log x(1 + (\log \log x)^{-2} + \sin\log \log x)} \\
    &\leq \limsup_{x\to\infty} \frac{\log\log x(3 + (\log\log x)^{-1})}{\log x} = 0,\\
\end{align*}
this concludes the proof.
\end{proof}

\section*{Acknowledgements}

MB is funded by the CDT in Mathematics and Statistics at The University of Warwick.
AM is supported by EPSRC under grants EP/W006227/1 \& EP/V009478/1 and by The Alan Turing
Institute under the EPSRC grant EP/X03870X/1.
The work of AW is supported by EPSRC grant EP/W00657X/1.
We thank Krzysztof Burdzy for drawing our attention to the literature on the uniform ergodicity for normally reflected Brownian motion. 

\bibliography{RefDiff}
\bibliographystyle{amsplain}

\end{document}